%% LyX 2.0.3 created this file.  For more info, see http://www.lyx.org/.
%% Do not edit unless you really know what you are doing.
\documentclass[12pt,letterpaper,english,twoside]{article}
\usepackage[T1]{fontenc}
\usepackage[latin9]{inputenc}
\usepackage{babel}
\usepackage{refstyle}
\usepackage{amsthm}
\usepackage{amsmath}
\usepackage{amssymb}
\usepackage{esint}
\usepackage[unicode=true,pdfusetitle,
 bookmarks=true,bookmarksnumbered=true,bookmarksopen=true,bookmarksopenlevel=1,
 breaklinks=false,pdfborder={0 0 0},backref=false,colorlinks=false]
 {hyperref}

\makeatletter

%%%%%%%%%%%%%%%%%%%%%%%%%%%%%% LyX specific LaTeX commands.

\AtBeginDocument{\providecommand\eqref[1]{\ref{eq:#1}}}
\AtBeginDocument{\providecommand\thmref[1]{\ref{thm:#1}}}
\AtBeginDocument{\providecommand\propref[1]{\ref{prop:#1}}}
\AtBeginDocument{\providecommand\lemref[1]{\ref{lem:#1}}}
\AtBeginDocument{\providecommand\secref[1]{\ref{sec:#1}}}
\AtBeginDocument{\providecommand\corref[1]{\ref{cor:#1}}}
\AtBeginDocument{\providecommand\defnref[1]{\ref{defn:#1}}}
\pdfpageheight\paperheight
\pdfpagewidth\paperwidth

\RS@ifundefined{subref}
  {\def\RSsubtxt{section~}\newref{sub}{name = \RSsubtxt}}
  {}
\RS@ifundefined{thmref}
  {\def\RSthmtxt{theorem~}\newref{thm}{name = \RSthmtxt}}
  {}
\RS@ifundefined{lemref}
  {\def\RSlemtxt{lemma~}\newref{lem}{name = \RSlemtxt}}
  {}

%%%%%%%%%%%%%%%%%%%%%%%%%%%%%% Textclass specific LaTeX commands.
\numberwithin{equation}{section}
\theoremstyle{plain}
\newtheorem{thm}{\protect\theoremname}[section]
  \theoremstyle{plain}
  \newtheorem{prop}[thm]{\protect\propositionname}
  \theoremstyle{remark}
  \newtheorem*{rem*}{\protect\remarkname}
  \theoremstyle{plain}
  \newtheorem{lem}[thm]{\protect\lemmaname}
  \theoremstyle{plain}
  \newtheorem{cor}[thm]{\protect\corollaryname}
  \theoremstyle{definition}
  \newtheorem{defn}[thm]{\protect\definitionname}

%%%%%%%%%%%%%%%%%%%%%%%%%%%%%% User specified LaTeX commands.
%\usepackage[T1]{fontenc}
%\usepackage{textcomp}
\usepackage[hmarginratio=1:1]{geometry}
\usepackage{times}
\usepackage{enumerate}
\pagestyle{myheadings}
\def\titlerunning#1{\gdef\titrun{#1}}
\makeatletter
\def\author#1{\gdef\autrun{\def\and{\unskip, }#1}\gdef\@author{#1}}
\def\address#1{{\def\and{\\\hspace*{18pt}}\renewcommand{\thefootnote}{}%
\footnote {#1}}%
\markboth{\autrun}{\titrun}}
\makeatother
\def\email#1{e-mail: #1}
\def\subjclass#1{{\renewcommand{\thefootnote}{}%
\footnote{\emph{Mathematics Subject Classification (2010):} #1}}}
\def\keywords#1{\par\medskip
\noindent\textbf{Keywords.} #1}

\DeclareMathOperator{\mes}{mes}
\DeclareMathOperator{\com}{compl}
\DeclareMathOperator{\car}{Car}
\DeclareMathOperator{\dist}{dist}
\DeclareMathOperator{\spec}{spec}
\let \corref=\relax
\let\propref=\relax
\let\defnref=\relax
\newref{eq}{refcmd = {(\ref{#1})} }
\newref{thm}{refcmd = {Theorem \ref{#1}}}
\newref{lem}{refcmd = {Lemma \ref{#1}}}
\newref{prop}{refcmd = {Proposition \ref{#1}}}
\newref{cor}{refcmd = {Corollary \ref{#1}}}
\newref{defn}{refcmd = {Definition \ref{#1}}}
\newref{sec}{refcmd = {Section \ref{#1}}}
 
\renewcommand{\Im}{\mathrm{Im}}

%%%%%%%%%%% For JEMS
\frenchspacing
\textwidth=15cm
\textheight=23cm
\parindent=16pt
\topmargin=-0.5cm

%%%%%%%%%%%%%%%%%%%%%%%%%%%%%%%%%%%

\makeatother

  \providecommand{\corollaryname}{Corollary}
  \providecommand{\definitionname}{Definition}
  \providecommand{\lemmaname}{Lemma}
  \providecommand{\propositionname}{Proposition}
  \providecommand{\remarkname}{Remark}
\providecommand{\theoremname}{Theorem}

\begin{document}
\global\long\def\mb#1{\mathbb{#1}}

\global\long\def\mc#1{\mathcal{#1}}

\global\long\def\mf#1{\frak{#1}}

\global\long\def\t#1{\tilde{#1}}

\thinmuskip=1mu
\medmuskip=2mu
\thickmuskip=4mu
\titlerunning{On Optimal Separation of Eigenvalues for a Quasiperiodic Jacobi Matrix}
\title{On Optimal Separation of Eigenvalues for a Quasiperiodic Jacobi Matrix}
\author{Ilia Binder \and  Mircea Voda}
\date{}
\maketitle
\address{I. Binder: Dept. of Mathematics, University of Toronto, Toronto, ON, M5S 2E4, Canada; \email{ilia@math.utoronto.ca} \and M. Voda: Dept. of Mathematics, University of Toronto, Toronto, ON, M5S 2E4, Canada; \email{mvoda@math.utoronto.ca}}
\subjclass{Primary 81Q10; Secondary 47B36, 82B44}
\begin{abstract} 

We consider quasiperiodic Jacobi matrices of size $N$ with analytic
coefficients. We show that, in the positive Lyapunov exponent regime,
after removing some small sets of energies and frequencies, any eigenvalue
is separated from the rest of the spectrum by $N^{-1}\left(\log N\right)^{-p}$,
with $p>15$. 

 \keywords{eigenvalues, eigenfunctions, resonances, quasiperiodic Jacobi matrix, avalanche principle, large deviations} \end{abstract}

\section{Introduction}

It is known that one-dimensional quasiperiodic Schr\"odinger operators
in the regime of positive Lyapunov exponent exhibit exponential localization
of eigenfunctions (see for example \cite{MR2100420}). Can one develop
an inverse spectral theory in such a regime? This is one of two major
questions behind our work. The most studied case is the discrete single
frequency case. Since the inverse spectral theory for the periodic
case is well-understood, it seems very natural to try to understand
how the regime of positive Lyapunov exponent plays out with the periodic
approximation of the frequency via the standard convergent of its
continued fraction. Obviously, the optimal estimate for the separation
of the eigenvalues of the quasiperiodic operator on a fi{}nite interval
is crucial for this kind of approach. This is the second major question
behind this work. It is easy to fi{}gure out that the desired separation
for the operator on the interval $\left[0,N-1\right]$, with appropriate
$N$, is $\gtrsim N^{-1}\left(\log N\right)^{-p}$ with $p<1$. Is
this the correct estimate? A common sense argument suggests that outside
of a small exceptional set of eigenvalues the estimate should be $\gtrsim o\left(N^{-1}\right)$.
What is known about this problem? Goldstein and Schlag \cite{MR2753606}
proved the estimate $\gtrsim\exp\left(-\left(\log N\right)^{A}\right)$,
with $A\gg1$, which is far from optimal. In this paper we improve
the separation to $N^{-1}\left(\log N\right)^{-p}$, with $p>15$.
Moreover, we prove it for quasiperiodic Jacobi matrices. Our interest
in the more general case is motivated by the fact that quasiperiodic
Jacobi operators are necessary for the solution of the inverse spectral
problem for discrete quasiperiodic operators of second order. We note
that this setting is also needed for the study of the extended Harper's
model, which corresponds to $a\left(x\right)=2\cos(2\pi x)$, $b(x)=\lambda_{1}e^{2\pi i(x-\omega/2)}+\lambda_{2}+\lambda_{3}e^{-2\pi i\left(x-\omega/2\right)}$
(see \cite{MR2121278,NOMR1}). At the same time we want to stress
that the main result of this paper improves on the known result for
the Schr\"odinger case and makes it much closer to the optimal one.

We consider the quasiperiodic Jacobi operator $H\left(x,\omega\right)$
defined on $l^{2}\left(\mathbb{Z}\right)$ by
\[
\left[H\left(x,\omega\right)\phi\right]\left(k\right)=-b\left(x+\left(k+1\right)\omega\right)\phi\left(k+1\right)-\overline{b\left(x+k\omega\right)}\phi\left(k-1\right)+a\left(x+k\omega\right)\phi\left(k\right),
\]
where $a:\mb T\rightarrow\mb R$, $b:\mb T\rightarrow\mb C$ ($\mb T:=\mb R/\mb Z$)
are real analytic functions, $b$ is not identically zero, and $\omega\in\mb T_{c,\alpha}$
for some fixed $c\ll1$, $\alpha>1$, where 
\[
\mb T_{c,\alpha}:=\left\{ \omega\in\left(0,1\right):\,\left\Vert n\omega\right\Vert \ge\frac{c}{n\left(\log n\right)^{\alpha}}\right\} .
\]
The special case of the Schr\"odinger operator ($b=1$) has been
studied extensively (see \cite{MR883643,MR1102675}). 

It is known that the Diophantine condition imposed on $\omega$ is
generic, in the sense that $\mes\left(\cup_{c>0}\mb T_{c,\alpha}\right)=1$.
This Diophantine condition, first used by Goldstein and Schlag \cite{MR1847592},
has the advantage of allowing one to prove stronger large deviations
estimates (in the positive Lyapunov exponent case) than for general
irrational frequencies. The use of large deviations estimates in the
study of quasiperiodic Schr\"odinger operators was pioneered by Bourgain
and Goldstein \cite{MR1815703}. Initially these estimates were established
for transfer matrices. More recently Goldstein and Schlag \cite{MR2438997}
proved a large deviations estimate for the entries of the transfer
matrices (or equivalently for the determinants of the finite scale
restrictions of the operator). This estimate is essential for our
work, as it was for the developments in \cite{MR2438997} and \cite{MR2753606}.
The technical details of extending the large deviations estimate for
the entries to the Jacobi setting were dealt with in \cite{2012arXiv1202.2915B}.
This reduces the cost of presenting our result in the more general
Jacobi setting. Large deviations estimates in the quasiperiodic Jacobi
case were also obtained in \cite{MR2563096,MR2825743,MR2915794},
but only for the transfer matrices. 

We proceed by introducing the notation needed to state our main result.
To motivate its statement we will first recall two results from \cite{MR2753606}.

It is known that $a$ and $b$ admit complex analytic extensions.
We will assume that they both extend complex analytically to a set
containing the closure of 
\[
\mb H_{\rho_{0}}:=\left\{ z\in\mb C:\,\left|\Im z\right|<\rho_{0}\right\} ,
\]
for some $\rho_{0}>0$. Let $\t b$ denote the complex analytic extension
of $\bar{b}$ to $\mb H_{\rho_{0}}$ . 

We consider the finite Jacobi submatrix on $\left[0,N-1\right]$,
denoted by $H^{(N)}\left(z,\omega\right)$, and defined by
\[
\left[\begin{array}{ccccc}
a\left(z\right) & -b\left(z+\omega\right) & 0 & \ldots & 0\\
-\t b\left(z+\omega\right) & a\left(z+\omega\right) & -b\left(z+2\omega\right) & \ldots & 0\\
\ddots & \ddots & \ddots & \ldots & \vdots\\
0 & \ldots & 0 & -\t b\left(z+\left(N-1\right)\omega\right) & a\left(z+\left(N-1\right)\omega\right)
\end{array}\right].
\]
It is important for us to use $\t b$ instead of $\bar{b}$, because
we want the determinant to be complex analytic. More generally, we
will denote the finite Jacobi submatrix on $\Lambda=\left[a,b\right]$
by $H_{\Lambda}\left(z,\omega\right)$. Let $E_{j}^{N}\left(z,\omega\right)$,
and $\psi_{j}^{\left(N\right)}\left(z,\omega\right)$, $j=1,\ldots,N$
denote the eigenvalues and the $l^{2}$-normalized eigenvectors of
$H^{\left(N\right)}\left(z,\omega\right)$.

Let $L\left(\omega,E\right)$ be the Lyapunov exponent of the cocycle
associated with $H\left(x,\omega\right)$. Our work deals with the
case of the positive Lyapunov exponent regime. Namely, in this paper
we assume that there exist intervals $\Omega^{0}=\left(\omega',\omega''\right)$,
$\mc E^{0}=\left(E',E''\right)$ such that $L\left(\omega,E\right)>\gamma>0$
for all $\left(\omega,E\right)\in\Omega^{0}\times\mc E^{0}$.

We will be interested in the measure and complexity of sets $S\subset\mb C$.
Writing $\mes\left(S\right)\le c,\,\com\left(S\right)\le C$, will
mean that there exists a set $S'$ such that $S\subset S'\subset\mb C$
and $S'=\cup_{j=1}^{K}\mc D\left(z_{j},r_{j}\right)$, with $K\le C$,
and $\mes\left(S'\right)\le c$.

Goldstein and Schlag proved the following finite scale version of
Anderson localization, in the Schr\"odinger case (see also \cite[Lemma 6.4]{MR2753606}).
We give a restatement of \cite[Corollary 9.10]{MR2753606} adapted
to our setting. Note that in this paper the constants implied by symbols
such as $\lesssim$ will only be absolute constants. 
\begin{prop}
\label{prop:intro-GS-localization}(\cite[Corollary 9.10]{MR2753606})
Given $A>1$ there exists $N_{0}=N_{0}(a,\gamma,\alpha$, $c$, \textup{$\mc E^{0}$,
$A)$} such that for $N\ge N_{0}$ there exist $\Omega_{N}\subset\mb T$,
$\mc E_{N,\omega}\subset\mb R$ with
\[
\mes\left(\Omega_{N}\right)\lesssim\exp\left(-\left(\log\log N\right)^{A}\right),\,\com\left(\Omega_{N}\right)\lesssim N^{4},
\]
\[
\mes\left(\mc E_{N,\omega}\right)\lesssim\exp\left(-\left(\log\log N\right)^{A}\right),\,\com\left(\mc E_{N,\omega}\right)\lesssim N^{4},
\]
satisfying the property that for any $\omega\in\Omega^{0}\cap\mb T_{c,\alpha}\setminus\Omega_{N}$
and any $x\in\mb T$, if $E_{j}^{\left(N\right)}\left(x,\omega\right)\in\mc E^{0}\setminus\mc E_{N,\omega}$
then there exists $\nu_{j}^{\left(N\right)}\left(x,\omega\right)\in\left[0,N-1\right]$
such that if we let
\[
\Lambda_{j}:=\left[\nu_{j}^{\left(N\right)}\left(x,\omega\right)-l,\nu_{j}^{\left(N\right)}\left(x,\omega\right)+l\right]\cap\left[0,N-1\right],\, l=\left(\log N\right)^{4A},
\]
we have that 
\begin{equation}
\left|\psi_{j}^{\left(N\right)}\left(x,\omega;n\right)\right|\le C\exp\left(-\gamma\dist\left(n,\Lambda_{j}\right)/2\right)\label{eq:intro-localization}
\end{equation}
for all $n\in\left[0,N-1\right]$.
\end{prop}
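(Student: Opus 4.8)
The plan is to follow the argument of \cite{MR2753606}, the only substantive change being the use of the Jacobi large deviations estimate of \cite{2012arXiv1202.2915B} for the determinants. Write $f_{\Lambda}(z,\omega,E):=\det\left(H_{\Lambda}(z,\omega)-E\right)$, which is analytic in $z$ precisely because the matrix is built from $\t b$ rather than $\bar{b}$. By Cramer's rule, for $\Lambda=\left[p,q\right]$ and $p\le k\le m\le q$,
\[
\left|G_{\Lambda}(z,\omega;E)(k,m)\right|=\frac{\left|f_{\left[p,k-1\right]}(z,\omega,E)\right|\,\left|f_{\left[m+1,q\right]}(z,\omega,E)\right|}{\left|f_{\Lambda}(z,\omega,E)\right|}\prod_{i}\left|b\left(z+i\omega\right)\right|,
\]
so an exponential bound on the resolvent reduces to an upper bound on the two numerator determinants --- which holds for every $z$, as a standard consequence of the subharmonicity of $n^{-1}\log\left|f_{n}\right|$ and the large deviations estimate, in the form $\log\left|f_{n}(z,\omega,E)\right|\le nL(\omega,E)+n^{1-\tau}$ --- together with a lower bound on $\left|f_{\Lambda}\right|$, which holds as soon as $z$ avoids the sublevel set
\[
\mc B_{n,\omega,E}:=\left\{ z\in\mb T:\,\log\left|f_{n}(z,\omega,E)\right|<nL(\omega,E)-n^{1-\tau}\right\} .
\]
The large deviations estimate --- whose proof, via the avalanche principle, is what really powers the whole argument --- gives $\mes\left(\mc B_{n,\omega,E}\right)\lesssim\exp\left(-n^{\sigma}\right)$ off a small set of $\omega$, while Jensen's inequality together with the uniform upper bound shows that $f_{n}(\cdot,\omega,E)$ has $\lesssim n$ zeros in the relevant disc, so $\com\left(\mc B_{n,\omega,E}\right)\lesssim n^{C}$.

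Next I would fix a small scale $n_{1}\asymp\left(\log N\right)^{B}$, with $B$ to be chosen, discard from $\Omega^{0}\cap\mb T_{c,\alpha}$ the small set of $\omega$ for which the scale-$n_{1}$ large deviations estimate fails, and, for each surviving $\omega$, discard from $\mc E^{0}$ a small set of $E$ that keeps $L(\omega,E)$ and $\mc B_{n_{1},\omega,E}$ under control as $E$ varies. Partition $\left[0,N-1\right]$ into consecutive blocks $I_{k}$ of length $\asymp n_{1}$ and enclose each in an interval $\Lambda_{k}$ of length a fixed multiple of $n_{1}$. Call $I_{k}$ \emph{nonresonant} for $\left(z,\omega,E\right)$ if $z$ lies outside the translate of $\mc B_{n_{1},\omega,E}$ associated with $\Lambda_{k}$; the first paragraph then gives $\left|G_{\Lambda_{k}}(z,\omega;E)(u,v)\right|\le\exp\left(-\gamma\left|u-v\right|+n_{1}^{1-\tau}\right)$, which on the scale $\left|u-v\right|\asymp n_{1}$ used below is genuine exponential decay because $L\ge\gamma$.

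Given an eigenpair $\left(E_{j}^{(N)}(x,\omega),\psi_{j}^{(N)}(x,\omega)\right)$ of $H^{(N)}(x,\omega)$, the resolvent identity yields, for $n$ in a nonresonant block $I_{k}$,
\[
\left|\psi_{j}^{(N)}(x,\omega;n)\right|\lesssim\exp\left(-\gamma\,\dist\left(n,\partial\Lambda_{k}\right)\right)\max_{n'\in\partial\Lambda_{k}}\left|\psi_{j}^{(N)}(x,\omega;n')\right|,
\]
and iterating this along a chain of nonresonant blocks --- a paving argument, with the block-to-enclosure length ratio fixed so that the $n_{1}^{1-\tau}$ errors and the block overlaps still leave an accumulated rate of at least $\gamma/2$ --- propagates decay at rate $\gamma/2$ from $n$ to the nearest resonant block, where it is stopped by the trivial bound $\left|\psi_{j}^{(N)}\right|\le1$. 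This already yields the estimate of the proposition, provided one knows that, for $\left(x,\omega,E_{j}^{(N)}(x,\omega)\right)$, all resonant blocks lie within a single window of length $2l$, $l=\left(\log N\right)^{4A}$; then $\nu_{j}^{(N)}(x,\omega)$ is the centre of that window.

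The main obstacle is exactly this clustering of resonances. If $I_{k}$ and $I_{k'}$ are both resonant for $E_{j}^{(N)}$ then some eigenvalue of $H_{I_{k}}$ lies within $\exp\left(-n_{1}^{\tau'}\right)$ of some eigenvalue of $H_{I_{k'}}$, i.e. $\left(x,\omega\right)$ is a \emph{double resonance} for this pair of blocks, and one must show that for $\left|k-k'\right|n_{1}\ge l$ this happens only for $x$ (and $\omega$) in a small exceptional set. I would do this by the elimination method of \cite{MR2753606}: the resultant in $E$ of the characteristic polynomials $f_{I_{k}}(\cdot,\cdot,E)$ and $f_{I_{k'}}(\cdot,\cdot,E)$ is again analytic in $z$ (and in $\omega$), so the large deviations estimate and Jensen's inequality bound the measure and the complexity of the $z$-set on which it is small; summing over the $\lesssim N^{2}/n_{1}^{2}$ relevant pairs and over the $\omega$ parameter produces the sets $\Omega_{N}$ and $\mc E_{N,\omega}$, with $B$ and $\tau'$ tuned so that the resulting bounds are $\mes\lesssim\exp\left(-\left(\log\log N\right)^{A}\right)$ and $\com\lesssim N^{4}$ (with room to spare, the power of $N$ coming from the pair count and the polynomial complexity of each resultant sublevel set). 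Off these sets no two resonant blocks of a given $E_{j}^{(N)}(x,\omega)$ are $\ge l$ apart, so the paving argument applies with $\Lambda_{j}$ the length-$2l$ window containing all of them, which is the proposition. The point that must be watched throughout the elimination step is that $E_{j}^{(N)}(x,\omega)$ is not a free parameter but an $x$-dependent branch of $\left\{ f_{N}(x,\omega,\cdot)=0\right\}$, so these smallness estimates must be read along that spectral curve; it is the polynomial complexity bounds, carried through the resultants, that make the transfer from the $(z,E)$ picture to a bad set of $x$ possible.
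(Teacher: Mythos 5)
First, note that the paper itself does not prove this proposition: it is quoted (in adapted form) from \cite[Corollary 9.10]{MR2753606}, so the only in-house comparison is with the paper's Jacobi-case analogue, namely \thmref{localization} together with the resultant elimination of \secref{Elimination-via-Resultants} (\propref{resultants-concrete-elimination}, \propref{resultants-localization}). Measured against that, your outline is correct and follows the same Goldstein--Schlag strategy: large deviations for the determinants, Cramer's rule for the Green's function, elimination of double resonances by resultants, and the conclusion that all resonant blocks of a given eigenvalue cluster in a single window. Two places where your route genuinely differs. (i) You propagate decay by iterating the resolvent identity along a chain of nonresonant blocks (a paving argument); the paper instead feeds the block-level estimates into the Avalanche Principle (\corref{prelims-AP-determinants}, \corref{localization-AP}) to get a lower bound on the full determinant $f_{\left[0,n-1\right]}^{a}$ up to the localization window, after which one application of the Poisson formula and \lemref{prelims-bounds-Green-entries} finishes; the AP route avoids bookkeeping the accumulated per-block losses that your paving argument must control. (ii) You take the resultant in $E$ of the two degree-$n_{1}$ characteristic polynomials; the paper first applies Weierstrass preparation (\propref{resultants-Weierstrass-preparation-for-determinants}) to extract a monic factor in $z$ of degree $\le\left(\log N\right)^{C}$ and eliminates the $z$-zeros (\lemref{resultants-abstract-elimination}); the low degree of the prepared factor is what keeps the Cartan complexity polylogarithmic rather than of order $n_{1}$. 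One caution about your closing sentence: the proposition removes a bad set of \emph{energies} $\mc E_{N,\omega}$ and must hold for \emph{every} $x\in\mb T$ whose eigenvalue avoids it, so the Cartan bad set has to be organized in the $\left(\omega,E\right)$ variables as in \propref{resultants-concrete-elimination}; your earlier sentence has this right, but the ``transfer \dots to a bad set of $x$'' you describe at the end points in the wrong direction and, taken literally, would yield a weaker statement than the one claimed.
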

We will call $\nu_{j}^{\left(N\right)}\left(x,\omega\right)$ localization
centre, $\Lambda_{j}$ localization window, and we say that $E_{j}^{\left(N\right)}\left(x,\omega\right)$
is localized when \eqref{intro-localization} holds. By using this
localization result Goldstein and Schlag were able to obtain the following
quantitative separation for the finite scale eigenvalues (see also
\cite[Proposition 7.1]{MR2753606}). As with the previous Proposition,
we give a restatement of \cite[Proposition 10.1]{MR2753606} adapted
to our setting.
\begin{prop}
\label{prop:intro-GS-separation}(\cite[Proposition 10.1]{MR2753606})
Given $0<\delta<1$ there exist large constants $N_{0}=N_{0}\left(\delta,a,\gamma,\alpha,c,\mc E^{0}\right)$
and $A=A\left(\delta,a,\gamma,\alpha,c,\mc E^{0}\right)$ ($\delta A\gg1$)
such that for any $N\ge N_{0}$, and $l=\left(\log N\right)^{A}$
there exist $\Omega_{N}$, $\mc E_{N,\omega}$ as in the previous
Proposition such that for any $\omega\in\Omega^{0}\cap\mb T_{c,\alpha}\setminus\Omega_{N}$
and all $x\in\mb T$ one has
\begin{equation}
\left|E_{j}^{\left(N\right)}\left(x,\omega\right)-E_{k}^{\left(N\right)}\left(x,\omega\right)\right|>\exp\left(-l^{\delta}\right)\label{eq:intro-separation}
\end{equation}
for all $j\neq k$ provided $E_{j}^{\left(N\right)}\left(x,\omega\right)\in\mc E^{0}\setminus\mc E_{N,\omega}$.
\end{prop}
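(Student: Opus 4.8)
The plan is to argue by contradiction: suppose that for some $j\neq k$ one has $\bigl|E_{j}^{(N)}(x,\omega)-E_{k}^{(N)}(x,\omega)\bigr|\le\varepsilon:=\exp(-l^{\delta})$ while $E_{j}^{(N)}(x,\omega)\in\mc E^{0}\setminus\mc E_{N,\omega}$. Since $E_{k}^{(N)}(x,\omega)$ then lies within $\varepsilon$ of a point of $\mc E^{0}\setminus\mc E_{N,\omega}$, after replacing $\mc E_{N,\omega}$ by its $\varepsilon$-neighbourhood --- which changes the measure bound by an absolute factor and the complexity bound by a factor $N$ --- one may assume that Proposition~\ref{prop:intro-GS-localization}, applied with a parameter $A_{0}$ chosen so that $4A_{0}$ is comfortably below $A$, supplies localization centres $\nu_{j},\nu_{k}\in[0,N-1]$, localization windows $\Lambda_{j},\Lambda_{k}$ of radius $l_{0}\asymp(\log N)^{4A_{0}}$, and the decay \eqref{intro-localization} for both $\psi_{j}^{(N)}$ and $\psi_{k}^{(N)}$. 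Two quantitative facts are used throughout: boundary errors of size $\exp(-\gamma l_{0}/2)$ coming from \eqref{intro-localization} are negligible against $\varepsilon$ (since $l^{\delta}=(\log N)^{A\delta}\ll l_{0}$), whereas $\varepsilon$ is itself far smaller than any fixed negative power of $N$ --- this is where the hypothesis $\delta A\gg1$ enters. The argument then splits according to whether the centres are close, $|\nu_{j}-\nu_{k}|\le 3l_{0}$, or far apart.

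\textbf{Close centres.} Here I would use the following rigidity: an eigenvector localized at a centre $\nu$ is, up to a scalar, pinned down by its eigenvalue together with the resolvents of the two intervals obtained by deleting a single well-chosen site $\nu^{*}$ near $\nu$. A standard transfer-matrix computation gives $\det\bigl(H_{[0,q-1]}(x,\omega)-E_{j}^{(N)}\bigr)$ proportional to $\psi_{j}^{(N)}(x,\omega;q)$, and the mirror identity from the right, so $E_{j}^{(N)}$ is quantitatively separated from $\spec H_{[0,\nu^{*}-1]}(x,\omega)\cup\spec H_{[\nu^{*}+1,N-1]}(x,\omega)$ as soon as $|\psi_{j}^{(N)}(x,\omega;\nu^{*})|$ is not too small; since $\psi_{j}^{(N)}$ carries almost all its mass in $\Lambda_{j}$, a pigeonhole choice of $\nu^{*}\in\Lambda_{j}$ achieves $|\psi_{j}^{(N)}(x,\omega;\nu^{*})|\gtrsim l_{0}^{-1/2}$, and when the centres are close the same $\nu^{*}$ works for $\psi_{k}^{(N)}$ --- crucially this is an existence statement valid for \emph{every} phase $x$, so it costs nothing in the exceptional sets. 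Solving the eigenvalue equation on either side of $\nu^{*}$ expresses $\psi_{j}^{(N)}/\psi_{j}^{(N)}(\nu^{*})$ and $\psi_{k}^{(N)}/\psi_{k}^{(N)}(\nu^{*})$ through the resolvents of the two complementary intervals at $E_{j}^{(N)}$ and $E_{k}^{(N)}$ respectively; these energies are $\varepsilon$-close and, by the finite-scale Green's function bounds on those intervals (the large deviations and avalanche-principle machinery of \cite{MR2438997,2012arXiv1202.2915B}), the resolvents have norm at most $\exp(l_{0}^{\delta/2})$, so the resolvent identity yields $\bigl\|\psi_{j}^{(N)}/\psi_{j}^{(N)}(\nu^{*})-\psi_{k}^{(N)}/\psi_{k}^{(N)}(\nu^{*})\bigr\|\le\varepsilon\exp(l_{0}^{\delta})\le\exp(-l^{\delta}/2)$. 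Since both normalized vectors have norm $\ge1$, their inner product exceeds $1-\exp(-l^{\delta}/2)>0$, contradicting $\langle\psi_{j}^{(N)},\psi_{k}^{(N)}\rangle=0$.

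\textbf{Far centres.} This is the case that genuinely enlarges $\Omega_{N}$ and $\mc E_{N,\omega}$. By \eqref{intro-localization}, $\psi_{j}^{(N)}$ restricted to a window $W_{j}$ of radius $\asymp l_{0}$ centred at $\nu_{j}$ is an approximate eigenfunction, with eigenvalue $E_{j}^{(N)}$ and residual $\lesssim\exp(-\gamma l_{0}/2)$, of $H_{W_{j}}(x,\omega)$, which is unitarily a copy of $H^{(|W_{j}|)}$ at a shifted phase; hence $\det\bigl(H_{W_{j}}(x,\omega)-E_{j}^{(N)}\bigr)$ is abnormally small, and likewise $\det\bigl(H_{W_{k}}(x,\omega)-E_{k}^{(N)}\bigr)$, where $W_{k}$ is the translate of $W_{j}$ by the nonzero integer $m=\nu_{k}-\nu_{j}$ with $0<|m|<N$, and the two energies are $\varepsilon$-close. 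I would then invoke the large deviations estimate for these window determinants --- the estimate of \cite{MR2438997} in the Jacobi form of \cite{2012arXiv1202.2915B} --- which asserts that outside a set of $x$ of measure $\lesssim\exp(-l_{0}^{\sigma})$ and complexity polynomial in $l_{0}$ one has $\log\bigl|\det(H_{W}(x,\omega)-E)\bigr|\ge|W|\,L(\omega,E)-l_{0}^{1-\tau}\ge\gamma l_{0}$, uniformly for $E$ in a neighbourhood of $\mc E^{0}$. Combining the two smallness statements, summing the resulting exceptional $x$-sets over the $\lesssim N$ possible window positions and the $\lesssim N$ shifts $m$, and converting this into a bad \emph{energy} set that works uniformly in $x$ --- by treating the determinant as an analytic function of $(x,E)$ and controlling the number and size of the connected components of its sublevel sets, exactly as in Proposition~\ref{prop:intro-GS-localization} --- produces $\Omega_{N}$ and $\mc E_{N,\omega}$ of measure $\lesssim N^{O(1)}\exp(-l_{0}^{\sigma})$ and complexity $\lesssim N^{O(1)}$. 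For $l=(\log N)^{A}$ with $\delta A\gg1$ these are dominated by $\exp(-(\log\log N)^{A})$ and by a power of $N$, as required.

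The main obstacle is the far-centre case: one must upgrade ``two translated window determinants are simultaneously small at nearby energies'' into a genuinely small exceptional set of frequencies and energies, and the only quantitative input is the large deviations estimate, whose error term $l_{0}^{1-\tau}$ and whose complexity must both be overcome by the merely polynomial-in-$N$ loss incurred in the union bound over window positions and shifts; arranging this is precisely what forces $l$ to be a large power of $\log N$, which is the meaning of $\delta A\gg1$. A secondary point, in the close-centre case, is to secure the simultaneous control of the two complementary half-line resolvents at $E_{j}^{(N)}$ and $E_{k}^{(N)}$ while keeping $|\psi^{(N)}(\nu^{*})|$ bounded below; this rests on the Green's function estimates on those intervals together with the pigeonhole choice of $\nu^{*}$, and does not enlarge the exceptional sets.
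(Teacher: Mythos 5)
This proposition is quoted from \cite{MR2753606} and is not proved in the paper; the arguments actually carried out here that correspond to it are \propref{separation-raw} and \propref{resultants-separation-of-eigenvalues-scale-N}, built on \secref{localization}--\secref{Elimination-via-Resultants}. Measured against that machinery your skeleton is the right one --- dichotomy on localization centres, far centres excluded by eliminating double resonances (which is indeed where $\Omega_{N}$ and $\mc E_{N,\omega}$ come from), close centres killed by orthogonality --- but the close-centre case as you wrote it has a genuine gap, and the far-centre case is underspecified at the one point that matters.

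The gap: you need $\Vert(H_{[0,\nu^{*}-1]}(x,\omega)-E_{j}^{(N)})^{-1}\Vert\le\exp(l_{0}^{\delta/2})$ for a site $\nu^{*}$ \emph{inside} the localization window (and the same for $[\nu^{*}+1,N-1]$ and for $E_{k}^{(N)}$), and neither of your justifications delivers this. The pigeonhole bound $|\psi_{j}^{(N)}(\nu^{*})|\gtrsim l_{0}^{-1/2}$ translates, via $\mf f(\nu^{*})=f_{[0,\nu^{*}-1]}^{a}(x,\omega,E_{j}^{(N)})=\pm\prod_{i}(E_{i}'-E_{j}^{(N)})$, only into $\dist(E_{j}^{(N)},\spec H_{[0,\nu^{*}-1]})\gtrsim l_{0}^{-1/2}C^{-\nu^{*}}$ --- exponentially small in $N$, not in $l_{0}$. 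The LDT/avalanche machinery lower-bounds $|f_{[0,n-1]}^{a}(x,\omega,E_{j}^{(N)})|$ only for intervals stopping \emph{before} the window (\corref{localization-AP}); for an interval containing most of the window the large deviations estimate fails at the eigenvalue by construction (this is exactly \eqref{localization-Lambda0-LDT-fail}), and $H_{[0,\nu^{*}-1]}$ may have an edge eigenvalue, localized near $\nu^{*}$, at arbitrarily small distance from $E_{j}^{(N)}$; lower-bounding that distance is essentially another instance of the separation problem being solved, so the step is close to circular. (A secondary defect: a single $\nu^{*}$ with both $|\psi_{j}(\nu^{*})|$ and $|\psi_{k}(\nu^{*})|\gtrsim l_{0}^{-1/2}$ need not exist even when the centres coincide, since the two eigenfunctions can concentrate on disjoint halves of the window.) The paper's route never inverts a window-containing block: it takes the explicit eigenvectors $\mf f_{i}(n)=f_{[0,n-1]}^{a}(x,\omega,E_{i})$, shows they are multiplicatively close up to the left edge of the window (\lemref{separation-close-ev->close-ef}, resting on \corref{localization-AP}), and pushes the comparison \emph{across} the window with the transfer matrix, whose norm over the window is only $\exp(CQ_{N})$ --- which is precisely why the separation obtained is $\exp(-CQ_{N})$. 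In the far-centre case, your claim that the determinant lower bound holds outside an $x$-set ``uniformly for $E$ in a neighbourhood of $\mc E^{0}$'' is false (the determinant vanishes at the eigenvalues of $H_{W}(x,\omega)$, and the LDT exceptional set depends on $E$), so a union bound over $x$-exceptional sets cannot by itself produce a bad set of \emph{energies} valid for all $x$; the actual mechanism is Weierstrass preparation plus resultants plus Cartan's estimate in the $(\omega,E)$ variables, as in \propref{resultants-concrete-elimination}.
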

Such separation results play a crucial role in \cite{MR2438997} and
\cite{MR2753606}. It is well-known that $E_{j}^{\left(N\right)}\left(x,\omega\right)$
depends real analytically on $x$ and $\omega$, but we don't have
a priori control on the radius of convergence. Part of the importance
of having such separation results is that they give us control on
the radius of convergence. More specifically, it can be seen that
having the separation from \eqref{intro-separation}, guarantees that
the eigenvalue $E_{j}^{\left(N\right)}\left(\cdot,\cdot\right)$ remains
simple on a polydisk $\mc D\left(x,c\exp\left(-l^{\delta}\right)\right)\times\mc D\left(\omega,c\exp\left(-l^{\delta}\right)/N\right)$,
where $c$ is an absolute constant. Hence we can guarantee that $E_{j}^{\left(N\right)}\left(\cdot,\cdot\right)$
is complex analytic on a polydisk of controlled size. 

The separation achieved through \eqref{intro-separation} is much
smaller than $N^{-1}$, which might be considered the optimal separation.
The goal of our work is to improve the separation given by \eqref{intro-separation},
in an attempt to come closer to the optimal separation. We now state
our main result. A more precise formulation is given by \thmref{slopes-separation-2}.

\theoremstyle{plain}
\newtheorem*{mainthm}{Main Result}

\begin{mainthm}Fix $p>15$. There exist constants $N_{0}=N_{0}\left(a,b,\rho_{0},c,\alpha,\gamma,\mc E^{0},p\right)$,
$c_{0}<1$, such that for any $N\ge N_{0}$ there exists a set $\Omega_{N}$,
with
\[
\mes\left(\Omega_{N}\right)\lesssim\left(\log\log N\right)^{-c_{0}},\,\com\left(\Omega_{N}\right)\lesssim N^{2}\left(\log N\right)^{p},
\]
such that for any $\omega\in\Omega^{0}\cap\mb T_{c,\alpha}\setminus\Omega_{N}$
there exists a set $\mc E_{N,\omega}$, with
\[
\mes\left(\mc E_{N,\omega}\right)\lesssim\left(\log\log N\right)^{-c_{0}},\,\com\left(\mc E_{N,\omega}\right)\lesssim N\left(\log N\right)^{6},
\]
 such that for any $x\in\mb T$, if $E_{j}^{\left(N\right)}\left(x,\omega\right)\in\mathcal{E}^{0}\setminus\mc E_{N,\omega}$,
for some $j$, then
\[
\left|E_{j}^{\left(N\right)}\left(x,\omega\right)-E_{k}^{\left(N\right)}\left(x,\omega\right)\right|\ge\frac{1}{N\left(\log N\right)^{p}},
\]
for any $k\neq j$.\end{mainthm}
\begin{rem*}
The above result is not about an empty set. It is known that 
\[
\mes\left(\cup_{x\in\mb T}\spec\left(H^{\left(N\right)}\left(x,\omega\right)\right)\cap\mc E^{0}\right)\rightarrow\mes\left(\spec\left(H\left(x,\omega\right)\right)\cap\mc E^{0}\right)
\]
 and that $\mes\left(\spec\left(H\left(x,\omega\right)\right)\cap\mc E^{0}\right)>0$
(see \cite[Proposition 13.1 (10),(11)]{MR2753606}). Hence, even though
the set $\mc E_{N,\omega}$ is quite large, the bulk of the spectral
bands will be outside of it.
\end{rem*}
Unsurprisingly, improving the separation comes at the cost of an increase
in size for the sets of bad frequencies and of bad energies. The improved
complexity bound for the set of bad energies is crucial, as we shall
soon see. Our method of proving the main result doesn't directly give
us a complexity bound for $\Omega_{N}$. The stated bound follows
from the stability of the separation under perturbation in $\omega$,
and thus reflects the fact that the separation is less stable under
perturbation when $p$ is larger.

We will obtain our improved separation by first proving an appropriate
finite scale localization result. The known approach for obtaining
localization at scale $N$ is to first eliminate resonances at a smaller
scale $l$. This goes back to Sina\u\i's paper \cite{MR893122}.
 Informally speaking, resonances occur when the spectra of $H_{\Lambda_{1}}$$\left(x,\omega\right)$
and $H_{\Lambda_{2}}\left(x,\omega\right)$ are ``too close'', for
two ``far away'' intervals of length $l$, $\Lambda_{1},\Lambda_{2}\subset\left[0,N-1\right]$
. Specifically, in our case, eliminating resonances on $\left[0,N-1\right]$
at scale $l$ amounts to having the following: there exist constants
$\sigma_{N}$, $Q_{N}$, and a set $\Omega_{N}\subset\mb T$, with
the property that for any $\omega\in\Omega^{0}\cap\mb T_{c,\alpha}\setminus\Omega_{N}$
there exists $\mc E_{N,\omega}\subset\mb R$ such that for any $x\in\mb T$
and any integer $m$, $Q_{N}\le\left|m\right|\le N$, we have
\begin{equation}
\dist\left(\mathcal{E}^{0}\cap\spec\left(H^{\left(l\right)}\left(x,\omega\right)\setminus\mc E_{N,\omega}\right),\spec\left(H^{\left(l\right)}\left(x+m\omega,\omega\right)\right)\right)\ge\sigma_{N}.\label{eq:intro-elimination-of-resonances}
\end{equation}
This condition can be reformulated to hold for all energies in $\mc E^{0}$
at the cost of removing a set of bad phases. However, our improvement
of separation comes at the cost of also losing control over the set
of bad phases, we just have control on the corresponding set of bad
energies. Given such an elimination of resonances, one can prove a
localization result in the spirit of \propref{intro-GS-localization},
with the size of the localization window proportional to $Q_{N}$
(see \thmref{localization}). After establishing localization one
can obtain a separation of the eigenvalues at scale $N$ by $\exp\left(-CQ_{N}\right)$
(see \propref{separation-raw}). Up to this point our strategy is
the one employed by Goldstein and Schlag for the Schr\"odinger case
(see \cite{MR2438997}, \cite{MR2753606}). We will always have $\exp\left(-CQ_{N}\right)\ll\sigma_{N}$,
for the concrete values of $\sigma_{N}$ and $Q_{N}$ that we use.
Using a bootstrapping argument we show that the separation can be
improved to $\sigma_{N}/2$ (see \thmref{separation-bootstrap}).
Note that this can be done only if one is able to ``fatten'' the
set of bad energies $\mc E_{N,\omega}$ by $\sigma_{N}$. For example,
this suggests that the best separation that could be obtained through
\propref{intro-GS-localization} is by $N^{-4+}$. So, our strategy
for obtaining a sharper separation is to improve the elimination of
resonances.

To eliminate resonances we will consider for fixed $j,k,m$, the sets
of $\left(x,\omega\right)$ for which 
\begin{equation}
\left|E_{j}^{\left(l\right)}\left(x,\omega\right)-E_{k}^{\left(l\right)}\left(x+m\omega,\omega\right)\right|<\sigma_{N}.\label{eq:intro-resonance}
\end{equation}
We will need to show that the union over $j$, $k$, $m$ is small
(provided $\left|m\right|$ is large enough). Goldstein and Schlag
approached this problem by using resultants. Let $f_{N}^{a}\left(z,\omega,E\right):=\det\left[H^{\left(N\right)}\left(z,\omega\right)-E\right]$.
The resultant of $f_{l}^{a}\left(x,\omega,E\right)$ and $f_{l}^{a}\left(x+m\omega,\omega,E\right)$
is a polynomial $R\left(x,\omega,E\right)$ with the property that
it vanishes if $E$ is a zero for both determinants. Strictly speaking,
to define $R$, one needs to first use the Weierstrass Preparation
Theorem to factorize the two determinants. For more details see \cite[Section 5]{MR2753606}.
The idea behind considering $R$ is that one can use Cartan's estimate
(see \lemref{prelims-Cartan-estimate}) to eliminate the set where
$\log\left|R\right|$ is too small, and hence remove sets corresponding
to \eqref{intro-resonance}. 

Our approach is based on considering only the parts of the graphs
of the eigenvalues where the slopes are ``good'', i.e. bounded away
from zero. We will be able to control the size of the sets where we
have \eqref{intro-resonance}, by using the following simple observations.
Let $g\left(x,\omega\right)=E_{j}^{\left(l\right)}\left(x,\omega\right)-E_{k}^{\left(l\right)}\left(x+m\omega,\omega\right)$.
If $\left|\partial_{x}E_{k}^{\left(l\right)}\left(x+m\omega,\omega\right)\right|>\tau$,
for some $\tau>0$, it can be seen that $\left|\partial_{\omega}g\left(x,\omega\right)\right|\gtrsim m\tau$,
for $m$ large enough. If for some fixed $x$ and some interval $I$
we have $\left|g\left(x,\omega\right)\right|<\sigma_{N}$ and $\left|\partial_{\omega}g\left(x,\omega\right)\right|\gtrsim m\tau$
for all $\omega\in I$, then the length of $I$ is $\lesssim\sigma_{N}\left(m\tau\right)^{-1}$.
Our main problem will be to control the number of such intervals $I$.
Similar considerations are used by Goldstein and Schlag for the elimination
of the so called triple resonances (see \cite[Section 14]{MR2753606}).
To implement our ideas, one can be tempted to first try to eliminate
$\left(x,\omega\right)$ for which $\left|\partial_{x}E_{k}^{\left(l\right)}\left(x+m\omega,\omega\right)\right|\le\tau$.
Doing this would only yield separation by at most $N^{-2}$ , due
to the dependence on $m$ of the set corresponding to the ``good''
slopes. Instead we will eliminate $\left(x,\omega\right)$ for which
$\left|\partial_{x}E_{j}^{\left(l\right)}\left(x,\omega\right)\right|\le\tau$.
More precisely we will proceed as follows. Using a Sard-type argument
it is possible to show that for fixed $\omega$ and $\tau>0$ we can
find a small set $\mc E_{l,\omega}\left(\tau\right)$ such that for
any $x\in\mb T$, if $E_{j}^{\left(l\right)}\left(x,\omega\right)\notin\mc E_{l,\omega}\left(\tau\right)$,
then $\left|\partial_{x}E_{j}^{\left(l\right)}\left(x,\omega\right)\right|>\tau$.
Let $\t{\mc E}_{l,\omega}\left(\tau\right):=\left\{ E:\,\dist\left(E,\mc E_{l,\omega}\right)<\sigma_{N}\right\} $.
We have that for any $x\in\mb T$, if $E_{j}^{\left(l\right)}\left(x,\omega\right)\notin\t{\mc E}_{l,\omega}\left(\tau\right)$
and \eqref{intro-resonance} holds, then $\left|\partial_{x}E_{k}^{\left(l\right)}\left(x+m\omega,\omega\right)\right|>\tau$.
We stress the fact that the previous statement holds for any $x\in\mb T$,
and thus by fattening the set of bad energies we were able to circumvent
one summation over $m$, which ultimately will allow us to get the
improved separation. We still have to control the complexity of the
set of $\omega$'s such that $\left|g\left(x,\omega\right)\right|<\sigma_{N}$
and $E_{j}^{\left(l\right)}\left(x,\omega\right)\notin\t{\mc E}_{l,\omega}\left(\tau\right)$.
It is not clear how to do this directly. Instead, we will tackle this
problem by working on small intervals $I_{\omega}$ (of controlled
size) around $\omega$ on which we have some stability of the ``good''
slopes, that is, such that there exists a small set $\mc E_{l,I_{\omega}}\left(\tau\right)$
with the property that if $E_{j}^{\left(l\right)}\left(x,\omega'\right)\notin\mc E_{l,I_{\omega}}\left(\tau\right)$,
$\omega'\in I_{\omega}$ then $\left|\partial_{x}E_{j}\left(x,\omega'\right)\right|>\tau$
. In this setting we will need to control the complexity of the set
of frequencies $\omega'\in I_{\omega}$ such that $\left|g\left(x,\omega'\right)\right|<\sigma_{N}$
and $E_{j}^{\left(l\right)}\left(x,\omega'\right)\notin\t{\mc E}_{l,I_{\omega}}\left(\tau\right)$.
This can be achieved by using B\'ezout's Theorem, in the case when
the eigenvalues are algebraic functions (in this case $a$ and $b$
are trigonometric polynomials). The general result will follow through
approximation.

For the stability of the ``good'' slopes under perturbations in
$\omega$ we need the following type of estimate
\[
\left|\partial_{x}E_{j}^{\left(l\right)}\left(x,\omega\right)-\partial_{x}E_{j}^{\left(l\right)}\left(x,\omega'\right)\right|\le C\left|\omega-\omega'\right|.
\]
This can be easily obtained by using Cauchy's Formula, provided we
have control on the size of the polydisk to which $E_{j}^{\left(l\right)}$
extends complex analytically. As we already discussed, such information
can be obtained from a separation result. In the Schr\"odinger case
we have the ``a priori'' separation via resultants. We will need
to prove that this separation also holds in the Jacobi case.

Next we give a brief overview of the article. In \secref{Preliminaries}
we will introduce some more notation, review the basic results needed
for our work, and deduce some useful consequences of these results.
In \secref{localization} and \secref{separation} we establish localization
and separation assuming that we have elimination of resonances, of
the type \eqref{intro-elimination-of-resonances}, with undetermined
$\sigma_{N}$ and $Q_{N}$ (subject to some constraints). Next, in
\secref{Elimination-via-Resultants}, we obtain the elimination of
resonances via resultants and the corresponding localization and separation
results. In \secref{Abstract-Elimination} we prove our elimination
of resonances via slopes in an abstract setting. The reason for choosing
the abstract setting is twofold. First, it makes it straightforward
to obtain elimination with different values of the parameters. We
will need to apply the abstract elimination twice to achieve our stated
separation. Second, we want to emphasize the fact that at its heart
our argument is about algebraic functions, and not specifically about
eigenvalues. In \secref{Elimination-via-Slopes} we will obtain our
main result. Finally, in the Appendix we give the details needed for
some of the results stated in \secref{Preliminaries}.

\section{\label{sec:Preliminaries}Preliminaries}

In this section we present the basic tools that we will be using and
we deduce some useful consequences. We refer to \cite[Section 2]{MR2753606}
for the Schr\"odinger case of these results. 

We proceed by introducing some notation. For $\phi$ satisfying the
difference equation $H\left(z,\omega\right)\phi=E\phi$ let $M_{N}$
be the $N$-step transfer matrix such that
\[
\left[\begin{array}{c}
\phi\left(N\right)\\
\phi\left(N-1\right)
\end{array}\right]=M_{N}\left[\begin{array}{c}
\phi\left(0\right)\\
\phi\left(-1\right)
\end{array}\right],\, N\ge1.
\]
We have 
\[
M_{N}\left(z,\omega,E\right)=\prod_{j=N-1}^{0}\left(\frac{1}{b\left(z+\left(j+1\right)\omega\right)}\left[\begin{array}{cc}
a\left(z+j\omega\right)-E & -\tilde{b}\left(z+j\omega\right)\\
b\left(z+\left(j+1\right)\omega\right) & 0
\end{array}\right]\right),
\]
for $z$ such that $\prod_{j=1}^{N}b\left(z+j\omega\right)\neq0$.
We also consider the following two matrices associated with $M_{N}$:
\begin{equation}
M_{N}^{a}\left(z,\omega,E\right)=\left(\prod_{j=1}^{n}b\left(z+j\omega\right)\right)M_{N}\left(z,\omega,E\right)\label{eq:prelims-Ma-M}
\end{equation}
and
\begin{align*}
M_{N}^{u}\left(z,\omega,E\right) & =\frac{1}{\sqrt{\left|\det M_{N}\left(z,\omega,E\right)\right|}}M_{N}\left(z,\omega,E\right).
\end{align*}
A fundamental property of $M_{N}^{a}$ is that its entries can be
written in terms of the determinant $f_{N}^{a}\left(z,\omega,E\right)$
defined in the introduction:
\begin{equation}
M_{N}^{a}\left(z,\omega,E\right)=\left[\begin{array}{cc}
f_{N}^{a}\left(z,\omega,E\right) & -\t b\left(z\right)f_{N-1}^{a}\left(z+\omega,\omega,E\right)\\
b\left(z+N\omega\right)f_{N-1}^{a}\left(z,\omega,E\right) & -\t b\left(z\right)b\left(z+N\omega\right)f_{N-2}^{a}\left(z+\omega,\omega,E\right)
\end{array}\right]\label{eq:prelims-Ma-fa}
\end{equation}
(see \cite[Chapter 1]{MR1711536}, where such relations are deduced
in a detailed manner). Let $f_{N}^{u}\left(z,\omega,E\right)$ be
such that 
\[
M_{N}^{u}\left(z,\omega,E\right)=\left[\begin{array}{cc}
f_{N}^{u}\left(z,\omega,E\right) & \star\\
\star & \star
\end{array}\right]
\]
($f_{N}^{u}\left(z,\omega,E\right)$ is the determinant of an appropriately
modified Hamiltonian). Based on the definitions, it is straightforward
to check that 
\begin{equation}
\log\left\Vert M_{N}^{u}\left(z,\omega,E\right)\right\Vert =-\frac{1}{2}\left(\t S_{N}\left(z,\omega\right)+S_{N}\left(z+\omega,\omega\right)\right)+\log\left\Vert M_{N}^{a}\left(z,\omega,E\right)\right\Vert ,\label{eq:prelims-Mu-Ma}
\end{equation}
where $S_{N}\left(z,\omega\right)=\sum_{k=0}^{N-1}\log\left|b\left(z+k\omega\right)\right|$
and $\t S_{N}\left(z,\omega\right)=\sum_{k=0}^{N-1}\log\left|\t b\left(z+k\omega\right)\right|$.
Note that $S_{N}\left(x,\omega\right)=\t S_{N}\left(x,\omega\right)$
for $x\in\mb T$. For $y\in\left(-\rho_{0},\rho_{0}\right)$ we let
\[
L_{N}\left(y,\omega,E\right)=\frac{1}{N}\int_{\mathbb{T}}\log\left\Vert M_{N}\left(x+iy,\omega,E\right)\right\Vert dx,
\]
\[
L\left(y,\omega,E\right)=\lim_{N\rightarrow\infty}L_{N}\left(y,\omega,E\right)=\inf_{N\ge1}L_{N}\left(y,\omega,E\right).
\]
We also consider the quantities $L_{N}^{a}$, $L_{N}^{u}$, $L^{a}$,
$L^{u}$ which are defined analogously. Furthermore let $D\left(y\right)=\int_{\mathbb{T}}\log\left|b\left(x+iy\right)\right|dx$.
When $y=0$ we omit the $y$ argument, so for example we write $L\left(\omega,E\right)$
instead of $L\left(0,\omega,E\right)$. It is straightforward to see
that $L_{N}^{u}\left(\omega,E\right)=L_{N}\left(\omega,E\right)$
and hence $L^{u}\left(\omega,E\right)=L\left(\omega,E\right)$. Based
on \eqref{prelims-Mu-Ma} it is easy to conclude that 
\begin{equation}
L\left(\omega,E\right)=-D+L^{a}\left(\omega,E\right).\label{eq:prelims-L-La}
\end{equation}
For a discussion of the objects and quantities introduced above see
\cite[Section 2]{2012arXiv1202.2915B}. We note that in \cite{2012arXiv1202.2915B}
it was more convenient to identify $\mb T$ with the unit circle in
$\mb C$. So for example $a$ and $b$ are considered to be defined
on an annulus $\mc A_{\rho_{0}}$. However, it is trivial to switch
between our setting and that of \cite{2012arXiv1202.2915B}.

In what follows we will keep track of the dependence of the various
constants on the parameters of our problem. In order to simplify the
notation we won't always record the dependence on $\rho_{0}$. Dependence
on any quantity is such that if the quantity takes values in a compact
set, then the constant can be chosen uniformly with respect to that
quantity. We will use $E^{0}$ to denote the quantity $\sup\left\{ \left|E\right|:\, E\in\mc E^{0}\right\} $.
We denote by $\left\Vert \cdot\right\Vert _{\infty}$ the $L^{\infty}$
norm on $\mb H_{\rho_{0}}$ and we let $\left\Vert b\right\Vert _{*}=\left\Vert b\right\Vert _{\infty}+\max_{y\in\left[-\rho_{0},\rho_{0}\right]}$$\left|D\left(y\right)\right|$.
Note that, unless otherwise stated, the constants in different results
are different. Furthermore, in this paper the constants implied by
symbols such as $\lesssim$ will only be absolute constants. 

The following form of the large deviations estimate for the determinants
follows from \cite[Proposition 4.10]{2012arXiv1202.2915B}. We give
a detailed discussion in the Appendix. Note that in the Appendix we
also give a different proof of one of the results \cite{2012arXiv1202.2915B},
which allows us to remove one of the quantities on which the constants
from \cite{2012arXiv1202.2915B} depended.
\begin{prop}
\label{prop:prelims-LDT-determinants}Let $\left(\omega,E\right)\in\mb T_{c,\alpha}\times\mb C$
be such that $L\left(\omega,E\right)>\gamma>0$. There exist constants
$N_{0}=N_{0}\left(\left\Vert a\right\Vert _{\infty},\left\Vert b\right\Vert _{*},\left|E\right|,c,\alpha,\gamma\right)$,
$C_{0}=C_{0}\left(\alpha\right)$, and $C_{1}=C_{1}(\left\Vert a\right\Vert _{\infty},\left\Vert b\right\Vert _{*},\left|E\right|$,
$c$, \textup{$\alpha$, $\gamma)$} such that for every integer $N\ge N_{0}$
and any $H>0$ we have
\[
\mes\left\{ x\in\mathbb{T}:\,\left|\log\left|f_{N}^{a}\left(x,\omega,E\right)\right|-NL^{a}\left(\omega,E\right)\right|>H\left(\log N\right)^{C_{0}}\right\} \le C_{1}\exp\left(-H\right).
\]

\end{prop}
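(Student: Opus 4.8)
Since \propref{prelims-LDT-determinants} is, up to the precise dependence of the constants, \cite[Proposition 4.10]{2012arXiv1202.2915B}, the plan is to recall the structure of that argument and carry out the two adjustments needed here: matching the stated dependence of $N_{0},C_{0},C_{1}$ (on $\left\Vert a\right\Vert _{\infty}$, $\left\Vert b\right\Vert _{*}$, $\left|E\right|$, $c$, $\alpha$, $\gamma$ only, with $C_{0}$ a function of $\alpha$ alone), and removing the superfluous quantity on which the constants of \cite{2012arXiv1202.2915B} still depend. The detailed bookkeeping is deferred to the Appendix.

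The first step is to reduce matters to two separate large deviations estimates by means of the algebraic relations \eqref{prelims-Ma-M}, \eqref{prelims-Mu-Ma}, \eqref{prelims-L-La}, which give the exact identity
\[
\log\left\Vert M_{N}^{a}\left(x,\omega,E\right)\right\Vert -NL^{a}\left(\omega,E\right)=\left(S_{N}\left(x+\omega,\omega\right)-ND\right)+\left(\log\left\Vert M_{N}\left(x,\omega,E\right)\right\Vert -NL\left(\omega,E\right)\right).
\]
For the second bracket one invokes the standard large deviations estimate for $\frac{1}{N}\log\left\Vert M_{N}\left(\cdot,\omega,E\right)\right\Vert $ in the regime $L\left(\omega,E\right)>\gamma$: this function is subharmonic and bounded above on $\mb H_{\rho_{0}}$, so the avalanche principle combined with the sharp rate of convergence $\left|L_{N}-L\right|\lesssim N^{-1}\left(\log N\right)^{C_{0}\left(\alpha\right)}$ --- the point at which the Diophantine condition defining $\mb T_{c,\alpha}$ enters, and at which $C_{0}$ acquires its dependence on $\alpha$ --- yields, via the Bourgain--Goldstein subharmonic-function lemma and John--Nirenberg, the estimate $\mes\left\{ x:\,\left|\log\left\Vert M_{N}\left(x,\omega,E\right)\right\Vert -NL\left(\omega,E\right)\right|>H\left(\log N\right)^{C_{0}}\right\} \le C_{1}\exp\left(-H\right)$. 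For the first bracket, $\frac{1}{N}S_{N}\left(\cdot,\omega\right)$ is likewise subharmonic on $\mb H_{\rho_{0}}$ with $\int_{\mb T}\frac{1}{N}S_{N}\left(x+iy,\omega\right)dx=D\left(y\right)$ \emph{exactly}, so the same machinery --- now with no convergence rate to control --- gives, after a harmless shift in $x$, $\mes\left\{ x:\,\left|S_{N}\left(x,\omega\right)-ND\right|>H\left(\log N\right)^{C_{0}}\right\} \le C_{1}\exp\left(-H\right)$; treating $S_{N}$ directly as a subharmonic function, controlled through $\left\Vert b\right\Vert _{*}$ (which by definition already contains $\max_{y}\left|D\left(y\right)\right|$), is precisely what lets one drop the extra dependence of \cite{2012arXiv1202.2915B}. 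Since $\left|f_{N}^{a}\right|\le\left\Vert M_{N}^{a}\right\Vert $ by \eqref{prelims-Ma-fa}, the two estimates together already yield the \emph{upper} half of \propref{prelims-LDT-determinants}.

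The genuinely nontrivial part is the matching lower bound $\log\left|f_{N}^{a}\left(x,\omega,E\right)\right|\ge NL^{a}\left(\omega,E\right)-H\left(\log N\right)^{C_{0}}$ off a set of measure $\le C_{1}\exp\left(-H\right)$, since the $\left(1,1\right)$ entry of $M_{N}^{a}$ can be far smaller than its norm. I would proceed as in \cite{MR2438997,MR2753606,2012arXiv1202.2915B}: the function $g_{N}\left(z\right):=\frac{1}{N}\log\left|f_{N}^{a}\left(z,\omega,E\right)\right|$ is subharmonic on $\mb H_{\rho_{0}}$ and, by the upper bound above extended to a slightly smaller strip, is bounded above by $L^{a}\left(\omega,E\right)+o\left(1\right)$; separately, an avalanche-principle expansion of $M_{N}^{a}$ into shorter blocks (using \eqref{prelims-Ma-fa} to rewrite the entries of $M_{N}^{a}$ in terms of $f_{N}^{a},f_{N-1}^{a},f_{N-2}^{a}$) expresses $\left|f_{N}^{a}\right|$ as $\left\Vert M_{N}^{a}\right\Vert $ times a product of ``angle'' factors, and discarding the small set of $x$ at which some angle factor is too small --- estimated via the large deviations bounds at the shorter scales together with a covering argument --- leaves $\frac{1}{N}\int_{\mb T}\log\left|f_{N}^{a}\left(x,\omega,E\right)\right|dx\ge L^{a}\left(\omega,E\right)-o\left(1\right)$. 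Thus the boundary mean of the subharmonic function $g_{N}$ lies within $o\left(1\right)$ of its supremum, and a final application of the Bourgain--Goldstein lemma and John--Nirenberg (with the power of $\log N$ again governed by $\alpha$) upgrades this to the two-sided large deviations estimate, which is the assertion.

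I expect the main obstacle to be the quantitative control of those ``angle'' factors --- equivalently, establishing $\frac{1}{N}\int_{\mb T}\log\left|f_{N}^{a}\left(x,\omega,E\right)\right|dx=L^{a}\left(\omega,E\right)+O\bigl(\left(\log N\right)^{C_{0}}/N\bigr)$ with $C_{0}$ depending on $\alpha$ alone. This requires careful tracking of the avalanche-principle comparison across the scales $N-2,N-1,N$ and of the elimination of the exceptional $x$, and it is exactly this accounting --- now keeping only the dependence on $\left\Vert a\right\Vert _{\infty},\left\Vert b\right\Vert _{*},\left|E\right|,c,\alpha,\gamma$ --- that the Appendix is devoted to.
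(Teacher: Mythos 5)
Your architecture for the hard (lower) half --- show the mean of the subharmonic function $\tfrac1N\log\left|f_N^a\right|$ lies within $o(1)$ of its supremum via an avalanche-principle expansion, then apply the Bourgain--Goldstein subharmonic LDT --- is exactly the strategy behind \cite[Proposition 4.10]{2012arXiv1202.2915B}, which the paper does not reprove but simply cites: in the Appendix, \propref{prelims-LDT-determinants} is read off by combining that proposition with \cite[Lemma 4.11]{2012arXiv1202.2915B} (which replaces $\left\langle\log\left|f_N^a\right|\right\rangle$ by $NL_N^a$ up to a constant) and \cite[Lemma 3.9]{2012arXiv1202.2915B} (which replaces $L_N^a$ by $L^a$ up to $(\log N)^2/N$), taking $\delta N\simeq H(\log N)^{C_0}$. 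So you are re-deriving the black box rather than invoking it; that is legitimate in principle, but it obliges you to supply the one step you gloss over.

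That step is the base case of the multiscale argument, hidden in your phrase ``discarding the small set of $x$ at which some angle factor is too small, estimated via the large deviations bounds at the shorter scales.'' To start the induction one needs a lower bound on $\left|f_l\right|$ off an exponentially small set at an initial scale, and in \cite{2012arXiv1202.2915B} this is \cite[Lemma 4.2]{2012arXiv1202.2915B} --- precisely the point where the unwanted quantity $I_{a,E}=\int_{\mb T}\log\left|a(x)-E\right|dx$ (the log-mean of the scale-one determinant) enters the constants. Your proposal locates the removal of the superfluous dependence in the treatment of $S_N$ through $\left\Vert b\right\Vert_{*}$; that only addresses the replacement of $\omega$ by $(c,\alpha)$, not the $I_{a,E}$ issue. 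The paper's genuinely new work here is a reproof of that base-case lemma: assuming $\mes\{\left|f_l\right|\le e^{-l^3}\}>e^{-l}$, it upgrades this to $\sup_{\mb T}\left|f_l^a\right|\le e^{-Cl^2}$ via \lemref{apx-sh_better_upper_bound}, and then contradicts the LDT lower bound for $\left\Vert M_l^a\right\Vert$ by bounding all four entries in \eqref{prelims-Ma-fa}, using Cartan's estimate and the identity expressing $\left|P_{l-1}(x+\omega,\omega)\right|^2=\det M_l^a$ in terms of $f_l^a f_{l-2}^a$ and $f_{l-1}^a f_{l-1}^a$ to rule out $\left|f_{l-1}^a\right|$ being large on half of $\mb T$. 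This is a substantive argument, not bookkeeping, and without it (or the citation it replaces) your plan does not yield the stated dependence of $N_0,C_1$ on $\left\Vert a\right\Vert_{\infty},\left\Vert b\right\Vert_{*},\left|E\right|,c,\alpha,\gamma$ alone.
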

Next we recall a uniform upper bound for the transfer matrix. The
following is a restatement of \cite[Proposition 3.14]{2012arXiv1202.2915B}.
See the appendix for a discussion of this result and of the consequences
that follow.
\begin{prop}
\label{prop:prelims-M^a-upper-bound} Let $\left(\omega,E\right)\in\mb T_{c,\alpha}\times\mb C$
be such that $L\left(\omega,E\right)>\gamma>0$. There exist constants
$C_{0}=C_{0}\left(\alpha\right)$ and $C_{1}=C_{1}\left(\left\Vert a\right\Vert _{\infty},\left\Vert b\right\Vert _{*},\left|E\right|,c,\alpha,\gamma\right)$
such that for any integer $N>1$ we have
\[
\sup_{x\in\mathbb{T}}\log\left\Vert M_{N}^{a}\left(x,\omega,E\right)\right\Vert \le NL^{a}\left(\omega,E\right)+C_{1}\left(\log N\right)^{C_{0}}.
\]

\end{prop}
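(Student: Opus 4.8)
The plan is to derive the bound from the large deviations estimate \propref{prelims-LDT-determinants} together with the subharmonicity of the normalized log-norm of the transfer matrix. First I would reduce to a uniform upper bound for the analytic determinants. By \eqref{prelims-Ma-fa}, each entry of $M_{N}^{a}(z,\omega,E)$ equals one of $f_{N}^{a}(z,\omega,E)$, $f_{N-1}^{a}(z+\omega,\omega,E)$, $f_{N-2}^{a}(z+\omega,\omega,E)$ times at most two factors of the form $b(z+k\omega)$ or $\t b(z)$, each bounded on $\overline{\mb H_{\rho_{0}}}$ by $\|b\|_{\infty}$. Since $\|M\|\lesssim\max_{i,j}|M_{ij}|$ and $(N-1)L^{a}$, $(N-2)L^{a}$ differ from $NL^{a}$ by $O(1)$ (with $|L^{a}|$ bounded in terms of $\|a\|_{\infty},\|b\|_{*},|E|$ via \eqref{prelims-L-La} and the trivial bound on $M_{N}$), it suffices to show $\sup_{x\in\mb T}\log|f_{k}^{a}(x,\omega,E)|\le NL^{a}(\omega,E)+C_{1}(\log N)^{C_{0}}$ for $k\in\{N-2,N-1,N\}$.

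Two ingredients feed into this. An a priori crude bound: by its definition and \eqref{prelims-Ma-M}, $M_{N}^{a}$ is a product of $N$ matrices whose entries lie among $a-E,\t b,b$, all bounded on $\overline{\mb H_{\rho_{0}}}$, so $\|M_{N}^{a}(z,\omega,E)\|\le C^{N}$ there with $C=C(\|a\|_{\infty},\|b\|_{\infty},|E|)$; hence $u_{N}(z):=N^{-1}\log\|M_{N}^{a}(z,\omega,E)\|$ is subharmonic on $\mb H_{\rho_{0}}$ and $u_{N}\le B:=\log C$ uniformly in $N$. And the large deviations input: applying \propref{prelims-LDT-determinants} to $f_{N-2}^{a},f_{N-1}^{a},f_{N}^{a}$ with $H=2\log N$ gives a set $\mc B_{N}\subset\mb T$ with $\mes(\mc B_{N})\lesssim N^{-2}$, off which $u_{N}(x)\le L^{a}(\omega,E)+C(\log N)^{C_{0}+1}/N$. (For $N$ below the threshold of \propref{prelims-LDT-determinants} the conclusion is trivial once $C_{1}$ is enlarged.)

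The remaining, and I expect principal, difficulty is upgrading this off-$\mc B_{N}$ bound to a genuine pointwise bound on $\mb T$. Fix $x_{0}\in\mb T$ and the disk $D=\mc D(x_{0},r)\subset\mb H_{\rho_{0}}$ with $r$ of order $(\log N)^{C_{0}}/N$; the sub-mean value inequality gives $u_{N}(x_{0})\le|D|^{-1}\int_{D}u_{N}$, and since $u_{N}\le B$, any subset of $D$ of area $\eta$ contributes at most $B\eta$. It thus suffices to know that, off a subset of $D$ of area $o(|D|)$, one has $u_{N}\le L^{a}(\omega,E)+C(\log N)^{C_{0}+1}/N$; I would get this from the analogue of \propref{prelims-LDT-determinants} on the horizontal slices $\mb T+iy$, $|y|\le r$ (using that $L(y,\omega,E)>\gamma/2$ there, and, by convexity of $L^{a}(\cdot,\omega,E)$, that the drift $L^{a}(y,\omega,E)-L^{a}(\omega,E)$ is $O((\log N)^{C_{0}}/N)$ for $|y|\le r$), which yields the displayed bound off an area $\lesssim r\,N^{-2}=o(|D|)$. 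The sub-mean value estimate then closes the argument (multiply by $N$ and relabel $C_{0},C_{1}$). So the genuinely delicate point is that one needs the \emph{strip} form of the determinant large deviations estimate, not merely its value on $\mb T$: a one-dimensional exceptional-set bound alone cannot rule out a vertical ridge of $u_{N}$ over $x_{0}$. An alternative, used elsewhere in this paper, works for trigonometric polynomial $a,b$: then $f_{k}^{a}(\cdot,\omega,E)$ is a polynomial in $e^{2\pi iz}$ of degree $O(N)$, so $u_{N}$ has a pointwise majorant $N^{-1}\log|c_{N}|+O(1)$ ($c_{N}$ the leading coefficient) with no upward spikes, and a Jensen-type comparison with the mean $\int_{\mb T}u_{N}$ — which \propref{prelims-LDT-determinants} bounds by $L^{a}(\omega,E)+C(\log N)^{C_{0}+1}/N$ — pins down $\sup_{\mb H_{\rho_{0}}}u_{N}$; the general case follows by approximation. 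This is the substance of the companion of \propref{prelims-LDT-determinants} in \cite{2012arXiv1202.2915B}, discussed in the Appendix.
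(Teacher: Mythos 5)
Your argument is correct in outline, but it takes a genuinely different route from the paper's, which contains no proof at all: \propref{prelims-M^a-upper-bound} is imported from the companion paper, and the Appendix records that it follows immediately from \cite[Proposition 3.14]{2012arXiv1202.2915B} (which gives the bound with $NL_{N}^{a}$ in place of $NL^{a}$) together with \lemref{apx-Ln_L}, which converts $NL_{N}^{a}$ into $NL^{a}+C\left(\log N\right)^{2}$. What you have written is essentially a reconstruction of the proof of that companion result: reduce to the determinants via \eqref{prelims-Ma-fa}, obtain an upper bound off a small set at each height from a large deviations estimate, and upgrade to a pointwise bound by the sub-mean value inequality for the subharmonic function $u_{N}$ over a disk of radius $\simeq\left(\log N\right)^{C_{0}}/N$, absorbing the exceptional set with the crude bound $u_{N}\le B$. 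This is exactly the scheme the paper itself uses in the Appendix to prove the analogous \lemref{prelims-uniform-bound-S_N} for $S_{N}$ (there the exceptional set is handled with an $L^{2}$ bound and Cauchy--Schwarz because $\log\left|b\right|$ is unbounded below; your $L^{\infty}$ shortcut is legitimate for $u_{N}$, and your bookkeeping of the drift $L^{a}\left(y\right)-L^{a}\left(0\right)=O\left(\left|y\right|\right)$ and of the exceptional area is right).

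Two caveats on the first route you propose. First, as you yourself flag, the input you actually need is the large deviations estimate on the slices $\mb T+iy$, $\left|y\right|\lesssim\left(\log N\right)^{C_{0}}/N$, whereas \propref{prelims-LDT-determinants} as stated lives only on $\mb T$; "the analogue on slices" is precisely the content that must be supplied, and for the determinants it is not a one-line extension. Second, in the logical development of \cite{2012arXiv1202.2915B} (and of \cite{MR2438997}) the determinant LDT is proved \emph{using} the uniform upper bound --- the upper bound furnishes the constant $M$ in Cartan's estimate --- so founding the upper bound on \propref{prelims-LDT-determinants} would be circular if one were building the theory from scratch. The clean fix is to run your subharmonicity argument directly on $u_{N}=N^{-1}\log\left\Vert M_{N}^{a}\right\Vert $ using the transfer-matrix estimate \propref{apx-ldt-M}, whose slice version does follow from \thmref{apx-sh_ldt} together with $L\left(y,\omega,E\right)>\gamma/2$ for small $\left|y\right|$ (via \lemref{apx-Ly-Ly'}); this also renders your preliminary reduction to the determinants unnecessary. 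Your alternative closing observation (polynomial approximation and a Jensen-type comparison) correctly identifies where the result actually comes from.
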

Note that $\log\left|f_{N}^{a}\left(z,\omega,E\right)\right|\le\log\left\Vert M_{N}^{a}\left(z,\omega,E\right)\right\Vert $,
so this uniform upper bound also applies for the determinants $f_{N}^{a}$.
Next we state two useful consequences of the uniform upper bound from
\propref{prelims-M^a-upper-bound}. See the Appendix for the proofs.

\begin{cor}\label{cor:prelims-uniform-upper-bound}

Let $\left(\omega_{0},E_{0}\right)\in\mb T_{c,\alpha}\times\mb C$
such that $L\left(\omega_{0},E_{0}\right)>\gamma>0$. There exist
constants $N_{0}=\left(\left\Vert a\right\Vert _{\infty},\left\Vert b\right\Vert _{*},\left|E_{0}\right|,c,\alpha,\gamma\right)$,
$C_{0}=C_{0}\left(\alpha\right)$, and $C_{1}=C_{1}(\left\Vert a\right\Vert _{\infty},\left\Vert b\right\Vert _{*}$
$,\left|E_{0}\right|,$ $c,\alpha,\gamma)$ such that for $N\ge N_{0}$
we have
\begin{multline*}
\sup\left\{ \log\left\Vert M_{N}^{a}\left(x+iy,\omega,E\right)\right\Vert :x\in\mb T,\,\left|E-E_{0}\right|,\left|\omega-\omega_{0}\right|\le N^{-C_{1}},\,\left|y\right|\le N^{-1}\right\} \\
\le NL^{a}\left(\omega_{0},E_{0}\right)+\left(\log N\right)^{C_{0}}.
\end{multline*}

\end{cor}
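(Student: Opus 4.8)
The plan is to deduce the Corollary from Proposition~\ref{prop:prelims-M^a-upper-bound}, applied at points close to $(\omega_0,E_0)$, combined with a subharmonicity (two‑constants) argument that upgrades the resulting bound from the real line $\{\Im z=0\}$ to the thin strip $\{|\Im z|\le N^{-1}\}$. Two crude inputs are needed first. Writing $M_N^a(z,\omega,E)$ as the product of the $N$ factors $\bigl(\begin{smallmatrix}a(z+j\omega)-E & -\t b(z+j\omega)\\ b(z+(j+1)\omega) & 0\end{smallmatrix}\bigr)$ and bounding the operator norm of each by $\|a\|_\infty+2\|b\|_\infty+|E|$, one gets $\log\|M_N^a(z,\omega,E)\|\le C_2N$ on any polydisk around $(\mb T,\omega_0,E_0)$ on which every shifted argument $z+j\omega$, $1\le j\le N$, stays in $\mb H_{\rho_0}$; since $|\Im(z+j\omega)|\le|\Im z|+N|\omega-\omega_0|$ (as $\omega_0\in\mb R$), this holds on $\{|\Im z|\le\rho_0/2\}\times\{|\omega-\omega_0|\le N^{-C_1}\}\times\{|E-E_0|\le1\}$ once $C_1\ge2$ and $N\ge N_0(\rho_0)$. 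One also records that $|L^a(\omega_0,E_0)|\le C_2$ (after enlarging $C_2$ if needed): this follows from $L^a=L+D$, see \eqref{prelims-L-La}, from $0\le L\le L_1$, and from the bound on $D$ built into $\|b\|_*$.

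Next, the anchoring on the real slice. For real $x$ and $(\omega',E')$ with $|\omega'-\omega_0|,|E'-E_0|\le N^{-C_1}$, Proposition~\ref{prop:prelims-M^a-upper-bound} gives $\log\|M_N^a(x,\omega',E')\|\le NL^a(\omega',E')+C_3(\log N)^{C_0'}$, provided $L(\omega',E')>\gamma/2>0$ and $\omega'$ obeys the Diophantine condition at scale $N$. The first holds for such $(\omega',E')$ by continuity of the Lyapunov exponent; the second holds because $N|\omega'-\omega_0|\le N^{1-C_1}$ lies far below $\min_{n\le N}\|n\omega_0\|$, so $\omega'$ inherits the Diophantine condition of $\omega_0$, with constant $c/2$, up to scale $N$ — which is all that the proof of Proposition~\ref{prop:prelims-M^a-upper-bound} uses. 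To replace $L^a(\omega',E')$ by $L^a(\omega_0,E_0)$ one invokes the H\"older (or at least power‑type) continuity of $L$, hence of $L^a$, in $(\omega,E)$, with some exponent $\beta=\beta(\gamma,c,\alpha,\dots)>0$ (a standard consequence of the determinant large deviations estimate, cf. Proposition~\ref{prop:prelims-LDT-determinants}): then $N\,|L^a(\omega',E')-L^a(\omega_0,E_0)|\le CN^{1-C_1\beta}\le\tfrac12(\log N)^{C_0}$ for $N\ge N_0$ as soon as $C_1>1/\beta$. This is exactly where the dependence of $C_1$ on $\gamma,c,\alpha$ enters. Thus $\log\|M_N^a(x,\omega',E')\|\le A:=NL^a(\omega_0,E_0)+\tfrac12(\log N)^{C_0}$ for all real $x$ and all $(\omega',E')$ in the polydisk (complex $E'$ is allowed, since Proposition~\ref{prop:prelims-M^a-upper-bound} already permits $E\in\mb C$).

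Finally, complexifying $z$ by the two‑constants theorem. Fix $(\omega,E)$ in the polydisk and put $u(z):=\log\|M_N^a(z,\omega,E)\|$; it is subharmonic on the strip $\{|\Im z|<\rho_0\}$ (a cylinder over $\mb T$), with $u\le C_2N$ on $\{|\Im z|\le\rho_0/2\}$ and $u\le A$ on $\{\Im z=0\}$. On a strip the harmonic measure of the far horizontal edge seen from height $y$ is exactly $2|y|/\rho_0$ (the competing harmonic function being affine in $\Im z$), so the two‑constants theorem on each of the half‑strips $\{0<\pm\Im z<\rho_0/2\}$ gives, for $|y|\le N^{-1}$,
\[
u(x+iy)\le A\Bigl(1-\tfrac{2|y|}{\rho_0}\Bigr)+C_2N\cdot\tfrac{2|y|}{\rho_0}\le A+\frac{6C_2}{\rho_0}\le NL^a(\omega_0,E_0)+(\log N)^{C_0}
\]
for $N\ge N_0$, using $|A|\le2C_2N$ and $|y|\le N^{-1}$. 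Taking the supremum over $x,\omega,E,y$ in the stated ranges finishes the proof. (If $\omega$ is to range over a complex disk rather than a real interval, the same subharmonicity argument applies in the $\omega$‑variable, now anchored on the longest real $\omega$‑segment the continuity of $L$ allows, after which a slit two‑constants estimate absorbs the residual $N^{-C_1}$‑perturbation provided $C_1$ is taken a bit larger; note $|\Im\omega|\le N^{-C_1}\ll\rho_0/N$, so $M_N^a$ remains analytic.)

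The main obstacle is the anchoring step: one needs the Lyapunov exponent to be not merely continuous but H\"older (power‑type) continuous in $(\omega,E)$, so that $N$ times its oscillation over an $N^{-C_1}$‑polydisk is negligible against $(\log N)^{C_0}$. This requirement, together with keeping all shifted arguments inside $\mb H_{\rho_0}$ and keeping the perturbed frequency Diophantine at scale $N$, is precisely what forces $C_1$ to be large in terms of $\rho_0,\gamma,c$ and $\alpha$.
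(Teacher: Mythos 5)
Your argument is correct, and it reaches the same two essential inputs as the paper — the real‑axis upper bound of \propref{prelims-M^a-upper-bound} applied at nearby parameters, and a quantitative continuity estimate showing that $N\left|L^{a}\left(\omega,E\right)-L^{a}\left(\omega_{0},E_{0}\right)\right|$ is negligible against $\left(\log N\right)^{C_{0}}$ on an $N^{-C_{1}}$‑polydisk — but it handles the imaginary displacement in $z$ differently. The paper applies \propref{prelims-M^a-upper-bound} on each horizontal line $\left\{ \Im z=y\right\}$ (i.e.\ to the shifted coefficients $a\left(\cdot+iy\right)$, $b\left(\cdot+iy\right)$), obtaining $NL^{a}\left(y,\omega,E\right)+C\left(\log N\right)^{C'}$, and then removes the $y$‑dependence via the Lipschitz bound $\left|L^{a}\left(y_{1}\right)-L^{a}\left(y_{2}\right)\right|\lesssim\left|y_{1}-y_{2}\right|$ of \lemref{apx-Ly-Ly'}; you instead anchor only on $\left\{ \Im z=0\right\}$ and propagate to $\left|y\right|\le N^{-1}$ by the two‑constants theorem on the periodic strip against the crude ceiling $e^{CN}$, which costs only an additive $O\left(1\right)$. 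Your route is more self‑contained (it needs no continuity of $L^{a}$ in $y$), at the price of a harmonic‑measure argument the paper avoids; for the $\left(\omega,E\right)$‑perturbation the paper cites the specific estimates \lemref{apx-LE-LE'}, \lemref{apx-Lo-Lo'}, and \lemref{apx-Ln_L}, which deliver exactly the power‑type continuity you invoke generically. Two remarks: your appeal to what the proof of \propref{prelims-M^a-upper-bound} ``uses'' about the Diophantine condition of the perturbed $\omega'$ is the one step resting on unverified internals of a cited result, though the paper's own proof is silent on the same point (it applies the proposition throughout a $\rho$‑neighborhood of $\omega_{0}$); and your closing parenthetical about complex $\omega$ addresses a genuine issue — the corollary is later used over a complex $\omega$‑disk in the Weierstrass preparation — that the paper likewise leaves implicit.
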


\begin{cor}\label{cor:prelims-lipschitzness}Let $x_{0}\in\mb T$
and $\left(\omega_{0},E_{0}\right)\in\mb T_{c,\alpha}\times\mb C$
such that $L\left(\omega_{0},E_{0}\right)>\gamma>0$. There exist
constants $N_{0}=\left(\left\Vert a\right\Vert _{\infty},\left\Vert b\right\Vert _{*},\left|E_{0}\right|,c,\alpha,\gamma\right)$,
$C_{0}=C_{0}\left(\alpha\right)$, and $C_{1}=C_{1}(\left\Vert a\right\Vert _{\infty},\left\Vert b\right\Vert _{*},\left|E_{0}\right|,$
$c,\alpha,\gamma)$ such that for $N\ge N_{0}$ we have
\begin{multline}
\left\Vert M_{N}^{a}\left(x+iy,\omega,E\right)-M_{N}^{a}\left(x_{0},\omega_{0},E_{0}\right)\right\Vert \le\\
\left(\left|E-E_{0}\right|+\left|\omega-\omega_{0}\right|+\left|x-x_{0}\right|+\left|y\right|\right)\exp\left(NL^{a}\left(\omega_{0},E_{0}\right)+\left(\log N\right)^{C_{0}}\right)\label{eq:prelims-M-lipschitzness}
\end{multline}
and
\begin{multline}
\left|\log\left|f_{N}^{a}\left(x+iy,\omega,E\right)\right|-\log\left|f_{N}^{a}\left(x_{0},\omega_{0},E_{0}\right)\right|\right|\le\\
\left(\left|E-E_{0}\right|+\left|\omega-\omega_{0}\right|+\left|x-x_{0}\right|+\left|y\right|\right)\frac{\exp\left(NL^{a}\left(\omega_{0},E_{0}\right)+\left(\log N\right)^{C_{0}}\right)}{\left|f_{N}^{a}\left(x_{0},\omega_{0},E_{0}\right)\right|},\label{eq:prelims-f-lipschitzness}
\end{multline}
provided $\left|E-E_{0}\right|,\left|\omega-\omega_{0}\right|,\left|x-x_{0}\right|\le N^{-C_{1}}$,
$\left|y\right|\le N^{-1}$, and that the right-hand side of \eqref{prelims-f-lipschitzness}
is less than $1/2$.

\end{cor}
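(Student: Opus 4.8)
The plan is to derive both inequalities from the uniform upper bound of \corref{prelims-uniform-upper-bound} by Cauchy estimates, using the fact that polynomial-in-$N$ losses are swallowed by the term $\exp((\log N)^{C_0})$ as soon as $C_0>1$. By \eqref{prelims-Ma-M} together with the product formula for $M_N$ (equivalently, by \eqref{prelims-Ma-fa}), the matrix $M_N^a(z,\omega,E)$ is jointly analytic in $(z,\omega,E)$ — polynomial in $E$, and analytic in $z$ and $\omega$ near $\overline{\mb H_{\rho_0}}$ once $N$ is large enough that $z+j\omega\in\mb H_{\rho_0}$ for all $j\le N$ when $\left|\omega-\omega_0\right|\le N^{-C_1}$. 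Put $z_0:=x_0$. Choosing $C_1$ to be a fixed increment of the constant in \corref{prelims-uniform-upper-bound}, that corollary — whose proof applies, up to absolute constants, with its strip $\left|\Im z\right|\le N^{-1}$ enlarged to a fixed multiple of $N^{-1}$ — provides a polydisk $\mc P$ around $(z_0,\omega_0,E_0)$, of polyradius comparable to $(N^{-1},N^{-C_1},N^{-C_1})$ in $(z,\omega,E)$, on which
\[
\sup_{\mc P}\left\Vert M_N^a\right\Vert \le B:=\exp\!\big(NL^a(\omega_0,E_0)+(\log N)^{C_0}\big),\qquad C_0=C_0(\alpha).
\]

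Given a target point $(z,\omega,E)$ with $z=x+iy$ satisfying the hypotheses, one has $\left|z-z_0\right|\le\left|x-x_0\right|+\left|y\right|$, which is $\lesssim N^{-1}$, so the whole segment $P_t:=\big(z_0+t(z-z_0),\,\omega_0+t(\omega-\omega_0),\,E_0+t(E-E_0)\big)$, $t\in[0,1]$, lies well inside $\mc P$ — far enough from $\partial\mc P$ that Cauchy's estimate applies at every $P_t$ and gives $\left\Vert \partial_z M_N^a(P_t)\right\Vert \lesssim BN$ and $\left\Vert \partial_\omega M_N^a(P_t)\right\Vert ,\left\Vert \partial_E M_N^a(P_t)\right\Vert \lesssim BN^{C_1}$. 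Writing $M_N^a(z,\omega,E)-M_N^a(z_0,\omega_0,E_0)=\int_0^1\tfrac{d}{dt}M_N^a(P_t)\,dt$, bounding the integrand through these derivative estimates, and using $\left|z-z_0\right|\le\left|x-x_0\right|+\left|y\right|$, we get
\[
\left\Vert M_N^a(z,\omega,E)-M_N^a(z_0,\omega_0,E_0)\right\Vert \lesssim\big(\left|E-E_0\right|+\left|\omega-\omega_0\right|+\left|x-x_0\right|+\left|y\right|\big)\,B\,N^{C_1}.
\]
Since $B\,N^{C_1}=\exp\!\big(NL^a(\omega_0,E_0)+(\log N)^{C_0(\alpha)}+C_1\log N\big)\le\exp\!\big(NL^a(\omega_0,E_0)+(\log N)^{C_0(\alpha)+1}\big)$ for $N\ge N_0$ — with $N_0$ now also depending on $C_1$, hence on $\left\Vert a\right\Vert _{\infty},\left\Vert b\right\Vert _{*},\left|E_0\right|,c,\alpha,\gamma$ — relabelling $C_0(\alpha)+1$ as $C_0$ yields \eqref{prelims-M-lipschitzness}.

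By \eqref{prelims-Ma-fa}, $f_N^a$ is the $(1,1)$ entry of $M_N^a$, so \eqref{prelims-M-lipschitzness} gives the same bound for $\left|f_N^a(z,\omega,E)-f_N^a(z_0,\omega_0,E_0)\right|$. Let $R$ be the right-hand side of \eqref{prelims-f-lipschitzness}, which is $<1/2$ by hypothesis; dividing the bound just obtained by $\left|f_N^a(z_0,\omega_0,E_0)\right|$ shows that $t:=\left|f_N^a(z,\omega,E)\right|/\left|f_N^a(z_0,\omega_0,E_0)\right|$ lies in $(1-R,1+R)\subset(1/2,3/2)$, and since $\left|\log t\right|\le 2\left|t-1\right|$ on that interval we get $\big|\log\left|f_N^a(z,\omega,E)\right|-\log\left|f_N^a(z_0,\omega_0,E_0)\right|\big|\le 2R$. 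The numerical factor $2$ is absorbed, exactly as above, into one more increment of $C_0$, giving \eqref{prelims-f-lipschitzness}.

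The argument is soft; all the real content is in \corref{prelims-uniform-upper-bound}. I expect the only points requiring care to be bookkeeping ones: verifying that each $P_t$, together with the Cauchy disks needed at it, stays inside the polydisk on which the uniform bound is available (which is why one must widen the strip from $N^{-1}$ to a fixed multiple of $N^{-1}$ and inflate $C_1$ by a constant — both harmless), and tracking that the losses $N^{C_1}$ and the numerical constants are genuinely absorbed by $\exp((\log N)^{C_0})$, which forces $C_0>1$ and makes $N_0$, but not $C_0$, depend on $C_1$.
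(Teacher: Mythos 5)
Your reduction of \eqref{prelims-f-lipschitzness} to \eqref{prelims-M-lipschitzness} is exactly the paper's, and your treatment of the $z$- and $E$-derivatives is sound: in those variables $M_{N}^{a}$ is analytic on a genuinely complex neighbourhood where \corref{prelims-uniform-upper-bound} applies, so Cauchy's estimate plus the fundamental theorem of calculus works, and the losses $N$ and $N^{C_{1}}$ are indeed absorbed into $\exp\left(\left(\log N\right)^{C_{0}+1}\right)$. The gap is in the $\omega$-derivative. Cauchy's estimate in $\omega$ requires the bound $\sup_{\mc P}\log\left\Vert M_{N}^{a}\right\Vert \le NL^{a}\left(\omega_{0},E_{0}\right)+\left(\log N\right)^{C_{0}}$ on a \emph{complex} disk $\left|\omega-\omega_{0}\right|\le N^{-C_{1}}$, but \corref{prelims-uniform-upper-bound} does not supply this: its proof runs through $L^{a}\left(y,\omega,E\right)$ and \propref{prelims-M^a-upper-bound}, which are statements about real (indeed Diophantine) frequencies, and for complex $\omega$ the factors of the product defining $M_{N}^{a}$ acquire imaginary shifts $j\,\Im\omega$ that grow with $j$, so the complex-$\omega$ matrix is not $M_{N}^{a}$ evaluated at any single shifted real-frequency configuration. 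You carefully flag the harmless widenings (the $y$-strip, the increment of $C_{1}$) but not this one, which is the only place where the soft argument does not come for free. It is repairable --- e.g.\ $\log\left\Vert M_{N}^{a}\right\Vert$ is plurisubharmonic and trivially $\le CN$ for $\left|\Im\omega\right|\lesssim N^{-1}$, so a two-constants estimate propagates the refined bound off the real axis to $\left|\Im\omega\right|\lesssim N^{-C_{1}-2}\left(\log N\right)^{C}$, and the resulting extra loss $N^{C_{1}+2}$ in the Cauchy estimate is still absorbed --- but that is an additional argument you would have to supply.

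The paper avoids the issue entirely by differentiating the product explicitly: $\partial M_{N}^{a}$ is written via the Leibniz rule as $\sum_{j=0}^{N-1}M_{N-j-1}^{a}\left(\cdot+\left(j+1\right)\omega\right)\,\partial A_{j}\, M_{j}^{a}\left(\cdot\right)$, where $A_{j}$ is the one-step matrix. Every factor in every summand sits at the \emph{same real} $\omega$, so \corref{prelims-uniform-upper-bound} (at scales $j$ and $N-j-1$) bounds the partial products, $\partial_{\omega}A_{j}$ contributes at most a factor $CN$, and summing over $j$ costs another factor $N$ --- all absorbed as in your argument; the mean value theorem along the real segment in $\left(x,y,\omega,E\right)$ then gives \eqref{prelims-M-lipschitzness}. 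So the two routes differ precisely in how the $\omega$-derivative is controlled: yours buys brevity at the price of an upper bound for complex frequencies that the quoted corollary does not give, while the paper's uses the product structure to stay on the real frequency axis throughout.
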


We will also need a version of \corref{prelims-uniform-upper-bound}
for $S_{N}$ and $\t S_{N}$. See the Appendix for a proof.
\begin{lem}
\label{lem:prelims-uniform-bound-S_N} There exist constants $C_{0}=C_{0}\left(\alpha\right)$,
$C_{1}=C_{1}\left(\left\Vert b\right\Vert _{*},c,\alpha\right)$ such
that for every $N>1$ we have
\[
\sup\left\{ S_{N}\left(x+iy,\omega\right):\, x\in\mb T,\,\left|y\right|\le N^{-1}\right\} \le ND+C_{1}\left(\log N\right)^{C_{0}}
\]
and
\[
\sup\left\{ \t S_{N}\left(x+iy,\omega\right):\, x\in\mb T,\,\left|y\right|\le N^{-1}\right\} \le ND+C_{1}\left(\log N\right)^{C_{0}}.
\]

\end{lem}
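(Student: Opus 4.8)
Here is how I would approach the proof.

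The plan is to peel the (finitely many) zeros of $b$ off $\log|b|$, dispose of the resulting harmonic part by a routine Fourier/Weyl-sum estimate, and reduce the contribution of each zero to a uniform upper bound on a Sudler-type product; note that in the Schrödinger case $b\equiv1$ one has $S_N\equiv0$, and the whole difficulty below is precisely the handling of the zeros. First I would replace $D(y)$ by $D=D(0)$: the map $y\mapsto D(y)=\int_{\mathbb T}\log|b(x+iy)|\,dx$ is convex on $(-\rho_0,\rho_0)$ (as $\Delta\log|b|\ge0$ while $\int_{\mathbb T}\partial_x^2\log|b(x+iy)|\,dx=0$; equivalently, by the factorization below $D(y)$ is an affine function of $y$ plus $\sum_j\max(-2\pi y,\log|\zeta_j|)$) and $|D(y)|\le\|b\|_*$ there, so $|D(y)-D|\le C\|b\|_*/N$ for $|y|\le N^{-1}$, and it suffices to bound $S_N(x+iy,\omega)$ by $ND(y)+C_1(\log N)^{C_0}$; for $N$ below an absolute constant the bound is trivial since $S_N\le N\log^+\|b\|_\infty\le N\|b\|_*$. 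I would then pass to the annulus model of \cite{2012arXiv1202.2915B}, in which $\mathbb T$ is the unit circle, $x+iy$ lies on the circle $|z|=r:=e^{-2\pi y}$, and the shift by $\omega$ is multiplication by $e^{2\pi i\omega}$; since $b\not\equiv0$ is analytic near $\overline{\mathcal A_{\rho_0}}$ it has only finitely many zeros $\zeta_1,\dots,\zeta_m$ (with multiplicity) in $\mathcal A_{\rho_0/2}$, so $b=P\tilde b$ with $P(z)=\prod_{j=1}^m(z-\zeta_j)$ and $\tilde b$ analytic and nonvanishing (hence $\log|\tilde b|$ harmonic and bounded) on $\mathcal A_{\rho_0/2}$. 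Writing $z_k:=re^{2\pi i(x+k\omega)}$ this splits $S_N(x+iy,\omega)=\sum_{k<N}\log|\tilde b(z_k)|+\sum_{j=1}^m\sum_{k<N}\log|z_k-\zeta_j|$.

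For the harmonic term I would expand $\log|\tilde b|$ in its Fourier series on the circles $|z|=r$ with $|r-1|\le 2\pi/N$: its coefficients decay exponentially in the frequency with rate controlled by $\rho_0$. Combined with $\bigl|\sum_{k<N}e^{2\pi ink\omega}\bigr|\le\min(N,(2\|n\omega\|)^{-1})\le\min\bigl(N,|n|(\log(|n|+2))^\alpha/(2c)\bigr)$ for $\omega\in\mathbb T_{c,\alpha}$, this gives $\bigl|\sum_{k<N}\log|\tilde b(z_k)|-N\int_{\mathbb T}\log|\tilde b(x+iy)|\,dx\bigr|\le C$, and by Jensen's formula applied to each factor of $P$, $\int_{\mathbb T}\log|\tilde b(x+iy)|\,dx=D(y)-\sum_j\log\max(r,|\zeta_j|)$. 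So everything reduces to proving, for each zero $\zeta=\zeta_j$,
\[
\sum_{k=0}^{N-1}\log|z_k-\zeta|\ \le\ N\log\max(r,|\zeta|)+C(\log N)^{C_0}.
\]

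To establish this I would pull the factor of larger modulus (either $z_k$ or $\zeta$) out of each difference $z_k-\zeta$ and, after a rotation (possibly $\omega\mapsto-\omega$, which is harmless since $\|n(-\omega)\|=\|n\omega\|$), reduce to bounding $\bigl|\prod_{k<N}(1-v'q^k)\bigr|$ for $q=e^{2\pi i\omega}$ and $|v'|\le1$. By the $q$-binomial theorem the coefficient of $(v')^j$ in $\prod_{k<N}(1-v'q^k)$ has modulus $\prod_{l\le N}|1-q^l|\big/\bigl(\prod_{l\le j}|1-q^l|\prod_{l\le N-j}|1-q^l|\bigr)=\exp(T_N-T_j-T_{N-j})$, where $T_K:=\sum_{l=1}^{K}\log|2\sin\pi l\omega|$ (using $|1-q^l|=2|\sin\pi l\omega|$); hence $\bigl|\prod_{k<N}(1-v'q^k)\bigr|\le\sum_{j\le N}\exp(T_N-T_j-T_{N-j})\le(N+1)\,e^{3\max_{K\le N}|T_K|}$. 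Thus the displayed estimate, and with it the whole Lemma, follows from the Sudler-product bound $\max_{K\le N}|T_K|\le C(\log N)^{C_0(\alpha)}$: summing the $m$ resulting estimates and adding the harmonic term, the $N\log\max(r,|\zeta_j|)$ terms cancel against $-N\sum_j\log\max(r,|\zeta_j|)$, leaving $S_N(x+iy,\omega)\le ND(y)+C_1(\log N)^{C_0}\le ND+C_1(\log N)^{C_0}$. The estimate for $\tilde S_N$ is obtained the same way from $\tilde b$, whose analytic extension satisfies $|\tilde b|=|b|$ on $\mathbb T$, so $D$ is unchanged.

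The one genuinely hard step is this Sudler-product bound: a polylogarithmic control on $\log\bigl|\prod_{l\le K}2\sin\pi l\omega\bigr|=\log|(q;q)_K|$, uniform in $K$. The naive route of bounding $\sum_{n\ge1}\frac1n\bigl|\sum_{l\le K}e^{2\pi inl\omega}\bigr|$ diverges, because the near-resonant frequencies are not absolutely summable, so one must exploit the cancellation between the many positive and the few, large, negative terms of $T_K$; this is done through the continued-fraction expansion of $\omega$. The condition $\omega\in\mathbb T_{c,\alpha}$ forces the partial quotients to grow at most like $(\log q_m)^\alpha$, and a three-distance-theorem analysis of $\prod_{l\le q_m}2\sin\pi l\omega$ across successive convergents then yields $\max_{K\le N}|T_K|\le C(\log N)^{C_0(\alpha)}$, which is exactly where $C_0$ acquires its dependence on $\alpha$. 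Since such a $b$ can genuinely vanish on $\mathbb T$ (e.g. $b(x)=2\cos2\pi x$ in the extended Harper model), this product estimate really is needed and is not replaceable by the harmonic-function argument alone.
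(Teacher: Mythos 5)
Your argument is correct in outline but follows a genuinely different route from the paper. The paper's proof is short because it leans on machinery already in place: it applies the large deviations theorem for shifted sums of subharmonic functions (\cite[Theorem 3.8]{MR1847592}) to $u=\log|b|$ to get $S_{N}(x+iy,\omega)\le ND(y)+C(\log N)^{p}$ off an exceptional set of measure $\exp(-c(\log N)^{p})$, controls the exceptional set by the $L^{2}$ bound $\left\Vert S_{N}-ND\right\Vert _{L^{2}(\mb T)}\le CN$, and then removes the exceptional set entirely by averaging $S_{N}$ over a disk of radius $N^{-1}$ using subharmonicity; the passage from $D(y)$ to $D$ is \lemref{apx-uy-uy'}. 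You instead factor $b$ into its finitely many annulus zeros times a nonvanishing part, dispose of the nonvanishing part by the exponential-sum estimate (which does converge absolutely there, thanks to the exponential decay of the Fourier coefficients against the polynomial loss from $\mb T_{c,\alpha}$), and reduce each zero, via the $q$-binomial theorem, to the bound $\max_{K\le N}\bigl|\log\prod_{l\le K}2|\sin\pi l\omega|\bigr|\le C(\log N)^{C_{0}(\alpha)}$. That reduction is clean and the final cancellation against Jensen's formula is exact, so the scheme works; what each approach buys is clear: the paper recycles a black box it needs anyway, while yours is self-contained and makes visible exactly where the zeros of $b$ (which genuinely occur, e.g.\ in extended Harper) enter. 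The one caveat is that you have concentrated the entire difficulty into the Sudler-product estimate, which you assert but do not prove. It is true — an Ostrowski/three-distance (Denjoy--Koksma with truncated logarithmic singularity) argument gives a contribution $O(\log N)$ per block of length $q_{s}$, at most $a_{s+1}\lesssim(\log q_{s})^{\alpha}/c$ blocks per level and $O(\log N)$ levels, hence $C(c,\alpha)(\log N)^{\alpha+2}$, matching the exponent $p>\alpha+2$ the paper gets from the subharmonic LDT — but this lemma is comparable in depth to the result the paper invokes, so your proof is not complete until it is written out.
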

Next we recall the Avalanche Principle and show how to apply it to
the determinants $f_{N}^{a}$.
\begin{prop}
\label{prop:prelims-AP}( \cite[Proposition 3.3]{MR2438997}) Let
$A_{1},\ldots,A_{n}$, $n\ge2$, be a sequence of $2\times2$ matrices.
If
\begin{equation}
\max_{1\le j\le n}\left|\det A_{j}\right|\le1,\label{eq:prelims-AP-condition1}
\end{equation}
\begin{equation}
\min_{1\le j\le n}\left\Vert A_{j}\right\Vert \ge\mu>n,\label{eq:prelims-AP-condition2}
\end{equation}
and
\begin{equation}
\max_{1\le j<n}\left(\log\left\Vert A_{j+1}\right\Vert +\log\left\Vert A_{j}\right\Vert -\log\left\Vert A_{j+1}A_{j}\right\Vert \right)<\frac{1}{2}\log\mu\label{eq:prelims-AP-condition3}
\end{equation}
then 
\[
\left|\log\left\Vert A_{n}\ldots A_{1}\right\Vert +\sum_{j=2}^{n-1}\log\left\Vert A_{j}\right\Vert -\sum_{j=1}^{n-1}\log\left\Vert A_{j+1}A_{j}\right\Vert \right|<C_{0}\frac{n}{\mu}
\]
with some absolute constant $C_{0}$.\end{prop}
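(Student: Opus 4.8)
The plan is to reduce the whole statement to the near-rank-one structure of a $2\times2$ matrix $A$ with $\|A\|$ large and $|\det A|\le1$. Writing the singular value decomposition, with $s_1(A)=\|A\|\ge\mu$, $s_1(A)s_2(A)=|\det A|\le1$ (so $s_2(A)\le1/\mu$), and with $\underline u(A)$, $\underline v(A)$ the top left/right singular unit vectors, $A$ is within relative error $\mu^{-2}$ of the rank-one map $s_1(A)\,\underline u(A)\langle\underline v(A),\cdot\rangle$. First I would record the two elementary facts I will use repeatedly: for a unit vector $w$ one has $\|Aw\|\ge s_1(A)|\langle w,\underline v(A)\rangle|$, and the direction of $Aw$ makes an angle $\lesssim s_2(A)\big/\big(s_1(A)|\langle w,\underline v(A)\rangle|\big)$ with $\underline u(A)$; in particular, if $|\langle w,\underline v(A)\rangle|$ is not too small then $Aw/\|Aw\|$ is very close to $\underline u(A)$.

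The core is a two-matrix lemma: if $\|A\|,\|B\|\ge\mu$, $|\det A|,|\det B|\le1$, and the defect $\log\|B\|+\log\|A\|-\log\|BA\|$ is $<\tfrac12\log\mu$ (hypothesis \eqref{prelims-AP-condition3}), then (i) $|\langle\underline v(B),\underline u(A)\rangle|\gtrsim\mu^{-1/2}$, (ii) $\big|\log\|BA\|-\log\|A\|-\log\|B\|-\log|\langle\underline v(B),\underline u(A)\rangle|\big|\lesssim\mu^{-1}$, and (iii) $\underline u(BA)$ is within $O(\mu^{-1})$ of $\underline u(B)$ and $\underline v(BA)$ within $O(\mu^{-1})$ of $\underline v(A)$. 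To prove (i)--(ii): applying $BA$ to $\underline v(A)$ gives $\|BA\|\ge s_1(A)\|B\underline u(A)\|\ge s_1(A)s_1(B)|\langle\underline u(A),\underline v(B)\rangle|$; conversely, expanding an arbitrary unit vector in the $\underline v(A)$-basis and using $s_2(A),s_2(B)\le1/\mu$ gives $\|BA\|\le s_1(A)s_1(B)|\langle\underline u(A),\underline v(B)\rangle|+O\big(\max(\|A\|,\|B\|)/\mu\big)$. The defect hypothesis forces the first term to dominate, which is exactly (i), and then the two bounds pinch the ratio to within $1+O(\mu^{-3/2})$, giving (ii). Part (iii) is the single-matrix angle estimate applied to $w=\underline u(A)$, together with the symmetric argument for the transpose.

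The statement then follows by induction on the partial products $P_k:=A_k\cdots A_1$, showing simultaneously that $\underline u(P_k)$ lies within $O(\mu^{-1})$ of $\underline u(A_k)$ and that $\log\|P_k\|=\log\|P_{k-1}\|+\log\|A_k\|+\log|\langle\underline v(A_k),\underline u(A_{k-1})\rangle|+O(\mu^{-1})$. The inductive step uses that $P_{k-1}$ is extremely close to rank one: iterating (i) gives $\|P_{k-1}\|\gtrsim\mu^{k/2}$, hence $s_2(P_{k-1})=\prod|\det A_j|/\|P_{k-1}\|\le\mu^{-k/2}$ by \eqref{prelims-AP-condition1}; multiplying by $A_k$ and invoking the single-matrix angle estimate with $w\approx\underline u(P_{k-1})\approx\underline u(A_{k-1})$ (whose pairing with $\underline v(A_k)$ is $\gtrsim\mu^{-1/2}$ by (i)) pins $\underline u(P_k)$ near $\underline u(A_k)$ and gives the norm recursion, the replacement of $\underline u(P_{k-1})$ by $\underline u(A_{k-1})$ costing only $O(\mu^{-1})$ in the log since the inner product is $\gtrsim\mu^{-1/2}$. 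Telescoping the recursion over $k=2,\dots,n$ gives $\log\|A_n\cdots A_1\|=\sum_{k=1}^n\log\|A_k\|+\sum_{k=1}^{n-1}\log|\langle\underline v(A_{k+1}),\underline u(A_k)\rangle|+O(n\mu^{-1})$; substituting (ii) to rewrite each angle log as $\log\|A_{k+1}A_k\|-\log\|A_{k+1}\|-\log\|A_k\|+O(\mu^{-1})$ and cancelling the norm sums yields the claimed identity with error $O(n\mu^{-1})\le C_0 n/\mu$.

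The main obstacle — and where the hypothesis $\mu>n$ from \eqref{prelims-AP-condition2} and the "avalanche" heuristic really enter — is controlling the accumulation of angular errors: naively each multiplication could rotate the running output direction by $O(\mu^{-1/2})$, which over $n$ steps would swamp the $\mu^{-1/2}$ lower bound in (i) and break the induction. The resolution is the observation above that a matrix with $s_2/s_1\le\mu^{-2}$ maps any vector making angle $\gtrsim\mu^{-1/2}$ with its bad direction to within $O(\mu^{-3/2})$ of its top output direction: the last matrix essentially erases the history, so the per-step angular error is $O(\mu^{-3/2})$, not $O(\mu^{-1/2})$, and the induction closes once $\mu$ exceeds an absolute constant. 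For $\mu$ below that constant one has $n<\mu=O(1)$ and the estimate is obtained directly after enlarging $C_0$.
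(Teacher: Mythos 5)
The paper does not prove this statement; it is quoted verbatim as \cite[Proposition 3.3]{MR2438997}, and your argument is essentially the standard Goldstein--Schlag proof of the Avalanche Principle from that reference: singular value decomposition of each $A_j$, the two-matrix lemma pinning $\left|\left\langle \underline v(A_{j+1}),\underline u(A_j)\right\rangle\right|\gtrsim\mu^{-1/2}$ and the product norm, and induction on partial products with the crucial observation that the last factor resets the angular error to $O\left(\mu^{-3/2}\right)$ so that the per-step loss in the logarithm is $O\left(\mu^{-1}\right)$. The outline is correct; the only point left implicit is that the fallback for $\mu$ below the absolute threshold (where $n<\mu=O(1)$) still requires a direct lower bound on $\left\Vert A_n\cdots A_1\right\Vert$, which follows from the same expansion for bounded $n$.
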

\begin{cor}
\label{cor:prelims-AP-determinants} Let $z\in\mb H_{N^{-1}}$, $\left(\omega,E\right)\in\mb T_{c,\alpha}\times\mb C$
such that $L\left(\omega,E\right)>\gamma>0$, and let $C_{0}$ be
as in \propref{prelims-LDT-determinants}$ $. Let $l_{j}$, $j=1,\ldots,m$,
be positive integers such that $l\le l_{j}\le3l$, $j=1,\ldots,m$,
with $l$ a real number such that $l>2m/\gamma$, and let $s_{k}=\sum_{j<k}l_{j}$
(note that $s_{1}=0$). Assume that there exists $H\in\left(0,l\left(\log l\right)^{-2C_{0}}\right)$
such that 
\[
\log\left|f_{l_{j}}^{a}\left(z+s_{j}\omega,\omega,E\right)\right|>l_{j}L^{a}\left(\omega,E\right)-H\left(\log l_{j}\right)^{C_{0}},\, j=1,\ldots,m,
\]
\[
\log\left|f_{l_{j}+l_{j+1}}^{a}\left(z+s_{j}\omega,\omega,E\right)\right|>\left(l_{j}+l_{j+1}\right)L^{a}\left(\omega,E\right)-H\left(\log\left(l_{j}+l_{j+1}\right)\right)^{C_{0}},
\]
$j=1,\ldots,m-1$. There exists a constant $l_{0}=l_{0}\left(\left\Vert a\right\Vert _{\infty},\left\Vert b\right\Vert _{*},\left|E\right|,c,\alpha,\gamma\right)$
such that if $l\ge l_{0}$ then
\[
\left|\log\left|f_{s_{m+1}}^{a}\left(z,\omega,E\right)\right|+\sum_{j=2}^{m-1}\log\left\Vert A_{j}^{a}\left(z\right)\right\Vert -\sum_{j=1}^{m-1}\log\left\Vert A_{j+1}^{a}\left(z\right)A_{j}^{a}\left(z\right)\right\Vert \right|\lesssim m\exp\left(-\frac{\gamma}{2}l\right),
\]
where
\[
A_{1}^{a}\left(z\right)=A_{1}^{a}\left(z,\omega,E\right)=M_{l_{1}}^{a}\left(z,\omega,E\right)\left[\begin{array}{cc}
1 & 0\\
0 & 0
\end{array}\right],
\]
\[
A_{m}^{a}\left(z\right)=A_{m}^{a}\left(z,\omega,E\right)=\left[\begin{array}{cc}
1 & 0\\
0 & 0
\end{array}\right]M_{l_{m}}^{a}\left(z+s_{m}\omega,\omega,E\right),
\]
and $A_{j}^{a}\left(z\right)=A_{j}^{a}\left(z,\omega,E\right)=M_{l_{j}}^{a}\left(z+s_{j}\omega,\omega,E\right)$,
$j=2,\ldots,m-1$.\end{cor}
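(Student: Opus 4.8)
The plan is to apply the Avalanche Principle, \propref{prelims-AP}, to the rescaled matrices $B_j:=\lambda_jA_j^a(z)$ for suitable scalars $\lambda_j>0$, and then to read the conclusion back in terms of the determinants $f^a$. The starting point is that both the three-term combination appearing in the conclusion of \propref{prelims-AP} and each term of the maximum in \eqref{prelims-AP-condition3} are invariant under replacing every matrix by a nonzero scalar multiple of itself (the scalar contributions telescope and cancel), while \eqref{prelims-AP-condition1} and \eqref{prelims-AP-condition2} are affected only through the normalization. Hence it is enough to choose the $\lambda_j$ so that $B_1,\dots,B_m$ satisfy the three hypotheses of \propref{prelims-AP} with $\mu=\exp(\gamma l/2)$; applying \propref{prelims-AP} and undoing the rescaling then yields
\[
\left|\log\left\Vert A_m^a(z)\cdots A_1^a(z)\right\Vert+\sum_{j=2}^{m-1}\log\left\Vert A_j^a(z)\right\Vert-\sum_{j=1}^{m-1}\log\left\Vert A_{j+1}^a(z)A_j^a(z)\right\Vert\right|\lesssim\frac m\mu=m\exp\left(-\frac\gamma2l\right),
\]
so that it only remains to identify $\Vert A_m^a(z)\cdots A_1^a(z)\Vert$ with $|f_{s_{m+1}}^a(z,\omega,E)|$.

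For this last identity, the multiplicativity of the transfer matrices together with the telescoping of the scalar factor in \eqref{prelims-Ma-M} gives the cocycle relation $M_{l'+l''}^a(w,\omega,E)=M_{l''}^a(w+l'\omega,\omega,E)\,M_{l'}^a(w,\omega,E)$, and iterating it yields $M_{l_m}^a(z+s_m\omega)\cdots M_{l_1}^a(z)=M_{s_{m+1}}^a(z,\omega,E)$. Writing $P$ for the projection onto the first coordinate, so that $A_1^a(z)=M_{l_1}^a(z)P$, $A_m^a(z)=PM_{l_m}^a(z+s_m\omega)$, and $A_j^a(z)=M_{l_j}^a(z+s_j\omega)$ for $1<j<m$, we obtain $A_m^a(z)\cdots A_1^a(z)=P\,M_{s_{m+1}}^a(z,\omega,E)\,P$, whose unique nonzero entry equals $f_{s_{m+1}}^a(z,\omega,E)$ by \eqref{prelims-Ma-fa}; this is what we wanted. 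The same cocycle relation shows that $B_{j+1}B_j$ equals $\lambda_{j+1}\lambda_j$ times $M_{l_j+l_{j+1}}^a(z+s_j\omega)$, pre- and/or post-composed with $P$ when $j=1$ or $j+1=m$, so that in every case $\Vert B_{j+1}B_j\Vert\ge|\lambda_{j+1}\lambda_j|\,|f_{l_j+l_{j+1}}^a(z+s_j\omega,\omega,E)|$ and, likewise, $\Vert A_j^a(z)\Vert\ge|f_{l_j}^a(z+s_j\omega,\omega,E)|$ (again reading off a matrix entry via \eqref{prelims-Ma-fa}).

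It remains to choose the $\lambda_j$ and verify \eqref{prelims-AP-condition1}--\eqref{prelims-AP-condition3}. Set $\lambda_j=\exp(-l_jD-C_1(\log l_j)^{C_0})$, where $C_0=C_0(\alpha)$ is the exponent of \propref{prelims-LDT-determinants} (enlarged if necessary to also serve in \corref{prelims-uniform-upper-bound} and \lemref{prelims-uniform-bound-S_N}) and $C_1=C_1(\Vert a\Vert_\infty,\Vert b\Vert_*,|E|,c,\alpha,\gamma)\ge1$; the hypothesis $z\in\mathbb{H}_{N^{-1}}$ furnishes the smallness of $|\Im z|$ needed to apply those results at the scales $l_j\le3l$ and $l_j+l_{j+1}\le6l$. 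From \eqref{prelims-Ma-M} and the explicit form of $M_N$ one computes $\log|\det M_N^a(w,\omega,E)|=\t S_N(w,\omega)+S_N(w+\omega,\omega)$, which is at most $2ND+2C_1(\log N)^{C_0}$ by \lemref{prelims-uniform-bound-S_N}; hence $|\det B_j|\le1$ for $1<j<m$ and $|\det B_j|=0$ for $j\in\{1,m\}$, which is \eqref{prelims-AP-condition1}. Using the two hypotheses on the determinants for the lower bounds of the previous paragraph, \corref{prelims-uniform-upper-bound} for the upper bound $\log\Vert A_j^a(z)\Vert\le l_jL^a(\omega,E)+(\log l_j)^{C_0}$, and $L^a(\omega,E)-D=L(\omega,E)>\gamma$ from \eqref{prelims-L-La}, one obtains
\[
l\gamma-(H+C_1)(\log3l)^{C_0}\le\log\Vert B_j\Vert\le l_j\gamma\qquad\text{and}\qquad\log\Vert B_{j+1}B_j\Vert\ge(l_j+l_{j+1})\gamma-(H+2C_1)(\log6l)^{C_0}.
\]
Because $H<l(\log l)^{-2C_0}$, the losses $(H+C_1)(\log3l)^{C_0}$ and $(H+2C_1)(\log6l)^{C_0}$ are $o(l)$, so there is $l_0$, depending on $\Vert a\Vert_\infty,\Vert b\Vert_*,|E|,c,\alpha,\gamma$ through the constants above, such that for $l\ge l_0$ one has $\log\Vert B_j\Vert>\gamma l/2=\log\mu$, giving \eqref{prelims-AP-condition2} (the requirement $\mu>m$ holding because $l>2m/\gamma$), and $\log\Vert B_{j+1}\Vert+\log\Vert B_j\Vert-\log\Vert B_{j+1}B_j\Vert\le(H+2C_1)(\log6l)^{C_0}<\gamma l/4=\tfrac12\log\mu$, giving \eqref{prelims-AP-condition3}. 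Thus \propref{prelims-AP} applies and the corollary follows.

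I expect the only substantive point, beyond careful bookkeeping, to be the consistency of the normalization. In the Schr\"odinger case $\det M^a\equiv1$ and no rescaling is needed; here $D\ne0$ forces the $\lambda_j$, and one must check that a single choice simultaneously enforces \eqref{prelims-AP-condition1}, leaves \eqref{prelims-AP-condition3} valid, and does not spoil the lower bound $\log\Vert B_j\Vert\gtrsim\gamma l$ required for \eqref{prelims-AP-condition2}. The last of these goes through precisely because the assumed range $H<l(\log l)^{-2C_0}$ keeps the loss $H(\log l)^{C_0}$ negligible against $\log\mu\asymp\gamma l$. The two end matrices, which carry the projection $P$, are handled routinely once one notes that $\Vert M^aP\Vert$ and $\Vert PM^a\Vert$ still dominate $|f^a|$.
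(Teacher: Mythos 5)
Your proof is correct and follows essentially the same route as the paper's: the paper renormalizes by passing to $A_j^u$ (i.e.\ dividing $A_j^a$ by the $z$-dependent scalar $\sqrt{\vert\det M_{l_j}^a\vert}$, via \eqref{prelims-Mu-Ma}) whereas you use fixed scalars $\lambda_j$, but both are scalar renormalizations exploiting the same telescoping invariance of the Avalanche Principle's conclusion and of \eqref{prelims-AP-condition3}, and the verification of \eqref{prelims-AP-condition1}--\eqref{prelims-AP-condition3} via \lemref{prelims-uniform-bound-S_N}, \eqref{prelims-L-La}, and the determinant hypotheses is identical in substance. One cosmetic slip: the displayed upper bound $\log\Vert B_j\Vert\le l_j\gamma$ should read $l_jL\left(\omega,E\right)$ (only $L>\gamma$ is known), but since the companion lower bound on $\log\Vert B_{j+1}B_j\Vert$ likewise carries $L$ rather than $\gamma$, the substitutions cancel and the estimate needed for \eqref{prelims-AP-condition3} is unaffected.
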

\begin{proof}
Note that $\log\left|f_{s_{m+1}}^{a}\left(z\right)\right|=\log\left\Vert \prod_{j=m}^{1}A_{j}^{a}\left(z\right)\right\Vert $.
Essentially, the conclusion follows by applying the Avalanche Principle.
This is straightforward in the Schr\"odinger case. The Jacobi case
is slightly more complicated because the matrices $A_{j}^{a}$ don't
necessarily satisfy \eqref{prelims-AP-condition1}. Let $A_{j}^{u}$
be defined analogously to $A_{j}^{a}$ (using $M_{l}^{u}$ instead
of $M_{l}^{a}$). The matrices $A_{j}^{u}$ satisfy \eqref{prelims-AP-condition1}
and we will be able to apply the Avalanche Principle to them with
$\mu=\exp\left(l\gamma/2\right)$. The conclusion then follows from
the fact that 
\begin{multline*}
\log\left\Vert A_{m}^{u}\left(z\right)\ldots A_{1}^{u}\left(z\right)\right\Vert +\sum_{j=2}^{n-1}\log\left\Vert A_{j}^{u}\left(z\right)\right\Vert -\sum_{j=1}^{n-1}\log\left\Vert A_{j+1}^{u}\left(z\right)A_{j}^{u}\left(z\right)\right\Vert \\
=\log\left\Vert A_{m}^{a}\left(z\right)\ldots A_{1}^{a}\left(z\right)\right\Vert +\sum_{j=2}^{n-1}\log\left\Vert A_{j}^{a}\left(z\right)\right\Vert -\sum_{j=1}^{n-1}\log\left\Vert A_{j+1}^{a}\left(z\right)A_{j}^{a}\left(z\right)\right\Vert .
\end{multline*}
 This identity is a simple consequence of \eqref{prelims-Mu-Ma}.

Now we just need to check that the matrices $A_{j}^{u}$ satisfy \eqref{prelims-AP-condition2}
and \eqref{prelims-AP-condition3} with $\mu=\exp\left(l\gamma/2\right)$.
We have
\begin{multline*}
\log\left\Vert A_{j}^{u}\left(z\right)\right\Vert \ge\log\left|f_{l_{j}}^{u}\left(z+s_{j}\omega,\omega,E\right)\right|\\
=-\frac{1}{2}\left(\t S_{l_{j}}\left(z+s_{j}\omega,\omega\right)+S_{l_{j}}\left(z+\left(s_{j}+1\right)\omega,\omega\right)\right)+\log\left|f_{l_{j}}^{a}\left(z+s_{j}\omega,\omega,E\right)\right|\\
\ge-Dl_{j}-\left(\log l_{j}\right)^{C}+L^{a}l_{j}-H\left(\log l_{j}\right)^{C_{0}}=l_{j}L-\left(\log l_{j}\right)^{C}-H\left(\log l_{j}\right)^{C_{0}}\\
\ge l\frac{\gamma}{2}\ge\log m.
\end{multline*}
For the identities we used \eqref{prelims-Mu-Ma} and \eqref{prelims-L-La}.
For the second inequality we used \lemref{prelims-uniform-bound-S_N}.
The second to last inequality holds for large enough $l$ due to our
assumptions. We also have
\begin{multline*}
\log\left\Vert A_{j}^{u}\left(z\right)\right\Vert +\log\left\Vert A_{j+1}^{u}\left(z\right)\right\Vert -\log\left\Vert A_{j+1}^{u}\left(z\right)A_{j}^{u}\left(z\right)\right\Vert \\
=\log\left\Vert A_{j}^{a}\left(z\right)\right\Vert +\log\left\Vert A_{j+1}^{a}\left(z\right)\right\Vert -\log\left\Vert A_{j+1}^{a}\left(z\right)A_{j}^{a}\left(z\right)\right\Vert \\
\le\log\left\Vert M_{l_{j}}^{a}\left(z+s_{j}\omega\right)\right\Vert +\log\left\Vert M_{l_{j+1}}^{a}\left(z+s_{j+1}\omega\right)\right\Vert -\log\left|f_{l_{j+l_{j+1}}}^{a}\left(z+s_{j}\omega\right)\right|\\
\le l_{j}L^{a}+\left(\log l_{j}\right)^{C}+l_{j+1}L^{a}+\left(\log l_{j+1}\right)^{C}-\left(l_{j}+l_{j+1}\right)L^{a}+H\left(\log\left(l_{j}+l_{j+1}\right)\right)^{C_{0}}\\
\le2\left(\log\left(3l\right)\right)^{C}+H\left(\log\left(6l\right)\right)^{C_{0}}\le\frac{l\gamma}{4}=\frac{1}{2}\log\mu,
\end{multline*}
provided $l$ is large enough. Note that we used \eqref{prelims-Mu-Ma}
and \propref{prelims-M^a-upper-bound}. This concludes the proof.
\end{proof}
The large deviations estimate for the determinants and the uniform
upper bound allows one to use Cartan's estimate. We recall this estimate
in the formulation from \cite{MR2753606}.
\begin{defn}
\label{defn:prelims-Caratheodory}(\cite[Definition 2.1]{MR2753606})
Let $H\gg1$. For an arbitrary set $\mc B\subset\mc D\left(z_{0},1\right)\subset\mb C$
we say that $\mc B\in\car_{1}\left(H,K\right)$ if $\mc B\subset\cup_{j=1}^{j_{0}}\mc D\left(z_{j},r_{j}\right)$
with $j_{0}\le K$, and $\sum_{j}r_{j}\le\exp\left(-H\right)$. If
$d$ is a positive integer greater than one and $\mc B\subset\mc P\left(z^{0},1\right)\subset\mb C^{d}$
then we define inductively that $\mc B\in\car_{d}\left(H,K\right)$
if, for any $1\le j\le d$, there exists $\mc B_{j}\subset\mc D\left(z_{j}^{0},1\right)\subset\mb C$,
$\mc B_{j}\in\car_{1}\left(H,K\right)$ so that $\mc B_{z}^{\left(j\right)}:=\left\{ \left(z_{1},\ldots,z_{d}\right)\in\mc B:\, z_{j}=z\right\} \in\car_{d-1}\left(H,K\right)$
for any $z\in\mb C\setminus\mc B_{j}$.\end{defn}
\begin{lem}
\label{lem:prelims-Cartan-estimate}(\cite[Lemma 2.4]{MR2753606})
Let $\phi\left(z_{1},\ldots,z_{d}\right)$ be an analytic function
defined in a polydisk $\mc P=\mc P\left(z^{0},1\right)$, $z^{0}\in\mb C^{d}$.
Let $M\ge\sup_{z\in\mc P}\log\left|\phi\left(z\right)\right|$, $m\le\log\left|\phi\left(z^{0}\right)\right|$.
Given $H\gg1$, there exists a set $\mc B\subset\mc P$, $\mc B\in\car_{d}\left(H^{1/d},K\right)$,
$K=C_{d}H\left(M-m\right)$, such that
\[
\log\left|\phi\left(z\right)\right|>M-C_{d}H\left(M-m\right),
\]
for any $z\in\mc P\left(z^{0},1/6\right)\setminus\mc B$.
\end{lem}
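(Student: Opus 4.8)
The plan is an induction on the dimension $d$. For $d=1$ the assertion is the classical Cartan estimate: writing $\log\left|\phi\right|$ as the sum of a logarithmic potential and a harmonic function via the Riesz representation --- equivalently, using Jensen's formula to bound the number of zeros of $\phi$ in a fixed subdisk by $C(M-m)$ --- one controls the harmonic part by Harnack's inequality and the potential part by Cartan's covering lemma, obtaining $\log\left|\phi\right|>M-CH(M-m)$ on $\mathcal{D}(z^{0},1/6)$ outside a set $\mathcal{B}\in\car_{1}(H,K)$ with $K\le CH(M-m)$; see \cite{MR2438997,MR2753606}.

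For $d>1$, suppose the assertion holds in dimension $d-1$ and let $\phi$, $\mathcal{P}=\mathcal{P}(z^{0},1)$, $M$ and $m$ be as in the statement. Put $H_{1}:=H^{1/d}$ and $H_{2}:=H^{(d-1)/d}$, so that $H_{1}H_{2}=H$ and $H_{2}^{1/(d-1)}=H_{1}$. Fix a coordinate $j\in\{1,\dots,d\}$. Restricting $\phi$ to the complex line $z_{i}=z_{i}^{0}$ for $i\neq j$ gives an analytic function of $z_{j}$ on $\mathcal{D}(z_{j}^{0},1)$ of modulus $\le e^{M}$ whose value at $z_{j}^{0}$ is $\phi(z^{0})$, of modulus $\ge e^{m}$. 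The case $d=1$, applied with parameter $H_{1}$, produces $\mathcal{B}_{j}\in\car_{1}(H_{1},K)$, $K\le CH_{1}(M-m)$, such that this restriction has modulus $>e^{m'}$ for every $z_{j}\in\mathcal{D}(z_{j}^{0},1/6)\setminus\mathcal{B}_{j}$, where $m':=M-CH_{1}(M-m)$, so that $M-m'=CH_{1}(M-m)$. For each such $z_{j}$, the map $\zeta\mapsto\phi(\dots,z_{j},\dots)$ in the remaining $d-1$ variables is analytic on the $(d-1)$-dimensional polydisk of radius $1$ centred at the remaining coordinates of $z^{0}$, has modulus $\le e^{M}$, and has modulus $\ge e^{m'}$ at the centre; the inductive hypothesis applied with parameter $H_{2}$ then yields an exceptional set in $\car_{d-1}(H_{1},K')$ with $K'\le C_{d-1}H_{2}(M-m')=C_{d-1}C\,H_{1}H_{2}(M-m)\le C_{d}H(M-m)$, outside of which $\log\left|\phi\right|>M-C_{d-1}H_{2}(M-m')\ge M-C_{d}H(M-m)$ on the $(d-1)$-dimensional subpolydisk of radius $1/6$. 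Now put $\mathcal{B}:=\left\{z\in\mathcal{P}(z^{0},1/6):\log\left|\phi(z)\right|\le M-C_{d}H(M-m)\right\}$; then $\log\left|\phi\right|>M-C_{d}H(M-m)$ off $\mathcal{B}$ by definition, and for every coordinate $j$ and every $z\notin\mathcal{B}_{j}$ the slice $\mathcal{B}_{z}^{(j)}$ is contained in the $\car_{d-1}(H_{1},C_{d}H(M-m))$ set just produced (since $C_{d}H(M-m)\ge C_{d-1}H_{2}(M-m')$), hence lies in $\car_{d-1}(H^{1/d},C_{d}H(M-m))$ by monotonicity of $\car_{d-1}$ under inclusion; since moreover $\mathcal{B}_{j}\in\car_{1}(H^{1/d},C_{d}H(M-m))$ for each $j$, we conclude $\mathcal{B}\in\car_{d}(H^{1/d},K)$ with $K=C_{d}H(M-m)$, as required.

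The one point that must be arranged with care --- and the reason for the exponent $1/d$ in the conclusion --- is the splitting of the parameter: the one-dimensional estimate is fed $H^{1/d}$ and the $(d-1)$-dimensional hypothesis $H^{(d-1)/d}$, so that on the one hand every one-dimensional exceptional set arising at any depth of the recursion carries the common parameter $H^{1/d}$ required by $\car_{d}(H^{1/d},\cdot)$, and on the other hand the loss from the peeling step and the loss from the inductive step combine multiplicatively, $(CH^{1/d})(C_{d-1}H^{(d-1)/d})(M-m)=C_{d}H(M-m)$, rather than picking up an extra power of $H$. The remaining verifications --- that the slicing is compatible with the construction, that the centre of each subpolydisk is the corresponding coordinate of $z^{0}$, and that $\car$ is monotone under inclusion of sets --- are routine.
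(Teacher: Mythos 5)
Your induction on $d$ — one-dimensional Cartan plus Jensen/Harnack for the base case, then slicing along the centre line in each coordinate with the split $H=H^{1/d}\cdot H^{(d-1)/d}$ so that every one-dimensional exceptional set carries the parameter $H^{1/d}$ and the losses combine multiplicatively — is correct and is exactly the argument of Goldstein and Schlag for this lemma. The paper itself offers no proof (it simply quotes \cite[Lemma 2.4]{MR2753606}), so your proposal matches the cited source's approach rather than deviating from anything in the paper.
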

The following result is a good illustration for the use of Cartan's
estimate. It essentially tells us that the large deviations estimate
for $f_{N}^{a}\left(x,\omega,E\right)$ can only fail if $E$ is close
to the spectrum of $H^{\left(N\right)}\left(x,\omega\right)$.
\begin{prop}
\label{prop:prelims-LDT-failure}Let $H\gg1$ and $\left(\omega,E\right)\in\mb T_{c,\alpha}\times\mb C$
such that $L\left(\omega,E\right)>\gamma>0$. There exist constants
$N_{0}=N_{0}\left(\left\Vert a\right\Vert _{\infty},\left\Vert b\right\Vert _{*},\left|E\right|,c,\alpha,\gamma\right)$
, $C_{0}=C_{0}\left(\alpha\right)$ such that for all $N\ge N_{0}$
and $x\in\mb T$, if 
\begin{equation}
\log\left|f_{N}^{a}\left(x,\omega,E\right)\right|\le NL^{a}\left(\omega,E\right)-H\left(\log N\right)^{C_{0}},\label{eq:ldt-fails}
\end{equation}
then $f_{N}^{a}\left(z,\omega,E\right)=0$ for some $\left|z-x\right|\lesssim N^{-1}\exp\left(-H\right)$.
Furthermore, there exists a constant $C_{1}=C_{1}\left(\left\Vert a\right\Vert _{\infty},\left\Vert b\right\Vert _{\infty}\right)$
such that
\[
\dist\left(E,\spec\left(H^{\left(N\right)}\left(x,\omega\right)\right)\right)\lesssim C_{1}N^{-1}\exp\left(-H\right).
\]
\end{prop}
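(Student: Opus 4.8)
The plan is to derive both conclusions from Cartan's estimate (\lemref{prelims-Cartan-estimate}) applied to the analytic function $z\mapsto f_{N}^{a}(z,\omega,E)$ on a disk of radius of order $N^{-1}$ around $x$. First I would set up the scaling: consider $\phi(\zeta):=f_{N}^{a}(x+\zeta/N,\omega,E)$ for $\zeta\in\mc D(0,1)$, so that $\phi$ is analytic on the unit disk. The upper bound $M$ for $\log|\phi|$ comes from \corref{prelims-uniform-upper-bound} (or directly from \propref{prelims-M^a-upper-bound} together with the Lipschitz control on the vertical strip $|y|\le N^{-1}$): one gets $M\le NL^{a}(\omega,E)+(\log N)^{C_0}$. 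By hypothesis \eqref{ldt-fails}, the value at the centre is small, $\log|\phi(0)|=\log|f_{N}^{a}(x,\omega,E)|\le NL^{a}(\omega,E)-H(\log N)^{C_0}$, but this is a lower bound on $m$ in the wrong direction, so I cannot take $m$ this small — I must instead choose $m$ to be a genuine lower bound on $\log|\phi|$ somewhere, which is exactly where the large deviations estimate enters.

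The key step is to produce a point near $x$ where $\log|f_{N}^{a}|$ is \emph{not} small. By \propref{prelims-LDT-determinants} with the choice $H':=2\log C_1 + 2$ (an absolute-sized quantity), the set of $x'\in\mb T$ where $\log|f_{N}^{a}(x',\omega,E)|<NL^{a}(\omega,E)-H'(\log N)^{C_0}$ has measure at most $C_1\exp(-H')\le e^{-2}<1$, hence is a proper subset of $\mb T$; in particular there is a point $x'$, which one can also arrange to lie within distance $O(N^{-1})$ of $x$ by a more careful version of the LDT on the short arc (or simply by noting the bad set cannot contain the whole arc of length $\sim N^{-1}$ after rescaling — alternatively invoke the Lipschitz estimate \eqref{prelims-f-lipschitzness} to transfer a good value at scale $1$ down to scale $N^{-1}$ is not available, so the cleanest route is: the complement of the bad set is dense enough that in the rescaled picture $\phi$ takes a value $\ge NL^a-H'(\log N)^{C_0}$ at some $|\zeta_0|<1$). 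Then apply Cartan's estimate to $\phi$ (recentred at such a $\zeta_0$, or just use that $M-m\le (H'+1)(\log N)^{C_0}$): outside an exceptional set $\mc B\in\car_1(\sqrt{H},K)$ with $K=C H(\log N)^{C_0}$, one has $\log|\phi(\zeta)|>M-CH(\log N)^{C_0}$. Choosing the Cartan parameter so that this lower bound exceeds the right-hand side of \eqref{ldt-fails} forces $\zeta=0$ to lie in $\mc B$, i.e. $0$ is within $\exp(-\sqrt H)$ (suitably, $\exp(-H)$ after adjusting constants) of one of the Cartan disks; since $\phi$ restricted to a small sub-disk is a small perturbation of a normal situation, $\phi$ must vanish inside a disk of radius $\lesssim\exp(-H)$ about $0$. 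Undoing the rescaling gives $f_{N}^{a}(z,\omega,E)=0$ for some $|z-x|\lesssim N^{-1}\exp(-H)$.

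For the second conclusion I would pass from the zero $z$ of the determinant to the spectrum. Write $f_{N}^{a}(z,\omega,E)=\det[H^{(N)}(z,\omega)-E]\cdot\prod_{j=1}^{N} b(z+j\omega)$ up to the normalization in \eqref{prelims-Ma-M}; since $b$ is analytic and not identically zero, near the real point $x$ the product $\prod b(z+j\omega)$ is bounded below (using \lemref{prelims-uniform-bound-S_N} one controls $S_N$, hence $|\prod b|\ge\exp(ND-C(\log N)^{C_0})$ is false in general — but one only needs that $\prod b(z+j\omega)\ne0$ for $z$ in the tiny disk, which holds once $N_0$ is large since $b$ has finitely many zeros on each period and $\omega$ is Diophantine, or more simply restrict attention to the generic $x$). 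Thus $\det[H^{(N)}(z,\omega)-E]=0$, i.e. $E\in\spec(H^{(N)}(z,\omega))$. Finally, $H^{(N)}(z,\omega)-H^{(N)}(x,\omega)$ has operator norm $\lesssim(\|a'\|_\infty+\|b'\|_\infty+\|\t b'\|_\infty)|z-x|\lesssim C_1 N^{-1}\exp(-H)$ by analyticity of $a,b,\t b$ on $\mb H_{\rho_0}$, so by standard perturbation of eigenvalues (Weyl/ min-max, or just that eigenvalues move at most by the norm of the perturbation) $\dist(E,\spec(H^{(N)}(x,\omega)))\le\|H^{(N)}(z,\omega)-H^{(N)}(x,\omega)\|\lesssim C_1 N^{-1}\exp(-H)$.

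The main obstacle is the first conclusion, specifically arranging that the zero produced by Cartan's estimate lies within $N^{-1}\exp(-H)$ of $x$ rather than merely within $O(N^{-1})$: this requires choosing the ``good'' reference point $x'$ (or $\zeta_0$) close to the centre and checking that the Cartan exceptional set, which has total radius $\exp(-\sqrt H)$, actually traps the centre $\zeta=0$ — one must balance the Cartan parameter against $H$ from \eqref{ldt-fails} carefully (replacing $H$ by a constant multiple throughout, as the statement's $\lesssim$ allows) so that the contradiction ``$\log|\phi(0)|$ is simultaneously $\le M-H(\log N)^{C_0}$ and $>M-CH_{\text{Cartan}}(\log N)^{C_0}$'' is avoided only if $0\in\mc B$. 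The rest is bookkeeping with the constants $C_0(\alpha)$ and the $(\log N)^{C_0}$ factors, and a routine perturbation argument.
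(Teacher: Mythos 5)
Your overall strategy is the paper's: rescale to $\phi(\zeta)=f_N^a(x+N^{-1}\zeta,\omega,E)$, use the large deviations estimate to find a nearby point where $\log|\phi|$ is close to $NL^a$, apply Cartan's estimate, conclude that $\zeta=0$ must lie in the exceptional set, extract a zero, and finish with a perturbation argument. However, there are two genuine gaps in your treatment of the first conclusion. First, your choice of the LDT parameter $H'=2\log C_1+2$ only makes the bad set have measure $e^{-2}<1$; that guarantees a good point somewhere on $\mb T$, but the good point must satisfy $|\zeta_0|<1/100$, i.e.\ lie within $N^{-1}/100$ of $x$, and a bad set of measure $e^{-2}$ can easily swallow an arc of length $\sim N^{-1}$. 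You flag this difficulty but your proposed resolution (``the complement of the bad set is dense enough'') is false with your choice of $H'$. The fix is to take $H'\simeq\log N$ in \propref{prelims-LDT-determinants}, so the bad set has measure $\ll N^{-1}$; this costs only an extra factor $\log N$ in the deviation, which is absorbed by enlarging the exponent $C_0$ — this is exactly what the paper does when it asserts the existence of $\zeta_0$ with $|\zeta_0|<1/100$ and $\log|\phi(\zeta_0)|>NL^a-(\log N)^C$. Second, the passage from ``$0$ lies in a Cartan exceptional disk $\mc D(\zeta_j,r_j)$'' to ``$\phi$ vanishes within $\lesssim\exp(-H)$ of $0$'' is asserted, not proved: membership in the exceptional set does not by itself produce a zero. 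The paper's argument is the minimum modulus principle: if $\phi$ had no zero in $\mc D(\zeta_0,1/6)\cap\mc D(\zeta_j,r_j)$, then $|\phi(0)|$ would be bounded below by the minimum of $|\phi|$ on the boundary of that region, which lies outside the exceptional set and hence is $>\exp(NL^a-CH(\log N)^{C})$, contradicting \eqref{ldt-fails}.

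Two smaller remarks. Your digression about the product $\prod_j b(z+j\omega)$ in the second part is unnecessary: by the paper's definition $f_N^a(z,\omega,E)=\det[H^{(N)}(z,\omega)-E]$ directly (the normalization \eqref{prelims-Ma-M} concerns the transfer matrices, not the determinant), so the zero $z$ immediately gives $E\in\spec(H^{(N)}(z,\omega))$. And in the final perturbation step, note that $H^{(N)}(z,\omega)$ is not Hermitian for complex $z$, so Weyl's inequality does not apply directly; the correct one-line argument takes a unit eigenvector $v$ of $H^{(N)}(z,\omega)$ for $E$, bounds $\|(H^{(N)}(x,\omega)-E)v\|\le C_1|z-x|$, and uses that $H^{(N)}(x,\omega)$ is Hermitian to convert this into a bound on $\dist(E,\spec(H^{(N)}(x,\omega)))$.
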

\begin{proof}
Let $\phi\left(\zeta\right)=f_{N}^{a}\left(x+N^{-1}\zeta,\omega,E\right)$.
By the large deviations estimate for determinants (\propref{prelims-LDT-determinants})
it follows that for large enough $N$ there exists $\zeta_{0}$, $\left|\zeta_{0}\right|<1/100$,
such that $\left|\phi\left(\zeta_{0}\right)\right|>NL^{a}\left(\omega,E\right)-\left(\log N\right)^{C}$.
Using \corref{prelims-uniform-upper-bound} we can apply Cartan's
estimate, \lemref{prelims-Cartan-estimate}, to $\phi$ on $\mc D\left(\zeta_{0},1\right)$,
to get that $\log\left|\phi\left(\zeta\right)\right|>NL^{a}\left(\omega,E\right)-H\left(\log N\right)^{C_{0}}$,
for $\zeta\in\mc D\left(\zeta_{0},1/6\right)\setminus\left(\cup_{j}\mc D\left(\zeta_{j},r_{j}\right)\right)$,
with $\sum_{j}r_{j}\le\exp\left(-H\right)$. By our assumption \eqref{ldt-fails},
it follows that $0\in\mc D\left(\zeta_{j},r_{j}\right)$ for some
$j$. Furthermore there must exist $\zeta'\in\mc D\left(\zeta_{0},1/6\right)\cap\mc D\left(\zeta_{j},r_{j}\right)$
such that $\phi\left(\zeta'\right)=0$, otherwise we can use the minimum
modulus principle to contradict \eqref{ldt-fails}. Now, the first
claim holds with $z=x+N^{-1}\zeta'$. The last claim follows from
the fact that there exists a constant $C_{1}=C_{1}\left(\left\Vert a\right\Vert _{\infty},\left\Vert b\right\Vert _{\infty}\right)$
such that 
\[
\left\Vert H^{\left(N\right)}\left(z,\omega\right)-H^{\left(N\right)}\left(x,\omega\right)\right\Vert \le C_{1}\left|z-x\right|,
\]
 and the fact that $H^{\left(N\right)}\left(x,\omega\right)$ is Hermitian.
\end{proof}
Next we present the key tools for obtaining localization. They are
the Poisson formula in terms of Green's function and a bound on the
off-diagonal terms of Green's function in terms of the deviations
estimate for the determinant $f_{N}^{a}$. We will denote Green's
function by $G_{N}\left(z,\omega,E\right):=\left(H^{\left(N\right)}\left(z,\omega\right)-E\right)^{-1}$,
or in general $G_{\Lambda}\left(z,\omega,E\right):=\left(H_{\Lambda}\left(z,\omega\right)-E\right)^{-1}$.
It is known that any solution $\psi$ of the difference equation $H\left(z,\omega\right)\psi=E\psi$
satisfies the Poisson formula:
\begin{equation}
\psi\left(m\right)=G_{\left[a,b\right]}\left(z,\omega,E\right)\left(m,a\right)\psi\left(a-1\right)+G_{\left[a,b\right]}\left(z,\omega,E\right)\left(m,b\right)\psi\left(b+1\right),\label{eq:Poisson}
\end{equation}
for any $\left[a,b\right]$ and $m\in\left[a,b\right]$. Using Cramer's
rule one can explicitly write the entries of Green's function. Namely,
we have that $G_{N}\left(z,\omega,E\right)\left(j,k\right)$ is given
by 
\[
\begin{cases}
\dfrac{f_{j-1}^{a}\left(z,\omega,E\right)b\left(z+j\omega\right)\ldots b\left(z+\left(k-1\right)\omega\right)f_{N-\left(k+1\right)}^{a}\left(z+\left(k+1\right)\omega,\omega,E\right)}{f_{N}^{a}\left(z,\omega,E\right)} & ,\, j<k\\
\dfrac{f_{k-1}^{a}\left(z,\omega,E\right)\t b\left(z+k\omega\right)\ldots\t b\left(z+\left(j-1\right)\omega\right)f_{N-\left(j+1\right)}^{a}\left(z+\left(j+1\right)\omega,\omega,E\right)}{f_{N}^{a}\left(z,\omega,E\right)} & ,\, j>k\\
\dfrac{f_{j-1}^{a}\left(z,\omega,E\right)f_{N-\left(k+1\right)}^{a}\left(z+\left(k+1\right)\omega,\omega,E\right)}{f_{N}^{a}\left(z,\omega,E\right)} & ,\, j=k.
\end{cases}
\]

\begin{lem}
\label{lem:prelims-bounds-Green-entries} Let $\left(\omega,E\right)\in\mb T_{c,\alpha}\times\mb C$
such that $L\left(\omega,E\right)>\gamma>0$. There exist constants
$N_{0}=N_{0}\left(\left\Vert a\right\Vert _{\infty},\left\Vert b\right\Vert _{*},\left|E\right|,c,\alpha,\gamma\right)$,
$C_{0}=C_{0}\left(\alpha\right)$, such that for $N\ge N_{0}$ we
have that if
\[
\log\left|f_{N}^{a}\left(x,\omega,E\right)\right|\ge NL_{N}^{a}\left(\omega,E\right)-K/2,
\]
for some $x\in\mb T$ and $K>\left(\log N\right)^{C_{0}}$, then 
\[
\left|G_{N}\left(x,\omega,E\right)\left(j,k\right)\right|\le\exp\left(-\gamma\left|k-j\right|+K\right).
\]
\end{lem}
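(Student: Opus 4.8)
The strategy is to estimate each entry $G_N(x,\omega,E)(j,k)$ using the explicit Cramer's rule formula displayed just before the statement, bounding the numerator from above and the denominator from below. The denominator is controlled directly by hypothesis: $\log|f_N^a(x,\omega,E)| \ge NL_N^a(\omega,E) - K/2$. For the numerator, say in the case $j<k$, we have a product of the form $f_{j-1}^a(x,\omega,E)\cdot b(x+j\omega)\cdots b(x+(k-1)\omega)\cdot f_{N-(k+1)}^a(x+(k+1)\omega,\omega,E)$. The two determinant factors are bounded above via \propref{prelims-M^a-upper-bound} (using $\log|f_M^a|\le\log\|M_M^a\|$), giving roughly $(j-1)L^a(\omega,E) + (\log N)^{C_0}$ and $(N-k-1)L^a(\omega,E) + (\log N)^{C_0}$ respectively. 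The product of $b$-factors is $\exp(S_{k-j}(x+j\omega,\omega))$, which on the real line by \lemref{prelims-uniform-bound-S_N} is at most $(k-j)D + C_1(\log N)^{C_0}$.

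Combining, the logarithm of $|G_N(j,k)|$ is at most
\[
(j-1)L^a + (N-k-1)L^a + (k-j)D + C(\log N)^{C_0} - NL_N^a + K/2.
\]
Now I would group terms using the identity $L^a = L + D$ from \eqref{prelims-L-La}: the $L^a$-terms contribute $(N - (k-j) - 2)L^a$, and adding the $(k-j)D$ term converts part of this to $L$, yielding $(N - (k-j) - 2)L + (N-2)D - \text{(correction)}$... more carefully, $(j-1)L^a + (N-k-1)L^a + (k-j)D = (N-2)L^a - (k-j)L^a + (k-j)D = (N-2)L^a - (k-j)L$. So the bound becomes $(N-2)L^a - (k-j)L - NL_N^a + C(\log N)^{C_0} + K/2$. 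Since $L^a(\omega,E) = L(\omega,E) + D$ and similarly $NL_N^a = NL_N + ND$, and since $L \le L_N$ with $NL_N - NL \le C(\log N)^{C_0}$ (this is the standard convergence-rate estimate, which for the Diophantine frequencies in question holds with a power-of-log error, as used throughout \cite{2012arXiv1202.2915B}), the difference $(N-2)L^a - NL_N^a$ is bounded by $-2L^a + C(\log N)^{C_0} \le C(\log N)^{C_0}$. Thus we arrive at $\log|G_N(j,k)| \le -\gamma|k-j| + C(\log N)^{C_0} + K/2$, using $L(\omega,E) > \gamma$. Since $K > (\log N)^{C_0}$, choosing $C_0$ large enough absorbs the $C(\log N)^{C_0}$ term into $K/2$, giving the claimed bound $\exp(-\gamma|k-j| + K)$.

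The main obstacle is bookkeeping the interplay between $L^a$, $L$, $D$, and their finite-scale counterparts $L_N^a$, $L_N$: one must ensure that replacing $L_N^a$ by $L^a$ (or vice versa) costs only $(\log N)^{C_0}$ and that all the additive constants coming from \propref{prelims-M^a-upper-bound} and \lemref{prelims-uniform-bound-S_N} are of the same admissible form, so that they can be uniformly absorbed once $C_0$ is taken large and $N \ge N_0$. A secondary point is handling the three cases $j<k$, $j>k$, $j=k$ uniformly — but the $j>k$ case is symmetric with $\t S$ in place of $S$ (and $\t S_N = S_N$ on $\mb T$ by the remark after \eqref{prelims-Mu-Ma}), and the $j=k$ case is strictly easier since there are no $b$-factors, so no new ideas are needed. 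I would present the $j<k$ case in detail and remark that the others follow identically.
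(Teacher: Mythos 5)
Your proposal is correct and follows essentially the same route as the paper's proof: Cramer's rule for the entries, \propref{prelims-M^a-upper-bound} for the numerator determinants, \lemref{prelims-uniform-bound-S_N} for the product of $b$-factors, the hypothesis for the denominator, and the identity $L=-D+L^{a}$ together with $L>\gamma$ to produce the decay $-\gamma\left|k-j\right|$. The only cosmetic difference is that you pass from $L_{N}^{a}$ to $L^{a}$ via the rate-of-convergence estimate, whereas the inequality $L_{N}^{a}\ge L^{a}$ already suffices; everything else, including the treatment of the three cases, matches the paper.
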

\begin{proof}
Assume $j<k$. Then we have
\begin{multline*}
\left|G_{N}\left(z,\omega,E\right)\right|=\frac{\left|f_{j-1}^{a}\left(x,\omega,E\right)\right|\exp\left(S_{k-j}\left(x+j\omega,\omega\right)\right)\left|f_{N-\left(k+1\right)}^{a}\left(x+\left(k+1\right)\omega,\omega,E\right)\right|}{\left|f_{N}^{a}\left(x,\omega,E\right)\right|}\\
\le\exp\left(\left(j-1\right)L^{a}+\left(k-j\right)D+\left(N-k-1\right)L^{a}-NL^{a}+\frac{K}{2}+\left(\log N\right)^{C}\right)\\
=\exp\left(\left(k-j\right)\left(D-L^{a}\right)-2L^{a}+\frac{K}{2}+\left(\log N\right)^{C}\right)\le\exp\left(-\gamma\left(k-j\right)+K\right).
\end{multline*}
 We used \propref{prelims-M^a-upper-bound}, \lemref{prelims-uniform-bound-S_N},
and \eqref{prelims-L-La}. The cases $j=k$ and $j>k$ are analogous. 
\end{proof}
Finally, the following result is needed for the Weierstrass Preparation
of the determinants (see \propref{resultants-Weierstrass-preparation-for-determinants}).
The statement of the result is adapted to our setting.
\begin{prop}
\label{prop:prelims-number-of-ev}(\cite[Theorem 4.13]{2012arXiv1202.2915B})
Let $\left(\omega,E_{0}\right)\in\mb T_{c,\alpha}\times\mb C$ such
that $L\left(\omega,E_{0}\right)>\gamma>0$. There exist constants
\textup{$C_{0}=C_{0}\left(\alpha\right)$,} $C_{1}=C_{1}\left(\left\Vert a\right\Vert _{\infty},\left\Vert b\right\Vert _{*},\left|E_{0}\right|,c,\alpha,\gamma\right)$,
and $N_{0}=N_{0}(\left\Vert a\right\Vert _{\infty},\left\Vert b\right\Vert _{*},\left|E_{0}\right|,c,\alpha,\gamma)$
such that for any $x_{0}\in\mathbb{T}$ and $N\ge N_{0}$ one has
\[
\#\left\{ E\in\mathbb{R}:\, f_{N}^{a}\left(x_{0},\omega,E\right)=0,\,\left|E-E_{0}\right|<N^{-C_{1}}\right\} \le C_{1}\left(\log N\right)^{C_{0}}
\]
and
\[
\#\left\{ z\in\mb C:\, f_{N}^{a}\left(z,\omega,E_{0}\right)=0,\,\left|z-x_{0}\right|<N^{-1}\right\} \le C_{1}\left(\log N\right)^{C_{0}}.
\]

\end{prop}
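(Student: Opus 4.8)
Both assertions ask for an upper bound on the number of zeros of an analytic function in a disk, so the tool of choice is Jensen's formula: if $\varphi$ is analytic on $\overline{\mc D(w_*,R)}$ with $\varphi(w_*)\neq 0$, then for $0<r<R$ the number of zeros of $\varphi$ in $\mc D(w_*,r)$ is at most $\left(\log(R/r)\right)^{-1}\bigl(\sup_{|w-w_*|=R}\log|\varphi(w)|-\log|\varphi(w_*)|\bigr)$. The plan is to feed this an upper bound of the shape $NL^{a}(\omega,E_0)+(\log N)^{C_0}$ on the outer circle, supplied by Corollary~\ref{cor:prelims-uniform-upper-bound} together with $\log|f_N^a|\le\log\|M_N^a\|$, and a matching lower bound of the shape $NL^{a}(\omega,E_0)-C(\log N)^{C_0+1}$ at a well-chosen center $w_*$; the difference is then $O\bigl((\log N)^{C_0+1}\bigr)$, and dividing by $\log(R/r)$ yields the stated count after absorbing a power of $\log N$ into $C_0=C_0(\alpha)$. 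Note that $f_N^a(\cdot,\omega,E_0)$ is entire in $z$ and $f_N^a(x_0,\omega,\cdot)$ is a degree-$N$ polynomial, so the two quantities in the statement are literally numbers of zeros, counted with multiplicity.

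For the count in $z$ I would get the lower bound straight from the large deviations estimate. Applying Proposition~\ref{prop:prelims-LDT-determinants} to $f_N^a(\cdot,\omega,E_0)$ with $H\asymp\log N$ shows the set of $x$ violating $\log|f_N^a(x,\omega,E_0)|\ge NL^{a}(\omega,E_0)-C(\log N)^{C_0+1}$ has measure $\lesssim N^{-2}$, so there is a real $x_1$ with $|x_1-x_0|\lesssim N^{-1}$ at which the reversed inequality holds. Rescaling $z$ by a small multiple of $N^{-1}$ about $x_0$ and invoking Corollary~\ref{cor:prelims-uniform-upper-bound} (which, as its proof shows, remains valid for $|\Im z|\le 2N^{-1}$) to get the upper bound $NL^{a}(\omega,E_0)+(\log N)^{C_0}$ on a $z$-disk of radius $\asymp N^{-1}$ about $x_0$, Jensen's formula centered at $x_1$ — with inner radius just large enough to contain $\mc D(x_0,N^{-1})$ and outer radius still inside that disk — bounds the number of zeros there by $\lesssim(\log N)^{C_0+1}$.

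The count in $E$ is the genuinely delicate point, because the large deviations estimate controls $f_N^a$ in the $x$-variable for fixed $E$, whereas here $x=x_0$ is fixed and arbitrary and $\log|f_N^a(x_0,\omega,\cdot)|$ may be atypically small on an entire neighbourhood of $E_0$ (in particular $E_0$ may be an eigenvalue of $H^{(N)}(x_0,\omega)$), so a priori there is no point near $E_0$ at which to center Jensen's formula. The way around this is to use that $H^{(N)}(x_0,\omega)$ is Hermitian, hence has real spectrum: the point $E^{*}:=E_0+iN^{-C_1}$ then automatically satisfies $\dist\bigl(E^{*},\spec H^{(N)}(x_0,\omega)\bigr)\ge N^{-C_1}$, so — taking $H\asymp\log N$ and replacing $\gamma$ by $\gamma/2$, which is legitimate for large $N$ by continuity of $L(\omega,\cdot)$ — the conclusion of Proposition~\ref{prop:prelims-LDT-failure} at $(x_0,E^{*})$ would fail, and therefore its hypothesis fails: $\log|f_N^a(x_0,\omega,E^{*})|>NL^{a}(\omega,E^{*})-C(\log N)^{C_0+1}$. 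A Hölder-type comparison $|L^{a}(\omega,E^{*})-L^{a}(\omega,E_0)|\lesssim N^{-1}(\log N)^{C_0}$, obtained by integrating Corollary~\ref{cor:prelims-uniform-upper-bound} over $\mb T$, turns this into a lower bound in terms of $L^{a}(\omega,E_0)$. Using Corollary~\ref{cor:prelims-uniform-upper-bound} once more for the upper bound on the circle $|E-E_0|=\tfrac12 N^{-\widetilde C_1}$, where $\widetilde C_1$ is that corollary's constant and one takes $C_1:=\widetilde C_1+1$ (and then enlarges it to also dominate the count constant), Jensen's formula centered at $E^{*}$ with $r=2N^{-C_1}$ and $R=\tfrac12 N^{-\widetilde C_1}$ — so that $R/r\asymp N$ — bounds the number of zeros of $f_N^a(x_0,\omega,\cdot)$ in $\mc D(E^{*},r)\supseteq\mc D(E_0,N^{-C_1})$ by $\lesssim(\log N)^{C_0}$.

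The main obstacle is precisely the construction of this lower bound in the $E$-count for an arbitrary phase $x_0$; the idea that makes it work for all $x_0$ is the Hermitian/real-spectrum observation, which hands us for free a point $E^{*}$ at distance $\gtrsim N^{-C_1}$ from the zeros of $f_N^a(x_0,\omega,\cdot)$, which is exactly what Proposition~\ref{prop:prelims-LDT-failure} needs to be applicable there. Everything else — the extension of Corollary~\ref{cor:prelims-uniform-upper-bound} to a slightly wider strip, the comparison of $L^{a}$ at nearby energies, the exact choice of radii, and the final relabelling of $C_0=C_0(\alpha)$ and $C_1$ — is routine bookkeeping.
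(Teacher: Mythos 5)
Your proposal is correct. Note that the paper itself does not prove this proposition; it is quoted verbatim (adapted) from \cite[Theorem 4.13]{2012arXiv1202.2915B}, so the comparison is with the standard argument used there. Your treatment of the $z$-count is exactly that argument: Jensen's formula on a disk of radius $\asymp N^{-1}$, with the center supplied by the large deviations estimate and the outer bound by \corref{prelims-uniform-upper-bound} (your remark that the corollary's proof really gives the bound for $|\Im z|\le 2N^{-1}$ is needed and is accurate, since the proof only uses $L(y,\omega,E)>\gamma/2$ on a strip of fixed width $\rho$ together with \lemref{apx-Ly-Ly'}). Where you genuinely depart is the $E$-count. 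The usual route keeps the energy real: one picks an LDT-good phase $x_{1}$ with $|x_{1}-x_{0}|\le N^{-C_{1}-1}$, runs Jensen in $E$ centered at $(x_{1},E_{0})$, and transfers the count back to $x_{0}$ via the Lipschitz dependence of eigenvalues of Hermitian matrices on the entries, $|E_{j}^{(N)}(x_{0},\omega)-E_{j}^{(N)}(x_{1},\omega)|\le C|x_{0}-x_{1}|$. You instead keep $x_{0}$ fixed and shift the energy to $E^{*}=E_{0}+iN^{-C_{1}}$, using reality of the spectrum to place $E^{*}$ at distance $\ge N^{-C_{1}}$ from the zeros and then the contrapositive of \propref{prelims-LDT-failure} (with $H\asymp\log N$) to get the lower bound at the Jensen center. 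Both tricks exploit Hermiticity; yours is arguably cleaner in that no phase is moved, at the price of invoking the Cartan-based \propref{prelims-LDT-failure} and of needing $L(\omega,E^{*})>\gamma/2$ and $N|L^{a}(\omega,E^{*})-L^{a}(\omega,E_{0})|\lesssim(\log N)^{C_{0}}$ — both of which are indeed available from \lemref{apx-Ln_L} and \lemref{apx-LE-LE'} (equivalently, from the first step of the proof of \corref{prelims-uniform-upper-bound}), so there is no gap, only a dependence on more machinery than the reference's proof uses.
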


\section{\label{sec:localization}Localization}

In this section we will show that elimination of resonances implies
localization. More precisely we will assume that we have the following
elimination of resonances result.

\theoremstyle{plain}
\newtheorem{elimination-assumption}[thm]{Elimination  Assumption}

\begin{elimination-assumption}\label{localization-elimination-assumption}
Let $A=A\left(\alpha\right)$ be a fixed constant, much larger than
the $C_{0}$ constants from \corref{prelims-uniform-upper-bound},
\corref{prelims-lipschitzness}, and \lemref{prelims-bounds-Green-entries}.
Let $l=2\left[\left(\log N\right)^{A}\right]$. We assume that there
exists a constant $N_{0}=N_{0}\left(\left\Vert a\right\Vert _{\infty},\left\Vert b\right\Vert _{*},c,\alpha,\gamma,E^{0}\right)$
such that for any $N\ge N_{0}$ there exist constants $\sigma_{N}\gg\exp\left(-l^{1/4}\right)$,
$Q_{N}\gg l^{3}$, and a set $\Omega_{N}\subset\mb T$, with the property
that for any $\omega\in\Omega^{0}\cap\mb T_{c,\alpha}\setminus\Omega_{N}$
there exists a set $\mc E_{N,\omega}\subset\mb R$ such that for any
$x\in\mb T$ and any integer $m$, $Q_{N}\le\left|m\right|\le N$,
we have
\begin{equation}
\dist\left(\mathcal{E}^{0}\cap\spec\left(H^{\left(l_{1}\right)}\left(x,\omega\right)\right)\setminus\mc E_{N,\omega},\spec\left(H^{\left(l_{2}\right)}\left(x+m\omega,\omega\right)\right)\right)\ge\sigma_{N},\label{eq:localization-hyp}
\end{equation}
$l_{1},l_{2}\in\left\{ l,l+1,2l,2l+1\right\} $.\end{elimination-assumption}
Similarly to \cite{MR2753606}, we could have assumed that we have
elimination between any scales $l_{1}$, $l_{2}$, $l\le l_{1},l_{2}\le3l$.
However, this would lead to an extra $\log N$ power in our final
separation result. We note that for localization it is enough to assume
$l_{1},l_{2}\in\left\{ l,2l\right\} $, and that the stronger assumption
is needed in the next section, for obtaining separation. 

In this section and the next, all the results hold under the implicit
assumption that $N$ is large enough, as needed. The lower bound on
$N$ will depend on all the parameters of the problem (as in the Elimination
Assumption \ref{localization-elimination-assumption}).

The following lemma is the basic mechanism through which elimination
of resonances enters the proof of localization. As a consequence of
\propref{prelims-LDT-failure}, it shows that the large deviations
estimate for $f_{l}^{a}\left(x+m\omega,\omega,E\right)$ can only
fail for shifts $m$ in a ``small'' interval (that will end up being
the localization window). 
\begin{lem}
\label{lem:localization-LDT-failure}For all $x\in\mb T$, $\omega\in\Omega^{0}\cap\mathbb{T}_{c,a}\setminus\Omega_{N}$,
and $E\in\mathcal{E}^{0}$, $\dist\left(E,\mc E_{N,\omega}\cup\left(\mathcal{E}^{0}\right)^{C}\right)\gtrsim\exp\left(-l^{1/4}\right)$,
if we have
\begin{equation}
\log\left|f_{l}^{a}\left(x+n_{1}\omega,\omega,E\right)\right|\le lL_{l}^{a}-\sqrt{l},\label{eq:localization-LDT-fails-scale-l-n1}
\end{equation}
for some $n_{1}\in\left[0,N-1\right]$, then 
\begin{equation}
\log\left|f_{l'}^{a}\left(x+n\omega,\omega,E\right)\right|>l'L^{a}\left(\omega,E\right)-\sqrt{l'},\, l'\in\left\{ l,l+1,2l,2l+1\right\} ,\label{eq:localization-fl>lL-}
\end{equation}
 for all $n\in\left[0,N-1\right]\setminus\left[n_{1}-Q_{N},n_{1}+Q_{N}\right]$. \end{lem}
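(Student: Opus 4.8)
The plan is to argue by contradiction: suppose that \eqref{localization-LDT-fails-scale-l-n1} holds for some $n_1\in[0,N-1]$ but that \eqref{localization-fl>lL-} fails at some $n\in[0,N-1]$ with $|n-n_1|>Q_N$, i.e.\ $\log|f_{l'}^a(x+n\omega,\omega,E)|\le l'L^a(\omega,E)-\sqrt{l'}$ for some $l'\in\{l,l+1,2l,2l+1\}$. The idea is that each of these two large-deviation failures forces $E$ to be close to an eigenvalue of a short-scale restriction of the operator, and that these two eigenvalues then violate the Elimination Assumption at the shift $m=n-n_1$ (whose absolute value lies in $[Q_N,N]$, using $Q_N\gg l^3$ so $|m|\le N$ is automatic and $|m|\ge Q_N$ by hypothesis).

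First I would apply \propref{prelims-LDT-failure} to the determinant $f_l^a(\,\cdot\,,\omega,E)$ centred at the phase $x+n_1\omega$: since $lL_l^a-\sqrt l \le lL^a-\sqrt l + \big(lL_l^a - lL^a\big)$ and $L_l^a\to L^a$ with an error of order $(\log l)^{C_0}/l$ (absorbed because $A$ is chosen much larger than $C_0$, so $\sqrt l$ dominates), \eqref{localization-LDT-fails-scale-l-n1} is of the form \eqref{eq:ldt-fails} with $H\sim\sqrt l/(\log l)^{C_0}$. Hence there is an eigenvalue $E_1\in\spec\big(H^{(l)}(x+n_1\omega,\omega)\big)$ with $\dist(E,E_1)\lesssim C_1 l^{-1}\exp(-H)\ll\sigma_N$, using $\sigma_N\gg\exp(-l^{1/4})$ against $\exp(-H)\le\exp(-cl^{1/2}/(\log l)^{C_0})$. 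Because $\dist(E,\mc E_{N,\omega}\cup(\mc E^0)^C)\gtrsim\exp(-l^{1/4})$ is much larger than $\dist(E,E_1)$, we get $E_1\in\mc E^0\setminus\mc E_{N,\omega}$ — so $E_1$ is one of the ``good'' short-scale eigenvalues appearing on the left of \eqref{localization-hyp}. Symmetrically, applying \propref{prelims-LDT-failure} at phase $x+n\omega$ with window length $l'$ produces an eigenvalue $E_2\in\spec\big(H^{(l')}(x+n\omega,\omega)\big)$ with $\dist(E,E_2)\ll\sigma_N$; here I only need that $l'\in\{l,l+1,2l,2l+1\}$ so the quantitative constants are uniform and $\exp(-H)$ is still $\ll\sigma_N$.

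Now write $n=n_1+m$ with $|m|>Q_N$; by hypothesis $n\in[0,N-1]$ so $|m|\le N-1<N$, hence $Q_N\le|m|\le N$. Then $E_1\in\mc E^0\cap\spec(H^{(l)}(x+n_1\omega,\omega))\setminus\mc E_{N,\omega}$ and $E_2\in\spec(H^{(l')}(x+n_1\omega+m\omega,\omega))$ with $l,l'\in\{l,l+1,2l,2l+1\}$, so the Elimination Assumption \eqref{localization-hyp} (applied with phase $x+n_1\omega$, shift $m$, and scales $l_1=l$, $l_2=l'$) gives $|E_1-E_2|\ge\sigma_N$. But $|E_1-E_2|\le\dist(E,E_1)+\dist(E,E_2)\ll\sigma_N$, a contradiction. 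This forces \eqref{localization-fl>lL-} to hold for every $n\in[0,N-1]\setminus[n_1-Q_N,n_1+Q_N]$, which is the claim.

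The main obstacle is purely bookkeeping of the error terms: one must check that the Cartan-type loss $\exp(-H)$ coming out of \propref{prelims-LDT-failure} — where $H$ is roughly $\sqrt l/(\log l)^{C_0}$, determined by how far below $lL^a$ the logarithm of the determinant sits — genuinely beats the resonance scale $\sigma_N\gg\exp(-l^{1/4})$, and likewise beats the distance $\exp(-l^{1/4})$ separating $E$ from $\mc E_{N,\omega}\cup(\mc E^0)^C$, so that $E_1$ and $E_2$ land where the Elimination Assumption can be invoked; this is where the choice $A\gg C_0$ in the Elimination Assumption and the exponents $1/4$ versus $1/2$ are used. A minor point to be careful about is replacing $L_l^a$ by $L^a$ (and $L^a_{l'}$ by $L^a$) in going from \eqref{localization-LDT-fails-scale-l-n1} to the hypothesis \eqref{eq:ldt-fails}, and ensuring $N$ (hence $l$) is large enough that all the ``large $N$'' thresholds in \propref{prelims-LDT-failure} are met.
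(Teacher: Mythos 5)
Your proposal is correct and follows essentially the same route as the paper: apply \propref{prelims-LDT-failure} at the two phases $x+n_1\omega$ and $x+n\omega$ to produce eigenvalues $E_k^{(l)}(x+n_1\omega,\omega)$ and $E_{k'}^{(l')}(x+n\omega,\omega)$ within $\exp(-l^{1/3})$ of $E$, note that the first lies in $\mc E^0\setminus\mc E_{N,\omega}$ by the distance hypothesis, and contradict \eqref{localization-hyp} with $m=n-n_1$. Your extra bookkeeping (the $L_l^a$ versus $L^a$ replacement and the size of $H$) is consistent with what the paper leaves implicit.
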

\begin{proof}
Fix $x\in\mb T$, $\omega\in\mathbb{T}_{c,a}\setminus\Omega_{N}$,
and $E\in\mathcal{E}^{0}$, such that 
\begin{equation}
\dist\left(E,\mc E_{N,\omega}\cup\left(\mathcal{E}^{0}\right)^{C}\right)\gtrsim\exp\left(-l^{1/4}\right).\label{eq:localization-hyp-energy}
\end{equation}
Suppose there exists $n_{1}\in\left[0,N-1\right]$ such that \eqref{localization-LDT-fails-scale-l-n1}
holds. By \propref{prelims-LDT-failure} we have that there exists
$E_{k}^{\left(l\right)}\left(x+n_{1}\omega,\omega\right)$ such that
$\left|E_{k}^{\left(l\right)}\left(x+n_{1}\omega,\omega\right)-E\right|\le\exp\left(-l^{1/3}\right)$.
Due to \eqref{localization-hyp-energy} we have that $E_{k}^{\left(l\right)}\left(x+n_{1}\omega,\omega\right)\in\mathcal{E}^{0}\setminus\mc E_{N,\omega}$.
If \eqref{localization-fl>lL-} doesn't hold for $n\in\left[0,N-1\right]\setminus\left[n_{1}-Q_{N},n_{1}+Q_{N}\right]$,
then there exists $E_{k'}^{\left(l'\right)}\left(x+n\omega,\omega\right)$
such that $\left|E_{k'}^{\left(l'\right)}\left(x+n\omega,\omega\right)-E\right|\le\exp\left(-l^{1/3}\right)$,
and hence 
\[
\left|E_{k}^{\left(l\right)}\left(x+n_{1}\omega,\omega\right)-E_{k'}^{\left(l'\right)}\left(x+n\omega,\omega\right)\right|\lesssim\exp\left(-l^{1/3}\right).
\]
This contradicts \eqref{localization-hyp}, and thus concludes the
proof.
\end{proof}
We can now apply the Avalanche Principle to obtain large deviations
estimates at scales larger than $l$. 
\begin{cor}
\label{cor:localization-AP}Under the same assumptions as in \lemref{localization-LDT-failure}
and with $n_{1}$ as in \lemref{localization-LDT-failure}, we have
\begin{equation}
\left|f_{\left[0,n-1\right]}^{a}\left(x,\omega,E\right)\right|>\exp\left(nL^{a}\left(\omega,E\right)-l^{3}\right)\label{eq:localization-LDT-scale-n}
\end{equation}
for each $n=kl,kl+1$, $0\le n\le n_{1}-Q_{N}$, and
\begin{equation}
\left|f_{\left[n,N-1\right]}^{a}\left(x,\omega,E\right)\right|>\exp\left(\left(N-n\right)L^{a}\left(\omega,E\right)-l^{3}\right),\label{eq:localization-LDT-scale-N-n}
\end{equation}
for each $n_{1}+Q_{N}\le n=N-kl\le N-1$, $k\in\mb Z$.\end{cor}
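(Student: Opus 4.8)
The plan is to use Corollary~\ref{cor:prelims-AP-determinants} (the Avalanche Principle applied to the determinants) with all block lengths equal to $l$ or $l+1$. Fix $x$, $\omega$, $E$ as in Lemma~\ref{lem:localization-LDT-failure}, and let $n_1$ be the shift for which \eqref{localization-LDT-fails-scale-l-n1} holds. For a fixed $n = kl$ (or $kl+1$) with $0 \le n \le n_1 - Q_N$, I would chop the interval $[0,n-1]$ into $k$ consecutive blocks $\Lambda_j = [s_j, s_{j+1}-1]$ of length $l_j \in \{l, l+1\}$, so that $s_j = \sum_{i<j} l_i$ runs over the shifts $0, l, 2l, \ldots$ up to $n$. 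The point is that every shift $s_j$ appearing here, as well as every sum $s_j$ used in the pair-block determinants $f^a_{l_j + l_{j+1}}(x + s_j\omega, \omega, E)$, lies in $[0, n_1 - Q_N] \subset [0,N-1] \setminus [n_1 - Q_N, n_1 + Q_N]$ (here one uses that the blocks have length $\le l+1 \ll Q_N$, and that $l_j + l_{j+1} \in \{2l, 2l+1, 2l+2\}$, which is covered by the scales $\{l, l+1, 2l, 2l+1\}$ in Lemma~\ref{lem:localization-LDT-failure} up to trivial adjustments of block boundaries). Hence Lemma~\ref{lem:localization-LDT-failure} gives
\[
\log\left|f^a_{l_j}(x + s_j\omega, \omega, E)\right| > l_j L^a(\omega,E) - \sqrt{l_j}, \qquad \log\left|f^a_{l_j + l_{j+1}}(x + s_j\omega, \omega, E)\right| > (l_j + l_{j+1}) L^a(\omega,E) - \sqrt{l_j + l_{j+1}}
\]
for all the relevant $j$, which are exactly the hypotheses of Corollary~\ref{cor:prelims-AP-determinants} with $H = \sqrt{3l}$ (note $H \in (0, l(\log l)^{-2C_0})$ for $l$ large, and $H(\log l_j)^{C_0} \ge \sqrt{l_j}$, so the hypotheses of the corollary are implied).

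Applying Corollary~\ref{cor:prelims-AP-determinants} to the blocks covering $[0,n-1]$ yields
\[
\left|\log\left|f^a_{n}(x,\omega,E)\right| + \sum_{j=2}^{m-1}\log\|A_j^a\| - \sum_{j=1}^{m-1}\log\|A_{j+1}^a A_j^a\|\right| \lesssim m e^{-\gamma l/2},
\]
where $m = k = n/l + O(1)$ is the number of blocks. To turn this into the clean lower bound \eqref{localization-LDT-scale-n}, I would estimate the Avalanche sum: each $\log\|A_j^a\| \ge l_j L^a - \sqrt{l_j}$ from below (as above) and $\le l_j L^a + (\log l_j)^{C_0}$ from above (Proposition~\ref{prop:prelims-M^a-upper-bound}), while $\log\|A_{j+1}^a A_j^a\|$ is pinched between $(l_j + l_{j+1})L^a - \sqrt{l_j+l_{j+1}}$ and $(l_j+l_{j+1})L^a + (\log(l_j+l_{j+1}))^{C_0}$ in the same way. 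Telescoping, the main $L^a$ terms cancel and leave $\log|f^a_n(x,\omega,E)| = n L^a(\omega,E) + O(m (\log l)^{C_0}) + O(m e^{-\gamma l/2})$. Since $m \lesssim n/l \le N/l$, $l = 2[(\log N)^A]$, and $A$ is a large absolute constant, $m(\log l)^{C_0} \lesssim (N/l)(\log\log N)^{C_0 A} \ll l^3$ once $N$ is large (indeed $l^3 \approx (\log N)^{3A}$ dominates), giving \eqref{localization-LDT-scale-n}. The estimate \eqref{localization-LDT-scale-N-n} for $[n, N-1]$ is entirely symmetric: chop $[n, N-1]$ into blocks of length $l$ or $l+1$ starting from the right end $N-1$, note every shift involved lies in $[n_1 + Q_N, N-1] \subset [0,N-1]\setminus[n_1-Q_N, n_1+Q_N]$, and repeat the argument.

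The main obstacle is bookkeeping rather than conceptual: one must be careful that after subdividing $[0,n-1]$ into integer blocks of length $l$ or $l+1$ (needed because $n$ may be $kl$ or $kl+1$ and $l$ itself is even), every block length $l_j$ and every pair-sum $l_j + l_{j+1}$ that feeds into Corollary~\ref{cor:prelims-AP-determinants} is one for which Lemma~\ref{lem:localization-LDT-failure} actually supplies the deviation estimate — i.e. lies in $\{l, l+1, 2l, 2l+1\}$ up to the flexibility built into the Elimination Assumption. A clean way to handle this is to take all but possibly one block of length exactly $l$, absorbing the residue $n \bmod l \in \{0,1\}$ into a single block of length $l$ or $l+1$ at one end; then $l_j + l_{j+1} \in \{2l, 2l+1\}$ throughout, matching the assumption exactly. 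One also needs $n_1 - Q_N \ge l$ (so that there is at least one block, as required by Corollary~\ref{cor:prelims-AP-determinants} which needs $m \ge 2$, or rather one handles the degenerate small-$n$ case directly from \eqref{localization-fl>lL-}); this is where $Q_N \gg l^3$ and the range of $n$ are used. Everything else is the routine telescoping already carried out inside the proof of Corollary~\ref{cor:prelims-AP-determinants}.
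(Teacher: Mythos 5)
There is a genuine gap, and it sits exactly at the final quantitative step. A direct application of \corref{prelims-AP-determinants} at the real point $x$, with $m=k\approx n/l$ blocks, produces a lower bound of the form
\[
\log\left|f_{n}^{a}\left(x,\omega,E\right)\right|\ge nL^{a}\left(\omega,E\right)-C\,m\sqrt{l}-C\,m\left(\log l\right)^{C_{0}},
\]
because each block contributes a deficit of order $\sqrt{l}$ (from the hypotheses you feed into the Avalanche Principle) plus $(\log l)^{C_{0}}$ (from the uniform upper bound). Since $m\approx n/l$ and $n$ can be as large as $N$, the total deficit is of order $n/\sqrt{l}\approx N\left(\log N\right)^{-A/2}$, which is \emph{vastly larger} than $l^{3}\approx\left(\log N\right)^{3A}$. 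Your claimed inequality $m\left(\log l\right)^{C_{0}}\lesssim\left(N/l\right)\left(\log\log N\right)^{C_{0}A}\ll l^{3}$ is false: $N/l=N\left(\log N\right)^{-A}$ grows much faster than any power of $\log N$. So the direct telescoping cannot deliver \eqref{localization-LDT-scale-n}; it only gives $nL^{a}-Cn/\sqrt{l}$.

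The missing idea is to argue by contradiction through \propref{prelims-LDT-failure} (Cartan's estimate), which is what the paper does. If \eqref{localization-LDT-scale-n} fails, then \propref{prelims-LDT-failure} (applied with $H=l^{3}\left(\log n\right)^{-C_{0}}\ge l^{2}$, using $A\gg C_{0}$) produces a zero $z$ of $f_{n}^{a}\left(\cdot,\omega,E\right)$ with $\left|z-x\right|\lesssim\exp\left(-l^{2}\right)$. The Lipschitz estimate \corref{prelims-lipschitzness} transfers the scale-$l$ large deviations bounds from \lemref{localization-LDT-failure} to the point $z$ (with $2\sqrt{l'}$ in place of $\sqrt{l'}$), and only then is the Avalanche Principle applied, at $z$. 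The resulting finite lower bound $\log\left|f_{n}^{a}\left(z\right)\right|\gtrsim nL^{a}-4k\sqrt{l}$ is weak, but it suffices to contradict $f_{n}^{a}\left(z\right)=0$. In other words, the Avalanche Principle is used only to exclude zeros near $x$; the strong bound $nL^{a}-l^{3}$ then comes from Cartan's estimate, not from the AP telescoping. Your block decomposition and the verification of the AP hypotheses are fine and match the paper's; the route from there to the stated inequality is what needs to be replaced.
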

\begin{proof}
We only prove \eqref{localization-LDT-scale-n} for $n=kl$. The other
claims follow in the same way. 

Suppose that $n=kl$ and \eqref{localization-LDT-scale-n} fails.
Then by \propref{prelims-LDT-failure} we have $f_{\left[0,n-1\right]}^{a}\left(z\right)=f_{n}^{a}\left(z\right)=0$
for $z$ such that $\left|z-x\right|\lesssim n^{-1}\exp\left(-l^{3}/\left(\log n\right)^{C_{0}}\right)\lesssim\exp\left(-l^{2}\right)$
(the last inequality holds due to our choice of $l$ in the Elimination
Assumption \ref{localization-elimination-assumption}). Using \corref{prelims-lipschitzness}
we can conclude that 
\[
\log\left|f_{l'}^{a}\left(z+k\omega\right)\right|>l'L^{a}-2\sqrt{l'},\, l'\in\left\{ l,l+1,2l,2l+1\right\} ,
\]
for all $k\in\left[0,N-1\right]\setminus\left[n_{1}-Q_{N},n_{1}+Q_{N}\right]$.
We can now use \corref{prelims-AP-determinants} and \corref{prelims-uniform-upper-bound}
to get
\begin{multline*}
\log\left|f_{n}^{a}\left(z\right)\right|\gtrsim-k\exp\left(-\frac{\gamma}{2}l\right)-\sum_{j=2}^{k-1}\log\left\Vert A_{j}^{a}\left(z\right)\right\Vert +\sum_{j=1}^{k-1}\log\left\Vert A_{j+1}^{a}\left(z\right)A_{j}^{a}\left(z\right)\right\Vert \\
\gtrsim-k\exp\left(-\frac{\gamma}{2}l\right)-\sum_{j=2}^{k-1}\log\left\Vert M_{l}^{a}\left(z+\left(j-1\right)l\omega\right)\right\Vert +\sum_{j=1}^{k-1}\log\left|f_{2l}^{a}\left(z+\left(j-1\right)l\omega\right)\right|\\
\gtrsim-k\exp\left(-\frac{\gamma}{2}l\right)-\left(k-2\right)\left(lL^{a}+\left(\log l\right)^{C}\right)+\left(k-1\right)\left(2lL^{a}-2\sqrt{2l}\right)\\
\gtrsim klL^{a}-4k\sqrt{l}.
\end{multline*}
This contradicts $f_{n}^{a}\left(z\right)=0$. Hence we proved that
\eqref{localization-LDT-scale-n} holds.
\end{proof}
We have all we need to obtain localization.
\begin{thm}
\label{thm:localization}For all $x\in\mb T$, $\omega\in\Omega^{0}\cap\mathbb{T}_{c,a}\setminus\Omega_{N}$,
if the eigenvalue $E_{j}^{\left(N\right)}\left(x,\omega\right)$ is
such that $\dist\left(E_{j}^{\left(N\right)}\left(x,\omega\right),\mc E_{N,\omega}\cup\left(\mathcal{E}^{0}\right)^{C}\right)\gtrsim\exp\left(-l^{1/4}\right)$,
then there exists $\nu_{j}^{\left(N\right)}\left(x,\omega\right)\in\left[0,N-1\right]$
so that for any $\Lambda=\left[a,b\right]$,
\[
\left[\nu_{j}^{\left(N\right)}\left(x,\omega\right)-3Q_{N},\nu_{j}^{\left(N\right)}\left(x,\omega\right)+3Q_{N}\right]\cap\left[0,N-1\right]\subset\Lambda\subset\left[0,N-1\right],
\]
if we let $Q=\dist\left(\left[0,N-1\right]\setminus\Lambda,\nu_{j}^{\left(N\right)}\left(x,\omega\right)\right)$
we have:
\begin{enumerate}
\item 
\begin{equation}
\sum_{k\in\left[0,N-1\right]\setminus\Lambda}\left|\psi_{j}^{\left(N\right)}\left(x,\omega;k\right)\right|^{2}<\exp\left(-\gamma Q\right),\label{eq:localization-ef}
\end{equation}

\item 
\begin{equation}
\dist\left(E_{j}^{\left(N\right)}\left(x,\omega\right),\spec\left(H_{\Lambda}\left(x,\omega\right)\right)\right)\lesssim\exp\left(-\gamma Q\right).\label{eq:localization-ev}
\end{equation}

\end{enumerate}
\end{thm}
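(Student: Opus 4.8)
The plan is to run the standard argument deriving localization from elimination of resonances, as in \cite{MR2438997,MR2753606}: first locate a ``resonance centre'' $\nu_{j}^{(N)}(x,\omega)$ at which the large deviations bound for the determinant fails at scale $l$, then propagate good large deviations bounds for the determinants $f^{a}$ on all intervals lying away from that centre, and finally feed these into the Poisson formula \eqref{Poisson} through the Green's function estimate of \lemref{prelims-bounds-Green-entries}. So I would set $E:=E_{j}^{(N)}(x,\omega)$, $\psi:=\psi_{j}^{(N)}(x,\omega)$, record that $\|\psi\|_{2}=1$, that $\psi$ extended by zero solves $H(x,\omega)\psi=E\psi$ on $[0,N-1]$, and that $f_{N}^{a}(x,\omega,E)=0$, and observe that the hypothesis $\dist(E,\mc E_{N,\omega}\cup(\mathcal{E}^{0})^{C})\gtrsim\exp(-l^{1/4})$ is exactly what makes \lemref{localization-LDT-failure} and \corref{localization-AP} applicable. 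To produce the centre I would argue by contradiction: were $\log|f_{l'}^{a}(x+n\omega,\omega,E)|>l'L^{a}(\omega,E)-\sqrt{l'}$ to hold for every $n\in[0,N-1]$ and every $l'\in\{l,l+1,2l,2l+1\}$, then, tiling $[0,N-1]$ by consecutive blocks of lengths in $\{l,l+1\}$ arranged so that no two blocks of length $l+1$ are adjacent (so that merged pairs have lengths $2l$ or $2l+1$), \corref{prelims-AP-determinants} would produce a finite lower bound for $\log|f_{N}^{a}(x,\omega,E)|$, contradicting $f_{N}^{a}(x,\omega,E)=0$. Hence some $n_{1}\in[0,N-1]$ has $\log|f_{l}^{a}(x+n_{1}\omega,\omega,E)|\le lL_{l}^{a}(\omega,E)-\sqrt{l}$, and I set $\nu_{j}^{(N)}(x,\omega):=n_{1}$; then \lemref{localization-LDT-failure} gives $\log|f_{l'}^{a}(x+n\omega,\omega,E)|>l'L^{a}(\omega,E)-\sqrt{l'}$ for $l'\in\{l,l+1,2l,2l+1\}$ and $n\in[0,N-1]\setminus[\nu_{j}-Q_{N},\nu_{j}+Q_{N}]$, while \corref{localization-AP} gives $|f_{[0,n-1]}^{a}(x,\omega,E)|>\exp(nL^{a}-l^{3})$ for $n=kl,kl+1$, $n\le\nu_{j}-Q_{N}$, and $|f_{[n,N-1]}^{a}(x,\omega,E)|>\exp((N-n)L^{a}-l^{3})$ for $n=N-kl$, $n\ge\nu_{j}+Q_{N}$.

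For \eqref{localization-ef} I would fix $\Lambda=[a,b]$ as in the statement and estimate $|\psi(k)|$ for $k\in[0,N-1]\setminus\Lambda$, say $k>b$ (the case $k<a$ is symmetric, and the set is empty when $b=N-1$). I would pick $M'=N-k_{0}l$ in $[\nu_{j}+Q_{N},\nu_{j}+Q_{N}+l]$, possible since consecutive points of that form are $l$ apart. Since $M'\ge\nu_{j}+Q_{N}$, \corref{localization-AP} together with \lemref{prelims-bounds-Green-entries} (used with slack parameter $K\sim l^{3}\gg(\log N)^{C_{0}}$) gives $|G_{[M',N-1]}(x,\omega,E)(i,i')|\le\exp(-\gamma|i-i'|+O(l^{3}))$. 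Applying \eqref{Poisson} to $\psi$ on $[M',N-1]$ and using $\psi(N)=0$ collapses it to $|\psi(k)|\le\exp(-\gamma(k-M')+O(l^{3}))$; since $k-M'\ge\dist(k,\nu_{j})-Q_{N}-l\ge Q-Q_{N}-l$, summing the geometric series over $k>b$ gives a bound which is $\le\tfrac{1}{2}\exp(-\gamma Q)$ once the loss $\gamma(Q_{N}+l)+O(l^{3})$ is absorbed, legitimate because $Q\ge3Q_{N}$ and $Q_{N}\gg l^{3}$. The mirror-image estimate for $k<a$, using $f_{[0,n-1]}^{a}$, $\psi(-1)=0$ and an interval $[0,M]$ with $M$ just below $\nu_{j}-Q_{N}$, then yields \eqref{localization-ef}.

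For \eqref{localization-ev} I would set $v:=\psi|_{\Lambda}$ and use that, since $\psi$ solves $H^{(N)}(x,\omega)\psi=E\psi$ and the rows of $H_{\Lambda}(x,\omega)$ indexed by the interior of $\Lambda$ coincide with the corresponding rows of $H^{(N)}(x,\omega)$, the vector $(H_{\Lambda}(x,\omega)-E)v$ is supported on $\{a,b\}$, with entries $\tilde{b}(x+a\omega)\psi(a-1)$ and $b(x+(b+1)\omega)\psi(b+1)$ (vanishing when $a=0$, resp.\ when $b=N-1$). These boundary values are controlled by the pointwise estimates above at sites at distance $\ge Q$ from $\nu_{j}$, so $\|(H_{\Lambda}(x,\omega)-E)v\|\lesssim\exp(-\gamma Q)$, while $\|v\|^{2}=1-\sum_{k\notin\Lambda}|\psi(k)|^{2}\ge\tfrac{1}{2}$. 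As $H_{\Lambda}(x,\omega)$ is Hermitian for $x\in\mathbb{T}$, this gives $\dist(E,\spec(H_{\Lambda}(x,\omega)))\le\|(H_{\Lambda}(x,\omega)-E)v\|/\|v\|\lesssim\exp(-\gamma Q)$.

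The step I expect to be the main obstacle is not a single inequality but the quantitative bookkeeping reconciling all of them. Every estimate above carries a loss — the $\sqrt{l'}$ in the large deviations bounds, the $(\log l)^{C}$ and $l^{3}$ losses from \corref{prelims-AP-determinants} and \propref{prelims-LDT-failure}, and, most importantly, the $O(Q_{N}+l)$ displacement in the eigenfunction bound forced by the Green's function being controlled only on intervals avoiding $[\nu_{j}-Q_{N},\nu_{j}+Q_{N}]$ — and all of these must be dominated by $\gamma Q$. This is what the hypotheses $\sigma_{N}\gg\exp(-l^{1/4})$, $Q_{N}\gg l^{3}$, and $l=2[(\log N)^{A}]$ with $A$ large are designed to secure, with the margin $Q\ge3Q_{N}$ doing the decisive work. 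The one genuinely Jacobi-specific complication — that the blocks $A_{j}^{a}$ need not satisfy the determinant normalization required by the avalanche principle — has already been handled in \corref{prelims-AP-determinants} and does not resurface.
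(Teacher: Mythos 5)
Your proof is correct and follows the same overall architecture as the paper's (elimination $\Rightarrow$ propagation of the large deviations bound via the avalanche principle $\Rightarrow$ Green's function decay $\Rightarrow$ Poisson formula $\Rightarrow$ items (1) and (2)); parts (1) and (2) of your argument are essentially identical to the paper's, including the Hermitian variational step for \eqref{localization-ev}. The one place where you genuinely diverge is the selection of the centre $\nu_{j}^{\left(N\right)}\left(x,\omega\right)$. The paper takes $\nu_{j}^{\left(N\right)}$ to be the argmax of $\left|\psi_{j}^{\left(N\right)}\left(x,\omega;\cdot\right)\right|$ and then proves that the large deviations bound must \emph{fail} on the length-$l$ window around it, by noting that otherwise \lemref{prelims-bounds-Green-entries} plus \eqref{Poisson} would contradict maximality; you instead locate a failure point directly by contradiction, tiling $\left[0,N-1\right]$ and invoking \corref{prelims-AP-determinants} against $f_{N}^{a}\left(x,\omega,E\right)=0$, and declare that point to be the centre. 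Both choices are legitimate since the theorem only asserts existence of some admissible $\nu_{j}^{\left(N\right)}$ (and by \lemref{localization-LDT-failure} any two failure points lie within $Q_{N}$ of each other, so the two choices differ by $O\left(Q_{N}\right)$, which the $3Q_{N}$ margin absorbs). The paper's route has the small advantage of tying the centre to the eigenfunction and of producing \eqref{localization-EN-El} as a byproduct, which is cited later in \lemref{separation-close-ev->close-localization-centers}; your route would have to re-derive that estimate (trivially, via \propref{prelims-LDT-failure} at the failure point). One technical point to make explicit in your version: your contradiction argument only yields failure of the deviations bound at \emph{some} scale $l'\in\left\{ l,l+1,2l,2l+1\right\} $, not necessarily at scale $l$ as in the stated hypothesis of \lemref{localization-LDT-failure}; the lemma's proof goes through verbatim for any of these scales because the Elimination Assumption covers all pairs $l_{1},l_{2}$ in that set, but you should say so rather than quote the lemma as stated.
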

\begin{proof}
Fix $x\in\mb T$, $\omega\in\Omega^{0}\cap\mathbb{T}_{c,a}\setminus\Omega_{N}$,
and $E=E_{j}^{\left(N\right)}\left(x,\omega\right)$, satisfying our
assumptions. Let $\nu_{j}^{\left(N\right)}\left(x,\omega\right)$
be such that
\[
\left|\psi_{j}^{\left(N\right)}\left(x,\omega;\nu_{j}^{\left(N\right)}\left(x,\omega\right)\right)\right|=\max_{0\le n\le N-1}\left|\psi_{j}^{\left(N\right)}\left(x,\omega;n\right)\right|.
\]
Let $\Lambda_{0}=\left[a_{0},b_{0}\right]\subset\left[0,N-1\right]$
be the interval of length $l$ such that 
\[
\Lambda_{0}\supset\left(\left[\nu_{j}^{\left(N\right)}\left(x,\omega\right)-l/2,\nu_{j}^{\left(N\right)}\left(x,\omega\right)+l/2\right]\cap\left[0,N-1\right]\right).
\]
 We claim that
\begin{equation}
\log\left|f_{\Lambda_{0}}^{a}\left(x,\omega,E\right)\right|\le lL^{a}-\sqrt{l}.\label{eq:localization-Lambda0-LDT-fail}
\end{equation}
Otherwise, \lemref{prelims-bounds-Green-entries} implies that 
\[
\left|G_{\Lambda_{0}}\left(x,\omega,E\right)\left(j,k\right)\right|\le\exp\left(-\gamma\left|k-j\right|+2\sqrt{l}\right),
\]
for all $j,k\in\Lambda_{0}$. This, together with Poisson's formula
\eqref{Poisson} would contradict the maximality of $\left|\psi_{j}^{\left(N\right)}\left(x,\omega;\nu_{j}^{\left(N\right)}\left(x,\omega\right)\right)\right|$.

We note for future reference that \eqref{localization-Lambda0-LDT-fail}
and \propref{prelims-LDT-failure} imply the existence of $E_{k}^{\left(l\right)}\left(x+a_{0}\omega,\omega\right)$
such that
\begin{equation}
\left|E_{k}^{\left(l\right)}\left(x+a_{0}\omega,\omega\right)-E_{j}^{\left(N\right)}\left(x,\omega\right)\right|\le\exp\left(-l^{1/3}\right).\label{eq:localization-EN-El}
\end{equation}

Let $k\in\left[0,N-1\right]$, $k\le\nu_{j}^{\left(N\right)}\left(x,\omega\right)-Q$.
Due to \eqref{localization-Lambda0-LDT-fail} we can apply \corref{localization-AP},
with $n_{1}=a_{0}$, $n=l\left[\left(n_{1}-Q_{n}\right)/l\right]$
to get that
\[
\log\left|f_{\left[0,n-1\right]}^{a}\left(x\right)\right|\ge nL^{a}-l^{3}.
\]
Now we can apply \lemref{prelims-bounds-Green-entries} and \eqref{Poisson}
to get
\begin{multline*}
\left|\psi_{j}^{\left(N\right)}\left(x,\omega;k\right)\right|^{2}\le\left|G_{\left[0,n-1\right]}\left(x,\omega\right)\left(k,n-1\right)\right|^{2}\le\exp\left(-2\gamma\left(n-1-k\right)+4l^{3}\right)\\
\le\exp\left(-2\gamma\left(n_{1}-Q_{N}-l-1-\nu_{j}^{\left(N\right)}\left(x,\omega\right)+Q\right)+4l^{3}\right)\le\exp\left(-\frac{3\gamma Q}{2}\right)
\end{multline*}
(we used $\nu_{j}^{\left(N\right)}\left(x,\omega\right)-n_{1}\le l/2$,
$Q\ge3Q_{N}\gg l^{3}$). Similarly, we obtain the same bound when
$k\ge\nu_{j}^{\left(N\right)}\left(x,\omega\right)+Q$. Summing up
these bounds gives us \eqref{localization-ef}.

Due to \eqref{localization-ef} we have
\[
\left\Vert \left(H_{\Lambda}\left(x,\omega\right)-E_{j}^{\left(N\right)}\left(x,\omega\right)\right)\left(\psi_{j}^{\left(N\right)}\vert_{\Lambda}\right)\right\Vert <\exp\left(-\gamma Q\right).
\]
Since $H_{\Lambda}$ is Hermitian, and $\left\Vert \psi_{j}^{\left(N\right)}\vert_{\Lambda}\right\Vert >1-\exp\left(\gamma Q\right)$,
we can conclude that
\[
\dist\left(E_{j}^{\left(N\right)}\left(x,\omega\right),\spec\left(H_{\Lambda}\left(x,\omega\right)\right)\right)<\exp\left(-\gamma Q\right)\left(1-\exp\left(-\gamma Q\right)\right)^{-1}\lesssim\exp\left(-\gamma Q\right).
\]

\end{proof}

\section{\label{sec:separation}Separation of Eigenvalues}

In this section we continue to work under the Elimination Assumption
\ref{localization-elimination-assumption}. The basic idea behind
proving separation of eigenvalues is to use the fact that the eigenvectors
are orthogonal, and so they cannot be too close. It is known that
if $E$ is an eigenvalue of the Dirichlet problem on $\left[0,N-1\right]$
then $\frak{f}:=\left(f_{\left[0,n-1\right]}^{a}\left(x,\omega,E\right)\right)_{n=0}^{N-1}$
is an eigenvector associated with $E$ ($f_{\left[0,-1\right]}^{a}=1$).
Note that we are assuming the boundary conditions $\mf f\left(-1\right)=\mf f\left(N\right)=0$.
We will need the following lemma to argue that if two localized eigenvalues
are close enough, then they have eigenvectors which are also close,
at least before the localization window. 
\begin{lem}
\label{lem:separation-close-ev->close-ef}Let $x\in\mb{\mb T}$, $\omega\in\Omega^{0}\cap\mb T_{c,\alpha}\setminus\Omega_{N}$,
and suppose that 
\[
\dist\left(E_{j}^{\left(N\right)}\left(x,\omega\right),\mc E_{N,\omega}\cup\left(\mathcal{E}^{0}\right)^{C}\right)\gtrsim\exp\left(-l^{1/4}\right)
\]
for some $j$. If $E$ is such that $\left|E-E_{j}^{\left(N\right)}\left(x,\omega\right)\right|\le N^{-C_{1}}$,
with $C_{1}$ as in \corref{prelims-lipschitzness}, then
\begin{multline*}
\left|f_{\left[0,n-1\right]}^{a}\left(x,\omega,E\right)-f_{\left[0,n-1\right]}^{a}\left(x,\omega,E_{j}^{\left(N\right)}\left(x,\omega\right)\right)\right|\\
\le\exp\left(2l^{3}\right)\left|E-E_{j}^{\left(N\right)}\left(x,\omega\right)\right|\left|f_{\left[0,n-1\right]}^{a}\left(x,\omega,E_{j}^{\left(N\right)}\left(x,\omega\right)\right)\right|,
\end{multline*}
for each $n=kl,kl+1$, $k\in\mb Z$, $0\le n\le\nu_{j}^{\left(N\right)}\left(x,\omega\right)-2Q_{N}$,
where $\nu_{j}^{\left(N\right)}\left(x,\omega\right)$ is the localization
center corresponding to $E_{j}^{\left(N\right)}\left(x,\omega\right)$
(as in \thmref{localization}).\end{lem}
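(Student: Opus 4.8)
The plan is to obtain the estimate from two ingredients: a lower bound for the partial determinants $f_{[0,n-1]}^{a}(x,\omega,E_{j}^{(N)}(x,\omega))$ coming from the localization machinery of \secref{localization}, and the Lipschitz-type bound \corref{prelims-lipschitzness} applied at scale $n$. Write $E_{j}:=E_{j}^{(N)}(x,\omega)$ and $\nu_{j}:=\nu_{j}^{(N)}(x,\omega)$. Since $E_{j}\in\mc E^{0}$ satisfies $\dist(E_{j},\mc E_{N,\omega}\cup(\mc E^{0})^{C})\gtrsim\exp(-l^{1/4})$, \thmref{localization} applies to it, and — this is the key point — its proof produces the length-$l$ window $\Lambda_{0}=[a_{0},b_{0}]$ around $\nu_{j}$ (so $|a_{0}-\nu_{j}|\le l$) for which \eqref{localization-Lambda0-LDT-fail} gives
\[
\log\bigl|f_{l}^{a}(x+a_{0}\omega,\omega,E_{j})\bigr|\le lL^{a}-\sqrt{l}\le lL_{l}^{a}-\sqrt{l},
\]
using $L^{a}\le L_{l}^{a}$. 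Thus \eqref{localization-LDT-fails-scale-l-n1} holds with $n_{1}=a_{0}$ and energy $E_{j}$, so both \lemref{localization-LDT-failure} and \corref{localization-AP} are available for this choice.

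First I would invoke \corref{localization-AP} (with $n_{1}=a_{0}$ and energy $E_{j}$) to get
\[
\bigl|f_{[0,n-1]}^{a}(x,\omega,E_{j})\bigr|>\exp\bigl(nL^{a}(\omega,E_{j})-l^{3}\bigr)
\]
for every $n=kl,kl+1$ with $0\le n\le a_{0}-Q_{N}$. Since $a_{0}\ge\nu_{j}-l$ and $Q_{N}\gg l^{3}\ge l$, we have $a_{0}-Q_{N}\ge\nu_{j}-2Q_{N}$, so this lower bound is valid on the whole range $0\le n\le\nu_{j}-2Q_{N}$ appearing in the statement. (If $\nu_{j}-2Q_{N}<0$ there is nothing to prove, and the finitely many scales below the threshold $N_{0}$ of \corref{prelims-lipschitzness} are handled trivially, the constant $\exp(2l^{3})$ being enormous.)

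Next, for a fixed admissible $n$, I would apply \corref{prelims-lipschitzness} at scale $n$ with $x_{0}=x$, $\omega_{0}=\omega$, $y=0$, $E_{0}=E_{j}$ and free energy the given $E$; this is permitted because $L(\omega,E_{j})>\gamma$ (as $E_{j}\in\mc E^{0}$) and $|E-E_{j}|\le N^{-C_{1}}\le n^{-C_{1}}$. Reading the $(1,1)$-entry of \eqref{prelims-M-lipschitzness} through \eqref{prelims-Ma-fa} yields
\[
\bigl|f_{[0,n-1]}^{a}(x,\omega,E)-f_{[0,n-1]}^{a}(x,\omega,E_{j})\bigr|\le|E-E_{j}|\exp\bigl(nL^{a}(\omega,E_{j})+(\log n)^{C_{0}}\bigr).
\]
Dividing by the lower bound of the previous step the factors $\exp(nL^{a}(\omega,E_{j}))$ cancel, and since $(\log n)^{C_{0}}\le(\log N)^{C_{0}}\le(\log N)^{3A}\le l^{3}$ for $N$ large (this is exactly where the choice $l=2[(\log N)^{A}]$ with $A$ much larger than the $C_{0}$ of \corref{prelims-lipschitzness} enters), we obtain
\[
\frac{\bigl|f_{[0,n-1]}^{a}(x,\omega,E)-f_{[0,n-1]}^{a}(x,\omega,E_{j})\bigr|}{\bigl|f_{[0,n-1]}^{a}(x,\omega,E_{j})\bigr|}\le|E-E_{j}|\exp\bigl((\log n)^{C_{0}}+l^{3}\bigr)\le|E-E_{j}|\exp(2l^{3}),
\]
which is the asserted inequality after clearing the denominator.

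The argument is essentially routine once \corref{localization-AP} and \corref{prelims-lipschitzness} are in hand; the only thing requiring care is the bookkeeping — making sure the point $a_{0}$ at which the large deviations estimate fails lies within $O(l)$ of $\nu_{j}$, so that the range $[0,a_{0}-Q_{N}]$ produced by \corref{localization-AP} actually covers the range $[0,\nu_{j}-2Q_{N}]$ demanded here (this uses $l\ll Q_{N}$ and is why one can afford the generous $2Q_{N}$ margin), and verifying that the error term $(\log n)^{C_{0}}$ is dominated by $l^{3}$, which is guaranteed by the size of $A$ fixed in the Elimination Assumption \ref{localization-elimination-assumption}.
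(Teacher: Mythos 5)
Your proof is correct and follows exactly the paper's route: the paper's own proof is the one-line observation that the lemma "follows immediately from \eqref{prelims-M-lipschitzness} and \corref{localization-AP}", which is precisely the combination of the lower bound and the Lipschitz estimate you spell out. Your additional bookkeeping (locating $a_{0}$ within $l$ of $\nu_{j}$ so that $[0,a_{0}-Q_{N}]$ covers $[0,\nu_{j}-2Q_{N}]$, and absorbing $(\log n)^{C_{0}}$ into $l^{3}$) is exactly what the paper leaves implicit.
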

\begin{proof}
This follows immediately from \eqref{prelims-M-lipschitzness} and
\corref{localization-AP}.
\end{proof}
The next lemma shows that if two localized eigenvalues are close enough,
then their localization centers are also close. 
\begin{lem}
\label{lem:separation-close-ev->close-localization-centers}Let $x\in\mb{\mb T}$,
$\omega\in\Omega^{0}\cap\mb T_{c,\alpha}\setminus\Omega_{N}$ and
suppose that 
\[
\dist\left(E_{j_{i}}^{\left(N\right)}\left(x,\omega\right),\mc E_{N,\omega}\cup\left(\mathcal{E}^{0}\right)^{C}\right)\gtrsim\exp\left(-l^{1/4}\right),\, i=1,2.
\]
If $\left|E_{j_{1}}^{\left(N\right)}\left(x,\omega\right)-E_{j_{2}}^{\left(N\right)}\left(x,\omega\right)\right|\le\sigma_{N}/2$,
then both eigenvalues are localized and if we denote their localization
centers by $\nu_{j_{i}}^{\left(N\right)}\left(x,\omega\right)$, $i=1,2$,
we have $\left|\nu_{j_{1}}^{\left(N\right)}\left(x,\omega\right)-\nu_{j_{2}}^{\left(N\right)}\left(x,\omega\right)\right|<2Q_{N}$.\end{lem}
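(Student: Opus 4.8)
The plan is to derive a contradiction from the Elimination Assumption \eqref{localization-hyp}. First, the distance hypotheses on $E_{j_{1}}^{(N)}(x,\omega)$ and $E_{j_{2}}^{(N)}(x,\omega)$ are precisely those of \thmref{localization}, so both eigenvalues are localized; write $\nu_{i}:=\nu_{j_{i}}^{(N)}(x,\omega)$ for their localization centers, $i=1,2$. Applying to $E=E_{j_{i}}^{(N)}(x,\omega)$ the construction from the proof of \thmref{localization} (see \eqref{localization-Lambda0-LDT-fail} and \eqref{localization-EN-El}), for each $i$ there is an integer interval $\Lambda_{0,i}=[a_{i},b_{i}]\subset[0,N-1]$ of length $l$ that contains $\nu_{i}$ (hence $|\nu_{i}-a_{i}|\le l$), together with an eigenvalue $E_{k_{i}}^{(l)}(x+a_{i}\omega,\omega)$ of $H^{(l)}(x+a_{i}\omega,\omega)$ satisfying
\[
\bigl|E_{k_{i}}^{(l)}(x+a_{i}\omega,\omega)-E_{j_{i}}^{(N)}(x,\omega)\bigr|\le\exp\bigl(-l^{1/3}\bigr).
\]
Since $\dist\bigl(E_{j_{i}}^{(N)}(x,\omega),\mc E_{N,\omega}\cup(\mc E^{0})^{C}\bigr)\gtrsim\exp(-l^{1/4})\gg\exp(-l^{1/3})$, this forces $E_{k_{i}}^{(l)}(x+a_{i}\omega,\omega)\in\mc E^{0}\setminus\mc E_{N,\omega}$.

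Now suppose, towards a contradiction, that $|\nu_{1}-\nu_{2}|\ge2Q_{N}$. Put $\t x:=x+a_{1}\omega\in\mb T$ and $m:=a_{2}-a_{1}\in\mb Z$, so that $x+a_{2}\omega=\t x+m\omega$. Since $|a_{i}-\nu_{i}|\le l$ and $Q_{N}\gg l^{3}$ (so in particular $Q_{N}\ge2l$ for $N$ large), we get
\[
|m|\ge|\nu_{1}-\nu_{2}|-2l\ge2Q_{N}-2l\ge Q_{N},
\]
while obviously $|m|\le N-1<N$; thus $Q_{N}\le|m|\le N$. On the other hand, combining the two approximations above with the hypothesis $|E_{j_{1}}^{(N)}(x,\omega)-E_{j_{2}}^{(N)}(x,\omega)|\le\sigma_{N}/2$ yields
\[
\bigl|E_{k_{1}}^{(l)}(\t x,\omega)-E_{k_{2}}^{(l)}(\t x+m\omega,\omega)\bigr|\le\sigma_{N}/2+2\exp\bigl(-l^{1/3}\bigr)<\sigma_{N},
\]
the last step using $\sigma_{N}\gg\exp(-l^{1/4})\gg\exp(-l^{1/3})$. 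But $E_{k_{1}}^{(l)}(\t x,\omega)\in\mc E^{0}\cap\spec\bigl(H^{(l)}(\t x,\omega)\bigr)\setminus\mc E_{N,\omega}$ and $E_{k_{2}}^{(l)}(\t x+m\omega,\omega)\in\spec\bigl(H^{(l)}(\t x+m\omega,\omega)\bigr)$, which contradicts \eqref{localization-hyp} applied with base point $\t x$, shift $m$, and $l_{1}=l_{2}=l$ (any off-by-one in the window length being absorbed by the range $l_{1},l_{2}\in\{l,l+1,2l,2l+1\}$ allowed there). Hence $|\nu_{1}-\nu_{2}|<2Q_{N}$, as claimed.

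The only point that needs care is checking that the shift $m=a_{2}-a_{1}$ between the two length-$l$ windows lands in the admissible range $[Q_{N},N]$ of the Elimination Assumption; this is exactly what the slack $Q_{N}\gg l^{3}$ (rather than merely $Q_{N}\gtrsim l$) is there for, since a localization center and the endpoint of its window differ by at most $l$. Everything else is bookkeeping with the scale hierarchy $\exp(-l^{1/3})\ll\exp(-l^{1/4})\ll\sigma_{N}$, together with the fact that $\mc E_{N,\omega}$ is independent of the base point, so it is the same set whether we work from $x$ or from $\t x=x+a_{1}\omega$.
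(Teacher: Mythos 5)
Your proof is correct and follows essentially the same route as the paper: both reduce to the scale-$l$ eigenvalues attached to the two localization windows via \eqref{localization-EN-El}, observe that the first one avoids $\mc E_{N,\omega}$ because of the $\exp(-l^{1/4})$ distance hypothesis, and then contradict the Elimination Assumption \eqref{localization-hyp} using $\sigma_{N}\gg\exp(-l^{1/4})\gg\exp(-l^{1/3})$. The only differences are cosmetic bookkeeping (you compare window endpoints $a_{1},a_{2}$ directly and start from $|\nu_{1}-\nu_{2}|\ge 2Q_{N}$, while the paper starts from $|n_{1}-n_{2}|\ge Q_{N}$ and converts back at the end).
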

\begin{proof}
As was noted in the proof of \thmref{localization} (see \eqref{localization-EN-El})
we have that
\begin{equation}
\left|E_{j_{i}}^{\left(N\right)}\left(x,\omega\right)-E_{k_{i}}^{\left(l\right)}\left(x+n_{i}\omega,\omega\right)\right|\le\exp\left(-l^{1/3}\right),\, i=1,2,\label{eq:separation-evN-close-evl}
\end{equation}
where $n_{i}$ are such that $\left|\nu_{j_{i}}^{\left(N\right)}\left(x,\omega\right)-n_{i}\right|\le l/2$,
$i=1,2$.

Suppose that $\left|n_{1}-n_{2}\right|\ge Q_{N}$. Due to \eqref{separation-evN-close-evl}
we have that $E_{k_{1}}^{\left(l\right)}\left(x,\omega\right)\in\mathcal{E}^{0}\setminus\mc E_{N,\omega}$
and hence, by \eqref{localization-hyp} we have 
\[
\left|E_{k_{1}}^{\left(l\right)}\left(x+n_{1}\omega,\omega\right)-E_{k_{2}}^{\left(l\right)}\left(x+n_{2}\omega,\omega\right)\right|\ge\sigma_{N}.
\]
The above inequality together with \eqref{separation-evN-close-evl}
and the assumption that $\sigma_{N}\gg\exp\left(-l^{1/4}\right)$,
implies that $\left|E_{j_{1}}^{\left(N\right)}\left(x,\omega\right)-E_{j_{2}}^{\left(N\right)}\left(x,\omega\right)\right|>\sigma_{N}/2$,
contradicting our assumptions. So, we must have $\left|n_{1}-n_{2}\right|<Q_{N}$
and consequently $\left|\nu_{j_{1}}^{\left(N\right)}\left(x,\omega\right)-\nu_{j_{2}}^{\left(N\right)}\left(x,\omega\right)\right|\le Q_{N}+l<2Q_{N}$. 
\end{proof}
We are now ready to prove a first version of separation, based on
the size of the localization window. This is a generalization of \cite[Proposition 7.1]{MR2753606}. 
\begin{prop}
\label{prop:separation-raw}There exists a constant $C_{0}=C_{0}\left(\left\Vert a\right\Vert _{\infty},\left\Vert b\right\Vert _{\infty},E^{0}\right)$
such that for all $x\in\mb T$, $\omega\in\Omega^{0}\cap\mathbb{T}_{c,\alpha}\setminus\Omega_{N}$,
if $\dist\left(E_{j}^{\left(N\right)}\left(x,\omega\right),\mc E_{N,\omega}\cup\left(\mathcal{E}^{0}\right)^{C}\right)\gtrsim\exp\left(-l^{1/4}\right)$
for some $j$, then
\[
\left|E_{j}^{\left(N\right)}\left(x,\omega\right)-E_{k}^{\left(N\right)}\left(x,\omega\right)\right|>\exp\left(-C_{0}Q_{N}\right)
\]
for any $k\neq j$.\end{prop}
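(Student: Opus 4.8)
The plan is to argue by contradiction. Fix $x,\omega$ as in the statement, let $E_{j}:=E_{j}^{(N)}(x,\omega)$ satisfy $\dist\bigl(E_{j},\mc E_{N,\omega}\cup(\mc E^{0})^{C}\bigr)\gtrsim\exp(-l^{1/4})$, and suppose some $k\neq j$ has $|E_{j}-E_{k}|\le\exp(-C_{0}Q_{N})$, where $E_{k}:=E_{k}^{(N)}(x,\omega)$ and the constant $C_{0}=C_{0}(\|a\|_{\infty},\|b\|_{\infty},E^{0})$ is to be fixed at the end. Since $Q_{N}\gg l^{3}$ we have $\exp(-C_{0}Q_{N})\ll\exp(-l^{1/4})$, so $E_{k}$ also satisfies the distance hypothesis; hence \thmref{localization} applies to both eigenvalues, giving localization centres $\nu_{j},\nu_{k}$, and \lemref{separation-close-ev->close-localization-centers} gives $|\nu_{j}-\nu_{k}|<2Q_{N}$. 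After reflecting $[0,N-1]$ if necessary (which exchanges \eqref{localization-LDT-scale-n} with \eqref{localization-LDT-scale-N-n}, $f^{a}_{[0,n-1]}$ with $f^{a}_{[n+1,N-1]}$, and \lemref{separation-close-ev->close-ef} with its mirror image) we may assume $\nu_{j}\ge N/2$; then $\Lambda:=[\nu_{j}-5Q_{N},\nu_{j}+5Q_{N}]\cap[0,N-1]$ is a proper subinterval of $[0,N-1]$ containing both localization windows, with nonnegative left endpoint for $N$ large (here $Q_{N}$ is much smaller than $N$).

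Next I would pass to the eigenvectors. Write $\phi_{j},\phi_{k}$ for the normalized eigenvectors $\psi^{(N)}_{j}(x,\omega),\psi^{(N)}_{k}(x,\omega)$; these are, up to unimodular factors, the Dirichlet solutions of the difference equation, componentwise proportional to $\mf f_{j}:=(f^{a}_{[0,n-1]}(x,\omega,E_{j}))_{n=0}^{N-1}$ resp.\ $\mf f_{k}$ via the weights $\prod_{i=1}^{n}b(x+i\omega)$ (nonvanishing for generic $x$), and $\langle\phi_{j},\phi_{k}\rangle=0$ by orthogonality of eigenvectors of the Hermitian matrix $H^{(N)}(x,\omega)$. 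Put $v_{j}:=\phi_{j}|_{\Lambda}$, $v_{k}:=\phi_{k}|_{\Lambda}$. The three-term recurrence shows $(H_{\Lambda}(x,\omega)-E_{j})v_{j}$ is supported on $\partial\Lambda$, with norm $\lesssim\|b\|_{\infty}\max|\phi_{j}|$ over the four sites adjacent to $\partial\Lambda$; these lie at distance $\ge4Q_{N}$ from $\nu_{j}$, so by \eqref{localization-ef} $|\phi_{j}(n)|\le\|\phi_{j}\|\exp(-2\gamma Q_{N})$ there, while $\|v_{j}\|\ge|\phi_{j}(\nu_{j})|=\|\phi_{j}\|\,|\psi^{(N)}_{j}(\nu_{j})|\ge\|\phi_{j}\|(12Q_{N})^{-1/2}$ since $\max_{n}|\psi^{(N)}_{j}(n)|\ge(12Q_{N})^{-1/2}$ (again \eqref{localization-ef}); hence $\|(H_{\Lambda}-E_{j})v_{j}\|\le\exp(-\gamma Q_{N})\|v_{j}\|$, and likewise for $k$. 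Finally $\langle v_{j},v_{k}\rangle=-\langle\phi_{j}|_{[0,N-1]\setminus\Lambda},\phi_{k}|_{[0,N-1]\setminus\Lambda}\rangle$ and $\|\phi_{j}|_{[0,N-1]\setminus\Lambda}\|=\|\phi_{j}\|\,\|\psi^{(N)}_{j}|_{\Lambda^{c}}\|\le\|\phi_{j}\|\exp(-2\gamma Q_{N})$ (similarly for $k$), so $|\langle v_{j},v_{k}\rangle|\le\exp(-3\gamma Q_{N})\|v_{j}\|\|v_{k}\|$. Thus $v_{j},v_{k}$ are nearly orthogonal, yet each is an approximate eigenvector of the Hermitian $H_{\Lambda}$ for $E_{j}$, resp.\ $E_{k}$, with tiny relative residual. (\lemref{separation-close-ev->close-ef}, stating that $\mf f_{j},\mf f_{k}$ are componentwise proportional up to relative error $\exp(-C_{0}Q_{N}/2)$ on $[0,\nu_{j}-2Q_{N}]$, can be used to shorten this bookkeeping; it is more essential in the bootstrapping of \thmref{separation-bootstrap}.)

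The contradiction then comes from spectral perturbation, once we know one crude fact about $H_{\Lambda}$. Let $P$ be the spectral projection of $H_{\Lambda}$ onto $(E_{j}-\theta,E_{j}+\theta)$ with $\theta:=\exp(-c_{*}Q_{N}/2)$, $c_{*}:=\min(\gamma,C_{0})$. Since $\|(H_{\Lambda}-E_{j})v_{k}\|\le\|(H_{\Lambda}-E_{k})v_{k}\|+|E_{j}-E_{k}|\,\|v_{k}\|\le2\exp(-c_{*}Q_{N})\|v_{k}\|$, the bound $\|(I-P)w\|\le\theta^{-1}\|(H_{\Lambda}-E_{j})w\|$ gives $\|(I-P)v_{j}\|\le2\exp(-c_{*}Q_{N}/2)\|v_{j}\|$, $\|(I-P)v_{k}\|\le2\exp(-c_{*}Q_{N}/2)\|v_{k}\|$, so $\|Pv_{j}\|\ge\tfrac12\|v_{j}\|$, $\|Pv_{k}\|\ge\tfrac12\|v_{k}\|$, and $|\langle Pv_{j},Pv_{k}\rangle|\le|\langle v_{j},v_{k}\rangle|+3\exp(-c_{*}Q_{N}/2)\|v_{j}\|\|v_{k}\|\le\tfrac18\|v_{j}\|\|v_{k}\|$ for $N$ large. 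Now $H_{\Lambda}$ has at least one eigenvalue in $(E_{j}-\theta,E_{j}+\theta)$ (as $\dist(E_{j},\spec H_{\Lambda})\le\exp(-\gamma Q_{N})\ll\theta$); if it had exactly one, $P$ would have rank one, $Pv_{j},Pv_{k}$ would be parallel, and $|\langle Pv_{j},Pv_{k}\rangle|=\|Pv_{j}\|\|Pv_{k}\|\ge\tfrac14\|v_{j}\|\|v_{k}\|$ — a contradiction. So the whole argument reduces to a crude a priori spacing bound for the eigenvalues of the Jacobi block $H_{\Lambda}$ near $E_{j}$, of the form $\ge\exp(-C_{0}'Q_{N})$ with $C_{0}'$ depending only on $\|a\|_{\infty},\|b\|_{\infty},E^{0}$ (and $\gamma$); then $C_{0}:=3C_{0}'$ finishes the proof.

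This last reduction is the main obstacle, and the one genuinely Jacobi-flavoured step. The natural route is via the "discriminant": with $g(E):=\det(H_{\Lambda}(x,\omega)-E)$, a polynomial with real coefficients whose zeros are the eigenvalues $\lambda'$ of $H_{\Lambda}$, expanding $g'$ along principal minors gives, for an eigenvalue $\lambda$, that $|g'(\lambda)|=\sum_{n\in\Lambda}\bigl|\det\bigl((H_{\Lambda}(x,\omega)-\lambda)_{\widehat{n}\widehat{n}}\bigr)\bigr|$ (deletion of row and column $n$; the summands all have one sign, each being a product of two principal sub-block determinants of $H_{\Lambda}-\lambda$). Picking the splitting site $n=\nu_{j}$ — for which neither sub-block of $\Lambda$ carries the localized state of $E_{j}\approx\lambda$, so neither determinant is forced to be small — one bounds the two factors below by $\exp(-C|\Lambda|)$ using \propref{prelims-LDT-failure} and a \corref{localization-AP}-type estimate at scale $|\Lambda|\asymp Q_{N}$ (this is exactly why localization at scale $N$ had to be established first: it reduces the operative scale from $N$ to $Q_{N}$), and then $\min_{\lambda'\neq\lambda}|\lambda-\lambda'|\ge|g'(\lambda)|\,(2\|a\|_{\infty}+4\|b\|_{\infty})^{-|\Lambda|+2}$ yields the gap. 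Two Jacobi-specific nuisances must be handled carefully here: the off-diagonal entries $-b,-\t b$ (which is why one works with the real polynomial $g$ and its minors rather than with eigenvectors directly), and the possible zeros of $b(x+i\omega)$ (one uses $Q_{N}\gg l^{3}$ to keep enough room inside $\Lambda$ to choose a good splitting site and to run \corref{localization-AP}); I expect the careful execution of this spacing bound, rather than the soft perturbation argument above, to be where the real work lies.
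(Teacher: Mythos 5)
Your first half (localize both eigenvalues, bring their centres within $2Q_{N}$ of each other, work on a window $\Lambda$ of length $\sim Q_{N}$) matches the paper's proof, but from there you diverge onto a route that does not close. The paper never needs an eigenvalue spacing bound for $H_{\Lambda}$: it shows the two Dirichlet eigenvectors $\mf f_{1},\mf f_{2}$ are pointwise close on all of $\left[0,N-1\right]$ --- close off $\Lambda$ by \eqref{localization-ef}, and close on $\Lambda$ because both are obtained by applying transfer matrices of norm $\le\exp\left(CQ_{N}\right)$, $C=C\left(\left\Vert a\right\Vert _{\infty},\left\Vert b\right\Vert _{\infty},E^{0}\right)$, to boundary data at a site $m$ just left of $\Lambda$, where $\mf f_{1}$ is exponentially small relative to its mass on $\Lambda$ and $\mf f_{1}-\mf f_{2}$ is controlled by $\left|E_{1}-E_{2}\right|$ via \lemref{separation-close-ev->close-ef} and \corref{prelims-lipschitzness} --- and then contradicts orthogonality of $\mf f_{1}$ and $\mf f_{2}$. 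Choosing $C_{0}$ large compared with the amplification constant $C$ is what wins, and is exactly where the stated dependence of $C_{0}$ comes from.

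Your route instead reduces everything to a ``crude a priori spacing bound'' $\ge\exp\left(-C_{0}'Q_{N}\right)$ for the eigenvalues of $H_{\Lambda}$ near $E_{j}$, and the reduction fails quantitatively even if that bound were granted. The spectral-projection step can only trap two eigenvalues of $H_{\Lambda}$ in a window of radius $\theta$ that must exceed the localization residual, so $\theta\gtrsim\exp\left(-c\gamma Q_{N}\right)$; a contradiction requires $2\theta<\exp\left(-C_{0}'Q_{N}\right)$, i.e.\ $C_{0}'<c\gamma$. But the $C_{0}'$ your discriminant argument produces is of order $\left|\Lambda\right|\left(\log\left(2\left\Vert H_{\Lambda}\right\Vert \right)-L^{a}\right)/Q_{N}$, which depends on $\left\Vert a\right\Vert _{\infty},\left\Vert b\right\Vert _{\infty},E^{0}$ and has no reason to be smaller than $\gamma$ (e.g.\ when $\gamma$ is small); with your own choice $c_{*}=\min\left(\gamma,3C_{0}'\right)$ the final comparison fails whenever $\gamma\le2C_{0}'$. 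Enlarging $\Lambda$ rescales both exponents identically and does not help. On top of this, the spacing bound itself is deferred and is not ``crude'': lower-bounding $\left|f_{\left[a,\nu_{j}-1\right]}^{a}\left(x,\omega,\lambda\right)\right|$ and $\left|f_{\left[\nu_{j}+1,b\right]}^{a}\left(x,\omega,\lambda\right)\right|$ at the specific phase $x$ and the specific eigenvalue $\lambda$ is precisely the statement that fails at resonances (\propref{prelims-LDT-failure} converts such a failure into a nearby eigenvalue of the sub-block), so it needs the resonance-elimination input rather than following from soft considerations. In short, the second half replaces the paper's orthogonality-plus-transfer-matrix mechanism with a perturbation argument that loses exactly the factor the proposition is about.
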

\begin{proof}
Fix $x\in\mathbb{T}$, $\omega\in\Omega^{0}\cap\mathbb{T}_{c,\alpha}\setminus\Omega_{N}$
and $E_{1}=E_{j}^{\left(N\right)}\left(x,\omega\right)$ satisfying
the assumptions. Suppose there exists $E_{2}=E_{k}^{\left(N\right)}\left(x,\omega\right)\neq E_{1}$
such that $\left|E_{1}-E_{2}\right|\le\exp\left(-C_{0}Q_{N}\right)$.
This implies that $\dist\left(E_{k}^{\left(N\right)}\left(x,\omega\right),\mc E_{N,\omega}\cup\left(\mathcal{E}^{0}\right)^{\mc C}\right)\gtrsim\exp\left(-l^{1/4}\right)$
(recall that $Q_{N}\gg l^{3}$). Hence, by \thmref{localization},
both $E_{j}^{\left(N\right)}\left(x,\omega\right)$, and $E_{k}^{\left(N\right)}\left(x,\omega\right)$
are localized.

We know $\frak{f}_{i}:=\left(f_{\left[0,n-1\right]}^{a}\left(x,\omega,E_{i}\right)\right)_{n=0}^{N-1}$
are eigenvectors corresponding to $E_{i}$, $i=1,2$. Furthermore
$\mf f_{i}\left(-1\right)=\mf f_{i}\left(N\right)=0$. If we let 
\[
\Lambda=\left[a,b\right]=\left[0,N-1\right]\cap\left[\nu_{j}^{\left(N\right)}\left(x,\omega\right)-5Q_{N},\nu_{j}^{\left(N\right)}\left(x,\omega\right)+5Q_{N}\right],
\]
then due to \eqref{localization-ef} and \lemref{separation-close-ev->close-localization-centers}
we have
\begin{equation}
\sum_{n\in\left[0,N-1\right]\setminus\Lambda}\left|\frak{f}_{i}\left(n\right)\right|^{2}\lesssim\exp\left(-5\gamma Q_{N}\right)\sum_{n\in\Lambda}\left|\frak{f}_{i}\left(n\right)\right|^{2},\, i=1,2,\label{eq:separation-ef-localization-for-f}
\end{equation}
and consequently
\begin{equation}
\sum_{n\in\left[0,N-1\right]\setminus\Lambda}\left|\frak{f}_{1}\left(n\right)-\frak{f}_{2}\left(n\right)\right|^{2}\lesssim\exp\left(-5\gamma Q_{N}\right)\sum_{n\in\Lambda}\left(\left|\frak{f}_{1}\left(n\right)\right|^{2}+\left|\frak{f}_{2}\left(n\right)\right|^{2}\right).\label{eq:separation-sum-f1-f2-over-outside-Lambda}
\end{equation}

Let 
\[
m=\begin{cases}
\left[\left(a-2\right)/l\right]l & ,\, a>l+1\\
-1 & ,\, a\le l+1
\end{cases}.
\]
For $n\in\Lambda$ we have
\begin{multline}
\left|\frak{f}_{1}\left(n\right)-\frak{f}_{2}\left(n\right)\right|^{2}\le\left\Vert \left(\begin{array}{c}
\frak{f}_{1}\left(n+1\right)\\
\mf f_{1}\left(n\right)
\end{array}\right)-\left(\begin{array}{c}
\mf f_{2}\left(n+1\right)\\
\mf f_{2}\left(n\right)
\end{array}\right)\right\Vert ^{2}\\
=\left\Vert M_{\left[m+1,n\right]}^{a}\left(E_{1}\right)\left(\begin{array}{c}
\mf f_{1}\left(m+1\right)\\
\mf f_{1}\left(m\right)
\end{array}\right)-M_{\left[m+1,n\right]}^{a}\left(E_{2}\right)\left(\begin{array}{c}
\mf f_{2}\left(m+1\right)\\
\mf f_{2}\left(m\right)
\end{array}\right)\right\Vert ^{2}\\
\lesssim\left\Vert \left(M_{\left[m+1,n\right]}^{a}\left(E_{1}\right)-M_{\left[m+1,n\right]}^{a}\left(E_{2}\right)\right)\left(\begin{array}{c}
\mf f_{1}\left(m+1\right)\\
\mf f_{1}\left(m\right)
\end{array}\right)\right\Vert ^{2}\\
+\left\Vert M_{\left[m+1,n\right]}^{a}\left(E_{2}\right)\left(\begin{array}{c}
\mf f_{1}\left(m+1\right)-\mf f_{2}\left(m+1\right)\\
\mf f_{1}\left(m\right)-\mf f_{2}\left(m\right)
\end{array}\right)\right\Vert ^{2}\\
\lesssim\exp\left(CQ_{N}\right)\left|E_{1}-E_{2}\right|^{2}\left(\left|\mf f_{1}\left(m+1\right)\right|^{2}+\left|\mf f_{1}\left(m\right)\right|^{2}\right)\\
+\exp\left(CQ_{N}\right)\left(\left|\mf f_{1}\left(m+1\right)-\mf f_{2}\left(m+1\right)\right|^{2}+\left|\mf f_{1}\left(m\right)-\mf f_{2}\left(m\right)\right|^{2}\right)\\
\lesssim\exp\left(CQ_{N}\right)\left|E_{1}-E_{2}\right|^{2}\left(\left|\mf f_{1}\left(m+1\right)\right|^{2}+\left|\mf f_{1}\left(m\right)\right|^{2}\right).\label{eq:separation-f1-f2-on-Lambda}
\end{multline}
For the second to last inequality we used \corref{prelims-lipschitzness},
\propref{prelims-M^a-upper-bound}, and the fact that $n-m\lesssim Q_{N}$
for $n\in\Lambda$. For the last inequality, in the case when $a>l+1$,
we used \lemref{separation-close-ev->close-ef} and the assumption
that $Q_{N}\gg l^{3}$. When $a\le l+1$ the last inequality holds
trivially since $\mf f_{i}\left(-1\right)=0$, $\mf f_{i}\left(0\right)=1$,
$i=1,2$.

Assume that $a>0$. We have that either $m,m+1\in\left[0,N-1\right]\setminus\Lambda$,
or $m=-1$, $ $$m+1\in\left[0,N-1\right]\setminus\Lambda$. Since
$\mf f_{1}\left(-1\right)=0$, using \eqref{separation-ef-localization-for-f}
and \eqref{separation-f1-f2-on-Lambda} we can conclude in either
case that
\begin{equation}
\sum_{n\in\Lambda}\left|\mf f_{1}\left(n\right)-\mf f_{2}\left(n\right)\right|^{2}\lesssim\exp\left(-CQ_{N}\right)\sum_{n\in\Lambda}\left|\mf f_{1}\left(n\right)\right|^{2}.\label{eq:separation-sum-f1-f2-over-Lambda}
\end{equation}
If $a=0$, then this follows trivially from \eqref{separation-f1-f2-on-Lambda}.
From \eqref{separation-sum-f1-f2-over-Lambda}, \eqref{separation-sum-f1-f2-over-outside-Lambda},
and the fact that $\mf f_{1}$ and $\mf f_{2}$ are orthogonal, we
get that
\[
\left\Vert \mf f_{1}-\mf f_{2}\right\Vert ^{2}=\sum_{n\in\left[0,N-1\right]}\left(\left|\mf f_{1}\left(n\right)\right|^{2}+\left|\mf f_{2}\left(n\right)\right|^{2}\right)\lesssim\exp\left(-CQ_{N}\right)\sum_{n\in\Lambda}\left(\left|\mf f_{1}\left(n\right)\right|^{2}+\left|\mf f_{2}\left(n\right)\right|^{2}\right).
\]
This is absurd, so we cannot have $\left|E_{1}-E_{2}\right|\le\exp\left(-C_{0}Q_{N}\right)$. 
\end{proof}
Next we use a bootstrapping argument to improve the separation from
the previous proposition.
\begin{thm}
\label{thm:separation-bootstrap} Suppose there exists $N'$, $2Q_{N}^{2}\le N'<N$,
such that $\exp\left(-C_{0}Q_{N'}\right)\ge\sigma_{N}$, with $C_{0}$
as in the previous proposition. Then for all $x\in\mb T$, $\omega\in\Omega^{0}\cap\mb T_{c,\alpha}\setminus\left(\Omega_{N}\cup\Omega_{N'}\right)$,
if $\dist\left(E_{j}^{\left(N\right)}\left(x,\omega\right),\mc E_{N,\omega}\cup\mc E_{N',\omega}\cup\left(\mathcal{E}^{0}\right)^{C}\right)\ge\sigma_{N}$
for some $j$, then
\[
\left|E_{j}^{\left(N\right)}\left(x,\omega\right)-E_{k}^{\left(N\right)}\left(x,\omega\right)\right|>\sigma_{N}/2
\]
for any $k\neq j$.\end{thm}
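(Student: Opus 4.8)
I would argue by contradiction, descending to the scale $N'$, where the a priori separation of \propref{separation-raw} already beats $\sigma_{N}$. The mechanism is that restricting a localized eigenvector of $H^{(N)}$ to a window of length $N'$ about its localization centre yields an approximate eigenvector of $H^{(N')}$, so that eigenvalue gaps at scale $N$ are controlled by gaps at scale $N'$ up to errors exponentially small in $N'$; since $N'\ge 2Q_{N}^{2}$ these errors lie far below $\exp(-C_{0}Q_{N})$, while $\exp(-C_{0}Q_{N'})\ge\sigma_{N}$ forces the scale-$N'$ gap to be at least $\sigma_{N}$. Concretely, fix $x\in\mb T$, $\omega\in\Omega^{0}\cap\mb T_{c,\alpha}\setminus(\Omega_{N}\cup\Omega_{N'})$, and $E_{1}=E_{j}^{(N)}(x,\omega)$ with $\dist(E_{1},\mc E_{N,\omega}\cup\mc E_{N',\omega}\cup(\mc E^{0})^{C})\ge\sigma_{N}$, and suppose some $E_{2}=E_{k}^{(N)}(x,\omega)$, $k\neq j$, satisfies $|E_{1}-E_{2}|\le\sigma_{N}/2$. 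Since $\sigma_{N}\gg\exp(-l^{1/4})$, both $E_{1}$ and $E_{2}$ lie at distance $\gtrsim\exp(-l^{1/4})$ from $\mc E_{N,\omega}\cup(\mc E^{0})^{C}$, so \thmref{localization} and \lemref{separation-close-ev->close-localization-centers} give that both eigenvalues are localized with localization centres $\nu_{1},\nu_{2}$ satisfying $|\nu_{1}-\nu_{2}|<2Q_{N}$; moreover \propref{separation-raw} already yields the crude bound $|E_{1}-E_{2}|>\exp(-C_{0}Q_{N})$, which I will contradict in one of the cases below.

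Next I would pass to a window of length exactly $N'$. Choose $\Lambda=[a,a+N'-1]\subset[0,N-1]$ (possible as $N'<N$) containing $\nu_{1}$ and as nearly centred on $\nu_{1}$ as that inclusion permits; then $Q:=\dist([0,N-1]\setminus\Lambda,\nu_{1})\ge N'/2$, and since $N'\ge 2Q_{N}^{2}\gg Q_{N}$ one also gets $[\nu_{2}-3Q_{N},\nu_{2}+3Q_{N}]\cap[0,N-1]\subset\Lambda$ and $Q':=\dist([0,N-1]\setminus\Lambda,\nu_{2})\ge Q-2Q_{N}\ge N'/4$. Writing $H_{\Lambda}(x,\omega)=H^{(N')}(y,\omega)$ with $y=x+a\omega$, \thmref{localization}(2) applied to $E_{1}$ and to $E_{2}$ produces eigenvalues $\t E_{1},\t E_{2}$ of $H^{(N')}(y,\omega)$ with $|\t E_{i}-E_{i}|\lesssim\exp(-\gamma N'/4)$, $i=1,2$. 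This error is far below both relevant thresholds: $\gamma N'/4\ge(\gamma/2)Q_{N}^{2}\gg C_{0}Q_{N}$ since $Q_{N}\to\infty$, and $\gamma N'/4\gg l^{6}\gg l^{1/4}\gtrsim\log(1/\sigma_{N})$ since $N'\ge 2Q_{N}^{2}\gg l^{6}$ and $\sigma_{N}\gg\exp(-l^{1/4})$.

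Now I would split into two cases. If $\t E_{1}=\t E_{2}$, then $|E_{1}-E_{2}|\le|E_{1}-\t E_{1}|+|\t E_{2}-E_{2}|\lesssim\exp(-\gamma N'/4)\ll\exp(-C_{0}Q_{N})$, contradicting the crude separation. If $\t E_{1}\neq\t E_{2}$, then \propref{separation-raw} applied at scale $N'$ — legitimate because $\omega\notin\Omega_{N'}$ and $\t E_{1}$ is good at scale $N'$, as discussed below — gives $|\t E_{1}-\t E_{2}|>\exp(-C_{0}Q_{N'})\ge\sigma_{N}$, whence $|E_{1}-E_{2}|\ge|\t E_{1}-\t E_{2}|-|E_{1}-\t E_{1}|-|E_{2}-\t E_{2}|>\sigma_{N}-2\exp(-\gamma N'/4)>\sigma_{N}/2$, contradicting $|E_{1}-E_{2}|\le\sigma_{N}/2$. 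Either way we reach a contradiction, which proves the theorem.

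The step I expect to need the most care is the claim used in the second case that $\t E_{1}$ is good at scale $N'$, i.e.\ that $\dist(\t E_{1},\mc E_{N',\omega}\cup(\mc E^{0})^{C})\gtrsim\exp(-(l')^{1/4})$ with $l'=2[(\log N')^{A}]$ the scale parameter at scale $N'$ — exactly the hypothesis of \propref{separation-raw} at scale $N'$. Since $|\t E_{1}-E_{1}|\lesssim\exp(-\gamma N'/4)$, which is far smaller than $\exp(-(l')^{1/4})$, it is enough that $E_{1}$ avoid a fixed $\exp(-(l')^{1/4})$-neighbourhood of the scale-$N'$ exceptional energies; this is why one removes (a harmless enlargement of) $\mc E_{N',\omega}$ in the hypothesis rather than the raw set — enlarging $\mc E_{N',\omega}$ by $\exp(-(l')^{1/4})$ affects neither its measure nor its complexity appreciably, as $N'$, hence $l'$, is minuscule compared with $N$ under $\exp(-C_{0}Q_{N'})\ge\sigma_{N}$ and $N'\ge 2Q_{N}^{2}$. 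The remaining points are routine: that $\Lambda$ may indeed be taken of length exactly $N'$, so the scale-$N'$ data $Q_{N'},\mc E_{N',\omega},\Omega_{N'}$ legitimately apply; that $N'$ exceeds the scale threshold of the Elimination Assumption once $N$ is large (as $N'\ge 2Q_{N}^{2}\to\infty$); and that the localization window of $E_{2}$ lies inside $\Lambda$ (as $N'/2\gg 5Q_{N}$).
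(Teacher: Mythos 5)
Your proposal is correct and follows essentially the same route as the paper's proof: descend to an interval $\Lambda$ of length $N'$ containing both localization centres, use \thmref{localization}(2) to produce nearby eigenvalues of $H_{\Lambda}$, rule out their coincidence via \propref{separation-raw} at scale $N$, and then invoke \propref{separation-raw} at scale $N'$ together with $\exp\left(-C_{0}Q_{N'}\right)\ge\sigma_{N}$ to force the contradiction. Your closing discussion of why the projected eigenvalue must clear an $\exp\left(-\left(l'\right)^{1/4}\right)$-neighbourhood of $\mc E_{N',\omega}$ is, if anything, more careful than the paper's one-line treatment of that same point.
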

\begin{proof}
Fix $x\in\mathbb{T}$, $\omega\in\Omega^{0}\cap\mathbb{T}_{c,\alpha}\setminus\left(\Omega_{N}\cup\Omega_{N'}\right)$,
and $j$, such that $E_{1}=E_{j}^{\left(N\right)}\left(x,\omega\right)$
satisfies the assumptions. Suppose that there exists $E_{2}=E_{k}^{\left(N\right)}\left(x,\omega\right)\neq E_{1}$
such that $\left|E_{1}-E_{2}\right|\le\sigma_{N}/2$. We have that
$\dist\left(E_{k}^{\left(N\right)}\left(x,\omega\right),\mc E_{N,\omega}\cup\left(\mathcal{E}^{0}\right)^{C}\right)\ge\sigma_{N}/2\gg\exp\left(-l^{1/4}\right)$,
and due to \lemref{separation-close-ev->close-localization-centers}
it is possible to choose an interval $\Lambda\subset\left[0,N-1\right]$
of length $N'$ such that $\Lambda\supset\left[\nu_{i}^{\left(N\right)}\left(x,\omega\right)-Q_{N}^{2},\nu_{i}^{\left(N\right)}\left(x,\omega\right)+Q_{N}^{2}\right]$,
$i\in\left\{ j,k\right\} $. By \eqref{localization-ev} we know that
there exist $E'_{1},E'_{2}\in\spec\left(H_{\Lambda}\left(x,\omega\right)\right)$
such that $\left|E_{i}-E'_{i}\right|\lesssim\exp\left(-\gamma Q_{N}^{2}\right)$.
Note that $E'_{1}\neq E'_{2}$, since otherwise $\left|E_{1}-E_{2}\right|\lesssim\exp\left(-\gamma Q_{N}^{2}\right)$,
contradicting the conclusion of \propref{separation-raw}. We also
have that
\[
\dist\left(E'_{1},\mc E_{N',\omega}\cup\left(\mathcal{E}^{0}\right)^{C}\right)\gtrsim\sigma_{N}-\exp\left(-\sigma Q_{N}^{2}\right)\ge\sigma_{N}/2\gg\exp\left(-l^{1/4}\right)\ge\exp\left(-l'^{1/4}\right),
\]
where $l'=2\left[\left(\log N'\right)^{A}\right]$, with $A$ as in
the Elimination Assumption \ref{localization-elimination-assumption}.
Applying \propref{separation-raw} at scale $N'$ we get that $\left|E'_{1}-E'_{2}\right|>\exp\left(-C_{0}Q_{N'}\right)\ge\sigma_{N}$,
and consequently $\left|E_{1}-E_{2}\right|>\sigma_{N}-\exp\left(-Q_{N}^{2}\right)\ge\sigma_{N}/2$.
We arrived at a contradiction, and the proof is concluded.
\end{proof}

\section{\label{sec:Elimination-via-Resultants}Elimination, Localization,
and Separation via Resultants}

In this section we will first obtain the elimination of resonances
via resultants using the abstract results from \cite{MR2753606}.
Then we will apply the abstract results of the previous two sections
to get concrete localization and separation.

As was mentioned in the introduction, we first need to apply the Weierstrass
Preparation Theorem to the determinants. For convenience we recall
a version of the Weierstrass Preparation Theorem. In what follows
$f\left(z,w\right)$ is a function defined on the polydisk $\mc P=\mc D\left(z_{0},R_{0}\right)\times\mc P\left(w^{0},R_{0}\right)$,
$z_{0}\in\mb C$, $w^{0}\in\mb C^{d}$, $1/2\ge R_{0}>0$.
\begin{lem}
\label{lem:resultants-Weierstrass-preparation}(\cite[Proposition 2.26]{MR2753606})
Assume that $f\left(\cdot,w\right)$ has no zeros on some circle $\left|z-z_{0}\right|=r_{0}$,
$0<r_{0}<R_{0}/2$, for any $w\in\mc P_{1}=\mc P\left(w^{0},r_{1}\right)$
where $0<r_{1}<R_{0}$. Then there exist a polynomial $P\left(z,w\right)=z^{k}+a_{k-1}\left(w\right)z^{k-1}+\ldots+a_{0}\left(w\right)$
with $a_{j}\left(w\right)$ analytic in $\mc P_{1}$ and an analytic
function $g\left(z,w\right)$, $\left(z,w\right)\in\mc D\left(z_{0},r_{0}\right)\times\mc P_{1}$
so that the following statements hold:
\begin{enumerate}
\item $f\left(z,w\right)=P\left(z,w\right)g\left(z,w\right)$ for any $\left(z,w\right)\in\mc D\left(z_{0},r_{0}\right)\times\mc P_{1}$,
\item $g\left(z,w\right)\neq0$ for any $\left(z,w\right)\in\mc D\left(z_{0},r_{0}\right)\times\mc P_{1}$,
\item For any $w\in\mc P_{1}$, $P\left(\cdot,w\right)$ has no zeros in
$\mb C\setminus\mc D\left(z_{0},r_{0}\right)$.
\end{enumerate}
\end{lem}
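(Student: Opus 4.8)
The plan is to prove this by the classical argument-principle proof of the Weierstrass Preparation Theorem, keeping careful track of joint analyticity in $(z,w)$ so that the coefficients $a_j$ and the unit $g$ come out analytic on the full polydisk. First I would pin down the degree $k$. Since $f(\cdot,w)$ has no zeros on the circle $\left|z-z_0\right|=r_0$, it is in particular not identically zero for any $w\in\mc P_1$, and the number of its zeros (with multiplicity) in $\mc D\left(z_0,r_0\right)$ equals
\[
k(w)=\frac{1}{2\pi i}\oint_{\left|z-z_0\right|=r_0}\frac{\partial_z f(z,w)}{f(z,w)}\,dz .
\]
The integrand is jointly continuous on a neighbourhood of $\left\{\left|z-z_0\right|=r_0\right\}\times\mc P_1$ because its denominator does not vanish there, so $k(\cdot)$ is a continuous $\mb Z$-valued function on the connected set $\mc P_1$, hence a constant $k$.

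Next I would construct $P$. Writing $\zeta_1(w),\dots,\zeta_k(w)$ for the zeros of $f(\cdot,w)$ in $\mc D\left(z_0,r_0\right)$ listed with multiplicity, the power sums have the integral representation
\[
p_m(w)=\sum_{j=1}^{k}\zeta_j(w)^m=\frac{1}{2\pi i}\oint_{\left|z-z_0\right|=r_0}z^m\,\frac{\partial_z f(z,w)}{f(z,w)}\,dz ,\qquad m\ge 1 ,
\]
and differentiation under the integral sign shows each $p_m$ is analytic on $\mc P_1$. By Newton's identities the elementary symmetric functions $e_1(w),\dots,e_k(w)$ of $\zeta_1(w),\dots,\zeta_k(w)$ are universal polynomials in $p_1(w),\dots,p_k(w)$, hence analytic on $\mc P_1$, so setting
\[
P(z,w)=\prod_{j=1}^{k}\bigl(z-\zeta_j(w)\bigr)=z^k+a_{k-1}(w)z^{k-1}+\dots+a_0(w),\qquad a_{k-j}(w)=(-1)^j e_j(w),
\]
gives a polynomial with coefficients analytic on $\mc P_1$, and by construction all zeros of $P(\cdot,w)$ lie in $\mc D\left(z_0,r_0\right)$; this is statement (3).

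Finally I would define $g$ and verify (1) and (2). For $(z,w)\in\mc D\left(z_0,r_0\right)\times\mc P_1$ put $g(z,w)=f(z,w)/P(z,w)$. For fixed $w$, the functions $f(\cdot,w)$ and $P(\cdot,w)$ have exactly the same zeros with the same multiplicities in $\mc D\left(z_0,r_0\right)$, so all singularities of $g(\cdot,w)$ are removable and $g(\cdot,w)$ is holomorphic and zero-free on $\mc D\left(z_0,r_0\right)$; this yields (1) and (2) pointwise in $w$. For the joint analyticity of $g$ I would use that on the circle $\left|z'-z_0\right|=r_0$ both $f(z',w)$ and $P(z',w)$ are nonzero for every $w\in\mc P_1$ (the latter since all roots of $P(\cdot,w)$ were placed strictly inside the disk), so $f/P$ is jointly analytic near that circle, and the Cauchy integral formula
\[
g(z,w)=\frac{1}{2\pi i}\oint_{\left|z'-z_0\right|=r_0}\frac{f(z',w)/P(z',w)}{z'-z}\,dz',\qquad \left|z-z_0\right|<r_0,
\]
exhibits $g$ as analytic in $(z,w)$ on $\mc D\left(z_0,r_0\right)\times\mc P_1$. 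The only genuine technical point is this joint analyticity of the $a_j$ and of $g$; it reduces entirely to the integral representations above, whose validity in turn rests precisely on the hypothesis that $f(\cdot,w)$ does not vanish on the contour $\left|z-z_0\right|=r_0$ for $w\in\mc P_1$.
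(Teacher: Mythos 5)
Your proof is correct: it is the classical argument-principle proof of the Weierstrass Preparation Theorem with parameters, with the degree pinned down by continuity of the integer-valued zero-counting integral, the coefficients obtained from the analytic power sums via Newton's identities, and the joint analyticity of $g$ recovered from the Cauchy integral over the contour where $P$ does not vanish. The paper does not prove this lemma but simply cites \cite[Proposition 2.26]{MR2753606}, whose proof is this same standard construction, so there is nothing to reconcile.
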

We can now obtain the Weierstrass Preparation of the determinants.
\begin{prop}
\label{prop:resultants-Weierstrass-preparation-for-determinants}Given
$x_{0}\in\mb T$, $\left(\omega_{0},E_{0}\right)\in\mb T_{c,\alpha}\times\mb C$
such that $L\left(\omega_{0},E_{0}\right)>\gamma>0$, there exist
constants $N_{0}=N_{0}\left(\left\Vert a\right\Vert _{\infty},\left\Vert b\right\Vert _{*},\left|E_{0}\right|,c,\alpha,\gamma\right)$,
$C_{0}=C_{0}\left(\alpha\right)$, so that for any $N\ge N_{0}$ there
exist $r_{0}\simeq N^{-1}$, a polynomial $P_{N}\left(z,\omega,E\right)=z^{k}+a_{k-1}\left(\omega,E\right)z^{k-1}+\ldots+a_{0}\left(\omega,E\right)$,
with $a_{j}\left(\omega,E\right)$ analytic in $\mc D\left(E_{0},r_{1}\right)\times\mc D\left(\omega_{0},r_{1}\right)$,
$r_{1}=\exp\left(-\left(\log N\right)^{C_{0}}\right)$, and an analytic
function $g_{N}\left(z,\omega,E\right)$, $\left(z,\omega,E\right)\in\mc P:=\mc D\left(x_{0},r_{0}\right)\times\mc D\left(E_{0},r_{1}\right)\times\mc D\left(\omega_{0},r_{1}\right)$
such that:
\begin{enumerate}
\item $f_{N}^{a}\left(z,\omega,E\right)=P_{N}\left(z,\omega,E\right)g_{N}\left(z,\omega,E\right)$,
\item $g_{N}\left(z,\omega,E\right)\neq0$ for any $\left(z,\omega,E\right)\in\mc P$,
\item For any $\left(\omega,E\right)\in\mc D\left(\omega_{0},r_{1}\right)\times\mc D\left(E_{0},r_{1}\right)$
the polynomial $P_{N}\left(\cdot,\omega,E\right)$ has no zeros in
$\mb C\setminus\mc D\left(z_{0},r_{0}\right)$,
\item $k=\deg P_{N}\left(\cdot,\omega,E\right)\le\left(\log N\right)^{C_{0}}$.
\end{enumerate}
\end{prop}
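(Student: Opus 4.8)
The plan is to apply the Weierstrass Preparation Theorem (\lemref{resultants-Weierstrass-preparation}) to the determinant $f_N^a$ viewed as a function of $z$ with parameters $(\omega,E)$. The only hypothesis to verify is that $f_N^a(\cdot,\omega,E)$ has no zeros on some circle $|z-x_0|=r_0$ with $r_0\simeq N^{-1}$, uniformly for $(\omega,E)$ in a small polydisk around $(\omega_0,E_0)$. This is exactly where the zero-counting bound \propref{prelims-number-of-ev} enters: the number of zeros of $f_N^a(\cdot,\omega,E_0)$ in $\mc D(x_0,N^{-1})$ is at most $C_1(\log N)^{C_0}$, so among the $\sim (\log N)^{C_0+1}$ concentric circles of radii $r_0^{(i)} \in (cN^{-1},2cN^{-1})$ spaced by roughly $N^{-1}(\log N)^{-C_0-1}$, at least one must avoid all these zeros. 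I would then use the Lipschitz estimate \corref{prelims-lipschitzness} (more precisely \eqref{prelims-f-lipschitzness}, or a minimum-modulus argument via \eqref{prelims-M-lipschitzness}) to propagate this zero-freeness from $E=E_0$ to all $E,\omega$ within $r_1 = \exp(-(\log N)^{C_0})\le N^{-C_1}$: on the chosen circle $\log|f_N^a(z,\omega_0,E_0)|$ is not too negative (it is $\ge NL^a - C(\log N)^{C_0}$ by a combination of the large deviations estimate and Cartan's estimate, as in the proof of \propref{prelims-LDT-failure}), and perturbing $(\omega,E)$ by at most $r_1$ changes $\log|f_N^a|$ by a controlled amount, keeping it away from $-\infty$.

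Having verified the hypothesis, I would invoke \lemref{resultants-Weierstrass-preparation} with $z_0 = x_0$, $w = (\omega,E)\in\mb C^2$, $w^0 = (\omega_0,E_0)$, $R_0$ a fixed small constant (say $1/2$, after rescaling the $z$-variable so that the disk has radius $1/2$; note $a,b$ are analytic on $\mb H_{\rho_0}$ so this is fine), $r_0$ the radius of the good circle just selected, and $r_1 = \exp(-(\log N)^{C_0})$. This immediately yields the factorization $f_N^a(z,\omega,E) = P_N(z,\omega,E) g_N(z,\omega,E)$ with $P_N$ a monic polynomial in $z$ of some degree $k$ with coefficients $a_j(\omega,E)$ analytic on $\mc D(\omega_0,r_1)\times\mc D(E_0,r_1)$, $g_N$ nonvanishing on the polydisk $\mc P$, and $P_N(\cdot,\omega,E)$ having no zeros outside $\mc D(x_0,r_0)$. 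This gives conclusions (1), (2), (3) directly.

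For conclusion (4), the degree $k$ equals the number of zeros (with multiplicity) of $P_N(\cdot,\omega,E)$ inside $\mc D(x_0,r_0)$, which by (1) and (2) equals the number of zeros of $f_N^a(\cdot,\omega,E)$ in that disk. Since $r_0\simeq N^{-1}$ and $r_1\le N^{-C_1}$, the second bound in \propref{prelims-number-of-ev} (applied with $E_0$ replaced by any $E$ in the small disk, which still satisfies $L(\omega,E)>\gamma$ after shrinking the radius, using continuity of $L$ — or one observes the zero count is the same for all parameters in the connected polydisk since it is an integer-valued continuous function) gives $k\le C_1(\log N)^{C_0}$. Relabeling constants yields the stated bound $k\le(\log N)^{C_0}$.

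The main obstacle is the uniform choice of the zero-free circle: one must be careful that the radii $r_0^{(i)}$ stay comparable to $N^{-1}$ while being separated enough to guarantee one of them misses the $O((\log N)^{C_0})$ zeros, and simultaneously that on this circle $f_N^a$ has a lower bound good enough (coming from combining \propref{prelims-LDT-determinants} with \lemref{prelims-Cartan-estimate}, as in \propref{prelims-LDT-failure}) so that the perturbation estimate \corref{prelims-lipschitzness} keeps $f_N^a$ from vanishing for all $(\omega,E)\in\mc D(\omega_0,r_1)\times\mc D(E_0,r_1)$. All other steps are routine bookkeeping of constants.
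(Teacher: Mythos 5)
Your proposal is correct and follows essentially the same route as the paper: a lower bound for $\left|f_{N}^{a}\right|$ on a circle of radius $\simeq N^{-1}$ obtained from the large deviations estimate plus Cartan's estimate, propagation to the $(\omega,E)$-polydisk via \eqref{prelims-M-lipschitzness}, then \lemref{resultants-Weierstrass-preparation}, with the degree bound from \propref{prelims-number-of-ev}. The only difference is your preliminary pigeonhole over concentric circles to dodge the zeros, which is redundant (and by itself insufficient, since one needs a quantitative lower bound, not mere zero-freeness); the Cartan exceptional-set argument you also invoke already produces a circle with the required lower bound in one step, exactly as in the paper.
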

\begin{proof}
Let $f\left(\zeta,w_{1},w_{2}\right):=f_{N}^{a}\left(x_{0}+N^{-1}\zeta,\omega_{0}+N^{-C}w_{1},E_{0}+N^{-C}w_{2}\right)$,
where $C$ is larger than the $C_{1}$ constants from \corref{prelims-uniform-upper-bound}
and \corref{prelims-lipschitzness}. By the large deviations estimate
for determinants (\propref{prelims-LDT-determinants}) it follows
that (for large enough $N$) there exists $\zeta_{0}$, $\left|\zeta_{0}\right|<1/100$,
such that $\left|f\left(\zeta_{0},0,0\right)\right|>NL_{N}\left(\omega_{0},E_{0}\right)-\left(\log N\right)^{C}$.
Using \corref{prelims-uniform-upper-bound} we can apply Cartan's
estimate (\lemref{prelims-Cartan-estimate}) to $\phi\left(\zeta\right)=f\left(\zeta,0,0\right)$
on $\mc D\left(\zeta_{0},1\right)$, to get that there exists $\mc B\in\car_{1}\left(\log N,\left(\log N\right)^{C}\right)$
such that
\begin{equation}
\left|f\left(\zeta,0,0\right)\right|>\exp\left(NL^{a}\left(\omega_{0},E_{0}\right)-\left(\log N\right)^{C}\right),\label{eq:resultants-f_zet_0_0>0}
\end{equation}
for $\zeta\in\mc D\left(\zeta_{0},1/6\right)\setminus\mc B$. In particular,
from \defnref{prelims-Caratheodory}, we can conclude there exists
$r\in(1/5,1/6)$ such that \eqref{resultants-f_zet_0_0>0} holds for
$\left|\zeta\right|=r$. Using \eqref{prelims-M-lipschitzness} we
have 
\begin{multline*}
\left|f\left(\zeta,w_{1},w_{2}\right)\right|\ge\left|f\left(\zeta,0,0\right)\right|-\left|f\left(\zeta,0,0\right)-f\left(\zeta,w_{1},w_{2}\right)\right|\\
\ge\exp\left(NL^{a}\left(\omega_{0},E_{0}\right)-\left(\log N\right)^{C}\right)\\
-\exp\left(NL^{a}\left(\omega_{0},E_{0}\right)+\left(\log N\right)^{C}\right)N^{-C}\left(\left|w_{1}\right|+\left|w_{2}\right|\right)>0,
\end{multline*}
for $\left|\zeta\right|=r$, $\left|w_{1}\right|,\left|w_{2}\right|\le\exp\left(-\left(\log N\right)^{C}\right)$.
Now the first three claims follow by applying \lemref{resultants-Weierstrass-preparation}
with $r_{0}=rN^{-1}$ and $r_{1}=\exp\left(-\left(\log N\right)^{C}\right)$.
The last claim is a consequence of \propref{prelims-number-of-ev}. 
\end{proof}
Next we recall the abstract version of the elimination via resultants
obtained by Goldstein and Schlag. Given $w^{0}\in\mb C^{d}$, $r=\left(r_{1},\ldots,r_{d}\right)$,
$r_{i}>0$, $i=1,\dots,d$, we let 
\[
S_{w_{0},r}\left(w\right)=\left(r_{1}^{-1}\left(w_{1}-w_{1}^{0}\right),\ldots,r_{d}^{-1}\left(w_{d}-w_{d}^{0}\right)\right).
\]
 We will use the notation $\mc Z\left(f\right)$ for the zeros of
a function $f$. We also let $\mc Z\left(f,S\right):=\mc Z\left(f\right)\cap S$
and $\mc Z\left(f,r\right):=\mc Z\left(f,\mb H_{r}\right)$. 
\begin{lem}
\label{lem:resultants-abstract-elimination}(\cite[Lemma 5.4]{MR2753606})
Let $P_{s}\left(z,w\right)=z^{k_{s}}+a_{s,k_{s}-1}\left(w\right)z^{k_{s}-1}+\ldots+a_{s,0}\left(w\right)$,
$z\in\mb C$, $s=1,2$, where $a_{s,j}\left(w\right)$ are analytic
functions defined on a polydisk $\mc{P=}\mc P\left(w^{0},r\right)$,
$w^{0}\in\mb C^{d}$. Assume that $k_{s}>0$, $s=1,2$, and set $k=k_{1}k_{2}$.
Suppose that for any $w\in\mc P$ the zeros of $P_{s}\left(\cdot,w\right)$
belong to the same disk $\mc D\left(z_{0},r_{0}\right)$, $r_{0}\ll1$,
$s=1,2$. Let $\left|t\right|>16kr_{0}r^{-1}$. Given $H\gg1$ there
exists a set 
\[
\mc B_{H,t}\subset\t{\mc P}:=\mc D\left(w_{1}^{0},8kr_{0}/\left|t\right|\right)\times\prod_{j=2}^{d}\mc D\left(w_{j}^{0},r/2\right)
\]
such that $S_{w^{0},\left(16kr_{0}\left|t\right|^{-1},r,\ldots,r\right)}\left(\mc B_{H,t}\right)\in\car_{d}\left(H^{1/d},K\right)$,
$K=CHk$ and for any $w\in\t{\mc P}\setminus\mc B_{H,t}$ one has
\[
\dist\left(\mc Z\left(P_{1}\left(\cdot,w\right)\right),\mc Z\left(P_{2}\left(\cdot+t\left(w_{1}-w_{1}^{0}\right),w\right)\right)\right)\ge e^{-CHk}.
\]

\end{lem}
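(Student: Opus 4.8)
The plan is to run the classical resultant argument, the only delicate point being the choice of the polydisk on which Cartan's estimate (\lemref{prelims-Cartan-estimate}) is applied. Put $Q_{2}(z,w):=P_{2}(z+t(w_{1}-w_{1}^{0}),w)$. Expanding $P_{2}$ in powers of its first argument shows that $Q_{2}(\cdot,w)$ is again monic of degree $k_{2}$ in $z$ and that its $z$-coefficients are finite linear combinations of the $a_{2,j}(w)$ with coefficients polynomial in $t(w_{1}-w_{1}^{0})$; hence they are analytic on $\mc P$. Let $R(w):=\mathrm{Res}_{z}(P_{1}(\cdot,w),Q_{2}(\cdot,w))$; being a polynomial in the coefficients of $P_{1}(\cdot,w)$ and $Q_{2}(\cdot,w)$, $R$ is analytic on $\mc P$, and since $P_{1},P_{2}$ are monic the product formula for the resultant gives
\[
R(w)=\prod_{\zeta}\prod_{\eta}\bigl(\zeta-\eta+t(w_{1}-w_{1}^{0})\bigr),
\]
the first product over $\zeta\in\mc Z(P_{1}(\cdot,w))$ and the second over $\eta\in\mc Z(P_{2}(\cdot,w))$, both with multiplicity. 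Thus $R(w)$ is a product of $k=k_{1}k_{2}$ factors, each of the form $\zeta-\tilde\eta$ with $\tilde\eta\in\mc Z(Q_{2}(\cdot,w))$, so $R(w)=0$ iff $P_{1}(\cdot,w)$ and $Q_{2}(\cdot,w)$ share a zero; moreover, whenever all zeros of $P_{1}(\cdot,w)$ and of $Q_{2}(\cdot,w)$ lie in one disk of radius $\le\delta$, every factor has modulus $\le2\delta$, so bounding $|R(w)|$ below by (smallest factor)$\times$(the other $k-1$ factors) yields
\[
\dist\bigl(\mc Z(P_{1}(\cdot,w)),\mc Z(Q_{2}(\cdot,w))\bigr)\ \ge\ |R(w)|\,(2\delta)^{-(k-1)}.
\]

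Next I would set up the Cartan polydisk. By hypothesis $16kr_{0}|t|^{-1}<r$, so $\mc D(w_{1}^{0},16kr_{0}|t|^{-1})\times\prod_{j\ge2}\mc D(w_{j}^{0},r)\subset\mc P$; there the zeros of $P_{1}(\cdot,w)$ lie in $\mc D(z_{0},r_{0})$ and those of $Q_{2}(\cdot,w)$ in $\mc D(z_{0}-t(w_{1}-w_{1}^{0}),r_{0})\subset\mc D(z_{0},17kr_{0})$, so each factor of $R$ has modulus $\le18kr_{0}$ and $\sup\log|R|\le k\log(18kr_{0})=:M$. For the lower bound, pick $w^{\circ}$ with $w_{j}^{\circ}=w_{j}^{0}$ for $j\ge2$ and $t(w_{1}^{\circ}-w_{1}^{0})=u_{0}$, $|u_{0}|=4kr_{0}$ (admissible since $4kr_{0}|t|^{-1}<r$): at $w^{\circ}$ any zero of $P_{1}$ and any zero of $P_{2}$ differ by at most $2r_{0}$ while the shift has modulus $4kr_{0}\ge4r_{0}$, so each factor of $R(w^{\circ})$ has modulus $\ge4kr_{0}-2r_{0}\ge2kr_{0}$, hence $\log|R(w^{\circ})|\ge k\log(2kr_{0})=:m$ and $M-m\le k\log9$. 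Keeping the centre shifted by $\simeq kr_{0}|t|^{-1}$ is the crucial step: it keeps $M-m$ linear in $k$ (centring at $w^{0}$ is useless since $R(w^{0})$ may vanish), and this is exactly what yields the final separation $e^{-CHk}$ rather than $e^{-CHk\log k}$.

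Then I would apply Cartan and conclude. Re-centre the polydisk above at $w^{\circ}$ (still well inside it), rescale to the unit polydisk, and apply \lemref{prelims-Cartan-estimate} to $R$ with these $M,m$ and parameter $H$: since $M-m\lesssim k$ one gets a set $\mc B\in\car_{d}(H^{1/d},CHk)$ off which $\log|R(w)|>M-CHk$ on the $1/6$-subpolydisk. Covering $\t{\mc P}$ by boundedly many such subpolydisks, each recentred at its own admissible point, transporting each exceptional set back to coordinates centred at $w^{0}$, and taking the union, produces $\mc B_{H,t}\subset\t{\mc P}$ with the stated $\car_{d}$ bound. For $w\in\t{\mc P}$ one has $|t(w_{1}-w_{1}^{0})|\le8kr_{0}$, so the inequality of the first paragraph applies with $\delta=9kr_{0}$, and for $w\in\t{\mc P}\setminus\mc B_{H,t}$,
\[
\dist\bigl(\mc Z(P_{1}(\cdot,w)),\mc Z(Q_{2}(\cdot,w))\bigr)\ \ge\ |R(w)|\,(18kr_{0})^{-(k-1)}\ >\ (18kr_{0})^{k}(18kr_{0})^{-(k-1)}e^{-CHk}\ =\ 18kr_{0}\,e^{-CHk},
\]
which is $\ge e^{-C'Hk}$ after relabelling the constant (the powers of $r_{0}$ from $M$ cancel against the $(kr_{0})^{k-1}$ coming from the remaining factors, which is the point of using the sharp bound $M=k\log(18kr_{0})$). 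Since $\mc Z(Q_{2}(\cdot,w))=\mc Z(P_{2}(\cdot+t(w_{1}-w_{1}^{0}),w))$, this is the assertion.

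The main obstacle is the calibration in the middle step: choosing the scale $16kr_{0}|t|^{-1}$ so that the Cartan polydisk fits inside $\mc P$ — this is precisely where the hypothesis $|t|>16kr_{0}r^{-1}$ is used — and the re-centring scale $4kr_{0}|t|^{-1}$ so that $R$ is bounded below by $(ckr_{0})^{k}$ at the centre, keeping $M-m=O(k)$; together with the routine bookkeeping that transports the Cartan exceptional sets back to a single $\car_{d}$ set inside $\t{\mc P}$ of complexity $O(Hk)$.
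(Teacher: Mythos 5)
This lemma is not proved in the paper at all: it is imported verbatim from \cite[Lemma 5.4]{MR2753606}, so there is no internal proof to compare against. Your argument is the standard resultant argument and is, structurally, the intended one: form $R(w)=\mathrm{Res}_{z}\left(P_{1},Q_{2}\right)$, use the product formula to bound it above by $\left(18kr_{0}\right)^{k}$ on a polydisk of first-coordinate radius $16kr_{0}\left|t\right|^{-1}$ (exactly where $\left|t\right|>16kr_{0}r^{-1}$ enters), bound it below at a point offset from $w_{1}^{0}$ by $\asymp kr_{0}\left|t\right|^{-1}$ so that $M-m=O\left(k\right)$, apply Cartan, and convert the lower bound on $\left|R\right|$ into a lower bound on the distance between the zero sets by peeling off the remaining $k-1$ factors. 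Your observation that the recentring is what keeps $M-m$ linear in $k$ is correct and is the heart of the matter.

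The one place your calibration actually fails is the covering step, which you assert but do not check. The Cartan polydisk must sit inside $\mc D\left(w_{1}^{0},16kr_{0}\left|t\right|^{-1}\right)\times\prod_{j\ge2}\mc D\left(w_{j}^{0},r\right)$: enlarging it in the $w_{1}$-direction either exits $\mc P$ (the hypothesis only guarantees $16kr_{0}\left|t\right|^{-1}<r$) or destroys the bound $M=k\log\left(18kr_{0}\right)$. Hence if its centre is offset by $\delta_{1}$ from $w_{1}^{0}$, its first-coordinate radius is at most $16kr_{0}\left|t\right|^{-1}-\delta_{1}$, and the $1/6$-subpolydisk only reaches down to $\left|w_{1}-w_{1}^{0}\right|\ge\left(7\delta_{1}-16kr_{0}\left|t\right|^{-1}\right)/6$. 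With your choice $\left|t\right|\delta_{1}=4kr_{0}$ this lower limit equals $2kr_{0}\left|t\right|^{-1}>0$, so no finite union of such subpolydisks covers the part of $\t{\mc P}$ with $w_{1}$ near $w_{1}^{0}$ --- and those points cannot simply be discarded, since they must be placed in $\mc B_{H,t}$ with the stated $\car_{d}$ complexity. The fix is to take $\left|t\right|\delta_{1}$ in the window $\left(2r_{0}+ckr_{0},\tfrac{16}{7}kr_{0}\right)$, nonempty for all $k\ge1$ once $c<2/7$: the factors at the centre are then still $\ge ckr_{0}$, so $M-m\le Ck$, while $7\delta_{1}<16kr_{0}\left|t\right|^{-1}$ lets the subpolydisks reach $w_{1}=w_{1}^{0}$, after which a bounded number of further recentrings covers the rest of $\t{\mc P}$. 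So: right method, but the recentring scale must be chosen in a strictly narrower range than the one you wrote down.
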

We can now prove the elimination of resonances via resultants. This
is a generalization of \cite[Proposition 5.5]{MR2753606}.
\begin{prop}
\label{prop:resultants-concrete-elimination}There exist constants
$ $$l_{0}=l_{0}\left(\left\Vert a\right\Vert _{\infty},\left\Vert b\right\Vert _{*},c,\alpha,\gamma,E^{0}\right)$,
$c_{0}$, $C_{0}=C_{0}\left(\alpha\right)$ such that for any $l\ge l'\ge l_{0}$,
$t$ with $\left|t\right|\ge\exp\left(\left(\log l\right)^{C_{0}}\right)$,
and $H\gg1$, there exists a set $\Omega_{l,l',t,H}\subset\mb T,$
with
\[
\mes\left(\Omega_{l,l',t,H}\right)<\exp\left(\left(\log l\right)^{C_{0}}-\sqrt{H}\right),\,\com\left(\Omega_{l,l',t,H}\right)<\left|t\right|H\exp\left(\left(\log l\right)^{C_{0}}\right),
\]
such that for any $\omega\in\Omega^{0}\cap\mb T_{c,\alpha}\setminus\Omega_{l,l',t,H}$
there exists a set $\mc E_{l,l',t,H,\omega}$ with
\[
\mes\left(\mc E_{l,l',t,H,\omega}\right)<\left|t\right|\exp\left(\left(\log l\right)^{C_{0}}-\sqrt{H}\right),\,\com\left(\mc E_{l,l',t,H,\omega}\right)<\left|t\right|H\exp\left(\left(\log l\right)^{C_{0}}\right),
\]
such that:
\begin{enumerate}
\item For any $E\in\mathcal{E}^{0}\setminus\mc E_{l,l',t,H,\omega}$ we
have
\begin{equation}
\dist\left(\mc Z\left(f_{l}^{a}\left(\cdot,\omega,E\right),c_{0}l^{-1}\right),\mc Z\left(f_{l'}^{a}\left(\cdot+t\omega,\omega,E\right),c_{0}l^{-1}\right)\right)\ge\exp\left(-H\left(\log l\right)^{C_{0}}\right).\label{eq:separation-of-zeros}
\end{equation}

\item For any $x\in\mb T$ we have
\begin{multline}
\dist\left(\mathcal{E}^{0}\cap\spec\left(H^{\left(l\right)}\left(x,\omega\right)\right)\setminus\mc E_{l,l',t,H,\omega},\spec\left(H^{\left(l'\right)}\left(x+t\omega,\omega\right)\right)\right)\\
\ge\exp\left(-H\left(\log l\right)^{3C_{0}}\right).\label{eq:separation-of-spectra}
\end{multline}

\end{enumerate}
\end{prop}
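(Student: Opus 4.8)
The plan is to reduce item (1) to the abstract elimination \lemref{resultants-abstract-elimination} after Weierstrass‑preparing the two determinants over a finite grid of base points, and then to deduce item (2) from item (1) via \propref{prelims-LDT-failure} and the Lipschitz bounds of \corref{prelims-lipschitzness}. To begin, fix $C_{0}=C_{0}(\alpha)$ large compared with all the $C_{0}$'s in the cited results and set $r_{1}=\exp(-(\log l)^{C_{0}})$. Cover a fundamental domain of $\mb T$ by $\lesssim l$ disks $\mc D(x_{i},\rho_{x})$ with $\rho_{x}\simeq l^{-1}$ taken suitably small (legitimate since $l'\le l$), cover $\mc E^{0}$ by $\lesssim\exp((\log l)^{C_{0}})$ disks $\mc D(E_{m},r_{1})$, and cover $\Omega^{0}$ by $\lesssim|t|l'$ disks $\mc D(\omega_{j},r_{1})$ — this many because the window on which \lemref{resultants-abstract-elimination} yields its conclusion has $\omega$‑radius only $\simeq kr_{0}/|t|$, where $r_{0}\simeq l'^{-1}$ is the common disk for the prepared polynomials and $k\lesssim(\log l)^{2C_{0}}$. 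For each base point $(x_{i},\omega_{j},E_{m})$ in $\Omega^{0}\times\mc E^{0}$ (where $L>\gamma$), apply \propref{resultants-Weierstrass-preparation-for-determinants} to $f_{l}^{a}$ and, separately, to the translated function $(z,\omega,E)\mapsto f_{l'}^{a}(z+t\omega_{j},\omega,E)$ — which satisfies the same hypotheses, the large deviations estimate and the upper bounds of \propref{prelims-M^a-upper-bound} being invariant under a real shift of the phase — obtaining monic‑in‑$z$ polynomials $P_{1}^{(i,j,m)}$, $P_{2}^{(i,j,m)}$ of degree $\le(\log l)^{C_{0}}$ with nonvanishing holomorphic factors. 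Unravelling the factorizations, $\mc Z(P_{1}^{(i,j,m)}(\cdot,\omega,E))$ is exactly the zero set of $f_{l}^{a}(\cdot,\omega,E)$ in $\mc D(x_{i},r_{0})$, and $\mc Z(P_{2}^{(i,j,m)}(\cdot+t(\omega-\omega_{j}),\omega,E))$ is exactly the zero set of $f_{l'}^{a}(\cdot+t\omega,\omega,E)$ in $\mc D(x_{i}-t(\omega-\omega_{j}),r_{0})$.

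Next, apply \lemref{resultants-abstract-elimination} to each pair $(P_{1}^{(i,j,m)},P_{2}^{(i,j,m)})$ on the polydisk $\mc D(\omega_{j},r_{1})\times\mc D(E_{m},r_{1})$ with the given shift $t$; the hypothesis $|t|>16kr_{0}/r_{1}$ holds for $l\ge l_{0}$ since $|t|\ge\exp((\log l)^{C_{0}})$ while $16kr_{0}\ll1$. This produces an exceptional set $\mc B^{(i,j,m)}$ in $(\omega,E)$‑space that, rescaled, is small in the sense of \defnref{prelims-Caratheodory} (it lies in $\car_{2}(\sqrt H,CHk)$), off which the two zero sets above are separated by $e^{-CHk}$. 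Slicing each $\mc B^{(i,j,m)}$ in $\omega$ gives a bad set of $\omega$'s of measure $\lesssim(kr_{0}/|t|)e^{-\sqrt H}$ and complexity $\lesssim Hk$, and for each good $\omega$ a bad set of $E$'s of measure $\lesssim r_{1}e^{-\sqrt H}$ and complexity $\lesssim Hk$. Let $\Omega_{l,l',t,H}$ be the union over all base points of the bad $\omega$‑sets, and for $\omega\notin\Omega_{l,l',t,H}$ let $\mc E_{l,l',t,H,\omega}$ be the union of the bad $E$‑sets over those base points whose window contains $\omega$. There are $\lesssim|t|\exp((\log l)^{C_{0}})$ base points, $\lesssim|t|l'$ of which share a fixed $(x_{i},E_{m})$; summing the $\omega$‑measures over these $\omega_{j}$ gives $\lesssim(\log l)^{2C_{0}}e^{-\sqrt H}$, and then summing over $x_{i},E_{m}$ gives $\mes(\Omega_{l,l',t,H})<\exp((\log l)^{C_{0}}-\sqrt H)$ after enlarging $C_{0}$; the complexity bound and the bounds on $\mc E_{l,l',t,H,\omega}$ are obtained in the same way.

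Item (1) is then immediate: given $\omega\notin\Omega_{l,l',t,H}$ and $E\in\mc E^{0}\setminus\mc E_{l,l',t,H,\omega}$, pick $\omega_{j}$ with $|t(\omega-\omega_{j})|$ much smaller than $r_{0}$ and $E_{m}$ with $|E-E_{m}|<r_{1}$; if $z_{1}\in\mc Z(f_{l}^{a}(\cdot,\omega,E),c_{0}l^{-1})$ and $z_{2}\in\mc Z(f_{l'}^{a}(\cdot+t\omega,\omega,E),c_{0}l^{-1})$ had $|z_{1}-z_{2}|<e^{-CHk}$, then (with $c_{0}$ small) both lie in $\mc D(x_{i},\rho_{x})$ for a common $i$, so — using $\rho_{x}+|t(\omega-\omega_{j})|<r_{0}$ — the descriptions above place $z_{1}\in\mc Z(P_{1}^{(i,j,m)}(\cdot,\omega,E))$ and $z_{2}\in\mc Z(P_{2}^{(i,j,m)}(\cdot+t(\omega-\omega_{j}),\omega,E))$, contradicting \lemref{resultants-abstract-elimination} since $(\omega,E)\notin\mc B^{(i,j,m)}$; as $k\lesssim(\log l)^{2C_{0}}$, the bound $e^{-CHk}$ absorbs into $\exp(-H(\log l)^{C_{0}})$ after enlarging $C_{0}$. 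For item (2), fix $x\in\mb T$, $\omega\notin\Omega_{l,l',t,H}$, $E_{1}\in\mc E^{0}\cap\spec(H^{(l)}(x,\omega))\setminus\mc E_{l,l',t,H,\omega}$, and suppose $E_{2}\in\spec(H^{(l')}(x+t\omega,\omega))$ has $|E_{1}-E_{2}|<\exp(-H(\log l)^{3C_{0}})$. Then $f_{l}^{a}(x,\omega,E_{1})=0$, so $x\in\mc Z(f_{l}^{a}(\cdot,\omega,E_{1}),c_{0}l^{-1})$; and since $f_{l'}^{a}(x+t\omega,\omega,E_{2})=0$, the Lipschitz estimate of \corref{prelims-lipschitzness} together with a Cauchy bound on $\partial_{E}f_{l'}^{a}$ coming from \propref{prelims-M^a-upper-bound} make $|f_{l'}^{a}(x+t\omega,\omega,E_{1})|$ small enough that \propref{prelims-LDT-failure}, applied to $z\mapsto f_{l'}^{a}(z+t\omega,\omega,E_{1})$ at the real point $x$ with parameter $H_{3}\simeq H(\log l)^{2C_{0}}$, gives a zero $z'$ of $f_{l'}^{a}(\cdot+t\omega,\omega,E_{1})$ with $|z'-x|\lesssim l'^{-1}e^{-H_{3}}$, lying in $\mc Z(f_{l'}^{a}(\cdot+t\omega,\omega,E_{1}),c_{0}l^{-1})$. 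Item (1) at the energy $E_{1}$ then forces $\exp(-H(\log l)^{C_{0}})\le|x-z'|\lesssim l'^{-1}e^{-H_{3}}$, impossible for $l\ge l_{0}$; hence (2) holds.

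The main obstacle is the geometric bookkeeping in the first two paragraphs: the covering radii must be arranged so that \emph{simultaneously} the hypothesis of \lemref{resultants-abstract-elimination} holds, every near‑real zero gets captured at some base point despite the built‑in shift $t(\omega-\omega_{j})$ — whose size is at least $\simeq kr_{0}$, which is what forces the $\omega$‑grid to have the fine scale $\simeq r_{0}/|t|$ and hence the factor $|t|$ in the complexity — and the glued exceptional sets obey the asserted measure and complexity bounds. A secondary subtlety is the exponent tracking in the last paragraph, where a factor $(\log l)^{C_{0}}$ is lost twice, once in the Lipschitz/large‑deviations‑failure conversion and once in comparing back with (1), which is why (1) carries $(\log l)^{C_{0}}$ while (2) carries $(\log l)^{3C_{0}}$.
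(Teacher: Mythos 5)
Your proposal is correct and follows essentially the same route as the paper: Weierstrass preparation of $f_{l}^{a}$ and of the shifted $f_{l'}^{a}$ over a net of base points, the abstract resultant elimination of \lemref{resultants-abstract-elimination} on each local polydisk, gluing the Cartan exceptional sets into $\Omega_{l,l',t,H}$ and $\mc E_{l,l',t,H,\omega}$, and deducing (2) from (1) via \corref{prelims-lipschitzness} and \propref{prelims-LDT-failure}. The only difference is presentational: you track the interaction between the shift $t\left(\omega-\omega_{j}\right)$ and the preparation radius somewhat more explicitly than the paper does, but the argument and the exponent bookkeeping are the same.
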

\begin{proof}
Let $x_{0}\in\mb T$, $E_{0}\in\mathcal{E}^{0}$, and $\omega_{0}\in\Omega^{0}\cap\mb T_{c,\alpha}$.
Using \propref{resultants-Weierstrass-preparation-for-determinants}
we can write
\[
f_{l}^{a}\left(z,\omega,E\right)=P_{1}\left(z,\omega,E\right)g_{1}\left(z,\omega,E\right)
\]
and
\[
f_{l'}^{a}\left(z+t\omega_{0},\omega,E\right)=P_{2}\left(z,\omega,E\right)g_{2}\left(z,\omega,E\right),
\]
on $\mc P_{0}=\mc D\left(x_{0},r_{0}\right)\times\mc D\left(E_{0},r_{1}\right)\times\mc D\left(\omega_{0},r_{1}\right)$,
where $r_{0}\simeq l^{-1}$, $r_{1}=\exp\left(-\left(\log l\right)^{C}\right)$.
The functions $g_{i},$ $i=1,2$, don't vanish on $\mc P_{0}$, and
the polynomials $P_{i}$, $i=1,2$ are of degrees $k_{i}$, $i=1,2$,
$k_{i}\le\left(\log l\right)^{C}$. Applying \lemref{resultants-abstract-elimination}
to the polynomials $P_{1}\left(\cdot,\omega,E\right)$ and $P_{2}\left(\cdot+t\left(\omega-\omega_{0}\right),\omega,E\right)$,
with $\left|t\right|\ge\exp\left(\left(\log l\right)^{C}\right)>16k_{1}k_{2}r_{0}r_{1}^{-1}$,
yields that there exists $\mc B_{H,t}\subset\t{\mc P_{0}}:=\mc D\left(\omega_{0},8kr_{0}/\left|t\right|\right)\times\mc D\left(E_{0},r_{1}/2\right)$,
with
\begin{equation}
\left\{ \left(\frac{\left|t\right|\left(\omega-\omega_{0}\right)}{16kr_{0}},\frac{E-E_{0}}{r_{1}}\right):\,\left(\omega,E\right)\in\mc B_{H,t}\right\} \in\car_{2}\left(H^{1/2},H\left(\log l\right)^{C}\right),\label{eq:B-is-Car-2}
\end{equation}
so that for any $\left(\omega,E\right)\in\t{\mc P}_{0}\setminus\mc B_{H,t}$
we have
\[
\dist\left(\mc Z\left(P_{1}\left(\cdot,\omega,E\right)\right),\mc Z\left(P_{2}\left(\cdot+t\left(\omega-\omega_{0}\right),\omega,E\right)\right)\right)\ge e^{-H\left(\log l\right)^{C}},
\]
which implies that
\begin{equation}
\dist\left(\mc Z\left(f_{l}^{a}\left(\cdot,\omega,E\right),\mc D\left(x_{0},r_{0}\right)\right),\mc Z\left(f_{l'}^{a}\left(\cdot+t\omega,\omega,E\right),\mc D\left(x_{0},r_{0}\right)\right)\right)\ge e^{-H\left(\log l\right)^{C}}.\label{eq:local-separation-of-zeros}
\end{equation}

Let $\mc N_{x}$ be an $r_{0}/2$-net covering $\mb T$, such that
$\left\{ z:\,\left|\Im z\right|<c_{0}l^{-1}\right\} \subset\cup_{x\in\mc N_{x}}\mc D\left(x,r_{0}/2\right)$
(for this $c_{0}$ has to be small enough, depending on the absolute
constants in $r_{0}\simeq l^{-1}$). Let $\mc N_{\omega}$ be a $8kr_{0}/\left|t\right|$-net
covering $\Omega^{0}\cap\mb T_{c,\alpha}$, $\mc N_{E}$ a $r_{1}/2$-net
covering $\mathcal{E}^{0}$, and $\left\{ \left(x_{j},\omega_{j},E_{j}\right)\right\} _{j}=\mc N_{x}\times\mc N_{\omega}\times\mc N_{E}$.
Denote by $\mc B_{H,t,j}$ the bad set corresponding (as above) to
$\left(x_{j},\omega_{j},E_{j}\right)$. By \eqref{B-is-Car-2} and
\defnref{prelims-Caratheodory} we have that there exists $\Omega_{j}$,
with 
\[
\mes\left(\Omega_{j}\right)\le16kr_{0}\left|t\right|^{-1}\exp\left(-\sqrt{H}\right),\,\com\left(\Omega_{j}\right)\le H\left(\log l\right)^{C},
\]
 so that for each $\omega\in\mc D\left(\omega_{j},8kr_{0}/t\right)\setminus\Omega_{j}$
we have $\left(\mc B_{H,t,j}\right)_{\omega}^{\left(1\right)}=:\mc E_{j,\omega}$
is such that
\[
\mes\left(\mc E_{j,\omega}\right)\le r_{1}\exp\left(-\sqrt{H}\right),\,\com\left(\mc E_{j,\omega}\right)\le H\left(\log l\right)^{C}.
\]
We define $\Omega_{l,l',t,H}:=\cup_{j}\Omega_{j}$ and $\mc E_{l,l',t,H,\omega}:=\cup_{j}\mc E_{j,\omega}$,
for $\omega\in\Omega^{0}\cap\mb T_{c,\alpha}\setminus\Omega_{l,l',t,H}$.
The measure and complexity bounds for these sets are straightforward
to check. If \eqref{separation-of-zeros} fails, there would exist
$\omega\in\Omega^{0}\cap\mb T_{c,\alpha}\setminus\Omega_{l,l',t,H}$,
$E\in\mathcal{E}^{0}\setminus\mc E_{l,l',t,H,\omega}$, and $z_{1},z_{2}$,
$\left|\Im z_{1}\right|,\left|\Im z_{2}\right|<c_{0}l^{-1}$, $\left|z_{1}-z_{2}\right|<\exp\left(-H\left(\log l\right)^{C_{0}}\right)$
such that
\[
f_{l}^{a}\left(z_{1},\omega,E\right)=f_{l'}^{a}\left(z_{2}+t\omega,\omega,E\right)=0.
\]
By our choice of covering nets, we have that $\left(z_{1},\omega,E\right)\in\mc D\left(x_{j},r_{0}/2\right)\times\mc D\left(\omega_{j},8kr_{0}/\left|t\right|\right)\times\mc D\left(E_{j},r_{1}/2\right)$
for some $j$. Since $\left|z_{1}-z_{2}\right|<\exp\left(-H\left(\log l\right)^{C_{0}}\right)$,
we can conclude that we have $\left(z_{i},\omega,E\right)\in\mc D\left(x_{j},r_{0}\right)\times\mc D\left(\omega_{j},8kr_{0}/\left|t\right|\right)\times\mc D\left(E_{j},r_{1}/2\right)$,
which contradicts \eqref{local-separation-of-zeros}. This proves
\eqref{separation-of-zeros}.

If \eqref{separation-of-spectra} fails, there would exist $\omega\in\Omega^{0}\cap\mb T_{c,\alpha}\setminus\Omega_{l,l',t,H}$,
$E_{1}\in\mathcal{E}^{0}\setminus\mc E_{l,l',t,H,\omega}$, $E_{2}\in\mb C$,
$\left|E_{1}-E_{2}\right|<\exp\left(-H\left(\log l\right)^{3C_{0}}\right)$,
and $x\in\mb T$ such that
\[
f_{l}^{a}\left(x,\omega,E_{1}\right)=f_{l'}^{a}\left(x+t\omega,\omega,E_{2}\right)=0.
\]
By \corref{prelims-lipschitzness} we have
\begin{multline*}
\left|f_{l'}^{a}\left(x+t\omega,\omega,E_{1}\right)\right|=\left|f_{l'}^{a}\left(x+t\omega,\omega,E_{1}\right)-f_{l'}^{a}\left(x+t\omega,\omega,E_{2}\right)\right|\\
\le\left|E_{1}-E_{2}\right|\exp\left(l'L^{a}\left(\omega,E_{1}\right)+\left(\log l'\right)^{C}\right)\le\exp\left(l'L^{a}\left(\omega,E_{1}\right)-H\left(\log l\right)^{2C_{0}}\right).
\end{multline*}
By \propref{prelims-LDT-failure}, there exists $z$, $\left|z-x\right|\lesssim l'^{-1}\exp\left(-H\left(\log l\right)^{C_{0}}\right)$
such that 
\[
f_{l'}^{a}\left(z+t\omega,\omega,E_{1}\right)=0.
\]
This contradicts \eqref{separation-of-zeros}, and thus we proved
\eqref{separation-of-spectra}.
\end{proof}
Next we state the elimination of resonances as in the Elimination
Assumption \ref{localization-elimination-assumption}.
\begin{cor}
\label{cor:resultants-elimination} Fix $A>1$. There exist constants
$N_{0}=N_{0}\left(\left\Vert a\right\Vert _{\infty},\left\Vert b\right\Vert _{*},c,\alpha,\gamma,E^{0},A\right)$,
$C_{0}=C_{0}\left(\alpha\right)$, such that for any $N\ge N_{0}$
there exists a set $\Omega_{N}$, with
\[
\mes\left(\Omega_{N}\right)<\exp\left(-\left(\log N\right)^{2}\right),\,\com\left(\Omega_{N}\right)<N^{2}\exp\left(\left(\log\log N\right)^{C_{0}}\right),
\]
such that for any $\omega\in\Omega^{0}\cap\mb T_{c,\alpha}\setminus\Omega_{N}$
there exists a set $\mc E_{N,\omega}$, with
\[
\mes\left(\mc E_{N,\omega}\right)<\exp\left(-\left(\log N\right)^{2}\right),\,\com\left(\mc E_{N,\omega}\right)<N^{2}\exp\left(\left(\log\log N\right)^{C_{0}}\right),
\]
 such that for any $x\in\mb T$ and any integer $m$, $\exp\left(\left(\log\log N\right)^{C_{0}}\right)\le\left|m\right|\le N$,
we have
\[
\dist\left(\mathcal{E}^{0}\cap\spec\left(H^{\left(l_{1}\right)}\left(x,\omega\right)\setminus\mc E_{N,\omega}\right),\spec\left(H^{\left(l_{2}\right)}\left(x+m\omega,\omega\right)\right)\right)\ge\exp\left(-\left(\log N\right)^{6}\right),
\]
$l_{1},l_{2}\in\left\{ l,l+1,2l,2l+1\right\} $, where $l=2\left[\left(\log N\right)^{A}\right]$.\end{cor}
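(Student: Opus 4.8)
The plan is to deduce the statement from \propref{resultants-concrete-elimination} by applying that proposition to every pair of scales drawn from $\left\{ l,l+1,2l,2l+1\right\} $ and every admissible shift $m$, and then taking a suitably fattened union. Fix $A>1$, let $l=2\left[\left(\log N\right)^{A}\right]$ as in the statement, and set the deviation parameter $H:=\left(\log N\right)^{5}$ (any fixed power in $\left(4,6\right)$ would do). Take the constant $C_{0}=C_{0}\left(\alpha\right)$ in the conclusion to be strictly larger than the $C_{0}\left(\alpha\right)$ of \propref{resultants-concrete-elimination}; since $\log l\le\left(A+1\right)\log\log N$, this makes the shift threshold $\exp\left(\left(\log\log N\right)^{C_{0}}\right)$ exceed the threshold $\exp\left(\left(\log l\right)^{C_{0}}\right)$ demanded by \propref{resultants-concrete-elimination} once $N$ is large in terms of $A$; the remaining $A$-dependence (including $l\ge l_{0}$) is likewise pushed into $N_{0}$. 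For each ordered pair $\left(l_{1},l_{2}\right)\in\left\{ l,l+1,2l,2l+1\right\} ^{2}$ and each integer $m$ with $\exp\left(\left(\log\log N\right)^{C_{0}}\right)\le\left|m\right|\le N$ we invoke \propref{resultants-concrete-elimination} (its argument applies to any two scales within a bounded factor of each other, in either order): when $l_{1}\ge l_{2}$ directly, with $l\mapsto l_{1}$, $l'\mapsto l_{2}$, $t\mapsto m$, $H\mapsto H$; when $l_{1}<l_{2}$ with $l\mapsto l_{2}$, $l'\mapsto l_{1}$, $t\mapsto-m$, followed by the translation $x\mapsto x+m\omega$ and the symmetry of $\dist$. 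Each application produces a set $\Omega_{l_{1},l_{2},m}\subset\mb T$ and, for $\omega$ outside it, a set $\mc E_{l_{1},l_{2},m,\omega}\subset\mb R$ obeying the measure and complexity bounds of \propref{resultants-concrete-elimination} with $l$ in place of both scales (up to absolute constants).

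Put $\Omega_{N}:=\bigcup\Omega_{l_{1},l_{2},m}$, the union over the at most $16$ pairs and the $\lesssim N$ admissible $m$, and, for $\omega\in\Omega^{0}\cap\mb T_{c,\alpha}\setminus\Omega_{N}$, let $\mc E_{N,\omega}$ be the $\exp\left(-\left(\log N\right)^{6}\right)$-neighbourhood of $\bigl(\bigcup\mc E_{l_{1},l_{2},m,\omega}\bigr)\cup\left(\mc E^{0}\right)^{C}$. The fattening repairs the asymmetry of the case $l_{1}<l_{2}$: it guarantees that any point of $\spec\left(H^{\left(l_{2}\right)}\left(x+m\omega,\omega\right)\right)$ within $\exp\left(-\left(\log N\right)^{6}\right)$ of an allowed energy $E\in\mc E^{0}\setminus\mc E_{N,\omega}$ is itself allowed and interior to $\mc E^{0}$, which is what lets us invoke conclusion (2) of \propref{resultants-concrete-elimination} for that pair. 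The bounds are then bookkeeping. For the measure: $\mes\left(\Omega_{N}\right)\lesssim N\exp\left(\left(\log l\right)^{C_{0}}-\sqrt{H}\right)<\exp\left(-\left(\log N\right)^{2}\right)$ because $\sqrt{H}=\left(\log N\right)^{5/2}$ dominates $\left(\log N\right)^{2}+\log N+\left(\log l\right)^{C_{0}}$ for $N$ large, and similarly $\mes\left(\mc E_{N,\omega}\right)\lesssim N^{2}\exp\left(\left(\log l\right)^{C_{0}}-\sqrt{H}\right)+\com\left(\mc E_{N,\omega}\right)\exp\left(-\left(\log N\right)^{6}\right)<\exp\left(-\left(\log N\right)^{2}\right)$. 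For the complexity: $\com\left(\Omega_{N}\right)\lesssim N\cdot NH\exp\left(\left(\log l\right)^{C_{0}}\right)=N^{2}\left(\log N\right)^{5}\exp\left(\left(\log l\right)^{C_{0}}\right)$, and since $\left(\log l\right)^{C_{0}}\lesssim\left(\log\log N\right)^{C_{0}}$ and $\left(\log N\right)^{5}=\exp\left(5\log\log N\right)$, after enlarging $C_{0}$ slightly this is $<N^{2}\exp\left(\left(\log\log N\right)^{C_{0}}\right)$; the estimate for $\com\left(\mc E_{N,\omega}\right)$ is identical, the fattening not increasing the complexity beyond an additive constant from the $\left(\mc E^{0}\right)^{C}$ part.

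It remains to verify the separation. Fix $\omega\in\Omega^{0}\cap\mb T_{c,\alpha}\setminus\Omega_{N}$, $E\in\mc E^{0}\setminus\mc E_{N,\omega}$, $x\in\mb T$, an integer $m$ with $\exp\left(\left(\log\log N\right)^{C_{0}}\right)\le\left|m\right|\le N$, and $l_{1},l_{2}\in\left\{ l,l+1,2l,2l+1\right\} $; assume $E\in\spec\left(H^{\left(l_{1}\right)}\left(x,\omega\right)\right)$. If $l_{1}\ge l_{2}$, conclusion (2) of \propref{resultants-concrete-elimination} applied to $\left(l_{1},l_{2},m\right)$ gives $\dist\left(E,\spec\left(H^{\left(l_{2}\right)}\left(x+m\omega,\omega\right)\right)\right)\ge\exp\left(-H\left(\log l_{1}\right)^{3C_{0}}\right)\ge\exp\left(-\left(\log N\right)^{6}\right)$, the last step because $H\left(\log l_{1}\right)^{3C_{0}}=\left(\log N\right)^{5}\left(\log l_{1}\right)^{3C_{0}}\ll\left(\log N\right)^{6}$ for $N$ large (recall $\log l\simeq A\log\log N$). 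If $l_{1}<l_{2}$, the application to $\left(l_{2},l_{1},-m\right)$ with the translation $x\mapsto x+m\omega$ gives that every $\lambda\in\mc E^{0}\cap\spec\left(H^{\left(l_{2}\right)}\left(x+m\omega,\omega\right)\right)\setminus\mc E_{l_{2},l_{1},-m,\omega}$ satisfies $\left|E-\lambda\right|\ge\exp\left(-H\left(\log l_{2}\right)^{3C_{0}}\right)\ge\exp\left(-\left(\log N\right)^{6}\right)$; since $E\notin\mc E_{N,\omega}$ forces $\dist\bigl(E,\mc E_{l_{2},l_{1},-m,\omega}\cup\left(\mc E^{0}\right)^{C}\bigr)\ge\exp\left(-\left(\log N\right)^{6}\right)$, any $\lambda\in\spec\left(H^{\left(l_{2}\right)}\left(x+m\omega,\omega\right)\right)$ with $\left|E-\lambda\right|<\exp\left(-\left(\log N\right)^{6}\right)$ would lie in $\mc E^{0}\cap\spec\left(H^{\left(l_{2}\right)}\left(x+m\omega,\omega\right)\right)\setminus\mc E_{l_{2},l_{1},-m,\omega}$ and hence be bounded away from $E$ by $\exp\left(-\left(\log N\right)^{6}\right)$, a contradiction. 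Thus $\dist\left(E,\spec\left(H^{\left(l_{2}\right)}\left(x+m\omega,\omega\right)\right)\right)\ge\exp\left(-\left(\log N\right)^{6}\right)$ in this case too, which is the asserted elimination.

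The main difficulty is not any individual estimate but the parameter management: $H$ must be large enough that $\sqrt{H}$ beats $\left(\log N\right)^{2}$ (smallness of the exceptional sets) yet small enough that $H\left(\log l\right)^{3C_{0}}$ stays below $\left(\log N\right)^{6}$ (the target resolution), and one must also ensure the corollary's shift threshold $\exp\left(\left(\log\log N\right)^{C_{0}}\right)$ dominates the threshold $\exp\left(\left(\log l\right)^{C_{0}}\right)=\exp\left(O\left(\left(\log\log N\right)^{C_{0}}\right)\right)$ of \propref{resultants-concrete-elimination}, which forces the $A$-dependence into $N_{0}$. The one genuinely substantive point is the pairs with $l_{1}<l_{2}$: \propref{resultants-concrete-elimination} removes bad energies near the spectrum of the \emph{longer} block, so transferring this to the required statement about the \emph{shorter} block costs a translation of $x$ together with a fattening of the bad energy set by the target separation — which is exactly why $\mc E_{N,\omega}$ must be defined as a fattened union rather than a plain one.
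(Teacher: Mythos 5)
Your proof is correct and follows exactly the route the paper intends: its own proof of this corollary is the single sentence that it follows from \propref{resultants-concrete-elimination} with $H=\left(\log N\right)^{5}$, and your argument is precisely that deduction with the bookkeeping (union over the $16$ scale pairs and the $\lesssim N$ shifts, enlargement of $C_{0}$, absorption of the $A$-dependence into $N_{0}$) written out. The one point you elaborate that the paper leaves implicit — handling the pairs with $l_{1}<l_{2}$ by swapping the roles of the scales, translating $x$, and fattening the bad energy set by the target separation — is sound and is the right way to respect the hypothesis $l\ge l'$ of the proposition.
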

\begin{proof}
It is straightforward to see how this follows from \propref{resultants-concrete-elimination}
by letting $H=\left(\log N\right)^{5}$.
\end{proof}
We now have that the Elimination Assumption \ref{localization-elimination-assumption}
is satisfied with $A=A\left(\alpha\right)\gg1$, $Q_{N}=\exp\left(\left(\log\log N\right)^{C_{0}}\right)$,
$\sigma_{N}=\exp\left(-\left(\log N\right)^{6}\right)$, and $\Omega_{N}$,
$\mc E_{N,\omega}$ as in \corref{resultants-elimination}. The next
result follows immediately from \thmref{localization}.
\begin{prop}
\label{prop:resultants-localization}There exist constants $N_{0}=N_{0}\left(\left\Vert a\right\Vert _{\infty},\left\Vert b\right\Vert _{*},c,\alpha,\gamma,E^{0}\right)$,
$C_{0}=C_{0}\left(\alpha\right)$ , such that for any $N\ge N_{0}$
there exists a set $\Omega_{N}$, with
\[
\mes\left(\Omega_{N}\right)<\exp\left(-\left(\log N\right)^{2}\right),\,\com\left(\Omega_{N}\right)<N^{2}\exp\left(\left(\log\log N\right)^{C_{0}}\right),
\]
such that for any $\omega\in\Omega^{0}\cap\mb T_{c,\alpha}\setminus\Omega_{N}$
there exists a set $\t{\mc E}_{N,\omega}$, with
\[
\mes\left(\mc{\t E}_{N,\omega}\right)\lesssim\exp\left(-\left(\log N\right)^{2}\right),\,\com\left(\t{\mc E}_{N,\omega}\right)\lesssim N^{2}\exp\left(\left(\log\log N\right)^{C_{0}}\right),
\]
such that for any $x\in\mb T$, if $E_{j}^{\left(N\right)}\left(x,\omega\right)\in\mathcal{E}^{0}\setminus\t{\mc E}_{N,\omega}$,
for some $j$, then there exists a point $\nu_{j}^{\left(N\right)}\left(x,\omega\right)\in\left[0,N-1\right]$
so that for any $\Lambda=\left[a,b\right]$,
\[
\left[\nu_{j}^{\left(N\right)}\left(x,\omega\right)-3Q_{N},\nu_{j}^{\left(N\right)}\left(x,\omega\right)+3Q_{N}\right]\cap\left[0,N-1\right]\subset\Lambda\subset\left[0,N-1\right],
\]
$Q_{N}=\exp\left(\left(\log\log N\right)^{C_{0}}\right)$, if we let
$Q=\dist\left(\left[0,N-1\right]\setminus\Lambda,\nu_{j}^{\left(N\right)}\left(x,\omega\right)\right)$
we have:\end{prop}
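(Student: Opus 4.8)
The plan is to read this off \thmref{localization}, so the argument is essentially bookkeeping. As recorded in the paragraph preceding the statement, \corref{resultants-elimination} shows that the Elimination Assumption \ref{localization-elimination-assumption} holds with $A=A(\alpha)\gg1$, $Q_{N}=\exp\bigl((\log\log N)^{C_{0}}\bigr)$, $\sigma_{N}=\exp\bigl(-(\log N)^{6}\bigr)$, and with $\Omega_{N}$, $\mc E_{N,\omega}$ the sets produced there. First I would double-check the two constraints of that assumption for $N$ large: with $l=2[(\log N)^{A}]$, the bound $Q_{N}\gg l^{3}$ holds because $Q_{N}=\exp\bigl((\log\log N)^{C_{0}}\bigr)\gg(\log N)^{3A}\gtrsim l^{3}$ when $C_{0}>1$, and $\sigma_{N}\gg\exp(-l^{1/4})$ holds once $A$ is taken large enough (say $A>24$, which is permitted since $A$ is only required to be a sufficiently large function of $\alpha$). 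Hence \thmref{localization} applies: for $\omega\in\Omega^{0}\cap\mb T_{c,\alpha}\setminus\Omega_{N}$ and any $x\in\mb T$, every eigenvalue $E_{j}^{(N)}(x,\omega)$ with $\dist\bigl(E_{j}^{(N)}(x,\omega),\mc E_{N,\omega}\cup(\mc E^{0})^{C}\bigr)\gtrsim\exp(-l^{1/4})$ admits a localization centre $\nu_{j}^{(N)}(x,\omega)\in[0,N-1]$ satisfying the two asserted conclusions.

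It then remains only to convert the distance hypothesis of \thmref{localization} into the membership hypothesis $E_{j}^{(N)}(x,\omega)\in\mc E^{0}\setminus\t{\mc E}_{N,\omega}$ appearing in the statement. Letting $c_{*}$ be the absolute constant implicit in the symbol $\gtrsim$ of \thmref{localization}, I would set
\[
\t{\mc E}_{N,\omega}:=\mc E^{0}\cap\bigl\{E\in\mb R:\ \dist\bigl(E,\mc E_{N,\omega}\cup(\mc E^{0})^{C}\bigr)\le c_{*}\exp(-l^{1/4})\bigr\}.
\]
If $E_{j}^{(N)}(x,\omega)\in\mc E^{0}\setminus\t{\mc E}_{N,\omega}$, then $\dist\bigl(E_{j}^{(N)}(x,\omega),\mc E_{N,\omega}\cup(\mc E^{0})^{C}\bigr)>c_{*}\exp(-l^{1/4})$, so \thmref{localization} applies to $E_{j}^{(N)}(x,\omega)$ and yields exactly the conclusion. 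The set $\Omega_{N}$ is that of \corref{resultants-elimination}, with the stated measure and complexity bounds.

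Finally, one verifies the measure and complexity bounds for $\t{\mc E}_{N,\omega}$. Starting from a cover $\mc E_{N,\omega}\subset\bigcup_{i=1}^{K}\mc D(z_{i},r_{i})$ realizing $\mes(\mc E_{N,\omega})\lesssim\exp(-(\log N)^{2})$ with $K\lesssim N^{2}\exp((\log\log N)^{C_{0}})$, the enlarged disks $\mc D(z_{i},r_{i}+c_{*}\exp(-l^{1/4}))$ together with the two disks of radius $c_{*}\exp(-l^{1/4})$ about the endpoints of $\mc E^{0}$ cover $\t{\mc E}_{N,\omega}$; this is $K+2\lesssim N^{2}\exp((\log\log N)^{C_{0}})$ disks, and the sum of the radii increases by at most $(K+2)c_{*}\exp(-l^{1/4})\lesssim N^{2}\exp((\log\log N)^{C_{0}})\exp(-(\log N)^{A/4})$, which is $\ll\exp(-(\log N)^{2})$ for $N$ large provided $A$ is chosen large enough (here $A>8$ suffices), so the measure bound is preserved. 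The only point that requires attention is precisely this last bookkeeping --- making sure that fattening $\mc E_{N,\omega}$ by $\exp(-l^{1/4})$ does not spoil the measure bound $\exp(-(\log N)^{2})$, which is what pins down how large the constant $A$ must be taken; everything else is immediate from \thmref{localization}.
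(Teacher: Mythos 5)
Your proposal is correct and follows exactly the paper's route: the paper simply notes that Corollary \ref{cor:resultants-elimination} verifies the Elimination Assumption \ref{localization-elimination-assumption} with $Q_{N}=\exp\bigl((\log\log N)^{C_{0}}\bigr)$ and $\sigma_{N}=\exp\bigl(-(\log N)^{6}\bigr)$ and then declares the result immediate from Theorem \ref{thm:localization}. Your write-up just makes explicit the routine bookkeeping (the constraints $\sigma_{N}\gg\exp(-l^{1/4})$, $Q_{N}\gg l^{3}$, and the fattening of $\mc E_{N,\omega}$ into $\t{\mc E}_{N,\omega}$ with the measure and complexity bounds preserved) that the paper leaves implicit, and it does so correctly.
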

\begin{enumerate}
\item 
\begin{equation}
\sum_{k\in\left[0,N-1\right]\setminus\Lambda}\left|\psi_{j}^{\left(N\right)}\left(x,\omega;k\right)\right|^{2}<\exp\left(-\gamma Q\right),\label{eq:localization-ef-1}
\end{equation}

\item 
\begin{equation}
\dist\left(E_{j}^{\left(N\right)}\left(x,\omega\right),\spec\left(H_{\Lambda}\left(x,\omega\right)\right)\right)\lesssim\exp\left(-\gamma Q\right).\label{eq:localization-ev-1}
\end{equation}

\end{enumerate}
The next result follows immediately from \propref{separation-raw}.
This is a generalization to the Jacobi case of \cite[Proposition 7.1]{MR2753606}.
\begin{prop}
\label{prop:resultants-separation-of-eigenvalues-scale-N}Let $\delta\in\left(0,1\right)$
and let $\Omega_{N}$, $\t{\mc E}_{N,\omega}$ be as in the previous
proposition. There exist constants $N_{0}=N_{0}\left(\left\Vert a\right\Vert _{\infty},\left\Vert b\right\Vert _{*},c,\alpha,\gamma,E^{0},\delta\right)$,
such that for $N\ge N_{0}$, $x\in\mb T$, $\omega\in\Omega^{0}\cap\mathbb{T}_{c,\alpha}\setminus\Omega_{N}$,
if $E_{j}^{\left(N\right)}\left(x,\omega\right)\in\mathcal{E}^{0}\setminus\t{\mc E}_{N,\omega}$
, for some $j$, then
\[
\left|E_{j}^{\left(N\right)}\left(x,\omega\right)-E_{k}^{\left(N\right)}\left(x,\omega\right)\right|>\exp\left(-N^{\delta}\right)
\]
for all $k\neq j$.
\end{prop}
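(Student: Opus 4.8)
The plan is to read this statement off from \propref{separation-raw}. Recall that the discussion preceding \propref{resultants-localization} established that the Elimination Assumption~\ref{localization-elimination-assumption} holds in the present setting with $A=A(\alpha)\gg1$, $Q_{N}=\exp((\log\log N)^{C_{0}})$ (here $C_{0}=C_{0}(\alpha)$), $\sigma_{N}=\exp(-(\log N)^{6})$, and with $\Omega_{N}$, $\mc E_{N,\omega}$ as in \corref{resultants-elimination}; consequently all the results of \secref{localization} and \secref{separation} are available with these parameters. In particular the statement we are proving uses the same $\Omega_{N}$ and the same domain $\omega\in\Omega^{0}\cap\mb T_{c,\alpha}\setminus\Omega_{N}$ as \propref{separation-raw}, so no matching of exceptional frequency sets is required.

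First I would unwind the relation between the hypothesis $E_{j}^{(N)}(x,\omega)\in\mc E^{0}\setminus\t{\mc E}_{N,\omega}$ and the hypothesis of \propref{separation-raw}. By construction, the set $\t{\mc E}_{N,\omega}$ of \propref{resultants-localization} is a fattening of $\mc E_{N,\omega}$ (together with the corresponding shrinking of $\mc E^{0}$ near its boundary) by a neighbourhood of radius $\simeq\exp(-l^{1/4})$, $l=2[(\log N)^{A}]$; this is exactly the enlargement used to recast the hypothesis of \thmref{localization} in the stated form. Hence, if $E_{j}^{(N)}(x,\omega)\in\mc E^{0}\setminus\t{\mc E}_{N,\omega}$, then $\dist(E_{j}^{(N)}(x,\omega),\mc E_{N,\omega}\cup(\mc E^{0})^{C})\gtrsim\exp(-l^{1/4})$, which is precisely the hypothesis of \propref{separation-raw} at scale $N$. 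Applying that proposition yields a constant $C_{1}=C_{1}(\left\Vert a\right\Vert_{\infty},\left\Vert b\right\Vert_{\infty},E^{0})$ (the constant denoted $C_{0}$ there) such that $|E_{j}^{(N)}(x,\omega)-E_{k}^{(N)}(x,\omega)|>\exp(-C_{1}Q_{N})$ for every $k\neq j$.

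It then remains only to compare $\exp(-C_{1}Q_{N})$ with $\exp(-N^{\delta})$. Since $Q_{N}=\exp((\log\log N)^{C_{0}})$, we have $\log(C_{1}Q_{N})=\log C_{1}+(\log\log N)^{C_{0}}=o(\log N)$ as $N\to\infty$, so $C_{1}Q_{N}<N^{\delta}$ for all $N\ge N_{0}(\left\Vert a\right\Vert_{\infty},\left\Vert b\right\Vert_{*},c,\alpha,\gamma,E^{0},\delta)$, and therefore $\exp(-C_{1}Q_{N})>\exp(-N^{\delta})$, as claimed. There is no genuine obstacle in this argument; the only point demanding care is the bookkeeping that identifies $\t{\mc E}_{N,\omega}$ with the appropriate enlargement of $\mc E_{N,\omega}$ and confirms — as was already done in the proof of \propref{resultants-localization} — that this enlargement does not spoil the recorded measure and complexity bounds for $\t{\mc E}_{N,\omega}$.
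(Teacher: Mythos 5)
Your proposal is correct and follows the paper's route exactly: the paper derives this proposition directly from \propref{separation-raw} with $Q_{N}=\exp\left(\left(\log\log N\right)^{C_{0}}\right)$, and the only content is the observation that $C Q_{N}\le N^{\delta}$ for large $N$, which you verify. Your unwinding of $\t{\mc E}_{N,\omega}$ as the $\exp\left(-l^{1/4}\right)$-fattening of $\mc E_{N,\omega}\cup\left(\mc E^{0}\right)^{C}$ is the intended reading and makes the hypothesis of \propref{separation-raw} available, so nothing is missing.
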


\section{\label{sec:Abstract-Elimination}Abstract Elimination of Resonances
via Slopes}

In this section we will obtain elimination of resonances via slopes
(as discussed in the introduction) in an abstract setting. We begin
by presenting the assumptions under which we will be working. 

Let $e\left(x\right)=e^{2\pi ix}$. Let $P\left(x,y,z\right)$ be
a polynomial of degree at most $d_{1}$ for any fixed $x$, and of
degree at most $d_{2}$ for any fixed $y$. Let $f_{j}:\mb R^{2}\rightarrow\mb R$,
$j=1,\ldots,n$ be functions which are real-analytic and $1$-periodic
in each variable, and with the property that
\[
P\left(e\left(x\right),e\left(y\right),f_{j}\left(x,y\right)\right)=0,\,\left(x,y\right)\in\mb R^{2},\, j=1,\ldots,n.
\]
Clearly, there exist constants $C_{0}$ and $C_{1}$ such that 
\begin{equation}
\left|\partial_{x}f_{j}\left(x,y\right)\right|\le C_{0},\,\left|\partial_{y}f_{j}\left(x,y\right)\right|\le C_{1},\, x,y\in\mb R,\, j=1,\ldots,n.\label{eq:absslopes-hyp-bounded-derivatives}
\end{equation}
Equivalently we will have 
\begin{equation}
\left|f_{j}\left(x,y\right)-f_{j}\left(x',y'\right)\right|\le C_{0}\left|x-x'\right|+C_{1}\left|y-y'\right|,\, x,x',y,y'\in\mb R,\, j=1,\ldots,n.\label{eq:absslopes-hyp-lipschitz}
\end{equation}
Furthermore, we assume that there exist constants $c_{0}$, $r_{0}$,
$C_{2}$, $C_{3}$, a set $\mc Y^{0}\subset\left[0,1\right]$, and
an interval $\mc Z^{0}$, such that for every $y\in\mc Y^{0}$ there
exists a set $\mc Z_{y}^{0}$, with
\[
\mes\left(\mc Z_{y}^{0}\right)\le c_{0},\,\com\left(\mc Z_{y}^{0}\right)\le C_{2},
\]
such that for any $x\in\mb R$, if $f_{j}\left(x,y\right)\in\mc Z^{0}\setminus\mc Z_{y}^{0}$,
for some $j$, then 
\begin{equation}
\left|\partial_{x}f_{j}\left(x,y\right)-\partial_{x}f_{j}\left(x,y'\right)\right|\le C_{3}\left|y-y'\right|,\label{eq:absslopes-hyp-slopes}
\end{equation}
for any $y'\in\mb R$ such that $\left|y-y'\right|\le r_{0}$. The
rather convoluted form of the assumption is motivated by the concrete
estimate that we have for eigenvalues (see \corref{slopes-lipschitz-slopes-1}). 

By a Sard-type argument we show that for fixed $y$, after removing
some thin horizontal strips from the graphs of $f_{j}\left(\cdot,y\right)$
we have control over the slopes. Furthermore, these strips are stable
under small perturbations in $y$. We refer to \cite[Lemma 10.9-10]{MR2753606}
for similar considerations.
\begin{lem}
\label{lem:absslopes-abstract-slopes} Fix $\tau>0$ and let $\delta=\min\left\{ r_{0},\tau/C_{3},\tau/C_{1}\right\} $.
For each $y\in\mc Y^{0}$ there exists a set $\mc Z_{y}$, with
\begin{equation}
\mes\left(\mc Z_{y}\right)\lesssim\left(n+d_{2}^{2}+C_{2}\right)\tau+c_{0},\,\com\left(\mc Z_{y}\right)\lesssim d_{2}^{2}+C_{2},\label{eq:absslopes-Zy-complexity}
\end{equation}
such that for any $x\in\mb R$ and $y'\in\left(y-\delta,y+\delta\right)$,
if $f_{j}\left(x,y'\right)\in\mc Z^{0}\setminus\mc Z_{y}$, for some
$j$, then $\left|\partial_{x}f_{j}\left(x,y'\right)\right|>\tau$.\end{lem}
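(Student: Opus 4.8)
The plan is to run a Sard-type argument on the level sets of $f_j(\cdot,y)$ for a single fixed $y \in \mc Y^0$, and then transfer the conclusion to nearby $y'$ using the Lipschitz estimates we already have. Fix $\tau>0$. For each $j$ consider the "bad energy" set $\mc Z_{y,j} := \{ f_j(x,y) : x \in \mb R,\ |\partial_x f_j(x,y)| \le \tau \}$, i.e. the image under $f_j(\cdot,y)$ of its near-critical set. I would first argue that for each $j$ this set is contained in a union of $O(d_2^2)$ intervals of total length $\lesssim n\tau$ — or more precisely, that $\cup_j \mc Z_{y,j}$ has complexity $\lesssim d_2^2$ and measure $\lesssim n\tau$. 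The mechanism: because $P(e(x),e(y),f_j(x,y))\equiv 0$, differentiating in $x$ gives $P_y(e(x),e(y),f_j)\,\partial_x f_j = -2\pi i\, e(x)\, P_x(e(x),e(y),f_j)$; in particular a near-critical point of $f_j(\cdot,y)$ at energy level $E=f_j(x,y)$ forces $E$ to be a near-common-root of $P(\cdot,e(y),E)$ and a derivative-type polynomial in the $x$-variable, so there are only $\lesssim d_2$ such energy levels up to the near-critical width, each contributing an interval of length $\lesssim \tau/(\text{something})$. Summing over the (at most $n$, but really at most $d_2$) branches gives the measure bound $\lesssim (n + d_2^2)\tau$; the complexity bound $\lesssim d_2^2$ comes from counting how many such intervals can arise. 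This is the step I expect to be the main obstacle: making the Sard/resultant count clean enough to get exactly the claimed $d_2^2$ complexity and linear-in-$\tau$ measure, while keeping everything uniform in $y$.

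Next I would handle the transfer to nearby $y'$. Set $\delta = \min\{r_0, \tau/C_3, \tau/C_1\}$ and define
\[
\mc Z_y := \bigl\{ E : \dist(E, \cup_j \mc Z_{y,j}) \le C_1\delta \bigr\} \cup \t{\mc Z}_y^0,
\]
where $\t{\mc Z}_y^0$ is a suitable $C_1\delta$-fattening of $\mc Z_y^0$ (needed because the stability estimate \eqref{absslopes-hyp-slopes} only applies when $f_j(x,y) \in \mc Z^0 \setminus \mc Z_y^0$). Fattening by $C_1\delta \le \tau$ changes the measure by at most $O((n+d_2^2+C_2)\tau + c_0)$ and does not change the order of the complexity, which gives \eqref{absslopes-Zy-complexity}. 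Now suppose $|y'-y|<\delta$, $x \in \mb R$, and $f_j(x,y') \in \mc Z^0 \setminus \mc Z_y$. By the Lipschitz bound \eqref{absslopes-hyp-lipschitz} (or \eqref{absslopes-hyp-bounded-derivatives}), $|f_j(x,y) - f_j(x,y')| \le C_1|y-y'| \le C_1\delta$, so $f_j(x,y) \in \mc Z^0 \setminus \mc Z_y^0$ (here the fattening of $\mc Z^0$'s complement is what lets us stay inside $\mc Z^0$, and the fattening of $\mc Z_y^0$ is what keeps us out of it). Hence \eqref{absslopes-hyp-slopes} applies and gives $|\partial_x f_j(x,y) - \partial_x f_j(x,y')| \le C_3|y-y'| \le C_3\delta \le \tau$. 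On the other hand, since $f_j(x,y)$ is not in the $C_1\delta$-neighbourhood of $\cup_j \mc Z_{y,j}$, in particular $f_j(x,y) \notin \mc Z_{y,j}$, which by the definition of $\mc Z_{y,j}$ forces $|\partial_x f_j(x,y)| > \tau$. Wait — I need to be slightly more careful: $\mc Z_{y,j}$ as defined is exactly the set of near-critical values, so $f_j(x,y)\notin\mc Z_{y,j}$ gives $|\partial_x f_j(x,y)|>\tau$ directly, but then the triangle inequality with the previous bound only yields $|\partial_x f_j(x,y')| > \tau - \tau = 0$, which is useless. So instead I would define $\mc Z_{y,j}$ with threshold $2\tau$ (i.e. near-critical means $|\partial_x f_j(\cdot,y)|\le 2\tau$) and choose $\delta = \min\{r_0,\tau/C_3,\tau/C_1\}$ as stated; then $|\partial_x f_j(x,y)| > 2\tau$ and $|\partial_x f_j(x,y) - \partial_x f_j(x,y')| \le C_3\delta \le \tau$ give $|\partial_x f_j(x,y')| > \tau$, as required. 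The measure and complexity estimates only change by absolute constants under $\tau \mapsto 2\tau$.

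In summary, the three ingredients are: (i) a resultant/Sard count bounding the image of the near-critical set of each algebraic branch $f_j(\cdot,y)$, uniformly in $y$, with complexity $\lesssim d_2^2$ and measure $\lesssim (n+d_2^2)\tau$; (ii) absorbing the pre-existing bad set $\mc Z_y^0$ by fattening it (and $(\mc Z^0)^c$) by $C_1\delta \le \tau$, contributing $\lesssim C_2$ to the complexity and $\lesssim C_2\tau + c_0$ to the measure; (iii) the perturbation argument using \eqref{absslopes-hyp-lipschitz} and \eqref{absslopes-hyp-slopes} with the slack built into the factor $2$ in the threshold and into $\delta$. I expect step (i) to be where essentially all the work lies — in particular pinning down the dependence on $d_2$ rather than $d_1$, which should come from viewing the level set $\{f_j(x,y)=E\}$ as cut out by $P(\cdot,e(y),E)=0$ in the $x$-variable and reading off that there are $O(d_2)$ candidate near-critical levels, each of width $\lesssim \tau$ after dividing by a lower bound on $|P_y|$ away from a further exceptional set (which itself has controlled complexity and gets folded into $\mc Z_y$).
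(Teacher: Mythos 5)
Your proposal matches the paper's proof in all essentials: the same $2\tau$-threshold near-critical sets, the same $\tau$-fattening of $Z_{y}\cup\mc Z_{y}^{0}\cup\left(\mc Z^{0}\right)^{C}$, the same Lipschitz-plus-slope-stability transfer to $y'$, and the same B\'ezout count (applied to $P=0$ together with $2\pi ie\left(x\right)\partial_{1}P\pm2\tau\,\partial_{3}P=0$) giving the $d_{2}^{2}$ complexity. The only blemishes are cosmetic: the implicit differentiation identity should read $\partial_{3}P\cdot\partial_{x}f_{j}=-2\pi ie\left(x\right)\partial_{1}P$ (not $P_{y}$), and no lower bound on any derivative of $P$ is needed for the measure bound, which follows directly from $\left|\partial_{x}f_{j}\right|\le2\tau$ on each near-critical interval.
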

\begin{proof}
Fix $y\in\mc Y^{0}$. There exist, possibly degenerate, intervals
$I_{j,k}=I_{j,k}\left(y\right)\subset\left[0,1\right]$ such that
$\left|\partial_{x}f_{j}\left(x,y\right)\right|\le2\tau$ for $x\in\cup_{k}I_{j,k}$
and $\left|\partial_{x}f_{j}\left(x,y\right)\right|>2\tau$ for $x\in\left[0,1\right]\setminus\left(\cup_{k}I_{j,k}\right)$.
We let $Z_{j,k}=\left\{ f_{j}\left(x,y\right):\, x\in I_{j,k}\right\} $,
$Z_{y}=\cup_{j,k}Z_{j,k}$, and we define 
\[
\mc Z_{y}:=\left\{ z\in\mc Z^{0}:\,\dist\left(z,Z_{y}\cup\mc Z_{y}^{0}\cup\left(\mc Z^{0}\right)^{C}\right)\le\tau\right\} .
\]
Suppose that $f_{j}\left(x,y'\right)\in\mc Z^{0}\setminus\mc Z_{y}$,
for some $y'\in\left(y-\delta,y+\delta\right)$. By \eqref{absslopes-hyp-lipschitz}
and $\delta\le\tau/C_{1}$, it follows that $f_{j}\left(x,y\right)\in\mc Z^{0}\setminus\left(Z_{y}\cup\mc Z_{y}^{0}\right)$.
Hence $\left|\partial_{x}f_{j}\left(x,y\right)\right|>2\tau$, and
by \eqref{absslopes-hyp-slopes} and $\delta\le r_{0},\tau/C_{3}$,
it follows that $\left|\partial_{x}f_{j}\left(x,y'\right)\right|>\tau$,
as desired.

We clearly have that $\mes\left(Z_{j,k}\right)\le\tau\mes\left(I_{j,k}\right)$,
and hence $\mes\left(Z_{y}\right)\le n\tau$. At the same time we
have 
\[
\mes\left(\mc Z_{y}\right)\le\mes\left(Z_{y}\right)+\mes\left(\mc Z_{y}^{0}\right)+2\tau\left(\com\left(Z_{y}\right)+\com\left(\mc Z_{y}^{0}\right)+2\right),
\]
\[
\com\left(\mc Z_{y}\right)\le\com\left(Z_{y}\right)+\com\left(\mc Z_{y}^{0}\right)+2
\]
(recall that $\mc Z^{0}$ is an interval). So to get \eqref{absslopes-Zy-complexity}
we just need to estimate the number of intervals $I_{j,k}$. The number
of these intervals is controlled by the number of solutions of $\partial_{x}f_{j}\left(x,y\right)=\pm2\tau$,
$j=1,\ldots,n$ which is bounded by the number of solutions of the
system
\begin{align*}
0=Q_{1}\left(e\left(x\right),z\right): & =P\left(e\left(x\right),e\left(y\right),z\right)\\
0=Q_{2}\left(e\left(x\right),z\right): & =\partial_{1}P\left(e\left(x\right),e\left(y\right),z\right)2\pi ie\left(x\right)\pm2\tau\partial_{3}P\left(e\left(x\right),e\left(y\right),z\right).
\end{align*}
By B\'ezout's Theorem it follows that the number of solutions of
the above system is controlled by $d_{2}^{2}$. This concludes the
proof. 
\end{proof}
Let us make some remarks regarding the use of B\'ezout's Theorem
in the above lemma. To apply the theorem we would want $Q_{1}$ and
$Q_{2}$ to be irreducible and distinct. They are not necessarily
irreducible but we can replace them with some irreducible factors
by the following simple observation. Since $Q_{i}\left(e\left(x\right),f_{j}\left(x,y\right)\right)=0$
and $f_{j}$ is analytic, there must exist an irreducible factor $\t Q_{i}$
of $Q_{i}$ such that $\t Q_{i}\left(e\left(x\right),f_{j}\left(x,y\right)\right)=0$.
We can ensure that $\t Q_{1}$ and $\t Q_{2}$ are different by varying
$\tau$. Of course, for different functions $f_{j}$ we may get different
irreducible factors. It is elementary to argue that when we add up
the numbers of solutions from each combination of irreducible factors
we get a number less than the product of the degrees of $Q_{1}$ and
$Q_{2}$. In what follows, similar considerations apply whenever we
use B\'ezout's Theorem.

We can now obtain elimination of resonances.
\begin{thm}
\label{thm:absslopes-elimination}Let $\tau,\sigma>0$, $Q\ge\max\left\{ 4C_{1}/\tau,d_{1},n\left(d_{2}^{2}+C_{2}\right)\right\} $,
$M\ge Q$, $\delta\le\min\{r_{0},\tau/C_{3}$ $,\tau/C_{1}\}$, $\delta'\le\min\left\{ \sigma/\left(MC_{0}+2C_{1}\right),\delta/2\right\} $.
There exists $\mc Y\subset\left[0,1\right]$, with 
\begin{equation}
\mes\left(\mc Y\right)\lesssim\sqrt{M\sigma d_{2}d_{1}\tau^{-1}\delta^{-1}},\,\com\left(\mc Y\right)\lesssim\sqrt{M\sigma d_{2}d_{1}\tau^{-1}\delta^{-1}}/\delta',\label{eq:absslopes-Y-complexity}
\end{equation}
such that for each $y\in\mc Y^{0}\setminus\mc Y$ there exists $\mc{\t Z}_{y}$,
with 
\begin{multline*}
\mes\left(\t{\mc Z}_{y}\right)\lesssim n\tau+c_{0}+\left(d_{2}^{2}+C_{2}\right)\left(\tau+\sigma\right)+C_{0}\sqrt{M\sigma d_{2}d_{1}\tau^{-1}\delta^{-1}},\\
\,\com\left(\t{\mc Z}_{y}\right)\lesssim Md_{2}\left(d_{2}^{2}+C_{2}\right),
\end{multline*}
 such that for any $x\in\mb R$ we have that if $f_{j}\left(x,y\right)\in\mc Z^{0}\setminus\mc{\t Z}_{y}$,
for some $j$, then 
\begin{equation}
\left|f_{j}\left(x,y\right)-f_{k}\left(x+my,y\right)\right|\ge\sigma,\label{eq:absslopes-shift-separation}
\end{equation}
 for $k=1,\ldots,n$ and any integer $m$, $Q\le\left|m\right|\le M$.\end{thm}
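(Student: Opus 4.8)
The plan is to feed the Sard-type slope control of \lemref{absslopes-abstract-slopes} into a B\'ezout count, exactly via the ``fattening'' mechanism indicated in the introduction. First I would cover $[0,1]$ by $O(\delta^{-1})$ intervals $I_i$ of length $2\delta$ with centres $y_i\in\mc Y^0$ (intervals missing $\mc Y^0$ play no role), and apply \lemref{absslopes-abstract-slopes} at each $y_i$ to get $\mc Z_{y_i}$, valid for every $y\in I_i$, with the bounds of \eqref{absslopes-Zy-complexity}. Set $\hat{\mc Z}_i:=\{z\in\mc Z^0:\dist(z,\mc Z_{y_i}\cup(\mc Z^0)^C)\le\sigma\}$, so that $\mes(\hat{\mc Z}_i)\lesssim n\tau+c_0+(d_2^2+C_2)(\tau+\sigma)$ and $\com(\hat{\mc Z}_i)\lesssim d_2^2+C_2$. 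The key local point is that fattening turns a near-resonance into a good \emph{transversal} slope: if $y\in I_i$, $f_j(x,y)\in\mc Z^0\setminus\hat{\mc Z}_i$ and $|f_j(x,y)-f_k(x+my,y)|<\sigma$ with $Q\le|m|\le M$, then $f_k(x+my,y)\in\mc Z^0\setminus\mc Z_{y_i}$, so $|\partial_x f_k(x+my,y)|>\tau$ by \lemref{absslopes-abstract-slopes}, whence, writing $g_{m,j,k}(x,y):=f_j(x,y)-f_k(x+my,y)$ and differentiating in $y$,
\[
|\partial_y g_{m,j,k}(x,y)|\ge|m|\,|\partial_x f_k(x+my,y)|-|\partial_y f_j|-|\partial_y f_k|\ge|m|\tau-2C_1\ge|m|\tau/2,
\]
using $Q\ge 4C_1/\tau$.

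Next I would introduce the bad set $\mc B:=\bigcup_{i,m,j,k}\mc B_{i,m,j,k}$, where $\mc B_{i,m,j,k}:=\{(x,y)\in[0,1]\times I_i:\ f_j(x,y)\in\mc Z^0\setminus\hat{\mc Z}_i,\ |g_{m,j,k}(x,y)|<\sigma\}$, and estimate $\mes(\mc B)$ by slicing in $x$. On each connected arc of a $y$-section of $\mc B_{i,m,j,k}$ the function $g_{m,j,k}(x,\cdot)$ is strictly monotone with $|\partial_y g_{m,j,k}|>|m|\tau/2$ and stays in $(-\sigma,\sigma)$, so the arc has length $<4\sigma/(|m|\tau)$; the number of such arcs is bounded, via B\'ezout applied to $P(e(x),e(y),w_2\pm\sigma)$ and $P(e(x)e(y)^m,e(y),w_2)$ in $(e(y),w_2)$ — two polynomials of degrees $O(d_1)$ and $O(|m|d_2)$ — by $O(|m|d_1 d_2)$ (the remarks after \lemref{absslopes-abstract-slopes} handle the passage to irreducible factors and the sum over $j,k$; $Q\ge n(d_2^2+C_2)$ absorbs the lower-order arcs coming from $f_j$ crossing $\partial\hat{\mc Z}_i$). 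Hence each $x$-section of $\mc B$ has measure $\lesssim M d_1 d_2\sigma/\tau$, and integrating in $x$ and summing over the $O(\delta^{-1})$ intervals $I_i$ gives $\mes(\mc B)\lesssim M\sigma d_1 d_2\tau^{-1}\delta^{-1}$.

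With $\mes(\mc B)$ in hand I would split the $y$'s by a Chebyshev/Fubini argument. Writing $\mc B_j$ for the union over $i,m,k$ of the $\mc B_{i,m,j,k}$, the ``bad energies'' at $y$ form $W_y:=\bigcup_j f_j(\{x:(x,y)\in\mc B_j\},y)$, and since $|\partial_x f_j|\le C_0$ we get $\mes(W_y)\le C_0\sum_j\mes\{x:(x,y)\in\mc B_j\}$. Put $T:=\sqrt{M\sigma d_1 d_2\tau^{-1}\delta^{-1}}$ and $\mc Y:=\{y\in\mc Y^0:\ \sum_j\mes\{x:(x,y)\in\mc B_j\}>T\}$; then $\mes(\mc Y)\cdot T\le\sum_j\mes(\mc B_j)\le\mes(\mc B)\lesssim T^2$, so $\mes(\mc Y)\lesssim T$, which is the first part of \eqref{absslopes-Y-complexity}. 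For $y\in\mc Y^0\setminus\mc Y$ lying in $I_i$, take $\t{\mc Z}_y:=\hat{\mc Z}_i\cup W_y$: then $\mes(\t{\mc Z}_y)\le\mes(\hat{\mc Z}_i)+C_0 T$, the asserted bound, and by construction $f_j(x,y)\in\mc Z^0\setminus\t{\mc Z}_y$ forces $|f_j(x,y)-f_k(x+my,y)|\ge\sigma$ for all $k$ and all $m$ with $Q\le|m|\le M$ (otherwise $f_j(x,y)\in W_y$), which is \eqref{absslopes-shift-separation}. For the complexity of $\mc Y$ I would cover by $\delta'$-disks: since $|\partial_y g_{m,j,k}|\le MC_0+2C_1$, perturbing $y$ by $\le\delta'\le\sigma/(MC_0+2C_1)$ moves $g_{m,j,k}$ (and $f_j$) by $\le\sigma$, so the bad condition is $\delta'$-stable and $\com(\mc Y)\lesssim\mes(\mc Y)/\delta'$; the complexity of $\t{\mc Z}_y$ comes from $\com(\hat{\mc Z}_i)\lesssim d_2^2+C_2$ plus, now slicing in $y$ with $x$ varying, a B\'ezout bound of $O(d_2^2)$ $x$-arcs per shift $m$, which after accounting for the (bounded) folding of $f_j$ on each arc and summing over the $O(M)$ shifts gives $\com(\t{\mc Z}_y)\lesssim M d_2(d_2^2+C_2)$.

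The main obstacle is the B\'ezout bookkeeping: one has to run essentially the same resultant estimate in two directions — slicing in $x$ (the $O(|m|d_1 d_2)$ count that drives $\mes(\mc B)$ and hence $\mes(\mc Y)$ through the square root) and slicing in $y$ (the $O(d_2^2)$ count that controls $\com(\t{\mc Z}_y)$) — while carefully tracking the passage to irreducible factors, the summation over $j,k$, and the various lower-order terms (absorbed using $Q\ge\max\{4C_1/\tau,d_1,n(d_2^2+C_2)\}$), so that the exponents of $M,\sigma,\tau,\delta,\delta'$ land exactly as in \eqref{absslopes-Y-complexity} and the displayed bounds for $\t{\mc Z}_y$. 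Once these counts are granted, the geometric content — fattening converts a $\sigma$-resonance into a transversal slope $|\partial_y g_{m,j,k}|\gtrsim|m|\tau$ — together with the Fubini/Chebyshev balancing is routine.
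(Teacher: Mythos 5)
Your proposal is correct and follows essentially the same route as the paper's proof: the same $\delta$-net of intervals with the $\sigma$-fattened slope-control sets, the same transversality estimate $|\partial_y g_{m,j,k}|\ge|m|\tau/2$, the same two B\'ezout counts (degrees $O(d_1)\times O(|m|d_2)$ when slicing at fixed $x$, degrees $O(d_2)\times O(d_2)$ when slicing at fixed $y$), and the same Fubini/Chebyshev balancing with the $\delta'$-stability argument for $\com(\mc Y)$. The only (cosmetic) difference is that you define $\mc Y$ via $\sum_j\mes\{x:(x,y)\in\mc B_j\}$ rather than the measure of the union over $j$, which in fact makes the push-forward bound $\mes(W_y)\le C_0T$ slightly cleaner.
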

\begin{proof}
Let $\left\{ y_{\alpha}\right\} $ be a $\delta$-net of points from
$\mc Y^{0}$ covering $\mc Y^{0}$. Also let $I_{y_{\alpha}}=\left(y_{\alpha}-\delta,y_{\alpha}+\delta\right)$,
and $\mc Z'_{y_{\alpha}}=\left\{ z\in\mc Z^{0}:\,\dist\left(\mc Z_{y_{\alpha}}\cup\left(\mc Z^{0}\right)^{C}\right)\le\sigma\right\} $,
where $\mc Z_{y_{\alpha}}$ is as in \lemref{absslopes-abstract-slopes}.
By \eqref{absslopes-Zy-complexity}, there exists a union of intervals
$Z{}_{y_{\alpha}}\supset\mc Z'_{y_{\alpha}}$ such that 
\[
\mes\left(Z{}_{y_{\alpha}}\right)\lesssim n\tau+c_{0}+\left(d_{2}^{2}+C_{2}\right)\left(\tau+\sigma\right),\,\com\left(Z{}_{y_{\alpha}}\right)\lesssim d_{2}^{2}+C_{2}.
\]

Let
\[
\mc B\left(y_{\alpha},j\right)=\left\{ \left(x,y\right)\in\left[0,1\right]\times I_{y_{\alpha}}:\, f_{j}\left(x,y\right)\in\left(\mc Z^{0}\right)^{C}\cup Z{}_{y_{\alpha}}\right\} .
\]
We define 
\[
g_{j,k,m}\left(x,y\right)=f_{k}\left(x+my,y\right)-f_{j}\left(x,y\right),
\]
 
\[
\mc B'_{m}\left(y_{\alpha},j,k\right)=\left\{ \left(x,y\right)\in\left(\left[0,1\right]\times I_{y_{\alpha}}\right)\setminus\mc B\left(y_{\alpha},j\right):\,\left|g_{j,k,m}\left(x,y\right)\right|<\sigma\right\} ,
\]
and $\mc B'_{m}\left(y_{\alpha}\right)=\cup_{j,k}\mc B'_{m}\left(y_{\alpha},j,k\right)$.
For $\left(x,y\right)\in\mc B'_{m}\left(y_{\alpha},j,k\right)$ we
have that $f_{j}\left(x,y\right)\in\mc Z^{0}\setminus Z{}_{y_{\alpha}}$
(due to the definition of $\mc B\left(y_{\alpha},j\right)$) and consequently
$f_{k}\left(x+my,y\right)\in\mc Z^{0}\setminus\mc Z_{y_{\alpha}}$
(due to the definitions of $Z{}_{y_{\alpha}}$ and $\mc Z'_{y_{\alpha}}$).
Hence, by \lemref{absslopes-abstract-slopes}, for $\left(x,y\right)\in\mc B'_{m}\left(y_{\alpha},j,k\right)$
we have $\left|\partial_{x}f_{k}\left(x+my,y\right)\right|>\tau$.
Since 
\[
\partial_{y}g_{j,k,m}\left(x,y\right)=m\partial_{x}f_{k}\left(x+my,y\right)+\partial_{y}f_{k}\left(x+my,y\right)-\partial_{y}f_{j}\left(x,y\right)
\]
 we can conclude that $\left|\partial_{y}g_{j,k,m}\left(x,y\right)\right|\ge\left|m\right|\tau-2C_{1}\ge\left|m\right|\tau/2$
for $\left(x,y\right)\in\mc B'_{m}\left(y_{\alpha},j,k\right)$ (we
used \eqref{absslopes-hyp-bounded-derivatives} and $\left|m\right|\ge Q\ge4C_{1}/\tau$).
Let
\[
\mc B''_{m}\left(y_{\alpha},j,k\right)=\left\{ \left(x,y\right)\in\left[0,1\right]\times I_{y_{\alpha}}:\left|g_{j,k,m}\left(x,y\right)\right|<\sigma\right\} .
\]
For a set $S\subset\mb R^{2}$ we will use the notation $S\vert_{x}:=\left\{ y:\,\left(x,y\right)\in S\right\} $,
$S\vert_{y}:=\left\{ x:\left(x,y\right)\in S\right\} $. We have that
$\mc B'_{m}\left(y_{\alpha},j,k\right)\vert_{x}=\mc B''\left(y_{\alpha},j,k\right)\vert_{x}\setminus\mc B\left(y_{\alpha},j\right)\vert_{x}$
is a union of, possibly degenerate, intervals. On each such interval
we have $\partial_{y}g_{j,k,m}\left(x,\cdot\right)\ge\left|m\right|\tau/2$
or $\partial_{y}g_{j,k,m}\left(x,\cdot\right)\le-\left|m\right|\tau/2$,
so by the fundamental theorem of calculus and the fact that on these
intervals we have $\left|g_{j,k,m}\left(x,\cdot\right)\right|<\sigma$,
each such interval must be of size smaller than $2\sigma\left(\left|m\right|\tau\right)^{-1}$.
Consequently we get

\begin{equation}
\mes\left(\mc B_{m}'\left(y_{\alpha}\right)\vert_{x}\right)\le2\sigma\left(\left|m\right|\tau\right)^{-1}\sum_{j,k}\com\left(\mc B'_{m}\left(y_{\alpha},j,k\right)\vert_{x}\right).\label{eq:absslopes-Fubini}
\end{equation}
At the same time we have 
\[
\com\left(\mc B'_{m}\left(y_{\alpha},j,k\right)\vert_{x}\right)\le\com\left(\mc B_{m}''\left(y_{\alpha},j,k\right)\vert_{x}\right)+\com\left(\mc B\left(y_{\alpha},j\right)\vert_{x}\right).
\]
The total number of components in $\mc B\left(y_{\alpha},j\right)\vert_{x}$
for all $j$ is controlled by the number of solutions of 
\[
\begin{cases}
f_{j}\left(x,y\right)=z\\
j\in\left\{ 1,\ldots,n\right\} \\
z\in\mathrm{E\left(\left(\mc Z^{0}\right)^{C}\cup Z{}_{y_{\alpha}}\right)}
\end{cases},
\]
where $E\left(\left(\mc Z^{0}\right)^{C}\cup Z{}_{y_{\alpha}}\right)$
is the set consisting of the endpoints of the intervals in $\left(\mc Z^{0}\right)^{C}\cup Z{}_{y_{\alpha}}$.
The number of solutions of this system is bounded by the number of
solutions of
\[
\begin{cases}
0=Q_{1}\left(e\left(y\right),z\right):=P\left(e\left(x\right),e\left(y\right),z\right)\\
0=Q_{2}\left(e\left(y\right),z\right):=z-z'\\
z'\in E\left(\left(\mc Z^{0}\right)^{C}\cup Z{}_{y_{\alpha}}\right)
\end{cases}.
\]
Using B\'ezout's theorem, we can conclude that
\[
\sum_{j}\com\left(\mc B\left(y_{\alpha},j\right)\right)\lesssim d_{1}\left(d_{2}^{2}+C_{2}\right).
\]
The total number of components in $\mc B''\left(y_{\alpha},j,k\right)\vert_{x}$
for all $j,k$, is controlled by the number of solutions of 
\[
\begin{cases}
g_{j,k,m}\left(x,y\right)=\pm\sigma\\
j,k\in\left\{ 1,\ldots,n\right\} 
\end{cases},
\]
which is bounded by the number of solutions of
\[
\begin{cases}
0=Q_{1}\left(e\left(y\right),z\right):=P\left(e\left(x\right),e\left(y\right),z\right)\\
0=Q_{2}\left(e\left(y\right),z\right):=e\left(\left|m\right|d_{2}y\right)P\left(e\left(x+my\right),e\left(y\right),z\pm\sigma\right)
\end{cases}.
\]
The $e\left(\left|m\right|d_{2}y\right)$ factor ensures that $Q_{2}$
is a polynomial, even when $m<0$. Since $\deg Q_{1}\le d_{1}$ and
$\deg Q_{2}\lesssim\left|m\right|d_{2}+d_{1}$, using B\'ezout's
theorem we can conclude that
\[
\sum_{j,k}\com\left(\mc B_{m}''\left(y_{\alpha},j,k\right)\vert_{x}\right)\lesssim\left|m\right|d_{2}d_{1}+d_{1}^{2}\lesssim\left|m\right|d_{2}d_{1}
\]
(we used $\left|m\right|\ge Q\ge d_{1}$). Now we can conclude that
\[
\sum_{j,k}\com\left(\mc B'_{m}\left(y_{\alpha},j,k\right)\vert_{x}\right)\lesssim\left|m\right|d_{2}d_{1}+nd_{1}\left(d_{2}^{2}+C_{2}\right)\lesssim\left|m\right|d_{2}d_{1}
\]
(we used $\left|m\right|\ge Q\ge n\left(d_{2}^{2}+C_{2}\right)$).
By \eqref{absslopes-Fubini} and Fubini's theorem we can now conclude
that $\mes\left(\mc B_{m}'\left(y_{\alpha}\right)\right)\lesssim\sigma d_{2}d_{1}\tau^{-1}.$

Let $\mbox{\ensuremath{\mc B}}'=\cup_{m,y_{\alpha}}\mc B'_{m}\left(y_{\alpha}\right)$.
We have that $\mes\left(\mc B'\right)\lesssim M\sigma d_{2}d_{1}\tau^{-1}\delta^{-1}$.
We define 
\[
\mc Y:=\left\{ y\in\mc Y^{0}:\,\mes\left(\mc B'\vert_{y}\right)>\sqrt{M\sigma d_{2}d_{1}\tau^{-1}\delta^{-1}}\right\} .
\]
From Chebyshev's inequality we get $\mes\left(\mc Y\right)\lesssim\sqrt{M\sigma d_{2}d_{1}\tau^{-1}\delta^{-1}}$.
Clearly, for $y\in\mc Y_{0}\setminus\mc Y$ we have 
\begin{equation}
\mes\left(\mc B'\vert_{y}\right)\le\sqrt{M\sigma d_{2}d_{1}\tau^{-1}\delta^{-1}}.\label{eq:absslopes-small-bad-set-in-x-for-fixed-y}
\end{equation}
Next we estimate the complexity of the set $\mc Y$. Cover $\mc Y$
by $\lesssim\sqrt{M\sigma d_{2}d_{1}\tau^{-1}\delta^{-1}}/\delta'$
intervals $Y_{i}$ of size $\delta'$, centered at points from $\mc Y$.
Suppose that $y_{i}$ is the center of $Y_{i}$. Since $y_{i}\in\mc Y$
we have that there exists $m$, $Q\le\left|m\right|\le M$, such that
\[
\left|f_{k}\left(x+my_{i},y_{i}\right)-f_{j}\left(x,y_{i}\right)\right|<\sigma,\, x\in\mc B'\vert_{y_{i}}.
\]
Due to \eqref{absslopes-hyp-lipschitz} we can conclude that
\[
\left|f_{k}\left(x+my,y\right)-f_{j}\left(x,y\right)\right|\le\left|f_{k}\left(x+my_{i},y_{i}\right)-f_{j}\left(x,y_{i}\right)\right|+\left(MC_{0}+2C_{1}\right)\left|y-y_{i}\right|<2\sigma,
\]
for $x\in\mc B'\vert_{y_{i}}$ , and $y\in Y_{i}$ (we used $\delta'\le\sigma/\left(MC_{0}+2C_{1}\right)$).
From this we get that
\[
\cup_{i}Y_{i}\subset\left\{ y\in\t{\mc Y}^{0}:\,\mes\left(\mc B'\left(2\sigma\right)\vert_{y}\right)>\sqrt{M\sigma d_{2}d_{1}\tau^{-1}\delta^{-1}}\right\} ,
\]
where $\mc B'\left(2\sigma\right)$ has the same definition as $\mc B'$,
only with $2\sigma$ instead of $\sigma$, and 
\[
\t{\mc Y}^{0}=\left\{ y\in\left[0,1\right]:\,\dist\left(y,\mc Y^{0}\right)\le\delta/2\right\} 
\]
 (we used $\delta'\le\delta/2$). Note that the $\delta$-net $\left\{ y_{\alpha}\right\} $
can be chosen so that it covers $\t{\mc Y}^{0}$, rather than just
$\mc Y^{0}$. By the same argument as above (that led to $\mes\left(\mc Y\right)\lesssim\sqrt{M\sigma d_{2}d_{1}\tau^{-1}\delta^{-1}}$)
we get that $\mes\left(\cup_{i}Y_{i}\right)\lesssim\sqrt{M\sigma d_{2}d_{1}\tau^{-1}\delta^{-1}}$,
and hence we have \eqref{absslopes-Y-complexity}.

Fix $y\in\mc Y^{0}\setminus\mc Y$ and let $y_{\alpha}$ be such that
$y\in I_{y_{\alpha}}$. Let 
\[
Z'_{y}=\cup_{m,j,k}\left\{ f_{j}\left(x,y\right):\,\left(x,y\right)\in\mc B'_{m}\left(y_{\alpha},j,k\right)\right\} ,
\]
and define $\t{\mc Z}_{y}:=Z_{y_{\alpha}}\cup Z'_{y}.$ We have that
\begin{multline*}
\mes\left(\t{\mc Z}_{y}\right)\le\mes\left(Z_{y_{\alpha}}\right)+\mes\left(Z'_{y}\right)\\
\lesssim n\tau+c_{0}+\left(d_{2}^{2}+C_{2}\right)\left(\tau+\sigma\right)+C_{0}\sqrt{M\sigma d_{2}d_{1}\tau^{-1}\delta^{-1}},
\end{multline*}
\begin{multline*}
\com\left(\t{\mc Z}_{y}\right)\le\com\left(Z_{y_{\alpha}}\right)+\com\left(Z_{y}'\right)\lesssim d_{2}^{2}+C_{2}+M\left(d_{2}^{2}+d_{2}\left(d_{2}^{2}+C_{2}\right)\right)\\
\lesssim Md_{2}\left(d_{2}^{2}+C_{2}\right).
\end{multline*}
To get the bound on $\mes\left(Z'_{y}\right)$ we used \eqref{absslopes-small-bad-set-in-x-for-fixed-y}
and \eqref{absslopes-hyp-lipschitz}. The estimate on $\com\left(Z'_{y}\right)$
is obtained by noticing that 
\[
\com\left(Z'_{y}\right)\le\sum_{j,k,m}\com\left(\mc B'_{m}\left(y_{\alpha},j,k\right)\vert_{y}\right),
\]
 and by using B\'ezout's theorem in the same way we did to estimate
\[
\sum_{j,k}\com\left(\mc B_{m}'\left(y_{\alpha},j,k\right)\vert_{x}\right).
\]

It is easy to see that with this choice of $\t{\mc Z}{}_{y}$ we have
that \eqref{absslopes-shift-separation} holds. Indeed, suppose that
$f_{j}\left(x,y\right)\in\mc Z^{0}\setminus\t{\mc Z}_{y}$ and suppose
that there exist $k,m$, such that
\[
\left|f_{j}\left(x,\omega\right)-f_{k}\left(x+m\omega,\omega\right)\right|<\sigma.
\]
This implies $\left(x,y\right)\in\mc B''_{m}\left(y_{\alpha},j,k\right)\subset\mc B'_{m}\left(y_{\alpha},j,k\right)\cup\mc B\left(y_{\alpha},j\right)$.
If $\left(x,y\right)\in\mc B'_{m}\left(y_{\alpha},j,k\right)$ then
$f_{j}\left(x,y\right)\in Z'_{y}$, and if $\left(x,y\right)\in\mc B\left(y_{\alpha},j\right)$
then $f_{j}\left(x,y\right)\in Z_{y_{\alpha}}\cup\left(\mc Z^{0}\right)^{C}$.
Either way, we arrived at a contradiction. This concludes the proof.
\end{proof}

\section{\label{sec:Elimination-via-Slopes}Elimination of Resonances and
Separation of Eigenvalues via Slopes}

In this section we apply \thmref{absslopes-elimination} to our concrete
setting to obtain a sharper elimination of resonances, based on which
we will obtain our main result (by applying \thmref{separation-bootstrap}).
As was mentioned in the introduction, to get the stability of slopes
needed for \thmref{absslopes-elimination} we will at first use the
``a priori'' separation via resultants (\propref{resultants-separation-of-eigenvalues-scale-N}).
This will yield a better separation, but still weaker than the one
we desire (see \propref{slopes-separation-1}). By using the improved
separation to get better stability of slopes and then repeating our
steps we will obtain the desired separation. 

We proceed by setting things up for the use of \thmref{absslopes-elimination}.
First, we need to approximate $a$ and $b$ by trigonometric polynomials,
so that the eigenvalues will be algebraic. Let
\[
a\left(x\right)=\sum_{n=-\infty}^{\infty}a_{n}e\left(nx\right),
\]
and
\[
b\left(x\right)=\sum_{n=-\infty}^{\infty}b_{n}e\left(nx\right),
\]
be the Fourier series expansions for $a$ and $b$ (recall that $e\left(x\right)=\exp\left(2\pi ix\right)$).
It is known that there exist constant $C=C\left(\left\Vert a\right\Vert _{\infty},\left\Vert b\right\Vert _{\infty}\right)$
and $c=c\left(\rho_{0}\right)$ such that
\begin{equation}
\left|a_{n}\right|,\left|b_{n}\right|\le C\exp\left(-\pi\rho_{0}\left|n\right|\right),\, n\in\mb Z,\label{eq:slopes-Fourier-coef-bound}
\end{equation}
with $C=\sup_{x\in\mb T}\left(\left|a\left(x\pm i\rho_{0}/2\right)\right|+\left|b\left(x\pm i\rho_{0}/2\right)\right|\right)$.
Let 
\[
a_{K}\left(x\right)=\sum_{n=-K}^{K}a_{n}e\left(nx\right),
\]
and 
\[
b_{K}\left(x\right)=\sum_{n=-K}^{K}b_{n}e\left(nx\right).
\]
By \eqref{slopes-Fourier-coef-bound}, there exists $C=C\left(\left\Vert a\right\Vert _{\infty},\left\Vert b\right\Vert _{\infty},\rho_{0}\right)$
such that

\begin{equation}
\sup_{\left|\Im z\right|<\rho_{0}/3}\left|a\left(z\right)-a_{K}\left(z\right)\right|+\left|b\left(z\right)-b_{K}\left(z\right)\right|\le C\exp\left(-\pi\rho_{0}K/3\right).\label{eq:slopes-V-VK}
\end{equation}
Let $H_{K}^{\left(l\right)}\left(x,\omega\right)$ denote the matrix,
at scale $l$, associated with $a_{K},b_{K}$, and let $E_{K,j}^{\left(l\right)}\left(x,\omega\right)$
be its eigenvalues. As a consequence of \eqref{slopes-V-VK} we get
\[
\sup_{x,\omega\in\mb T}\left\Vert H^{\left(l\right)}\left(x,\omega\right)-H_{K}^{\left(l\right)}\left(x,\omega\right)\right\Vert \lesssim C\exp\left(-cK\right),
\]
and, since the matrices are Hermitian for $x,\omega\in\mb T$, we
also have
\begin{equation}
\sup_{x,\omega\in\mb T}\left|E_{j}^{\left(l\right)}\left(x,\omega\right)-E_{K,j}^{\left(l\right)}\left(x,\omega\right)\right|\lesssim C\exp\left(-cK\right).\label{eq:slopes-E-EK}
\end{equation}

It is easy to see that there exists a constant $C=C\left(\left\Vert a\right\Vert _{\infty},\left\Vert b\right\Vert _{*},\rho_{0}\right)$
such that
\[
\left\Vert H^{\left(l\right)}\left(z,w\right)-H^{\left(l\right)}\left(z',w'\right)\right\Vert \le C\left(\left|z-z'\right|+l\left|w-w'\right|\right),
\]
for any $z,z'\in\mb H_{\rho_{0}/3}$ and $w,w'\in l^{-1}\mb H_{\rho_{0}/3}$.
Furthermore, due to \eqref{slopes-V-VK}, it can be seen that there
exists a constant $C=C\left(\left\Vert a\right\Vert _{\infty},\left\Vert b\right\Vert _{*},\rho_{0}\right)$
such that for any $K$ we have 

\begin{equation}
\left\Vert H_{K}^{\left(l\right)}\left(z,w\right)-H_{K}^{\left(l\right)}\left(z',w'\right)\right\Vert \le C\left(\left|z-z'\right|+l\left|w-w'\right|\right),\label{eq:slopes-HK-Lipschitz}
\end{equation}
for any $z,z'\in\mb H_{\rho_{0}/3}$ and $w,w'\in l^{-1}\mb H_{\rho_{0}/3}$.
In particular, since $H_{K}^{\left(l\right)}\left(x,\omega\right)$
is Hermitian for $x,\omega\in\mb T$, we have that
\begin{equation}
\left|E_{K,j}^{\left(l\right)}\left(x,\omega\right)-E_{K,j}^{\left(l\right)}\left(x',\omega'\right)\right|\le C\left(\left|x-x'\right|+l\left|\omega-\omega'\right|\right),\label{eq:slopes-EK-lipschitzness}
\end{equation}
for any $x,\omega\in\mb T$. This will give us the values of the constants
in \eqref{absslopes-hyp-lipschitz}. To get the constants related
to \eqref{absslopes-hyp-slopes} we will use the following lemma.
\begin{lem}
\label{lem:slopes-lipschitz-derivatives}Fix $x,\omega\in\mb T$,
$j\in\left\{ 1,\ldots,l\right\} $, and suppose that $\left|E_{j}^{\left(l\right)}\left(x,\omega\right)-E_{i}^{\left(l\right)}\left(x,\omega\right)\right|\ge\sigma$,
for all $i\neq j$. Furthermore, suppose that $K$ is large enough
so that 
\[
\left|E_{i}^{\left(l\right)}\left(x,\omega\right)-E_{K,i}^{\left(l\right)}\left(x,\omega\right)\right|\le\sigma/2
\]
 for all $i$. There exists a constant $C_{0}=C_{0}\left(\left\Vert a\right\Vert _{\infty},\left\Vert b\right\Vert _{*},\rho_{0}\right)$
such that
\[
\left|\partial_{x}E_{K,j}^{\left(l\right)}\left(x,\omega\right)-\partial_{x}E_{K,j}^{\left(l\right)}\left(x,\omega'\right)\right|\le C_{0}l\sigma^{-1}\left|\omega-\omega'\right|,
\]
for any $\omega'\in\mb R$ such that $\left|\omega-\omega'\right|\le C_{0}^{-1}l^{-1}\sigma$. \end{lem}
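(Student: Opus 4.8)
The plan is to show that, under the hypotheses, $E_{K,j}^{\left(l\right)}$ extends to a function holomorphic in $(z,w)$ on a bidisk of radius $\sim\sigma$ in $z$ and $\sim l^{-1}\sigma$ in $w$, and then to extract the Lipschitz bound for $\partial_{x}E_{K,j}^{\left(l\right)}(x,\cdot)$ by two applications of Cauchy's formula. Throughout, implied constants may depend on $\|a\|_{\infty}$, $\|b\|_{*}$, $\rho_{0}$.

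\emph{Holomorphic extension via the spectral projection.} Since $E_{j}^{\left(l\right)}(x,\omega)$ is $\sigma$-separated from the rest of $\spec H^{\left(l\right)}(x,\omega)$ and, for $K$ large, the self-adjoint matrix $H_{K}^{\left(l\right)}(x,\omega)$ differs from $H^{\left(l\right)}(x,\omega)$ by at most $\sigma/2$ in operator norm (hence each eigenvalue moves by at most $\sigma/2$), the eigenvalue $E_{K,j}^{\left(l\right)}(x,\omega)$ is simple and separated from $\spec H_{K}^{\left(l\right)}(x,\omega)\setminus\{E_{K,j}^{\left(l\right)}(x,\omega)\}$ by $\gtrsim\sigma$. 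Let $\Gamma$ be the circle of radius $c\sigma$ (with $c$ a small absolute constant) about $E_{K,j}^{\left(l\right)}(x,\omega)$. By \eqref{slopes-HK-Lipschitz}, $\|H_{K}^{\left(l\right)}(z,w)-H_{K}^{\left(l\right)}(x,\omega)\|\lesssim|z-x|+l|w-\omega|$, so there is a constant $c'>0$ such that for $(z,w)$ in the bidisk $\mc D(x,c'\sigma)\times\mc D(\omega,c'l^{-1}\sigma)$ the contour $\Gamma$ lies in the resolvent set of $H_{K}^{\left(l\right)}(z,w)$ and encircles exactly one of its eigenvalues. Hence
\[
E_{K,j}^{\left(l\right)}(z,w)=\frac{1}{2\pi i}\operatorname{tr}\oint_{\Gamma}\zeta\,(H_{K}^{\left(l\right)}(z,w)-\zeta)^{-1}\,d\zeta
\]
is holomorphic in $(z,w)$ on this bidisk, agrees with the eigenvalue for real arguments, and $|E_{K,j}^{\left(l\right)}(z,w)-E_{K,j}^{\left(l\right)}(x,\omega)|\lesssim\|H_{K}^{\left(l\right)}(z,w)-H_{K}^{\left(l\right)}(x,\omega)\|\lesssim|z-x|+l|w-\omega|$ by standard perturbation theory.

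\emph{Cauchy estimates.} For $w$ with $|w-\omega|\le c'l^{-1}\sigma/2$, applying Cauchy's formula in $z$ to $g(z):=E_{K,j}^{\left(l\right)}(z,w)-E_{K,j}^{\left(l\right)}(x,w)$ over $|z-x|=c'\sigma/2$ gives $|\partial_{x}E_{K,j}^{\left(l\right)}(x,w)|=|g'(x)|\lesssim\sigma^{-1}\max_{|z-x|=c'\sigma/2}|g(z)|\lesssim1$; alternatively one invokes the Hellmann--Feynman identity $\partial_{x}E_{K,j}^{\left(l\right)}=\langle\psi_{K,j},(\partial_{x}H_{K}^{\left(l\right)})\psi_{K,j}\rangle$ together with $\|\partial_{x}H_{K}^{\left(l\right)}\|\lesssim1$. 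In either case $h(w):=\partial_{x}E_{K,j}^{\left(l\right)}(x,w)$ is holomorphic on $\mc D(\omega,c'l^{-1}\sigma/2)$ and bounded there by a constant $C_{0}=C_{0}(\|a\|_{\infty},\|b\|_{*},\rho_{0})$. A second application of Cauchy's formula, to $h$ over a circle of radius $c'l^{-1}\sigma/4$, yields $|h'(w)|\lesssim C_{0}\,l\sigma^{-1}$ on $\mc D(\omega,c'l^{-1}\sigma/4)$, and integrating along the segment from $\omega$ to $\omega'$ gives
\[
\bigl|\partial_{x}E_{K,j}^{\left(l\right)}(x,\omega)-\partial_{x}E_{K,j}^{\left(l\right)}(x,\omega')\bigr|\lesssim C_{0}\,l\sigma^{-1}\,|\omega-\omega'|
\]
whenever $|\omega-\omega'|\le c'l^{-1}\sigma/4$, which is the assertion after renaming constants.

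\emph{Main obstacle.} The crux is the first step: one must check that the holomorphic extension of $E_{K,j}^{\left(l\right)}$ persists on a bidisk whose $w$-radius is of order $l^{-1}\sigma$, not merely $l^{-1}\sigma^{2}$ or smaller. This is precisely where the separation hypothesis is used --- it provides a spectral gap of size $\gtrsim\sigma$, so the isolating contour $\Gamma$ can be taken of radius $\gtrsim\sigma$ and the eigenvalue tracked until $H_{K}^{\left(l\right)}$ has moved by $\gtrsim\sigma$ --- and where the factor $l$ in \eqref{slopes-HK-Lipschitz}, originating from the shifts $z+k\omega$, $0\le k<l$, in the entries of $H_{K}^{\left(l\right)}$, is responsible for the final $l\sigma^{-1}$. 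Everything else is routine bookkeeping with Cauchy's inequalities.
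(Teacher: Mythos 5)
Your proposal is correct and follows essentially the same route as the paper: both establish that the separation hypothesis (transferred to $H_{K}^{\left(l\right)}$ via the $\sigma/2$ approximation) keeps $E_{K,j}^{\left(l\right)}$ simple, hence analytic, on a polydisk of radii $\sim\sigma$ in $z$ and $\sim l^{-1}\sigma$ in $\omega$, derive the Lipschitz bound $\left|E_{K,j}^{\left(l\right)}\left(z,w\right)-E_{K,j}^{\left(l\right)}\left(x,\omega\right)\right|\lesssim\left|z-x\right|+l\left|w-\omega\right|$ from \eqref{slopes-HK-Lipschitz}, and finish with Cauchy estimates. The only (immaterial) difference is that you obtain analyticity via the Riesz projection while the paper invokes the implicit function theorem.
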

\begin{proof}
We clearly have that 
\[
\left|E_{K,j}^{\left(l\right)}\left(x,\omega\right)-E_{K,i}^{\left(l\right)}\left(x,\omega\right)\right|\ge\sigma/2.
\]
From \eqref{slopes-HK-Lipschitz} and standard perturbation theory
it follows that 
\begin{equation}
\left|E_{K,j}^{\left(l\right)}\left(z,w\right)-E_{K,i}^{\left(l\right)}\left(z,w\right)\right|\ge\sigma/4,\label{eq:slopes-EjK-EiK}
\end{equation}
for any $i\neq j$ and $\left(z,w\right)\in\mc D\left(x,c\sigma\right)\times\mc D\left(\omega,c\sigma/l\right)=:\mc P$.
We can choose $c=c(\left\Vert a\right\Vert _{\infty},\left\Vert b\right\Vert _{*},$
$\rho_{0})$ small enough so that we also have
\begin{equation}
\left\Vert H_{K}^{\left(l\right)}\left(z,w\right)-H_{K}^{\left(l\right)}\left(x,\omega\right)\right\Vert \le C\left(\left|z-x\right|+l\left|w-\omega\right|\right)\le\sigma/8,\label{eq:slopes-HKzw-HKxo}
\end{equation}
for any $\left(z,w\right)\in\mc P$. Since $E_{K,j}^{\left(l\right)}$
is simple on $\mc P$ it follows from the implicit function theorem
that it is analytic on $\mc P$. From \eqref{slopes-HKzw-HKxo} it
follows that given $\left(z,w\right)\in\mc P$ we have 
\[
\left|E_{K,j}^{\left(l\right)}\left(z,w\right)-E_{K,j'}^{\left(l\right)}\left(x,\omega\right)\right|\le C\left(\left|z-x\right|+l\left|w-\omega\right|\right)\le\sigma/8,
\]
for some $j'=j'\left(z,w\right)$. Due to \eqref{slopes-EjK-EiK}
and the continuity of $E_{K,j}^{\left(l\right)}$ it follows that
in fact for $\left(z,w\right)\in\mc P$ we have 
\[
\left|E_{K,j}^{\left(l\right)}\left(z,w\right)-E_{K,j}^{\left(l\right)}\left(x,\omega\right)\right|\le C\left(\left|z-x\right|+l\left|w-\omega\right|\right).
\]
This estimate and Cauchy's formula yield the desired conclusion.\end{proof}
\begin{cor}
\label{cor:slopes-lipschitz-slopes-1}Fix $A>1$ and let $l=2\left[\left(\log N\right)^{A}\right]$.
There exists a constant $N_{0}=N_{0}(\left\Vert a\right\Vert _{\infty}$
$,\left\Vert b\right\Vert _{*},c,\alpha,\gamma,E^{0},A)$ such that
for $N\ge N_{0}$ there exists $\Omega_{l}$, with
\[
\mes\left(\Omega_{l}\right)<\exp\left(-\left(\log\log N\right)^{2}\right),\,\com\left(\Omega_{l}\right)<\left(\log N\right)^{2A+1},
\]
such that for any $\omega\in\Omega^{0}\cap\mb T_{c,\alpha}\setminus\Omega_{l}$
there exists a set $\mc E_{l,\omega}$, with 
\[
\mes\left(\mc E_{l,\omega}\right)<\exp\left(-\left(\log\log N\right)^{2}\right),\,\com\left(\mc E_{l,\omega}\right)<\left(\log N\right)^{2A+1},
\]
such that for any $x\in\mb T$, $K\ge\left(\log N\right)^{1/2}$,
if $E_{K,j}^{\left(l\right)}\left(x,\omega\right)\in\mathcal{E}^{0}\setminus\mc E_{l,\omega}$,
, for some $j$, then
\[
\left|\partial_{x}E_{K,j}^{\left(l\right)}\left(x,\omega\right)-\partial_{x}E_{K,j}^{\left(l\right)}\left(x,\omega\right)\right|\le\exp\left(\left(\log N\right)^{1/2}\right)\left|\omega-\omega'\right|,
\]
for any $\omega'\in\mb R$ such that $\left|\omega-\omega'\right|\le\exp\left(-\left(\log N\right)^{1/2}\right)$.\end{cor}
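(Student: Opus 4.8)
The plan is to combine the ``a priori'' separation of the finite-scale eigenvalues at scale $l$, supplied by \propref{resultants-separation-of-eigenvalues-scale-N}, with the quantitative slope estimate of \lemref{slopes-lipschitz-derivatives}.

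First I would fix $\delta=1/\left(4A\right)$, so that $A\delta=1/4<1/2$, and apply \propref{resultants-localization} and \propref{resultants-separation-of-eigenvalues-scale-N} \emph{at scale $l$} rather than at scale $N$. Since $l=2\left[\left(\log N\right)^{A}\right]\to\infty$, for $N$ large enough this produces a set $\Omega_{l}$ and, for each $\omega\in\Omega^{0}\cap\mb T_{c,\alpha}\setminus\Omega_{l}$, a set $\t{\mc E}_{l,\omega}$ with
\[
\mes\left(\Omega_{l}\right),\,\mes\left(\t{\mc E}_{l,\omega}\right)\lesssim\exp\left(-\left(\log l\right)^{2}\right),\qquad\com\left(\Omega_{l}\right),\,\com\left(\t{\mc E}_{l,\omega}\right)\lesssim l^{2}\exp\left(\left(\log\log l\right)^{C_{0}}\right),
\]
such that for all $x\in\mb T$, if $E_{j}^{\left(l\right)}\left(x,\omega\right)\in\mc E^{0}\setminus\t{\mc E}_{l,\omega}$ then
\[
\left|E_{j}^{\left(l\right)}\left(x,\omega\right)-E_{i}^{\left(l\right)}\left(x,\omega\right)\right|>\sigma:=\exp\left(-l^{\delta}\right)\quad\text{for all }i\neq j.
\]
Because $l\simeq\left(\log N\right)^{A}$ with $A>1$, for $N$ large we have $\exp\left(-\left(\log l\right)^{2}\right)<\exp\left(-\left(\log\log N\right)^{2}\right)$ and $l^{2}\exp\left(\left(\log\log l\right)^{C_{0}}\right)<\left(\log N\right)^{2A+1}$, which yields the stated bounds for $\Omega_{l}$.

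To pass to the set $\mc E_{l,\omega}$, which must be phrased in terms of the algebraic approximants $E_{K,j}^{\left(l\right)}$, I would set
\[
\mc E_{l,\omega}:=\left\{ E\in\mb R:\,\dist\left(E,\t{\mc E}_{l,\omega}\cup\left(\mc E^{0}\right)^{C}\right)<2C\exp\left(-c\left(\log N\right)^{1/2}\right)\right\} ,
\]
where $C,c$ are the constants from \eqref{slopes-E-EK}. Fattening by such a tiny amount changes the measure and complexity only negligibly, so $\mc E_{l,\omega}$ still satisfies the bounds above. Now let $x\in\mb T$, $K\ge\left(\log N\right)^{1/2}$, and $E_{K,j}^{\left(l\right)}\left(x,\omega\right)\in\mc E^{0}\setminus\mc E_{l,\omega}$. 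By \eqref{slopes-E-EK} we have $\left|E_{i}^{\left(l\right)}\left(x,\omega\right)-E_{K,i}^{\left(l\right)}\left(x,\omega\right)\right|\lesssim C\exp\left(-cK\right)\le C\exp\left(-c\left(\log N\right)^{1/2}\right)$ for every $i$ (the ordered eigenvalues of $H^{\left(l\right)}$ and $H_{K}^{\left(l\right)}$ are matched by index since the matrices are Hermitian and close). Hence $E_{j}^{\left(l\right)}\left(x,\omega\right)\in\mc E^{0}\setminus\t{\mc E}_{l,\omega}$, so $\left|E_{j}^{\left(l\right)}\left(x,\omega\right)-E_{i}^{\left(l\right)}\left(x,\omega\right)\right|>\sigma$ for all $i\neq j$. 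Moreover, since $A\delta<1/2$ we have $l^{\delta}\lesssim\left(\log N\right)^{1/4}$, so for $N$ large $cK\ge c\left(\log N\right)^{1/2}$ exceeds $l^{\delta}+\log\left(2C\right)$, i.e. $C\exp\left(-cK\right)\le\sigma/2$. Thus both hypotheses of \lemref{slopes-lipschitz-derivatives} hold with this $\sigma$ and this $K$.

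Applying \lemref{slopes-lipschitz-derivatives} then gives
\[
\left|\partial_{x}E_{K,j}^{\left(l\right)}\left(x,\omega\right)-\partial_{x}E_{K,j}^{\left(l\right)}\left(x,\omega'\right)\right|\le C_{0}l\sigma^{-1}\left|\omega-\omega'\right|
\]
whenever $\left|\omega-\omega'\right|\le C_{0}^{-1}l^{-1}\sigma$. Since $l\simeq\left(\log N\right)^{A}$ and $\sigma^{-1}=\exp\left(l^{\delta}\right)$ with $l^{\delta}=o\left(\left(\log N\right)^{1/2}\right)$, for $N$ large we have $C_{0}l\sigma^{-1}\le\exp\left(\left(\log N\right)^{1/2}\right)$ and $C_{0}^{-1}l^{-1}\sigma\ge\exp\left(-\left(\log N\right)^{1/2}\right)$, which is exactly the asserted estimate and range. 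The argument is essentially bookkeeping; the only points requiring care are choosing $\delta$ small enough that simultaneously $A\delta<1/2$ and the $N_{0}$ arising from \propref{resultants-separation-of-eigenvalues-scale-N} remains finite, and the fattening step that converts the a priori separation statement for $E_{j}^{\left(l\right)}$ into the stated conclusion for the approximants $E_{K,j}^{\left(l\right)}$.
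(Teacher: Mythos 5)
Your proposal is correct and follows the same route as the paper, whose proof consists of the single line that the result follows from \lemref{slopes-lipschitz-derivatives} and \propref{resultants-separation-of-eigenvalues-scale-N} applied at scale $l$ with $\delta=1/(3A)$ (your choice $\delta=1/(4A)$ works just as well). You merely make explicit the bookkeeping the paper leaves implicit: the fattening of $\t{\mc E}_{l,\omega}$ so that the hypothesis on $E_{K,j}^{(l)}$ transfers to $E_{j}^{(l)}$ via \eqref{slopes-E-EK}, and the verification that $C_{0}l\sigma^{-1}\le\exp\left(\left(\log N\right)^{1/2}\right)$ with $\sigma=\exp\left(-l^{\delta}\right)$.
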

\begin{proof}
The result follows immediately from \lemref{slopes-lipschitz-derivatives}
and \propref{resultants-separation-of-eigenvalues-scale-N} with $\delta=1/3A$.
\end{proof}
Let $f_{K,l}\left(x,\omega,E\right)=\det\left[H_{K}^{\left(l\right)}\left(x,\omega\right)-E\right]$.
It is straightforward to see that
\begin{multline}
P\left(e\left(x\right),e\left(\omega\right),E\right):=e\left(20Klx\right)e\left(20Kl^{2}\omega\right)f_{K,l}\left(x,\omega,E\right)f_{K,l+1}\left(x,\omega,E\right)\\
\cdot f_{K,2l}\left(x,\omega,E\right)f_{K,2l+1}\left(x,\omega,E\right)\label{eq:slopes-definition-of-P}
\end{multline}
is a polynomial of degree $\lesssim Kl^{2}$ when the first variable
is fixed, and of degree $\lesssim Kl$ when the second variable is
fixed. Let $K_{0}=C\left[\log N\right]$, where $C=C\left(\left\Vert a\right\Vert _{\infty},\left\Vert b\right\Vert _{*},\rho_{0}\right)$
is chosen such that
\begin{equation}
\sup_{x,\omega\in\mb T}\left|E_{j}^{\left(l\right)}\left(x,\omega\right)-E_{K_{0},j}^{\left(l\right)}\left(x,\omega\right)\right|\le\frac{1}{N^{2}}\label{eq:slopes-K0}
\end{equation}
(we used \eqref{slopes-E-EK}) .

We can now apply \thmref{absslopes-elimination}.
\begin{prop}
\label{prop:slopes-elimination-1}Fix $A>1$, $p\in\left(1,2\right)$,
and let $l=2\left[\left(\log N\right)^{A}\right]$. There exists a
constant $N_{0}=N_{0}\left(\left\Vert a\right\Vert _{\infty},\left\Vert b\right\Vert _{*},c,\alpha,\gamma,E^{0},A,p\right)$,
such that for any $N\ge N_{0}$ there exists a set $\Omega_{N}$,
with
\[
\mes\left(\Omega_{N}\right)<\exp\left(-\left(\log\log N\right)^{2}/2\right),\,\com\left(\Omega_{N}\right)<N^{1+p},
\]
such that for any $\omega\in\Omega^{0}\cap\mb T_{c,\alpha}\setminus\Omega_{N}$
there exists a set $\mc E_{N,\omega}$, with
\[
\mes\left(\mc E_{N,\omega}\right)<\left(\log N\right)^{-A},\,\com\left(\mc E_{N,\omega}\right)<N\left(\log N\right)^{4A},
\]
 such that for any $x\in\mb T$ and any integer $m$, $\left(\log N\right)^{6A}\le\left|m\right|\le N$,
we have
\[
\dist\left(\mathcal{E}^{0}\cap\spec\left(H^{\left(l_{1}\right)}\left(x,\omega\right)\setminus\mc E_{N,\omega}\right),\spec\left(H^{\left(l_{2}\right)}\left(x+m\omega,\omega\right)\right)\right)\ge\frac{2}{N^{p}},
\]
$l_{1},l_{2}\in\left\{ l,l+1,2l,2l+1\right\} $.\end{prop}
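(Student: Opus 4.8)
The plan is to apply the abstract elimination \thmref{absslopes-elimination} to the polynomial $P$ of \eqref{slopes-definition-of-P} and to the eigenvalue branches of the truncated matrices, and then to convert the resulting separation statement for the $E_{K_{0},j}^{(l_{i})}$ into one for the true eigenvalues $E_{j}^{(l_{i})}$ by means of \eqref{slopes-K0}. Fix $K=K_{0}=C[\log N]$, so that $\sup_{x,\omega\in\mb T}|E_{j}^{(l_{i})}(x,\omega)-E_{K_{0},j}^{(l_{i})}(x,\omega)|\le N^{-2}$ for each scale $l_{i}\in\{l,l+1,2l,2l+1\}$. With $P$ as in \eqref{slopes-definition-of-P}, the zeros of $P(e(x),e(\omega),\cdot)$ are exactly $\bigcup_{i}\spec(H_{K_{0}}^{(l_{i})}(x,\omega))$, so the $n=\sum_{i}l_{i}\lesssim l$ functions $E_{K_{0},\cdot}^{(l_{i})}(x,\omega)$ are the branches required in the abstract setting, and $P$ has degree $d_{1}\lesssim K_{0}l^{2}\lesssim(\log N)^{2A+1}$ for fixed $x$ and $d_{2}\lesssim K_{0}l\lesssim(\log N)^{A+1}$ for fixed $\omega$.

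Next I would check the hypotheses of \thmref{absslopes-elimination}. The bounds \eqref{absslopes-hyp-bounded-derivatives}--\eqref{absslopes-hyp-lipschitz} hold with $C_{0}=C(\Vert a\Vert_{\infty},\Vert b\Vert_{*},\rho_{0})$ and $C_{1}\lesssim l$ by \eqref{slopes-EK-lipschitzness}. The stability of slopes \eqref{absslopes-hyp-slopes} holds --- uniformly over all branches and all four scales, after taking the union of the sets provided by \corref{slopes-lipschitz-slopes-1} at each scale --- with $\mc Y^{0}=\Omega^{0}\cap\mb T_{c,\alpha}\setminus\Omega_{l}$, $\mc Z^{0}=\mc E^{0}$, $\mc Z_{\omega}^{0}=\mc E_{l,\omega}$, $c_{0}=\exp(-(\log\log N)^{2})$, $C_{2}=(\log N)^{2A+1}$, $C_{3}=\exp((\log N)^{1/2})$, and $r_{0}=\exp(-(\log N)^{1/2})$; note $K_{0}\ge(\log N)^{1/2}$ for $N$ large, so \corref{slopes-lipschitz-slopes-1} does apply to $K=K_{0}$.

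I would then invoke \thmref{absslopes-elimination} with $M=N$, $\sigma=4N^{-p}$, $\tau=(\log N)^{-(3A+3)}$ (small enough that $n\tau$ and $(d_{2}^{2}+C_{2})\tau$ are $\lesssim(\log N)^{-A-1}$), $\delta=\tau C_{3}^{-1}$ (for $N$ large this is $\le\min\{r_{0},\tau/C_{1},\tau/C_{3}\}$), and $\delta'=\sigma(MC_{0}+2C_{1})^{-1}\asymp N^{-(p+1)}$; the standing requirement $Q\ge\max\{4C_{1}/\tau,d_{1},n(d_{2}^{2}+C_{2})\}$ is met with $Q\lesssim(\log N)^{4A+3}$, which for $N$ large is $\le(\log N)^{6A}$, so the conclusion covers every integer $m$ with $(\log N)^{6A}\le|m|\le N$, and $M\ge Q$, $\delta'\le\delta/2$ are immediate. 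The theorem yields $\mc Y\subset[0,1]$ with $\mes(\mc Y)\lesssim\sqrt{M\sigma d_{2}d_{1}\tau^{-1}\delta^{-1}}$ and $\com(\mc Y)\lesssim\mes(\mc Y)/\delta'$; since $p>1$ the quantity under the root is $N^{1-p}$ times a subpolynomial factor, so $\mes(\mc Y)$ and $\com(\mc Y)$ are, for $N$ large, far below $\exp(-(\log\log N)^{2})$ and $N^{1+p}$ respectively, and I would set $\Omega_{N}=\Omega_{l}\cup\mc Y$, which then obeys the Proposition's bounds. For $\omega\in\Omega^{0}\cap\mb T_{c,\alpha}\setminus\Omega_{N}$ let $\tilde{\mc Z}_{\omega}$ be the set from the theorem and put $\mc E_{N,\omega}:=\{E\in\mc E^{0}:\dist(E,\tilde{\mc Z}_{\omega}\cup(\mc E^{0})^{C})\le N^{-2}\}$; its measure is $\lesssim n\tau+c_{0}+(d_{2}^{2}+C_{2})(\tau+\sigma)+C_{0}\sqrt{M\sigma d_{2}d_{1}\tau^{-1}\delta^{-1}}+N^{-2}<(\log N)^{-A}$ and its complexity is $\lesssim Md_{2}(d_{2}^{2}+C_{2})\lesssim N(\log N)^{3A+3}<N(\log N)^{4A}$ once $A$ is large enough (all the verifications being routine power-counting). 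Finally, if $E_{j}^{(l_{1})}(x,\omega)\in\mc E^{0}\setminus\mc E_{N,\omega}$ then, by the $N^{-2}$-approximation and the definition of $\mc E_{N,\omega}$, $E_{K_{0},j}^{(l_{1})}(x,\omega)\in\mc E^{0}\setminus\tilde{\mc Z}_{\omega}$, so \eqref{absslopes-shift-separation} gives $|E_{K_{0},j}^{(l_{1})}(x,\omega)-E_{K_{0},k}^{(l_{2})}(x+m\omega,\omega)|\ge\sigma$ for all $l_{2}$, $k$, and $(\log N)^{6A}\le|m|\le N$; applying \eqref{slopes-K0} to both eigenvalues yields $|E_{j}^{(l_{1})}(x,\omega)-E_{k}^{(l_{2})}(x+m\omega,\omega)|\ge\sigma-2N^{-2}\ge2N^{-p}$, which is the asserted distance between the spectra.

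The conceptual input is entirely in \thmref{absslopes-elimination} and \corref{slopes-lipschitz-slopes-1}; the only real obstacle is the simultaneous bookkeeping of the parameters. On one hand $\tau$ must be small enough that $\mc E_{N,\omega}$ has measure below $(\log N)^{-A}$ (this forces $\tau\lesssim(\log N)^{-(3A+2)}$, because of the factor $d_{2}^{2}\lesssim(\log N)^{2A+2}$); on the other hand it must not be so small that $Q=4C_{1}/\tau$ overshoots $(\log N)^{6A}$; and $\delta,\delta'$ must satisfy their constraints while keeping $\delta^{-1}$ subpolynomial and $\delta'^{-1}$ polynomial in $N$. The output $\sqrt{M\sigma d_{2}d_{1}\tau^{-1}\delta^{-1}}$, together with $\delta'^{-1}$, must then fall below the thresholds $\exp(-(\log\log N)^{2}/2)$ and $N^{1+p}$, which is precisely what pins down $p>1$ (and, for the complexity of $\mc E_{N,\omega}$ and for the admissible range of $m$, that $A$ be taken large).
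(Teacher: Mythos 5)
Your proposal follows the paper's proof essentially verbatim: the same polynomial $P$ from \eqref{slopes-definition-of-P} with $K=K_{0}$, the same identification of the abstract parameters via \corref{slopes-lipschitz-slopes-1} ($\mc Y^{0}$, $\mc Z^{0}=\mc E^{0}$, $c_{0}$, $C_{2}$, $C_{3}$, $r_{0}$ all as in the paper), an application of \thmref{absslopes-elimination} with $M=N$, $\sigma=4N^{-p}$, $\delta'\asymp N^{-(1+p)}$, and the same $N^{-2}$-fattening and transfer back to the true eigenvalues via \eqref{slopes-K0}. The one quantitative slip is your choice $\tau=(\log N)^{-(3A+3)}$ (the paper takes $(\log N)^{-5A}$): with yours, $4C_{1}/\tau\sim(\log N)^{4A+3}$, which exceeds the admissible threshold $(\log N)^{6A}$ whenever $1<A<3/2$, so for those $A$ the abstract theorem would not cover the full claimed range of $m$; replacing $\tau$ by $(\log N)^{-5A}$ repairs this while still giving $\mes\left(\mc E_{N,\omega}\right)<(\log N)^{-A}$ for all $A>1$. (The remaining caveat you flag, that $\com\left(\t{\mc Z}_{y}\right)\lesssim N(\log N)^{3A+3}<N(\log N)^{4A}$ needs $A$ large, is present in the paper's own computation as well and is harmless since the proposition is only invoked with $A=A(\alpha)\gg1$.)
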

\begin{proof}
We begin by identifying all the parameters used in \secref{Abstract-Elimination}.
The polynomial $P$ is given by \eqref{slopes-definition-of-P}, and
we can take $d_{1}=C\left(\log N\right)^{2A+1}$, $d_{2}=C\left(\log N\right)^{A+1}$,
with $C=C\left(\left\Vert a\right\Vert _{\infty},\left\Vert b\right\Vert _{*},\rho_{0}\right)$.
We have $\left\{ f_{j}\right\} =\left\{ E_{K_{0},i}^{\left(l'\right)}:\, i\in\left\{ 1,\ldots,l'\right\} ,l'\in\left\{ l,l+1,2l,2l+1\right\} \right\} $,
and $n=6l+2$. By \eqref{slopes-EK-lipschitzness} we can choose $C_{0}=C$,
$C_{1}=C\left(\log N\right)^{A}$, $C=C\left(\left\Vert a\right\Vert _{\infty},\left\Vert b\right\Vert _{*},\rho_{0}\right)$.
Let $\Omega_{l}$, $\mc E_{l,\omega}$ be as in \corref{slopes-lipschitz-slopes-1}.
By \corref{slopes-lipschitz-slopes-1} we can choose $\mc Y^{0}=\Omega^{0}\cap\mb T_{c,\alpha}\setminus\Omega_{l}$,
$\mc Z^{0}=\mathcal{E}^{0}$, $\mc Z_{y}^{0}=\mc E_{l,\omega}$, $c_{0}=\exp\left(-\left(\log\log N\right)^{2}\right)$,
$C_{2}=\left(\log N\right)^{2A+1}$, $C_{3}=\exp\left(\left(\log N\right)^{1/2}\right)$,
$r_{0}=\exp\left(-\left(\log N\right)^{1/2}\right)$. 

Next we apply \thmref{absslopes-elimination} with $\tau=\left(\log N\right)^{-5A}$,
$\sigma=4N^{-p}$, $Q=\left(\log N\right)^{6A}$, $M=N$, $\delta=\exp\left(-\left(\log N\right)^{2/3}\right)$,
$\delta'=cN^{-\left(1+p\right)}$, $c=c\left(\left\Vert a\right\Vert _{\infty},\left\Vert b\right\Vert _{*},\rho_{0}\right)$.
Let $\Omega_{K_{0},l}=\Omega_{l}\cup\mc Y$ and $\mc E_{K_{0},l,\omega}=\t{\mc Z}_{y}$.
We have 
\[
\mes\left(\Omega_{K_{0},l}\right)<\exp\left(-\left(\log\log N\right)^{2}/2\right),\,\com\left(\Omega_{K_{0},l}\right)<N^{1+p},
\]
\[
\mes\left(\mc E_{K_{0},l,\omega}\right)<\left(\log N\right)^{-2A},\,\com\left(\mc E_{K_{0},l,\omega}\right)<N\left(\log N\right)^{4A},
\]
and 
\begin{equation}
\dist\left(\mathcal{E}^{0}\cap\spec\left(H_{K_{0}}^{\left(l_{1}\right)}\left(x,\omega\right)\setminus\mc E_{K_{0},l,\omega}\right),\spec\left(H_{K_{0}}^{\left(l_{2}\right)}\left(x+m\omega,\omega\right)\right)\right)\ge\frac{4}{N^{p}},\label{eq:slopes-elimination-K-1}
\end{equation}
$l_{1},l_{2}\in\left\{ l,l+1,2l,2l+1\right\} $, for any $\omega\in\Omega^{0}\cap\mb T_{c,\alpha}\setminus\Omega_{K_{0},l}$,
any $x\in\mb T$, and any integer $m$, $\left(\log N\right)^{6A}\le\left|m\right|\le N$.

Let $\Omega_{N}=\Omega_{K_{0},l}$, and $\mc E_{N,\omega}=\left\{ E\in\mathcal{E}^{0}:\,\dist\left(E,\mc E_{K_{0},l,\omega}\cup\left(\mathcal{E}^{0}\right)^{C}\right)\le N^{-2}\right\} $.
The measure and complexity bounds for $\Omega_{N}$ and $\mc E_{N,\omega}$
are clearly satisfied. Fix $\omega\in\Omega^{0}\cap\mb T_{c,\alpha}\setminus\Omega_{N}$
and $x\in\mb T$, and suppose $E_{j}^{\left(l_{1}\right)}\left(x,\omega\right)\in\mathcal{E}^{0}\setminus\mc E_{N,\omega}$,
for some $j$ and $l_{1}\in\left\{ l,l+1,2l,2l+1\right\} $. By \eqref{slopes-K0}
it follows that $E_{K_{0},j}^{\left(l_{1}\right)}\in\mathcal{E}^{0}\setminus\mc E_{K_{0},l,\omega}$.
Hence the conclusion follows from \eqref{slopes-elimination-K-1}
and \eqref{slopes-K0}.
\end{proof}
We can now improve the separation of eigenvalues at scale $N$ by
applying \thmref{separation-bootstrap}.
\begin{prop}
\label{prop:slopes-separation-1}Fix $p\in\left(1,2\right)$. There
exist constants $N_{0}=N_{0}(\left\Vert a\right\Vert _{\infty},\left\Vert b\right\Vert _{*},c,\alpha,\gamma,E^{0},$
$p)$, $C_{0}=C_{0}\left(\alpha\right)$ such that for any $N\ge N_{0}$
there exists a set $\t{\Omega}_{N}$, with
\[
\mes\left(\t{\Omega}_{N}\right)\le\exp\left(-\left(\log\log N\right)^{2}/4\right),\,\com\left(\t{\Omega}_{N}\right)\lesssim N^{1+p},
\]
such that for any $\omega\in\Omega^{0}\cap\mb T_{c,\alpha}\setminus\t{\Omega}_{N}$
there exists a set $\t{\mc E}_{N,\omega}$, with
\[
\mes\left(\t{\mc E}_{N,\omega}\right)\le\left(\log N\right)^{-1/10},\,\com\left(\t{\mc E}_{N,\omega}\right)\le N\left(\log N\right)^{C_{0}},
\]
 such that for any $x\in\mb T$, if $E_{j}^{\left(N\right)}\left(x,\omega\right)\in\mathcal{E}^{0}\setminus\t{\mc E}_{N,\omega}$,
for some $j$, then
\[
\left|E_{j}^{\left(N\right)}\left(x,\omega\right)-E_{k}^{\left(N\right)}\left(x,\omega\right)\right|>\frac{1}{N^{p}},
\]
for any $k\neq j$.\end{prop}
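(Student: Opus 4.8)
The plan is to deduce \propref{slopes-separation-1} directly from the bootstrap theorem \thmref{separation-bootstrap}, using the slopes elimination \propref{slopes-elimination-1} to supply the Elimination Assumption \ref{localization-elimination-assumption} at scale $N$ and at a carefully chosen auxiliary scale $N'$. First I would fix $A=A(\alpha)$ as in the Elimination Assumption \ref{localization-elimination-assumption}, taken large enough (in particular $A>4$), and set $l=2\left[(\log N)^{A}\right]$. Applying \propref{slopes-elimination-1} with the given $p\in(1,2)$ and this $A$ produces sets $\Omega_{N}$, $\mc E_{N,\omega}$ which, together with $Q_{N}=(\log N)^{6A}$ and $\sigma_{N}=2N^{-p}$, satisfy the Elimination Assumption \ref{localization-elimination-assumption} at scale $N$: one checks $\sigma_{N}\gg\exp(-l^{1/4})$ (this is where $A>4$ enters) and $Q_{N}\gg l^{3}$ (immediate).

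Next I would choose an integer $N'$ with $2Q_{N}^{2}\le N'<N$ and $\exp(-C_{0}Q_{N'})\ge\sigma_{N}$, where $C_{0}$ is the constant from \propref{separation-raw}, again instantiating the Elimination Assumption \ref{localization-elimination-assumption} at scale $N'$ via \propref{slopes-elimination-1}, so that $Q_{N'}=(\log N')^{6A}$. It suffices to take $\log N'\asymp(\log N)^{1/(12A)}$: then $Q_{N'}=(\log N')^{6A}\lesssim\log N$, hence $C_{0}Q_{N'}\le p\log N-\log2$ for $N$ large, while the requirement $N'\ge2Q_{N}^{2}=2(\log N)^{12A}$ is met because $(\log N)^{1/(12A)}$ dominates $\log\log N$. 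Of course $N$ must be taken large enough that $N'$ clears all the scale thresholds $N_{0}$ in the cited results.

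Then I would set $\t{\Omega}_{N}:=\Omega_{N}\cup\Omega_{N'}$ and, as dictated by the ``fattening'' step in \thmref{separation-bootstrap}, $\t{\mc E}_{N,\omega}:=\{E\in\mc E^{0}:\dist(E,\mc E_{N,\omega}\cup\mc E_{N',\omega}\cup(\mc E^{0})^{C})<\sigma_{N}\}$. For $\omega\in\Omega^{0}\cap\mb T_{c,\alpha}\setminus\t{\Omega}_{N}$ and $x\in\mb T$, if $E_{j}^{(N)}(x,\omega)\in\mc E^{0}\setminus\t{\mc E}_{N,\omega}$ then $\dist(E_{j}^{(N)}(x,\omega),\mc E_{N,\omega}\cup\mc E_{N',\omega}\cup(\mc E^{0})^{C})\ge\sigma_{N}$, so \thmref{separation-bootstrap} gives $\left|E_{j}^{(N)}(x,\omega)-E_{k}^{(N)}(x,\omega)\right|>\sigma_{N}/2=N^{-p}$ for all $k\ne j$, which is the asserted separation. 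The bounds are then mechanical: $\mes(\t{\Omega}_{N})\le\mes(\Omega_{N})+\mes(\Omega_{N'})$ is dominated by the term $\exp(-(\log\log N)^{2}/2)$ from \propref{slopes-elimination-1}, hence $\le\exp(-(\log\log N)^{2}/4)$ for $N$ large, while $\com(\t{\Omega}_{N})\le\com(\Omega_{N})+\com(\Omega_{N'})\lesssim N^{1+p}$ since $N'=N^{o(1)}$; for the energy set, since fattening a set of complexity $K$ by $\sigma_{N}$ adds at most $2\sigma_{N}(K+2)$ to its measure, $\mes(\t{\mc E}_{N,\omega})\lesssim(\log N)^{-A}+\sigma_{N}\cdot N(\log N)^{4A}+(\text{lower order})\le(\log N)^{-1/10}$ (using $p>1$), and $\com(\t{\mc E}_{N,\omega})\lesssim N(\log N)^{4A}=N(\log N)^{C_{0}}$ with $C_{0}=C_{0}(\alpha)$.

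The main obstacle — essentially the only step that is not bookkeeping — is the existence of a valid auxiliary scale $N'$. The bootstrap needs $N'\ge2Q_{N}^{2}$ so that the localization window at scale $N'$ fits inside the interval used in the proof of \thmref{separation-bootstrap}, but also $\exp(-C_{0}Q_{N'})\ge\sigma_{N}$ so that the a priori separation at scale $N'$ exceeds $\sigma_{N}$; since $Q_{N'}$ grows with $N'$ while $\sigma_{N}=2N^{-p}$ is fixed, these two requirements pull in opposite directions. What makes it work is the large gap between the scales they constrain: the lower requirement forces only $\log N'\gtrsim\log\log N$, whereas the upper one tolerates $\log N'$ up to a positive power of $\log N$, leaving a wide window. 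Verifying this compatibility, while simultaneously tracking the dependence of all the constants on the parameters, is the real content of the argument.
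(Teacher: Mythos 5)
Your strategy is exactly the paper's: instantiate the Elimination Assumption \ref{localization-elimination-assumption} at scale $N$ via \propref{slopes-elimination-1} (with $Q_{N}=\left(\log N\right)^{6A}$, $\sigma_{N}=2N^{-p}$), do the same at an auxiliary scale $N'$, and feed both into \thmref{separation-bootstrap}, defining $\t{\Omega}_{N}$ and $\t{\mc E}_{N,\omega}$ by union and $\sigma_{N}$-fattening. The separation claim itself is therefore established correctly. However, your specific choice of $N'$ breaks the measure bookkeeping for $\t{\mc E}_{N,\omega}$, and the step where you dismiss the scale-$N'$ contribution as ``lower order'' is the step that fails. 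Since $\t{\mc E}_{N,\omega}\supset\mc E_{N',\omega}$, you must account for $\mes\left(\mc E_{N',\omega}\right)<\left(\log N'\right)^{-A}$. With $\log N'\asymp\left(\log N\right)^{1/\left(12A\right)}$ this is $\asymp\left(\log N\right)^{-1/12}$, which \emph{exceeds} the target $\left(\log N\right)^{-1/10}$ (recall $t^{-1/12}>t^{-1/10}$ for $t>1$); far from being lower order, it is the dominant term and violates the stated bound. You therefore have a second lower constraint on $N'$ that you did not record: besides $N'\ge2Q_{N}^{2}$ you need $\left(\log N'\right)^{-A}\le\left(\log N\right)^{-1/10}$, i.e. $\log N'\ge\left(\log N\right)^{1/\left(10A\right)}$. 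The window is still nonempty — the upper constraint $\exp\left(-C_{0}Q_{N'}\right)\ge\sigma_{N}$ only forces $\log N'\lesssim\left(\log N\right)^{1/\left(6A\right)}$ — and the paper takes $N'=\left[\exp\left(\left(\log N\right)^{1/7A}\right)\right]$, which satisfies all three requirements.

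A secondary remark in the same spirit: your claim that $\mes\left(\t{\Omega}_{N}\right)$ is ``dominated by'' $\exp\left(-\left(\log\log N\right)^{2}/2\right)$ is also not right, since $\mes\left(\Omega_{N'}\right)\le\exp\left(-\left(\log\log N'\right)^{2}/2\right)$ and $\log\log N'\asymp\left(\log\log N\right)/\left(12A\right)$ is smaller than $\log\log N$ by a multiplicative (not additive) factor, so this term is $\exp\left(-c_{A}\left(\log\log N\right)^{2}\right)$ with $c_{A}\ll1/4$. This imprecision is present in the paper's own formulation as well and is harmless downstream (only qualitative smallness of $\t{\Omega}_{N}$ is used later), but it is worth flagging that the constant $1/4$ in the exponent cannot literally be achieved by this argument.
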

\begin{proof}
We start by identifying the parameters from the Elimination Assumption
\ref{localization-elimination-assumption}. Apply \propref{slopes-elimination-1}
with $A=A\left(\alpha\right)$ as in the Elimination Assumption \ref{localization-elimination-assumption}.
Now we can choose $\Omega_{N}$, $\mc E_{N,\omega}$ as in \propref{slopes-elimination-1}
and we also have $Q_{N}=\left(\log N\right)^{6A}$, $\sigma_{N}=2N^{-p}$. 

Next we apply \thmref{separation-bootstrap} with $N'=\left[\exp\left(\left(\log N\right)^{1/7A}\right)\right]$.
The conclusion follows by setting $\t{\Omega}_{N}=\Omega_{N}\cup\Omega_{N'}$,
and 
\[
\t{\mc E}_{N,\omega}=\left\{ E\in\mathcal{E}^{0}:\dist\left(\mc E_{N,\omega}\cup\mc E_{N',\omega}\cup\left(\mathcal{E}^{0}\right)^{C}\right)<2N^{-p}\right\} .
\]
 $ $
\end{proof}
We can now repeat our steps, starting with \corref{slopes-lipschitz-slopes-1}
to obtain a better separation. This time we will eliminate the resonances
at scale $l=100\left[\left(\log N\right)/\gamma\right]$ and then
use localization to eliminate the resonances at the scale $l=2\left[\left(\log N\right)^{A}\right]$,
as needed to apply \thmref{separation-bootstrap}. Working at a scale
$l=C\left[\log N\right]$ is needed to get separation by $\left(N\left(\log N\right)^{p}\right)^{-1}$
with $p$ as small as possible. We need to have $C=C\left(\gamma\right)$
in order to be able to apply localization.
\begin{lem}
\label{lem:slopes-lipschitz-slopes-2} Fix $p\in\left(1,2\right)$
and let $l=100\left[\left(\log N\right)/\gamma\right]$. There exist
constants $C_{0}=C_{0}\left(\alpha\right)$, $N_{0}=N_{0}(\left\Vert a\right\Vert _{\infty},\left\Vert b\right\Vert _{*},c,\alpha,\gamma,\mathcal{E}^{0},p)$,
such that for $N\ge N_{0}$ there exists $\Omega_{l}$, with
\[
\mes\left(\Omega_{l}\right)\le\exp\left(-\left(\log\log\log N\right)^{2}/8\right),\,\com\left(\Omega_{l}\right)\lesssim\left(\log N/\gamma\right)^{1+p},
\]
such that for any $\omega\in\Omega^{0}\cap\mb T_{c,\alpha}\setminus\Omega_{l}$
there exists a set $\mc E_{l,\omega}$, with 
\[
\mes\left(\mc E_{l,\omega}\right)\lesssim\left(\log\log N\right)^{-1/10},\,\com\left(\mc E_{l,\omega}\right)\le\log N\left(\log\log N\right)^{C_{0}},
\]
such that for any $x\in\mb T$, if $E_{K_{0},j}^{\left(l\right)}\left(x,\omega\right)\in\mathcal{E}^{0}\setminus\mc E_{l,\omega}$,
for some $j$, then
\[
\left|\partial_{x}E_{K_{0},j}^{\left(l\right)}\left(x,\omega\right)-\partial_{x}E_{K_{0},j}^{\left(l\right)}\left(x,\omega\right)\right|\lesssim\left(\log N\right)^{1+p}\gamma^{-1}\left|\omega-\omega'\right|,
\]
for any $\omega'\in\mb R$ such that $\left|\omega-\omega'\right|\le\left(\log N\right)^{-\left(1+p\right)}\gamma$.\end{lem}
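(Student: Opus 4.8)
The plan is to repeat the argument of \corref{slopes-lipschitz-slopes-1} almost verbatim, but with the weak a priori separation \propref{resultants-separation-of-eigenvalues-scale-N} replaced by the sharper separation \propref{slopes-separation-1}; the whole point of this second iteration is that the better input separation yields a polynomially small Lipschitz constant $l^{1+p}$ instead of the super-polynomial $l\exp(l^{\delta})$ coming from the a priori bound. First I would apply \propref{slopes-separation-1}, at the scale $l=100\left[\left(\log N\right)/\gamma\right]$ and with the given $p\in\left(1,2\right)$; this is legitimate once $N$ is large enough (depending on $\gamma$ and $p$) that $l$ exceeds the threshold $N_{0}$ of that proposition. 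It produces a set $\t{\Omega}_{l}\subset\mb T$ with $\mes\left(\t{\Omega}_{l}\right)\le\exp\left(-\left(\log\log l\right)^{2}/4\right)$, $\com\left(\t{\Omega}_{l}\right)\lesssim l^{1+p}$, and for each $\omega\in\Omega^{0}\cap\mb T_{c,\alpha}\setminus\t{\Omega}_{l}$ a set $\t{\mc E}_{l,\omega}$ with $\mes\left(\t{\mc E}_{l,\omega}\right)\le\left(\log l\right)^{-1/10}$, $\com\left(\t{\mc E}_{l,\omega}\right)\le l\left(\log l\right)^{C_{0}}$, such that $\left|E_{j}^{\left(l\right)}\left(x,\omega\right)-E_{i}^{\left(l\right)}\left(x,\omega\right)\right|>l^{-p}$ for all $i\neq j$ and all $x\in\mb T$ whenever $E_{j}^{\left(l\right)}\left(x,\omega\right)\in\mc E^{0}\setminus\t{\mc E}_{l,\omega}$.

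Next I would feed $\sigma:=l^{-p}$ into \lemref{slopes-lipschitz-derivatives}, taking $K=K_{0}=C\left[\log N\right]$. The hypotheses are met: by the choice of $K_{0}$ in \eqref{slopes-K0} one has $\sup_{x,\omega}\left|E_{i}^{\left(l\right)}\left(x,\omega\right)-E_{K_{0},i}^{\left(l\right)}\left(x,\omega\right)\right|\le N^{-2}\le\sigma/2$ for $N$ large (since $l\asymp\log N$, so $\sigma\asymp\left(\log N\right)^{-p}\gg N^{-2}$); and if one declares the bad energy set of the lemma to be $\mc E_{l,\omega}:=\left\{E\in\mc E^{0}:\,\dist\left(E,\t{\mc E}_{l,\omega}\cup\left(\mc E^{0}\right)^{C}\right)\le N^{-2}\right\}$, then $E_{K_{0},j}^{\left(l\right)}\left(x,\omega\right)\in\mc E^{0}\setminus\mc E_{l,\omega}$ forces $E_{j}^{\left(l\right)}\left(x,\omega\right)\in\mc E^{0}\setminus\t{\mc E}_{l,\omega}$, hence the $\sigma$-separation of $E_{j}^{\left(l\right)}$ from the rest of the spectrum needed to invoke \lemref{slopes-lipschitz-derivatives}. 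The lemma then gives
\[
\left|\partial_{x}E_{K_{0},j}^{\left(l\right)}\left(x,\omega\right)-\partial_{x}E_{K_{0},j}^{\left(l\right)}\left(x,\omega'\right)\right|\lesssim l\sigma^{-1}\left|\omega-\omega'\right|=l^{1+p}\left|\omega-\omega'\right|
\]
for $\left|\omega-\omega'\right|\lesssim l^{-1}\sigma=l^{-\left(1+p\right)}$, and substituting $l\asymp\left(\log N\right)/\gamma$ produces the claimed Lipschitz bound and range of validity. Setting $\Omega_{l}:=\t{\Omega}_{l}$ completes the construction.

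It remains to rewrite the set estimates in terms of $N$. Since $l=100\left[\left(\log N\right)/\gamma\right]$, for $N$ large one has $l\gtrsim\log N$ and $l\le100\left(\log N\right)/\gamma$, whence $\log l=\log\log N+O_{\gamma}\left(1\right)$ and $\log\log l\ge\log\log\log N-1$; consequently $\left(\log\log l\right)^{2}\ge\left(\log\log\log N\right)^{2}/2$ for $N$ large, which is exactly why the factor $1/4$ in $\mes\left(\t{\Omega}_{l}\right)$ becomes the factor $1/8$ in the statement. The complexity bounds $\com\left(\Omega_{l}\right)\lesssim l^{1+p}\lesssim\left(\log N/\gamma\right)^{1+p}$ and $\com\left(\mc E_{l,\omega}\right)\lesssim l\left(\log l\right)^{C_{0}}\lesssim\log N\left(\log\log N\right)^{C_{0}}$ follow directly (fattening by $N^{-2}$ does not increase the number of components), and $\mes\left(\mc E_{l,\omega}\right)\le\left(\log l\right)^{-1/10}+2N^{-2}\com\left(\t{\mc E}_{l,\omega}\right)\lesssim\left(\log\log N\right)^{-1/10}$. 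No single step is a real obstacle; the only genuinely delicate part is this bookkeeping — keeping track of the triple logarithm in $\mes\left(\Omega_{l}\right)$ and of the $\gamma^{-1}$ factors that enter through $l\asymp\left(\log N\right)/\gamma$ — which is precisely why $l$ is taken with a $\gamma$-dependent constant and why $N_{0}$ is allowed to depend on $\gamma$ and $p$.
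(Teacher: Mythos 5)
Your proposal is correct and follows exactly the route the paper takes: its entire proof of this lemma is the single line ``the result follows immediately from \lemref{slopes-lipschitz-derivatives} and \propref{slopes-separation-1},'' i.e.\ apply the improved separation of \propref{slopes-separation-1} at scale $l$ to get $\sigma=l^{-p}$, feed it into \lemref{slopes-lipschitz-derivatives} with $K=K_{0}$, and fatten the bad energy set by $N^{-2}$ to pass from $E_{j}^{\left(l\right)}$ to $E_{K_{0},j}^{\left(l\right)}$. Your explicit bookkeeping (including the $\left(\log\log\log N\right)^{2}/4\rightarrow/8$ loss and the harmless extra powers of $\gamma^{-1}$ absorbed by the statement's $\lesssim$) is exactly what the paper leaves implicit.
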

\begin{proof}
The result follows immediately from \lemref{slopes-lipschitz-derivatives}
and \propref{slopes-separation-1}.
\end{proof}
We can now apply \thmref{absslopes-elimination} again.
\begin{prop}
\label{prop:slopes-elimination-2-logN}Fix $\t p>15$, and let $l=100\left[\left(\log N\right)/\gamma\right]$.
There exists a constant $N_{0}=N_{0}\left(\left\Vert a\right\Vert _{\infty},\left\Vert b\right\Vert _{*},c,\alpha,\gamma,E^{0},\t p\right)$
such that for any $N\ge N_{0}$ there exists a set $\Omega_{N}$,
with
\[
\mes\left(\Omega_{N}\right)<\exp\left(-\left(\log\log\log N\right)^{2}/10\right),\,\com\left(\Omega_{N}\right)<N^{2}\left(\log N\right)^{\t p},
\]
such that for any $\omega\in\Omega^{0}\cap\mb T_{c,\alpha}\setminus\Omega_{l}$
there exists a set $\mc E_{N,\omega}$, with
\[
\mes\left(\mc E_{N,\omega}\right)\lesssim\left(\log\log N\right)^{-1/10},\,\com\left(\mc E_{N,\omega}\right)\lesssim N\left(\log N\right)^{6},
\]
 such that for any $x\in\mb T$ and any integer $m$, $\left(\log N\right)^{6}\le\left|m\right|\le2N$,
we have
\[
\dist\left(\mathcal{E}^{0}\cap\spec\left(H^{\left(l\right)}\left(x,\omega\right)\setminus\mc E_{N,\omega}\right),\spec\left(H^{\left(l\right)}\left(x+m\omega,\omega\right)\right)\right)\ge\frac{3}{N\left(\log N\right)^{\t p}}.
\]
\end{prop}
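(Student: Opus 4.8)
The plan is to repeat, \emph{mutatis mutandis}, the argument behind \propref{slopes-elimination-1}, the one essential difference being that the stability of the ``good'' slopes is now supplied by \lemref{slopes-lipschitz-slopes-2} (which rests on the polynomial separation \propref{slopes-separation-1}) instead of by \corref{slopes-lipschitz-slopes-1}. The point of this substitution is that the quantities $r_{0}$ and $C_{3}^{-1}$ entering \thmref{absslopes-elimination} are now merely negative powers of $\log N$, rather than $\exp(-(\log N)^{1/2})$; consequently the mesh $\delta$ in \thmref{absslopes-elimination} is a power of $\log N$, and this is precisely what lets us take $\sigma$ as small as $\sim N^{-1}(\log N)^{-\t p}$.

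Concretely, I would first identify the abstract data of \secref{Abstract-Elimination}. With $l=100\left[\left(\log N\right)/\gamma\right]$ and $K_{0}=C\left[\log N\right]$ as in \eqref{slopes-K0}, build the polynomial $P$ as in \eqref{slopes-definition-of-P} out of $f_{K_{0},l}$ (a single scale suffices, since the statement compares $H^{\left(l\right)}$ with $H^{\left(l\right)}$); then $d_{1}\lesssim K_{0}l^{2}\lesssim(\log N)^{3}$, $d_{2}\lesssim K_{0}l\lesssim(\log N)^{2}$, $n\lesssim l\lesssim\log N$, with $\{f_{j}\}=\{E_{K_{0},i}^{\left(l\right)}\}_{i=1}^{l}$. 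From \eqref{slopes-EK-lipschitzness} take $C_{0}=C$, $C_{1}\lesssim\log N$. From \lemref{slopes-lipschitz-slopes-2} take $\mc Y^{0}=\Omega^{0}\cap\mb T_{c,\alpha}\setminus\Omega_{l}$, $\mc Z^{0}=\mc E^{0}$, $\mc Z_{y}^{0}=\mc E_{l,\omega}$, $c_{0}=\exp(-(\log\log\log N)^{2}/8)$, $C_{2}\lesssim\log N(\log\log N)^{C_{0}}$, $C_{3}\lesssim(\log N)^{1+p}\gamma^{-1}$, $r_{0}=(\log N)^{-(1+p)}\gamma$, with $p\in(1,2)$ still free.

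Then I would apply \thmref{absslopes-elimination} with $\tau\sim(\log\log N)^{-1/10}(\log N)^{-4}$ (chosen so that $(d_{2}^{2}+C_{2})\tau\lesssim(\log\log N)^{-1/10}$), $\sigma=6N^{-1}(\log N)^{-\t p}$, $Q=(\log N)^{6}$, $M=2N$, $\delta=\min\{r_{0},\tau/C_{3},\tau/C_{1}\}\sim(\log N)^{-(5+p)}$, and $\delta'\sim N^{-2}(\log N)^{-\t p}$; for $N$ large all its hypotheses hold, since each of $4C_{1}/\tau$, $d_{1}$, $n(d_{2}^{2}+C_{2})$ is at most $(\log N)^{5}$ times a fixed power of $\log\log N$, hence $<(\log N)^{6}=Q\le 2N=M$, and moreover $\delta'\lesssim\sigma/(MC_{0}+2C_{1})\sim N^{-2}(\log N)^{-\t p}$ and $\delta'\le\delta/2$. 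Writing $\mc Y,\t{\mc Z}_{y}$ for the sets it produces, set $\Omega_{N}=\Omega_{l}\cup\mc Y$ and $\mc E_{N,\omega}=\{E\in\mc E^{0}:\dist(E,\t{\mc Z}_{y}\cup(\mc E^{0})^{C})\le N^{-2}\}$. If $E_{j}^{\left(l\right)}(x,\omega)\in\mc E^{0}\setminus\mc E_{N,\omega}$ then $E_{K_{0},j}^{\left(l\right)}(x,\omega)\in\mc E^{0}\setminus\t{\mc Z}_{y}$ by \eqref{slopes-K0}, so \eqref{absslopes-shift-separation} gives $|E_{K_{0},j}^{\left(l\right)}(x,\omega)-E_{K_{0},k}^{\left(l\right)}(x+m\omega,\omega)|\ge\sigma$ for all $k$ and $(\log N)^{6}\le|m|\le 2N$; undoing the approximation \eqref{slopes-K0} (a loss of at most $2N^{-2}$) on both spectra yields $\dist(E_{j}^{\left(l\right)}(x,\omega),\spec(H^{\left(l\right)}(x+m\omega,\omega)))\ge\sigma-2N^{-2}\ge 3N^{-1}(\log N)^{-\t p}$.

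The remaining point, and the one place I expect genuine care to be required, is the accounting of the measure and complexity of $\Omega_{N}$ and $\mc E_{N,\omega}$. By \eqref{absslopes-Y-complexity}, $\mes(\mc Y)\lesssim\sqrt{M\sigma d_{1}d_{2}\tau^{-1}\delta^{-1}}\sim(\log N)^{(14+p-\t p)/2}(\log\log N)^{1/10}$ and $\com(\mc Y)\lesssim\sqrt{M\sigma d_{1}d_{2}\tau^{-1}\delta^{-1}}/\delta'\sim N^{2}(\log N)^{(14+p+\t p)/2}(\log\log N)^{1/10}$ (up to fixed powers of $\gamma$); so $\mes(\Omega_{N})\le\mes(\Omega_{l})+\mes(\mc Y)<\exp(-(\log\log\log N)^{2}/10)$ requires $14+p-\t p<0$, while $\com(\Omega_{N})<N^{2}(\log N)^{\t p}$ requires $(14+p+\t p)/2<\t p$, both of which amount to $\t p>14+p$ — achievable by choosing $p\in(1,2)$ with $p<\t p-14$, which is possible exactly because $\t p>15$. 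The estimates for $\mc E_{N,\omega}$ are the routine part: the dominant term of $\mes(\t{\mc Z}_{y})$ is $n\tau+(d_{2}^{2}+C_{2})(\tau+\sigma)\lesssim(\log N)^{4}\tau\lesssim(\log\log N)^{-1/10}$ by the choice of $\tau$ (the $c_{0}$ and $C_{0}\sqrt{M\sigma d_{1}d_{2}\tau^{-1}\delta^{-1}}$ terms being smaller still), while $\com(\t{\mc Z}_{y})\lesssim Md_{2}(d_{2}^{2}+C_{2})\lesssim N(\log N)^{6}$, and fattening by $N^{-2}$ changes neither order. Thus the main obstacle is precisely this parameter bookkeeping, which is what forces $\t p>14+p$ and hence the threshold $\t p>15$; everything else is a direct transcription of the proof of \propref{slopes-elimination-1}.
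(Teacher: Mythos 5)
Your proposal follows the paper's own proof essentially verbatim: the same polynomial $P$ built from $f_{K_{0},l}$ at the single scale $l$, the same abstract data fed into \thmref{absslopes-elimination} (the paper takes $\tau=(\log N)^{-4}(\log\log N)^{-1}$, $\sigma=4N^{-1}(\log N)^{-\tilde{p}}$, $Q=(\log N)^{6}$, $M=2N$, $\delta=(\log N)^{-(5+p)}(\log\log N)^{-2}$, $\delta'=cN^{-2}(\log N)^{-\tilde{p}}$, all within $\log\log$ factors of your choices), and the same bookkeeping that forces $\tilde{p}>14+p$ with $p\in(1,2)$, hence $\tilde{p}>15$. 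The one slip is that $c_{0}$ should be the measure bound $(\log\log N)^{-1/10}$ on $\mathcal{E}_{l,\omega}$ from \lemref{slopes-lipschitz-slopes-2}, not the measure $\exp(-(\log\log\log N)^{2}/8)$ of $\Omega_{l}$; this is harmless, since with the correct value the $c_{0}$ term still contributes only $(\log\log N)^{-1/10}$ to $\mes(\tilde{\mathcal{Z}}_{y})$.
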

\begin{proof}
We begin by identifying all the parameters used in \secref{Abstract-Elimination}.
The polynomial $P$ is given by 
\[
P\left(e\left(x\right),e\left(\omega\right),E\right)=e\left(K_{0}lx\right)e\left(K_{0}l^{2}\omega\right)f_{K_{0},l}\left(x,\omega,E\right),
\]
and we can take $d_{1}=C\left(\log N\right)^{3}$, $d_{2}=C\left(\log N\right)^{2}$,
with $C=C\left(\left\Vert a\right\Vert _{\infty},\left\Vert b\right\Vert _{*},\rho_{0},\gamma\right)$.
We have $\left\{ f_{j}\right\} =\left\{ E_{K_{0},i}^{\left(l\right)}:\, i\in\left\{ 1,\ldots,l\right\} \right\} $,
and $n=l$. By \eqref{slopes-EK-lipschitzness} we can choose $C_{0}=C$,
$C_{1}=C\log N$, with $C=C\left(\left\Vert a\right\Vert _{\infty},\left\Vert b\right\Vert _{*},\rho_{0},\gamma\right)$.
Let $\Omega_{l}$, $\mc E_{l,\omega}$ be as in \lemref{slopes-lipschitz-slopes-2}.
By \lemref{slopes-lipschitz-slopes-2}, with $p\in\left(1,2\right)$
such that $14+p<\t p$, we can choose $\mc Y^{0}=\mb T_{c,\alpha}\setminus\Omega_{l}$,
$\mc Z^{0}=\mathcal{E}^{0}$, $\mc Z_{y}^{0}=\mc E_{l,\omega}$, $c_{0}=\left(\log\log N\right)^{-1/10}$,
$C_{2}=\log N\left(\log\log N\right)^{C}$, with $C=C\left(\alpha\right)$,
$C_{3}=\left(\log N\right)^{1+p}\gamma^{-1}$, $r_{0}=1/C_{3}$. 

Next we apply \thmref{absslopes-elimination} with $\tau=\left(\log N\right)^{-4}\left(\log\log N\right)^{-1}$,
$\sigma=4N^{-1}\left(\log N\right)^{-\t p}$, $Q=\left(\log N\right)^{6}$,
$M=2N$, $\delta=\left(\log N\right)^{-\left(5+p\right)}\left(\log\log N\right)^{-2}$,
$\delta'=cN^{-2}\left(\log N\right)^{-\tilde{p}}$, with $c=c\left(\left\Vert a\right\Vert _{\infty},\left\Vert b\right\Vert _{*},\rho_{0},\gamma\right)$,
$M=2N$. Let $\Omega_{K_{0},l}=\Omega_{l}\cup\mc Y$ and $\mc E_{K_{0},l,\omega}=\t{\mc Z}_{y}$.
We have
\[
\mes\left(\Omega_{K_{0},l}\right)\le\exp\left(-\left(\log\log\log N\right)^{2}/10\right),\,\com\left(\Omega_{K_{0},l}\right)\le N^{2}\left(\log N\right)^{\t p},
\]
\[
\mes\left(\mc E_{K_{0},l,\omega}\right)\lesssim\left(\log\log N\right)^{-1/10},\,\com\left(\mc E_{K_{0},l,\omega}\right)\lesssim N\left(\log N\right)^{6},
\]
and
\[
\dist\left(\mathcal{E}^{0}\cap\spec\left(H_{K_{0}}^{\left(l\right)}\left(x,\omega\right)\setminus\mc E_{K_{0},l,\omega}\right),\spec\left(H_{K_{0}}^{\left(l\right)}\left(x+m\omega,\omega\right)\right)\right)\ge\frac{4}{N\left(\log N\right)^{\t p}},
\]
for any $\omega\in\Omega^{0}\cap\mb T_{c,\alpha}\setminus\Omega_{K_{0},l}$,
any $x\in\mb T$ and any integer $m$, $\left(\log N\right)^{6}\le\left|m\right|\le2N$.

The conclusion follows just as in the proof of \propref{slopes-elimination-1},
by setting $\Omega_{N}=\Omega_{K_{0},l}$, and $\mc E_{N,\omega}=\left\{ E\in\mathcal{E}^{0}:\,\dist\left(E,\mc E_{K_{0},l,\omega}\cup\left(\mathcal{E}^{0}\right)^{C}\right)\le N^{-2}\right\} $.
\end{proof}
Next we obtain the new version of \propref{slopes-elimination-1}. 
\begin{prop}
\label{prop:slopes-elimination-2}Fix $p>15$, $A>1$ and let $l=2\left[\left(\log N\right)^{A}\right]$.
There exists a constant $N_{0}=N_{0}\left(\left\Vert a\right\Vert _{\infty},\left\Vert b\right\Vert _{*},c,\alpha,\gamma,E^{0},p,A\right)$,
such that for any $N\ge N_{0}$ there exists a set $\Omega_{N}$,
with
\[
\mes\left(\Omega_{N}\right)\lesssim\exp\left(-\left(\log\log\log N\right)^{2}/10\right),\,\com\left(\Omega_{N}\right)\lesssim N^{2}\left(\log N\right)^{p},
\]
such that for any $\omega\in\Omega^{0}\cap\mb T_{c,\alpha}\setminus\Omega_{N}$
there exists a set $\mc E_{N,\omega}$, with
\[
\mes\left(\mc E_{N,\omega}\right)\lesssim\left(\log\log N\right)^{-1/10},\,\com\left(\mc E_{N,\omega}\right)\lesssim N\left(\log N\right)^{6},
\]
 such that for any $x\in\mb T$ and any integer $m$, $\left(\log N\right)^{6A}\le\left|m\right|\le N$,
we have
\[
\dist\left(\mathcal{E}^{0}\cap\spec\left(H^{\left(l_{1}\right)}\left(x,\omega\right)\setminus\mc E_{N,\omega}\right),\spec\left(H^{\left(l_{2}\right)}\left(x+m\omega,\omega\right)\right)\right)\ge\frac{2}{N\left(\log N\right)^{p}},
\]
$l_{1},l_{2}\in\left\{ l,l+1,2l,2l+1\right\} $.\end{prop}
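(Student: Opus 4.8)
The plan is to derive \propref{slopes-elimination-2} from the sharp small--scale elimination of \propref{slopes-elimination-2-logN} by a localization transfer, lifting the separation from the scale $l_{0}:=100\left[\left(\log N\right)/\gamma\right]$ up to the scale $l=2\left[\left(\log N\right)^{A}\right]$. First I would fix $\tilde{p}\in\left(15,p\right)$ admissible for \propref{slopes-elimination-2-logN} and apply that proposition with this $\tilde{p}$, obtaining a set $\Omega_{N}^{\left(0\right)}$ and, for $\omega\notin\Omega_{N}^{\left(0\right)}$, a set $\mc E_{N,\omega}^{\left(0\right)}$ such that $\mc E^{0}\cap\spec\left(H^{\left(l_{0}\right)}\left(x,\omega\right)\right)\setminus\mc E_{N,\omega}^{\left(0\right)}$ is $3N^{-1}\left(\log N\right)^{-\tilde{p}}$--separated from $\spec\left(H^{\left(l_{0}\right)}\left(x+m'\omega,\omega\right)\right)$ whenever $\left(\log N\right)^{6}\le\left|m'\right|\le2N$. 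The idea is that an eigenvalue of $H^{\left(l_{1}\right)}\left(x,\omega\right)$, $l_{1}\in\left\{ l,l+1,2l,2l+1\right\}$, should be super--polynomially (in $N$) close to an eigenvalue of $H^{\left(l_{0}\right)}$ restricted to a suitable length--$l_{0}$ subwindow, so that a resonance at scale $l$ would force a resonance at scale $l_{0}$ for a shift differing by only $O\left(\left(\log N\right)^{A}\right)$, which is excluded above.

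The key step is the localization transfer. I would apply the (unconditional) resultant--based localization \propref{resultants-localization} at each scale $l_{1}\in\left\{ l,l+1,2l,2l+1\right\}$, with $l_{1}$ playing the role of $N$; this yields bad sets $\Omega_{l_{1}},\t{\mc E}_{l_{1},\omega}$ and a localization window parameter $Q_{l_{1}}=\exp\left(\left(\log\log l_{1}\right)^{C_{0}}\right)=\exp\left(O\left(\left(\log\log\log N\right)^{C_{0}}\right)\right)$, so that $6Q_{l_{1}}\ll l_{0}<l_{1}$ for $N$ large. Then, for $\omega$ outside the $\Omega_{l_{1}}$'s, any $x$, and any $E_{j}^{\left(l_{1}\right)}\left(x,\omega\right)\in\mc E^{0}\setminus\t{\mc E}_{l_{1},\omega}$ with localization centre $\nu\in\left[0,l_{1}-1\right]$, I would take $\Lambda\subset\left[0,l_{1}-1\right]$ of length $l_{0}$ containing $\nu$ with $Q:=\dist\left(\left[0,l_{1}-1\right]\setminus\Lambda,\nu\right)\ge l_{0}/2-1$ (centre $\nu$ when it lies at distance $\ge l_{0}/2$ from the ends of $\left[0,l_{1}-1\right]$, otherwise take the extreme length--$l_{0}$ window; in both cases $\Lambda\supset\left[\nu-3Q_{l_{1}},\nu+3Q_{l_{1}}\right]\cap\left[0,l_{1}-1\right]$ since $l_{0}\gg6Q_{l_{1}}$). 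With $a\in\left[0,l_{1}-l_{0}\right]$ the left endpoint of $\Lambda$, so that $H_{\Lambda}^{\left(l_{1}\right)}\left(x,\omega\right)=H^{\left(l_{0}\right)}\left(x+a\omega,\omega\right)$, estimate \eqref{localization-ev-1} gives
\[
\dist\left(E_{j}^{\left(l_{1}\right)}\left(x,\omega\right),\ \spec\left(H^{\left(l_{0}\right)}\left(x+a\omega,\omega\right)\right)\right)\lesssim\exp\left(-\gamma\left(l_{0}/2-1\right)\right)\le N^{-49}
\]
for $N$ large, since $\gamma l_{0}\ge100\log N-O\left(\gamma\right)$.

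To conclude, I would set $\Omega_{N}:=\Omega_{N}^{\left(0\right)}\cup\bigcup_{l_{1}}\Omega_{l_{1}}$ and
\[
\mc E_{N,\omega}:=\Big\{ E\in\mc E^{0}:\ \dist\Big(E,\ \mc E_{N,\omega}^{\left(0\right)}\cup\textstyle\bigcup_{l_{1}}\t{\mc E}_{l_{1},\omega}\cup\left(\mc E^{0}\right)^{C}\Big)\le N^{-48}\Big\},
\]
whose measure and complexity bounds follow from those in \propref{slopes-elimination-2-logN} and \propref{resultants-localization} (the localization bad sets add only $O\left(\left(\log N\right)^{2A+1}\right)$ to the complexity and are super--polynomially small in measure, and the $N^{-48}$--fattening adds only $O\left(N^{-42}\left(\log N\right)^{6}\right)$ to the measure). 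Assuming towards a contradiction that $\left|E_{j}^{\left(l_{1}\right)}\left(x,\omega\right)-E_{k}^{\left(l_{2}\right)}\left(x+m\omega,\omega\right)\right|<2N^{-1}\left(\log N\right)^{-p}$ with $E_{j}^{\left(l_{1}\right)}\left(x,\omega\right)\in\mc E^{0}\setminus\mc E_{N,\omega}$ and $\left(\log N\right)^{6A}\le\left|m\right|\le N$, the $N^{-48}$--fattening forces $E_{k}^{\left(l_{2}\right)}\left(x+m\omega,\omega\right)\in\mc E^{0}\setminus\t{\mc E}_{l_{2},\omega}$ as well, so the previous step applies to both eigenvalues and produces $a\in\left[0,l_{1}-l_{0}\right]$, $a'\in\left[0,l_{2}-l_{0}\right]$ and eigenvalues of $H^{\left(l_{0}\right)}\left(x+a\omega,\omega\right)$ and of $H^{\left(l_{0}\right)}\left(x+\left(m+a'\right)\omega,\omega\right)$ within $N^{-49}$ of the two given eigenvalues, the first of them in $\mc E^{0}\setminus\mc E_{N,\omega}^{\left(0\right)}$. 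These two scale--$l_{0}$ eigenvalues then differ by less than $2N^{-1}\left(\log N\right)^{-p}+2N^{-49}<3N^{-1}\left(\log N\right)^{-\tilde{p}}$ (using $\tilde{p}<p$ and $N$ large), while the shift relating the two spectra is $m'=m+a'-a$ with $\left(\log N\right)^{6}\le\left|m'\right|\le2N$ (since $\left|a'-a\right|\le2l+1$ and $6A>6$); this contradicts \propref{slopes-elimination-2-logN}.

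I expect the main obstacle to be keeping the localization--transfer error small enough: one needs $\exp\left(-\gamma l_{0}/2\right)$ to be a fixed negative power of $N$, yet the length--$l_{0}$ windows must still be long enough to absorb the nested--log--scale localization window $Q_{l_{1}}$ and short enough that the scale--$l_{0}$ elimination of \propref{slopes-elimination-2-logN} is applicable; this forces $l_{0}\asymp\gamma^{-1}\log N$. The factor $\gamma^{-1}$ is precisely what lets the avalanche principle underlying \propref{resultants-localization} run at scale $l_{1}$ with blocks of size $l_{0}$ (there being $O\left(\left(\log N\right)^{A-1}\right)$ of them, far fewer than $e^{\gamma l_{0}/2}=N^{50}$), and the logarithmic magnitude turns $e^{-\gamma l_{0}/2}$ into $N^{-50}$, comfortably below the gap between $3N^{-1}\left(\log N\right)^{-\tilde{p}}$ and $2N^{-1}\left(\log N\right)^{-p}$. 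The remaining bookkeeping of measures and complexities of the nested bad sets is routine.
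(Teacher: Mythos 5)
Your overall strategy coincides with the paper's: apply the scale-$l'=100\left[\left(\log N\right)/\gamma\right]$ elimination of \propref{slopes-elimination-2-logN} and transfer it up to the scales $l_{1},l_{2}\in\left\{ l,l+1,2l,2l+1\right\} $ by means of the resultant-based localization \propref{resultants-localization} applied with $l_{1},l_{2}$ in the role of $N$; the shift bookkeeping $m'=m+a'-a$ with $\left(\log N\right)^{6}\le\left|m'\right|\le2N$ is also exactly the paper's (the paper simply takes $\t p=p$ rather than $\t p\in\left(15,p\right)$).

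There is, however, one step that fails as written: the claim that the $N^{-48}$-fattening forces $E_{k}^{\left(l_{2}\right)}\left(x+m\omega,\omega\right)\in\mathcal{E}^{0}\setminus\t{\mc E}_{l_{2},\omega}$. All you know about $E_{k}^{\left(l_{2}\right)}\left(x+m\omega,\omega\right)$ is that it lies within $2N^{-1}\left(\log N\right)^{-p}$ of $E_{j}^{\left(l_{1}\right)}\left(x,\omega\right)$, and $2N^{-1}\left(\log N\right)^{-p}\gg N^{-48}$, so the fact that $E_{j}^{\left(l_{1}\right)}\left(x,\omega\right)$ keeps distance $N^{-48}$ from $\t{\mc E}_{l_{2},\omega}\cup\left(\mathcal{E}^{0}\right)^{C}$ says nothing about whether $E_{k}^{\left(l_{2}\right)}\left(x+m\omega,\omega\right)$ avoids that set. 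Without this you cannot invoke \propref{resultants-localization} for the second eigenvalue, and the transfer for the right-hand spectrum collapses. The fix is to fatten by the resonance width itself, i.e. to set
\[
\mc E_{N,\omega}=\left\{ E\in\mathcal{E}^{0}:\,\dist\left(E,\mc E_{N,\omega}^{\left(0\right)}\cup\bigcup_{l_{1}}\t{\mc E}_{l_{1},\omega}\cup\left(\mathcal{E}^{0}\right)^{C}\right)\le\frac{2}{N\left(\log N\right)^{p}}\right\} ,
\]
which is what the paper does; then $\dist\left(E_{j}^{\left(l_{1}\right)},\t{\mc E}_{l_{2},\omega}\right)>2N^{-1}\left(\log N\right)^{-p}$ together with $\left|E_{j}^{\left(l_{1}\right)}-E_{k}^{\left(l_{2}\right)}\right|<2N^{-1}\left(\log N\right)^{-p}$ does give $E_{k}^{\left(l_{2}\right)}\notin\t{\mc E}_{l_{2},\omega}$ and $E_{k}^{\left(l_{2}\right)}\in\mathcal{E}^{0}$. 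The larger fattening is harmless: the complexity of the set being fattened is $\lesssim N\left(\log N\right)^{6}$, so the added measure is $\lesssim\left(\log N\right)^{6-p}\ll\left(\log\log N\right)^{-1/10}$. The rest of your argument (the choice of the length-$l'$ window around the localization centre, the error $\exp\left(-\gamma Q\right)\le N^{-49}$, and the final contradiction with \propref{slopes-elimination-2-logN}) is sound.
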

\begin{proof}
Let $\Omega_{N}^{1}$, $\mc E_{N,\omega}^{1}$ denote the sets $\Omega_{N}$,
$\t{\mc E}_{N,\omega}$ from \propref{resultants-localization}. Let
$\Omega_{N}^{2}$, $\mc E_{N,\omega}^{2}$ denote the sets $\Omega_{N}$,
$\mc E_{N,\omega}$ from \propref{slopes-elimination-2-logN}, with
$\t p=p$. We define $\Omega_{N}=\Omega_{l}^{1}\cup\Omega_{l+1}^{1}\cup\Omega_{2l}^{1}\cup\Omega_{2l+1}^{1}\cup\Omega_{N}^{2}$
and 
\[
\mc E_{N,\omega}=\left\{ E\in\mathcal{E}^{0}:\,\dist\left(\mc E_{l,\omega}^{1}\cup\mc E_{l+1,\omega}^{1}\cup\mc E_{2l,\omega}^{1}\cup\mc E_{2l+1,\omega}^{1}\cup\mc E_{N,\omega}^{2}\cup\left(\mathcal{E}^{0}\right)^{C}\right)\le\frac{2}{N\left(\log N\right)^{p}}\right\} .
\]
It is straightforward to check the measure and complexity bounds for
$\Omega_{N}$ and $\mc E_{N,\omega}$.

To obtain the conclusion we argue by contradiction. Fix $x\in\mb T$
and $\omega\in\Omega^{0}\cap\mb T_{c,\alpha}\setminus\Omega_{N}$.
Suppose there exist $l_{1},l_{2}\in\left\{ l,l+1,2l,2l+1\right\} $,
$j_{1}$ , $j_{2}$, and $m$, $\left|m\right|\ge\left(\log N\right)^{6A}$
$ $such that $E_{j_{1}}^{\left(l_{1}\right)}\left(x,\omega\right)\in\mathcal{E}^{0}\setminus\mc E_{N,\omega}$
and
\begin{equation}
\left|E_{j_{1}}^{\left(l_{1}\right)}\left(x,\omega\right)-E_{j_{2}}^{\left(l_{2}\right)}\left(x+m\omega,\omega\right)\right|<\frac{2}{N\left(\log N\right)^{p}}.\label{eq:slopes-elimination2-contradiction-assumption}
\end{equation}
By the definition of $\mc E_{l,\omega}$ we have that $E_{j_{i}}^{\left(l_{i}\right)}\left(x,\omega\right)\in\mathcal{E}^{0}\setminus\mc E_{l_{i},\omega}^{1}$,
$i=1,2$. We can apply \propref{resultants-localization} to conclude
that there exist eigenvalues $E_{k_{1}}^{\left(l'\right)}\left(x+n_{1}\omega,\omega\right)$,
$n_{1}\in\left[0,l_{1}-1\right]$ and $E_{k_{2}}^{\left(l'\right)}\left(x+n_{2}\omega,\omega\right)$,
$n_{2}\in\left[m,m+l_{2}-1\right]$, $l'=100\left[\left(\log N\right)/\gamma\right]$,
such that
\begin{equation}
\left|E_{j_{1}}^{\left(l_{1}\right)}\left(x,\omega\right)-E_{k_{1}}^{\left(l'\right)}\left(x+n_{1}\omega,\omega\right)\right|\lesssim\exp\left(-\gamma l'/3\right)<1/N^{2},\label{eq:slopes-elimination2-localization1}
\end{equation}
\begin{equation}
\left|E_{j_{2}}^{\left(l_{2}\right)}\left(x+m\omega,\omega\right)-E_{k_{2}}^{\left(l'\right)}\left(x+n_{2}\omega,\omega\right)\right|\lesssim\exp\left(-\gamma l'/3\right)<1/N^{2}.\label{eq:slopes--elimination2-localization2}
\end{equation}
By the definition of $\mc E_{N,\omega}$ we have $E_{k_{1}}\left(x+n_{1}\omega\right)\in\mathcal{E}^{0}\setminus\mc E_{N,\omega}^{2}$.
We can apply \propref{slopes-elimination-2-logN}, with $\t p=p$,
to get
\[
\left|E_{k_{1}}^{\left(l'\right)}\left(x+n_{1}\omega,\omega\right)-E_{k_{2}}^{\left(l'\right)}\left(x+n_{2}\omega,\omega\right)\right|\ge\frac{3}{N\left(\log N\right)^{p}}.
\]
The above inequality, together with \eqref{slopes-elimination2-localization1},
and \eqref{slopes--elimination2-localization2} contradicts \eqref{slopes-elimination2-contradiction-assumption}.
This concludes the proof.
\end{proof}
Finally we obtain our main result.
\begin{thm}
\label{thm:slopes-separation-2}Fix $p>15$. There exists a constant
$N_{0}=N_{0}\left(\left\Vert a\right\Vert _{\infty},\left\Vert b\right\Vert _{*},c,\alpha,\gamma,E^{0},p\right)$
such that for any $N\ge N_{0}$ there exists a set $\t{\Omega}_{N}$,
with
\[
\mes\left(\t{\Omega}_{N}\right)\lesssim\exp\left(-\left(\log\log\log N\right)^{2}/20\right),\,\com\left(\t{\Omega}_{N}\right)\lesssim N^{2}\left(\log N\right)^{p},
\]
such that for any $\omega\in\Omega^{0}\cap\mb T_{c,\alpha}\setminus\t{\Omega}_{N}$
there exists a set $\t{\mc E}_{N,\omega}$, with
\[
\mes\left(\t{\mc E}_{N,\omega}\right)\lesssim\left(\log\log N\right)^{-1/10},\,\com\left(\t{\mc E}_{N,\omega}\right)\lesssim N\left(\log N\right)^{6},
\]
 such that for any $x\in\mb T$, if $E_{j}^{\left(N\right)}\left(x,\omega\right)\in\mathcal{E}^{0}\setminus\t{\mc E}_{N,\omega}$,
for some $j$, then
\[
\left|E_{j}^{\left(N\right)}\left(x,\omega\right)-E_{k}^{\left(N\right)}\left(x,\omega\right)\right|\ge\frac{1}{N\left(\log N\right)^{p}},
\]
for any $k\neq j$.\end{thm}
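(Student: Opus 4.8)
The plan is to observe that the stated separation is exactly the conclusion of the bootstrapping result \thmref{separation-bootstrap} once it is supplied with the sharp elimination of resonances furnished by \propref{slopes-elimination-2}. So the proof will consist of three steps: verifying that Elimination Assumption \ref{localization-elimination-assumption} holds with the sharp parameters coming out of \propref{slopes-elimination-2}; choosing an auxiliary scale $N'$ that meets the hypotheses of \thmref{separation-bootstrap}; and assembling the exceptional sets $\t{\Omega}_{N}$, $\t{\mc E}_{N,\omega}$ and checking their measure and complexity bounds.

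For the first step I would apply \propref{slopes-elimination-2} with the fixed constant $A=A\left(\alpha\right)$ of Elimination Assumption \ref{localization-elimination-assumption} and with the given $p>15$. This yields, for all $N\ge N_{0}$, sets $\Omega_{N}$ and $\mc E_{N,\omega}$ satisfying \eqref{localization-hyp} at the scales $l_{1},l_{2}\in\left\{l,l+1,2l,2l+1\right\}$, $l=2\left[\left(\log N\right)^{A}\right]$, with
\[
Q_{N}=\left(\log N\right)^{6A},\qquad\sigma_{N}=\frac{2}{N\left(\log N\right)^{p}}.
\]
Since $l$ is a fixed power of $\log N$, one has $\sigma_{N}\gg\exp\left(-l^{1/4}\right)$ and $Q_{N}\gg l^{3}$, so Elimination Assumption \ref{localization-elimination-assumption} is indeed satisfied with these $Q_{N}$, $\sigma_{N}$, $\Omega_{N}$, $\mc E_{N,\omega}$, and the machinery of \secref{localization} and \secref{separation} applies.

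For the second step I would take $N'=\left[\exp\left(\left(\log N\right)^{1/7A}\right)\right]$, exactly as in the proof of \propref{slopes-separation-1}. Since $\exp\left(\left(\log N\right)^{1/7A}\right)$ outgrows every fixed power of $\log N$, we have $N'\ge2Q_{N}^{2}=2\left(\log N\right)^{12A}$ for $N$ large; and applying \propref{slopes-elimination-2} at the scale $N'$ as well gives $Q_{N'}=\left(\log N'\right)^{6A}\le\left(\log N\right)^{6/7}$, whence
\[
\exp\left(-C_{0}Q_{N'}\right)\ge\exp\left(-C_{0}\left(\log N\right)^{6/7}\right)\ge\frac{2}{N\left(\log N\right)^{p}}=\sigma_{N}
\]
for $N$ large, which is the hypothesis of \thmref{separation-bootstrap}. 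Invoking that theorem at the pair of scales $N$, $N'$ (using $\Omega_{N},\mc E_{N,\omega}$ and $\Omega_{N'},\mc E_{N',\omega}$) and setting
\[
\t{\Omega}_{N}=\Omega_{N}\cup\Omega_{N'},\qquad\t{\mc E}_{N,\omega}=\left\{E\in\mathcal{E}^{0}:\,\dist\left(E,\mc E_{N,\omega}\cup\mc E_{N',\omega}\cup\left(\mathcal{E}^{0}\right)^{C}\right)<\sigma_{N}\right\},
\]
one gets that $E_{j}^{\left(N\right)}\left(x,\omega\right)\in\mathcal{E}^{0}\setminus\t{\mc E}_{N,\omega}$ forces $\dist\left(E_{j}^{\left(N\right)}\left(x,\omega\right),\mc E_{N,\omega}\cup\mc E_{N',\omega}\cup\left(\mathcal{E}^{0}\right)^{C}\right)\ge\sigma_{N}$, so the separation is $>\sigma_{N}/2=1/\left(N\left(\log N\right)^{p}\right)$, as claimed.

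The remaining bookkeeping is routine. Splitting $\mes\left(\t{\Omega}_{N}\right)\le\mes\left(\Omega_{N}\right)+\mes\left(\Omega_{N'}\right)$ and using that $\log\log\log N'$ equals $\log\log\log N$ up to an additive constant — so $\mes\left(\Omega_{N'}\right)\lesssim\exp\left(-\left(\log\log\log N\right)^{2}/20\right)$ for $N$ large — gives the bound on $\mes\left(\t{\Omega}_{N}\right)$; since $N'\ll N$, the complexity of $\t{\Omega}_{N}$ is dominated by $N^{2}\left(\log N\right)^{p}$. For the energies, fattening $\mc E_{N,\omega}\cup\mc E_{N',\omega}\cup\left(\mathcal{E}^{0}\right)^{C}$ by $\sigma_{N}$ costs in measure only $\lesssim\sigma_{N}\com\left(\mc E_{N,\omega}\right)\lesssim\left(\log N\right)^{6-p}\to0$ (here $p>15>6$), while $\mes\left(\mc E_{N',\omega}\right)\lesssim\left(\log\log N\right)^{-1/10}$ up to a constant, giving $\mes\left(\t{\mc E}_{N,\omega}\right)\lesssim\left(\log\log N\right)^{-1/10}$ and, similarly, $\com\left(\t{\mc E}_{N,\omega}\right)\lesssim N\left(\log N\right)^{6}$. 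There is no genuinely hard step left once \propref{slopes-elimination-2} is in hand; the only delicate point — and it is already essentially resolved upstream, inside the elimination of \secref{Elimination-via-Slopes} — is the simultaneous reconciliation of the three constraints $2Q_{N}^{2}\le N'$, $\exp\left(-C_{0}Q_{N'}\right)\ge\sigma_{N}$, and "$N'$ small enough not to spoil the $N^{2}\left(\log N\right)^{p}$ complexity bound", and this interlocking is exactly what forces $p$ to be taken slightly above $15$ and $N$ to be taken very large; this is where I would expect the main difficulty to lie if one tried to push $p$ lower.
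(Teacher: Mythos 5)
Your proposal is correct and follows essentially the same route as the paper's own proof: apply \propref{slopes-elimination-2} with the $A=A\left(\alpha\right)$ of Elimination Assumption \ref{localization-elimination-assumption} to get $Q_{N}=\left(\log N\right)^{6A}$ and $\sigma_{N}=2N^{-1}\left(\log N\right)^{-p}$, bootstrap via \thmref{separation-bootstrap} with $N'=\exp\left(\left(\log N\right)^{1/7A}\right)$, and take $\t{\Omega}_{N}=\Omega_{N}\cup\Omega_{N'}$ together with the $\sigma_{N}$-fattened energy set. Your write-up actually supplies more of the verification (the checks $2Q_{N}^{2}\le N'$, $\exp\left(-C_{0}Q_{N'}\right)\ge\sigma_{N}$, and the measure/complexity bookkeeping) than the paper's terse proof does.
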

\begin{proof}
We start by identifying the parameters from the Elimination Assumption
\ref{localization-elimination-assumption}. Apply \propref{slopes-elimination-1}
with $A=A\left(\alpha\right)$ as in the Elimination Assumption \ref{localization-elimination-assumption}.
We can choose $\Omega_{N}$, $\mc E_{N,\omega}$ as in \propref{slopes-elimination-2}
and we also have $Q_{N}=\left(\log N\right)^{6A}$, $\sigma_{N}=2N^{-1}\left(\log N\right)^{-p}$. 

Next we apply \thmref{separation-bootstrap} with $N'=\exp\left(\left(\log N\right)^{1/7A}\right)$.
The conclusion follows by setting $\t{\Omega}_{N}=\Omega_{N}\cup\Omega_{N'}$,
and 
\[
\t{\mc E}_{N,\omega}=\left\{ E\in\mathcal{E}^{0}:\dist\left(\mc E_{N,\omega}\cup\mc E_{N',\omega}\cup\left(\mathcal{E}^{0}\right)^{C}\right)<2N^{-1}\left(\log N\right)^{-p}\right\} .
\]
 $ $
\end{proof}
\appendix

\section{Appendix}

In this section we discuss how to obtain some of the results stated
in \secref{Preliminaries} from the results of \cite{2012arXiv1202.2915B}.

We start by discussing the large deviations estimate for determinants
as stated in \propref{prelims-LDT-determinants}. For convenience
we recall three relevant results from \cite{2012arXiv1202.2915B}.
Note that in what follows the assumption $\left(\omega,E\right)\in\mb T_{c,\alpha}\times\mb C$,
$L\left(\omega,E\right)>\gamma>0$ is implicit. Also, we use the notation
$\left\langle \log\left|f_{n}^{a}\right|\right\rangle =\int_{\mb T}\log\left|f_{n}^{a}\left(x\right)\right|dx$
and $I_{a,E}=\int_{\mb T}\log\left|a\left(x\right)-E\right|dx$.
\begin{prop}
\label{prop:appendix-LDT-determinants} (\cite[Proposition 4.10]{2012arXiv1202.2915B})
There exist constants $c_{0}=c_{0}(\left\Vert a\right\Vert _{\infty},I_{a,E},$
$\left\Vert b\right\Vert _{*},\left|E\right|,\omega,\gamma)$, $C_{0}=C_{0}\left(\omega\right)>\alpha+2$,
and $C_{1}=C_{1}\left(\left\Vert a\right\Vert _{\infty},I_{a,E},\left\Vert b\right\Vert _{*},\left|E\right|,\omega,\gamma\right)$
such that for every integer $n>1$ and any $\delta>0$ we have
\[
\mes\left\{ x\in\mb T:\,\left|\log\left|f_{n}^{a}\left(x\right)\right|-\left\langle \log\left|f_{n}^{a}\right|\right\rangle \right|>n\delta\right\} \le C_{1}\exp\left(-c_{0}\delta n\left(\log n\right)^{-C_{0}}\right).
\]
\end{prop}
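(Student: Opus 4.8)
The plan is to adapt, to the Jacobi setting, the Goldstein--Schlag strategy for determinants: prove the large deviations estimate first for the transfer matrix norm, where one has a genuinely two-sided bounded subharmonic function, and then transfer it to $f_{n}^{a}$ through the Avalanche Principle. Concretely, I would work on a strip $\mb H_{\rho}$, $\rho\simeq\rho_{0}$, with $v_{n}(z):=n^{-1}\log\|M_{n}^{u}(z,\omega,E)\|$: since $|\det M_{n}^{u}|=1$ we have $v_{n}\ge0$, $v_{n}$ is subharmonic, and the uniform upper bound of \propref{prelims-M^a-upper-bound} together with \eqref{prelims-Mu-Ma} gives $v_{n}\le L(\omega,E)+C(\log n)^{C_{0}}/n$. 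A bounded subharmonic function on a strip has Fourier coefficients decaying like $|\hat v_{n}(k)|\lesssim\|v_{n}\|_{\infty}/|k|$ (bounded Riesz mass for the logarithmic-potential part, exponentially small harmonic part), and $v_{n}$ is almost $\omega$-invariant, $|v_{n}(x)-v_{n}(x+\omega)|\lesssim1/n$, since $M_{n}^{u}(x)$ and $M_{n}^{u}(x+\omega)$ differ in a single bounded factor. Averaging $v_{n}$ against the shifts $x\mapsto x+j\omega$, $0\le j<R$, bounding the resulting Fej\'er-type multipliers by $\min(1,(R\|k\omega\|)^{-1})$, and using the Diophantine bound $\|k\omega\|\ge c/(k(\log k)^{\alpha})$ produces an $L^{2}$ deviation estimate carrying a $(\log n)^{-C_{0}}$ gain, which a standard bootstrap (re-using the almost-invariance and the $L^{\infty}$ bound) upgrades to $\mes\{|v_{n}-\langle v_{n}\rangle|>\delta\}\lesssim\exp(-c\delta n(\log n)^{-C_{0}})$; it is exactly the shape of $\mb T_{c,\alpha}$ that yields the logarithmic rather than a worse rate here.

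Given the estimate for $v_{n}$, I would deduce the one for $f_{n}^{a}$ block by block. For $x$ outside the set on which $E$ comes within $\exp(-H(\log l)^{C_{0}})$ of $\spec H^{(l')}(x+s\omega)$ for some length-$l'$ block, $l'\simeq l$ with $l$ a suitable scale of order $\log n$ --- a set which is small by the estimate for $v_{n}$, Cartan's estimate, and \propref{prelims-LDT-failure} --- the hypotheses of \corref{prelims-AP-determinants} are met, and the Avalanche Principle yields $\log|f_{n}^{a}(x)|\ge nL^{a}(\omega,E)-n\delta$ off a set of measure $\lesssim\exp(-c\delta n(\log n)^{-C_{0}})$ (using the $v_{n}$-estimate on the blocks and \eqref{prelims-L-La} to pass from $L$ to $L^{a}$). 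On the other hand $\log|f_{n}^{a}|\le\log\|M_{n}^{a}\|\le nL^{a}(\omega,E)+C(\log n)^{C_{0}}$ for every $x$. Since the mean $\langle\log|f_{n}^{a}|\rangle$ equals $nL^{a}(\omega,E)$ up to $O((\log n)^{C_{0}})$ (convergence of the finite-scale density of states), combining the last two bounds gives the claimed deviation of $\log|f_{n}^{a}(x)|$ from $\langle\log|f_{n}^{a}|\rangle$; the induction on scales needed to run the block reduction is closed off by treating $n$ below a fixed $n_{0}$ directly.

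The main obstacle is precisely that $n^{-1}\log|f_{n}^{a}|$ is \emph{not} bounded below --- it tends to $-\infty$ wherever $E$ meets the spectrum of the truncated operator --- so the subharmonic-function machinery cannot be applied to it directly and one is forced into the two-step route; the delicate point is to arrange the scale-reduction so that the set on which the Avalanche-Principle largeness hypothesis fails is genuinely small, a bootstrap whose base case at the smallest scale unavoidably involves $\log|a(\cdot)-E|$, which is how the constants come to depend on $I_{a,E}$. (Removing that dependence, to reach the $I_{a,E}$-free constants of \propref{prelims-LDT-determinants}, would require replacing this crude base step; that is the alternative argument referred to in the appendix.) A secondary, Jacobi-specific complication is that $b$ may vanish, so the individual factors $1/b(z+j\omega)$ are unbounded; this is handled throughout by working with $M_{n}^{a}$ and $M_{n}^{u}$ and keeping track of $S_{N},\t S_{N}$ and the identity $L=-D+L^{a}$, exactly as in the proof of \corref{prelims-AP-determinants}.
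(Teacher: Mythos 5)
First, a remark on scope: the paper does not prove this proposition at all — it is quoted verbatim from \cite[Proposition 4.10]{2012arXiv1202.2915B}, and the appendix only re-proves the auxiliary \cite[Lemma 4.2]{2012arXiv1202.2915B} (to remove the $I_{a,E}$-dependence in \propref{prelims-LDT-determinants}). Your overall architecture — first the large deviations estimate for $\log\left\Vert M_{n}^{u}\right\Vert$ via subharmonicity, almost shift-invariance and the Diophantine condition, then a transfer to $f_{n}^{a}$ through the Avalanche Principle, with an induction on scales whose base case is responsible for the $I_{a,E}$-dependence — is indeed the Goldstein--Schlag scheme that the cited proof follows, and your diagnosis of why $n^{-1}\log\left|f_{n}^{a}\right|$ cannot be treated as a bounded subharmonic function is correct.

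However, the transfer step as you have written it has a genuine quantitative gap. You claim that after removing, block by block, the set where some block determinant at scale $l'\simeq l\simeq\log n$ is too small, the remaining exceptional set has measure $\lesssim\exp\left(-c\delta n\left(\log n\right)^{-C_{0}}\right)$. But each block-scale exceptional set is controlled by the LDT (or Cartan's estimate) \emph{at scale} $l$, hence has measure at best $\exp\left(-cl\left(\log l\right)^{-C}\right)$; the union over the $\sim n/l$ blocks is then of order $n\exp\left(-c\left(\log n\right)^{1-}\right)$ (or $n\exp(-c(\log n)^{A-})$ if $l\simeq(\log n)^{A}$), which is astronomically larger than $\exp\left(-c\delta n\left(\log n\right)^{-C_{0}}\right)$. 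No union bound over blocks can reach an exceptional set exponentially small in $n$, because the deviation available at a block of length $\log n$ is itself only of size $\log n$. The actual mechanism is different: the Avalanche Principle is used only to prove that $u_{n}=n^{-1}\log\left|f_{n}^{a}\right|$ is \emph{almost invariant} under the shift in $L^{1}\left(\mb T\right)$ with error $\left(\log n\right)^{C}/n$ (the non-exponentially-small bad sets are tolerable in $L^{1}$ because the zero count of \propref{prelims-number-of-ev} bounds $u_{n}$ from below in an integrated sense), and to compare $\left\langle \log\left|f_{n}^{a}\right|\right\rangle$ with $nL_{n}^{a}$; the exponential measure bound then comes from applying the subharmonic splitting/John--Nirenberg machinery of \cite{MR1847592} directly to $u_{n}$ at scale $n$, exactly as you did for $v_{n}$, with the $\left(\log n\right)^{-C_{0}}$ loss reflecting the almost-invariance error. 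Separately, be careful that invoking \propref{prelims-LDT-failure} at the block scale is circular unless it is read as part of the induction on scales, since its proof already uses \propref{prelims-LDT-determinants}.
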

\begin{lem}
\label{lem:prelims-<f>-nL} (\cite[Lemma 4.11]{2012arXiv1202.2915B})
There exists a constant $C_{0}=C_{0}(\left\Vert a\right\Vert _{\infty},I_{a,E},\left\Vert b\right\Vert _{*},$
$\left|E\right|,\omega,\gamma)$ such that
\[
\left|\left\langle \log\left|f_{n}^{a}\right|\right\rangle -nL_{n}^{a}\right|\le C_{0}
\]
for all integers.
\end{lem}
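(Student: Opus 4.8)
The bound splits into the two one-sided inequalities, and the lower one is immediate: by \eqref{prelims-Ma-fa} the determinant $f_n^a$ is the $(1,1)$-entry of $M_n^a$, hence $\left|f_n^a(x)\right|\le\left\Vert M_n^a(x)\right\Vert$ for every $x\in\mb T$, and integrating over $\mb T$ gives $\left\langle\log\left|f_n^a\right|\right\rangle\le nL_n^a$. So the real content is the reverse inequality $nL_n^a\le\left\langle\log\left|f_n^a\right|\right\rangle+C_0$ (for small $n$ the quantities are bounded in terms of the data, so one may take $n$ large).

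For the reverse inequality the plan is to pass from the operator norm to the largest matrix entry. Since $\max_{i,j}\left|(M_n^a)_{ij}\right|\le\left\Vert M_n^a\right\Vert\le\left\Vert M_n^a\right\Vert_{\mathrm{HS}}\le2\max_{i,j}\left|(M_n^a)_{ij}\right|$ and, by \eqref{prelims-Ma-fa}, the entries of $M_n^a(x)$ are $f_n^a(x)$, $-\t b(x)f_{n-1}^a(x+\omega)$, $b(x+n\omega)f_{n-1}^a(x)$, and $-\t b(x)b(x+n\omega)f_{n-2}^a(x+\omega)$, with $\left|\t b(x)\right|,\left|b(x+n\omega)\right|\le\left\Vert b\right\Vert_\infty$ on $\mb T$, we get pointwise
\[
\log\left\Vert M_n^a(x)\right\Vert\le\log2+2\log^+\left\Vert b\right\Vert_\infty+\max\bigl(\log\left|f_n^a(x)\right|,\log\left|f_{n-1}^a(x+\omega)\right|,\log\left|f_{n-1}^a(x)\right|,\log\left|f_{n-2}^a(x+\omega)\right|\bigr).
\]
The maximum always dominates $\log\left|f_n^a(x)\right|$, and $\max(a,b,c,d)-a\le(b-a)_{+}+(c-a)_{+}+(d-a)_{+}$; subtracting $\log\left|f_n^a(x)\right|$ and integrating over $\mb T$ therefore reduces the theorem to the bounds, uniform in $n$,
\[
\Bigl\langle\bigl(\log\left|f_{k}^{a}(\,\cdot\,+t\omega)\right|-\log\left|f_{n}^{a}\right|\bigr)_{+}\Bigr\rangle\lesssim1,\qquad(k,t)\in\{(n-1,0),(n-1,1),(n-2,1)\},
\]
i.e. to the statement that $f_n^a$ cannot be, on average, much smaller than $f_{n-1}^a$ or $f_{n-2}^a$ at the relevant shifts.

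To establish these I would use the three-term recurrence $f_n^a=(a(\,\cdot\,+(n-1)\omega)-E)f_{n-1}^a-\t b(\,\cdot\,+(n-1)\omega)b(\,\cdot\,+(n-1)\omega)f_{n-2}^a$ (and its two-step iterate, whose coefficient of $f_{n-2}^a$ equals $f_2^a(\,\cdot\,+(n-2)\omega,\omega,E)$), together with the large deviation estimate \propref{appendix-LDT-determinants} applied at scales $n$, $n-1$, $n-2$. Rearranging the recurrence controls $\left|a(\,\cdot\,+(n-1)\omega)-E\right|\left|f_{n-1}^a\right|$ by $\left|f_n^a\right|+\left\Vert b\right\Vert_\infty^2\left|f_{n-2}^a\right|$; taking logarithms, using $\bigl(\alpha+\beta\bigr)_+\le\alpha_++\beta_+$, and integrating — with the shift-invariance of $\left\langle\,\cdot\,\right\rangle$ — produces exactly a term $\left\langle\log^{-}\left|a(\,\cdot\,)-E\right|\right\rangle\le\log^{+}(\left\Vert a\right\Vert_\infty+\left|E\right|)+\left|I_{a,E}\right|$, which is the origin of the dependence on $I_{a,E}$, plus an analogous bounded contribution $\left\langle\log^{-}\left|f_2^a\right|\right\rangle$ from the two-step iterate. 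The delicate point — and the main obstacle — is that such a rearrangement by itself only relates the comparison at scale $n$ to the one at scale $n-1$ (or $n-2$), so one must close the estimate with a constant that does not grow with $n$. This is where \propref{appendix-LDT-determinants} enters: it shows that $\log\left|f_m^a(x)\right|$ stays within a controlled amount of its mean off a set of small measure, which, combined with the Wronskian-type identity $\det M_n^a(x)=\prod_{j=1}^n b(x+j\omega)\prod_{j=0}^{n-1}\t b(x+j\omega)$ (forcing the small-value sets of $f_n^a$ and $f_{n-1}^a$ to be essentially disjoint) and the log-integrability of $\log^{-}\left|f_n^a\right|$ near its (finitely many in a period) zeros, lets one absorb the exceptional-set contribution into a constant depending only on $\left\Vert a\right\Vert_\infty$, $I_{a,E}$, $\left\Vert b\right\Vert_*$, $\left|E\right|$, $\omega$ and $\gamma$.
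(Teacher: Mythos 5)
The paper does not actually prove this lemma: it is imported verbatim from \cite[Lemma 4.11]{2012arXiv1202.2915B}, so there is no in-paper argument to compare with, and your proposal has to stand on its own. Its first half does: the inequality $\left\langle \log\left|f_{n}^{a}\right|\right\rangle \le nL_{n}^{a}$ is immediate from \eqref{prelims-Ma-fa}, and the reduction of the converse to the uniform bounds $\left\langle \left(\log\left|f_{k}^{a}\left(\cdot+t\omega\right)\right|-\log\left|f_{n}^{a}\right|\right)_{+}\right\rangle \lesssim1$ for the three neighbouring entries of $M_{n}^{a}$ is correct and is the natural first step. The problem is that these three averaged positive parts are, up to constants, equivalent to the statement being proved, and the sketch you give for them does not close.

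Concretely: (i) rearranging the three-term recurrence only yields $\log\left|f_{n-1}^{a}\right|\le\max\left(\log\left|f_{n}^{a}\right|,\log\left|f_{n-2}^{a}\right|\right)+\log^{-}\left|a\left(\cdot+\left(n-1\right)\omega\right)-E\right|+C$, i.e. it trades the unknown $(n-1)$-versus-$n$ comparison for the equally unknown $(n-2)$-versus-$n$ one; nothing is gained beyond a bounded term involving $I_{a,E}$. (ii) \propref{appendix-LDT-determinants} concentrates $\log\left|f_{m}^{a}\right|$ around its mean only with deviations of size $\delta m$ off sets of measure $\exp\left(-c_{0}\delta m\left(\log m\right)^{-C_{0}}\right)$; to extract an $O\left(1\right)$ deviation you would need $\delta\sim m^{-1}$, for which the exceptional-measure bound is vacuous, and even granting concentration you would still need $\left\langle \log\left|f_{n-1}^{a}\right|\right\rangle -\left\langle \log\left|f_{n}^{a}\right|\right\rangle =O\left(1\right)$, which is essentially the lemma itself. (iii) The identity $\det M_{n}^{a}\left(x\right)=\t b\left(x\right)b\left(x+n\omega\right)\left(f_{n-1}^{a}\left(x\right)f_{n-1}^{a}\left(x+\omega\right)-f_{n}^{a}\left(x\right)f_{n-2}^{a}\left(x+\omega\right)\right)$ only forces the product of $b$'s to be small when \emph{both} products of determinants are small, so it does not make the small-value sets of $f_{n}^{a}$ and $f_{n-1}^{a}$ disjoint in the way you assert; and a qualitative appeal to log-integrability of $\log^{-}\left|f_{n}^{a}\right|$ cannot give an $n$-independent constant, since the Riesz mass (equivalently, the number of zeros per period) of $\log\left|f_{n}^{a}\right|$ grows like $n$. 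The uniform $O\left(1\right)$ comparison is the genuine content of the cited Lemma 4.11 and requires a quantitative multi-scale argument (of avalanche-principle type, as in \cite{2012arXiv1202.2915B}), not the combination of ingredients sketched here; as written, the proposal leaves precisely this core step unproved.
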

\begin{lem}\label{lem:apx-Ln_L}(\cite[Lemma 3.9]{2012arXiv1202.2915B})
For any integer $n>1$ we have
\[
0\le L_{n}-L=L_{n}^{u}-L^{u}=L_{n}^{a}-L^{a}<C_{0}\frac{\left(\log n\right)^{2}}{n}
\]
where $C_{0}=C_{0}\left(\left\Vert a\right\Vert _{\infty},\left\Vert b\right\Vert _{*},\left|E\right|,\omega,\gamma\right)$.\end{lem}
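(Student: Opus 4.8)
The plan is to reduce the three displayed differences to a single one via elementary identities, get $0\le L_n-L$ from subadditivity, and get the polynomial upper bound from the Avalanche Principle together with the large deviations estimate for the determinants. For the identities: putting $z=x\in\mb T$ in \eqref{prelims-Mu-Ma}, integrating over $\mb T$, and using $\int_{\mb T}S_n(x,\omega)\,dx=\int_{\mb T}\t S_n(x,\omega)\,dx=nD$ (shift-invariance of Lebesgue measure, and $S_n=\t S_n$ on $\mb T$) gives $L_n^u=-D+L_n^a$; combined with $L_n^u=L_n$ and \eqref{prelims-L-La} this yields $L_n-L=L_n^u-L^u=L_n^a-L^a$, so it remains to prove $0\le L_n-L\le C_0(\log n)^2/n$, which I phrase using $L_n^u$. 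The lower bound is standard: the submultiplicativity $\left\Vert M_{n+m}^a(x)\right\Vert\le\left\Vert M_m^a(x+n\omega)\right\Vert\left\Vert M_n^a(x)\right\Vert$ (cocycle property of $M^a$) and shift-invariance of $\int_{\mb T}$ make $n\mapsto nL_n^a$ subadditive, so $L_n^a\downarrow L^a=\inf_n L_n^a$, whence $L_n\ge L$; the same subadditivity gives $L_{2m}\le L_m$, so the quantity $b_m:=2mL_m-2mL_{2m}$ is $\ge0$.

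For the upper bound I would show $b_{2m}=b_m-O\!\left(m\,e^{-c\gamma m/(\log m)^{C_0}}\right)$ along dyadic scales $m=2^km_1$, for a threshold $m_1$ depending only on the parameters. Fix such an $m$ and decompose $M_{4m}^u(x)$ into the four blocks $A_j^u(x)=M_m^u(x+(j-1)m\omega)$, $j=1,\dots,4$; since $\left|\det M_m^u\right|=1$, hypothesis \eqref{prelims-AP-condition1} of the Avalanche Principle holds automatically, and the cocycle property gives $A_{j+1}^uA_j^u=M_{2m}^u(x+(j-1)m\omega)$. By \propref{prelims-LDT-determinants}, outside a set $\mc B_m$ with $\mes\left(\mc B_m\right)\lesssim e^{-H}$ one has $\log\left|f_m^a(x+(j-1)m\omega)\right|>mL_m^a-H(\log m)^{C_0}$ and $\log\left|f_{2m}^a(x+(j-1)m\omega)\right|>2mL_{2m}^a-H(\log m)^{C_0}$ for all $j$; via \eqref{prelims-Mu-Ma} and the upper bound on $S_m,\t S_m$ from \lemref{prelims-uniform-bound-S_N} this converts into lower bounds on $\left\Vert A_j^u\right\Vert$ and $\left\Vert A_{j+1}^uA_j^u\right\Vert$. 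Taking $H$ of order $\gamma m(\log m)^{-C_0}$, and using in addition the pointwise upper bound $\log\left\Vert M_m^a\right\Vert\le mL_m^a+C(\log m)^{C_0}$ of \propref{prelims-M^a-upper-bound} together with the exact cancellation of the $S$-terms in the combination $\log\left\Vert A_{j+1}^u\right\Vert+\log\left\Vert A_j^u\right\Vert-\log\left\Vert A_{j+1}^uA_j^u\right\Vert$ (the identity already used in the proof of \corref{prelims-AP-determinants}), one checks that \eqref{prelims-AP-condition2} and \eqref{prelims-AP-condition3} hold on $\mb T\setminus\mc B_m$ with $\mu\ge e^{\gamma m/2}$.

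Applying \propref{prelims-AP} and integrating over $\mb T$ --- on $\mb T\setminus\mc B_m$ using the Avalanche Principle conclusion, on $\mc B_m$ using Cauchy--Schwarz with the routine bound $\left\Vert\log\left\Vert M_N^u\right\Vert\right\Vert_{L^2(\mb T)}\lesssim N$ --- and using that $\int_{\mb T}\log\left\Vert M_N^u(x,\omega,E)\right\Vert dx=NL_N$ exactly and that Lebesgue measure is shift-invariant, I obtain
\[
4mL_{4m}=6mL_{2m}-2mL_m+O\!\left(m\,e^{-c\gamma m/(\log m)^{C_0}}\right),
\]
which rearranges to the asserted recursion for $b_m$. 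Consequently $\{b_{2^km_1}\}_k$ is Cauchy, hence bounded by a constant $B$ depending only on the parameters; since $L_{2^jn}-L_{2^{j+1}n}=b_{2^jn}/(2^{j+1}n)$ and $L_N\to L$, summing the telescoping series gives $L_n-L=\sum_{j\ge0}b_{2^jn}/(2^{j+1}n)\le B/n$ for $n=2^km_1$, in particular $\le C_0(\log n)^2/n$. A routine variant of this computation with nearly-equal (rather than equal) block sizes, in the style of \corref{prelims-AP-determinants}, handles arbitrary $n\ge m_1$, and for $n\le m_1$ one uses the trivial bound $L_n-L\le L_1-L$.

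The step I expect to be the main obstacle is verifying the growth hypothesis \eqref{prelims-AP-condition3}: it reduces to $L_m-L_{2m}$ being a small fixed fraction of $\gamma$, which is nearly circular, this being essentially the conclusion we are after. The way around it is that the \emph{qualitative} convergence $L_m\downarrow L$ (from subadditivity) is available a priori, so \eqref{prelims-AP-condition3} holds once $m$ exceeds a parameter-dependent threshold $m_1$ with $L_{m_1}-L$ small enough; past that scale the Avalanche Principle ``bootstraps'' the crude rate into the polynomial one. A secondary technical point is that in the Jacobi setting the determinant hypothesis \eqref{prelims-AP-condition1} is nontrivial, which forces one to run the argument with the unimodular normalization $M^u$ while importing the large deviations estimate and the uniform upper bound --- stated for $f^a$ and $M^a$ --- through \eqref{prelims-Mu-Ma} and \lemref{prelims-uniform-bound-S_N}, exactly as in the proof of \corref{prelims-AP-determinants}.
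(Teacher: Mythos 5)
First, a point of context: the paper does not prove this lemma at all --- it is quoted verbatim from \cite[Lemma 3.9]{2012arXiv1202.2915B}, and the appendix here only uses it (together with Proposition \ref{prop:appendix-LDT-determinants} and Lemma \ref{lem:prelims-<f>-nL}) to pass from the finite-scale normalizations $nL_n^a$ to $nL^a$ in \propref{prelims-LDT-determinants} and \propref{prelims-M^a-upper-bound}. Your proposal is a genuine proof attempt, and its skeleton (reduce to $L_n^u$ via \eqref{prelims-Mu-Ma}, lower bound from subadditivity, Avalanche Principle for the unimodular cocycle on four blocks outside a large-deviations bad set, integrate, telescope over doubled scales) is exactly the standard Goldstein--Schlag mechanism that underlies the cited result, so in spirit you are reproducing the reference's argument rather than inventing a new one. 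This creates a logical trap you must be explicit about: as stated in this paper, \propref{prelims-LDT-determinants} and \propref{prelims-M^a-upper-bound} are normalized by $NL^a$ and are themselves deduced in the appendix \emph{from the lemma you are proving}. Your displayed inequalities are written with $mL_m^a$ and $2mL_{2m}^a$, which is the right (non-circular) choice, but then the correct references are Proposition \ref{prop:appendix-LDT-determinants} combined with Lemma \ref{lem:prelims-<f>-nL}, and the $nL_n^a$-normalized upper bound of \cite[Proposition 3.14]{2012arXiv1202.2915B}, not the two propositions you cite.

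Beyond that, two steps are weaker than ``routine''. (i) Your treatment of condition \eqref{prelims-AP-condition3} via a threshold $m_1$ furnished by Fekete's lemma does make the dyadic recursion legitimate (along $m=2^km_1$ one has $L_m-L_{2m}\le L_{m_1}-L$), but $m_1$ is then determined by the unquantified rate of convergence at the particular $(\omega,E)$, so the constant you produce depends on $E$ itself rather than on $|E|$ as stated; the cited proof avoids this by propagating the smallness of $L_m-L_{2m}$ through the scale induction itself, starting from a base scale controlled only by the LDT constants. (ii) The passage from dyadic multiples of $m_1$ to arbitrary $n$ is a genuine gap as written: there is no cheap comparison with the nearest dyadic scale, since the one-step bound $|L_{n+1}-L_n|\lesssim n^{-1}$ accumulates to $O(1)$ over a dyadic gap, and your clean bound $B/n$ at dyadic scales does not transfer. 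One must rerun the AP/integration argument at scale $n$ with $\sim n/l$ blocks of nearly equal length $l$ (chosen of size roughly $(\log n)(\log\log n)^{C}$, a scale where the LDT error beats $1/n$) and compare with the $N\to\infty$ version of the same identity; this is precisely where the extra $(\log n)$ factors in the stated bound $(\log n)^2/n$ come from, and where condition \eqref{prelims-AP-condition3} is needed at non-dyadic scales as well. So: right method, essentially that of the cited reference, but the citations must be replaced by their finite-scale counterparts to avoid circularity, and the last paragraph of your argument needs to be carried out, not waved at.
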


\propref{prelims-LDT-determinants} is a straightforward consequence
of the above results. Note that the constants depend on $\omega$
rather than $c,\alpha$ as in \secref{Preliminaries}. However, in
\cite{2012arXiv1202.2915B} it was noted that the dependence on $\omega$
only comes through the large deviations estimate for subharmonic functions
\cite[Theorem 3.8]{MR1847592}. The dependence there is only on $c,\alpha$,
so we can replace $\omega$ with $c,\alpha$. The dependence of the
constants on $I_{a,E}$ in \cite{2012arXiv1202.2915B} came through
\cite[Lemma 4.2]{2012arXiv1202.2915B}. We provide a different proof
of this lemma that gets rid of the dependence on $I_{a,E}$. 

First we need to recall three results that will be needed for the
proof. The following theorem is a restatement of the large deviations
estimate for subharmonic functions, \cite[Theorem 3.8]{MR1847592}.
In what follows $\mc A_{\rho}$ denotes the annulus $\left\{ z:\,\left|z\right|\in\left(1-\rho,1+\rho\right)\right\} $.
\begin{thm}
\label{thm:apx-sh_ldt}(\cite[Theorem 3.8]{MR1847592}) Fix $p>\alpha+2$.
Let $u$ be a subharmonic function and let 
\[
u\left(z\right)=\int_{\mathbb{C}}\log\left|z-\zeta\right|d\mu\left(\zeta\right)+h\left(z\right)
\]
be its Riesz representation on a neighborhood of $\mc A_{\rho}$.
If $\mu\left(\mc A_{\rho}\right)+\left\Vert h\right\Vert _{L^{\infty}\left(\mc A_{\rho}\right)}\le M$
then for any $\delta>0$ and any positive integer $n$ we have
\[
\mes\left(\left\{ x\in\mb T:\left|\sum_{k=1}^{n}u\left(x+k\omega\right)-n\left\langle u\right\rangle \right|>\delta n\right\} \right)<\exp\left(-c_{0}\delta n+r_{n}\right)
\]
where $c_{0}=c_{0}\left(c,\alpha,M,\rho\right)$ and
\[
r_{n}=\begin{cases}
C_{0}\left(\log n\right)^{p} & ,\, n>1\\
C_{0} & ,\, n=1,
\end{cases}
\]
with $C_{0}=C_{0}\left(c,\alpha,p\right)$. If $p_{s}/q_{s}$ is a
convergent of $\omega$ and $n=q_{s}>1$ then one can choose $r_{n}=C_{0}\log n$.\end{thm}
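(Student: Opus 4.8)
The final statement is \cite[Theorem 3.8]{MR1847592} restated in the normalization convenient for us, so the plan has two parts: reconcile the hypotheses, and then recall, for completeness, the mechanism behind it. The reconciliation is routine. A subharmonic $u$ with $\mu(\mc A_\rho)+\|h\|_{L^\infty(\mc A_\rho)}\le M$ is, by the sub-mean-value property, bounded above by $C(M,\rho)$ on a slightly smaller annulus, restricts to a function in $\mathrm{BMO}(\mb T)$ with $\|u\|_{\mathrm{BMO}(\mb T)}\lesssim M$, and has Fourier coefficients obeying $|\hat u(m)|\lesssim M/|m|$ for $m\ne0$; conversely the normalization $\sup_{\mc A_\rho}|u|\le1$ used in \cite{MR1847592} trivially gives $\mu(\mc A_\rho)+\|h\|_{L^\infty(\mc A_\rho)}\lesssim1$. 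Once the Riesz data of the two formulations are matched, the conclusion --- with its error $r_n$ and its sharpening when $n=q_s$ is a denominator of a convergent of $\omega$ --- is exactly the one proved there, and I outline that proof below.

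First I would split $u=h+p$, where $p(z)=\int\log|z-\zeta|\,d\mu(\zeta)$ is the logarithmic potential of the part of the Riesz mass lying near $\mb T$ and $h$ is harmonic and bounded on a neighbourhood of $\mb T$ (mass far from $\mb T$ being absorbed into $h$). Since $h$ extends harmonically past $\mb T$, its Fourier coefficients decay exponentially, $|\hat h(m)|\le CMe^{-c|m|}$; writing the ergodic sum in Fourier, $\sum_{k=1}^n(h(x+k\omega)-\langle h\rangle)=\sum_{m\ne0}\hat h(m)e(mx)\sum_{k=1}^n e(mk\omega)$, using $|\sum_{k=1}^n e(mk\omega)|\le\min(n,(2\|m\omega\|)^{-1})$ and the Diophantine bound $\|m\omega\|^{-1}\le c^{-1}|m|(\log|m|)^\alpha$, one obtains the deterministic estimate
\[
\Bigl|\sum_{k=1}^n\bigl(h(x+k\omega)-\langle h\rangle\bigr)\Bigr|\le CM\sum_{m\ne0}e^{-c|m|}|m|(\log|m|)^\alpha\le C(M,c,\alpha),
\]
uniform in $x$ and $n$; it is harmless and gets absorbed into $r_n$.

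For the potential part I would fix a frequency cut-off $R$ and write $p=p_{<R}+p_{\ge R}$, with $p_{<R}$ the truncation of the Fourier series of $p$ to $|m|<R$. For the low-frequency piece, using $|\hat p(m)|\lesssim M/|m|$ and the Diophantine bound again,
\[
\Bigl|\sum_{k=1}^n\bigl(p_{<R}(x+k\omega)-\langle p_{<R}\rangle\bigr)\Bigr|\le CM\sum_{0<|m|<R}\frac1{|m|}\min\!\Bigl(n,\frac1{2\|m\omega\|}\Bigr)\le C'MR(\log R)^\alpha,
\]
deterministically. Choosing $R$ a small multiple of $\delta n/(\log n)^\alpha$ makes this at most $\delta n/2$, and then, since $p_{\ge R}$ is still the restriction to $\mb T$ of a subharmonic function with bounded Riesz mass minus a bounded trigonometric polynomial, I would bound $\mes\{x:|\sum_{k=1}^n(p_{\ge R}(x+k\omega)-\langle p_{\ge R}\rangle)|>\delta n/2\}$ by $\exp(-c_0\delta n+C_0(\log n)^p)$ using the sub-mean-value property and the $\mathrm{BMO}$ bound via John--Nirenberg, upgraded from the multiplicative form $\exp(-c\delta n/(\log n)^{q})$ (for a suitable power $q$) to the additive one by the standard bootstrap over blocks, which rests on the exact identity $\sum_{k=1}^{jn}g(x+k\omega)=\sum_{i=0}^{j-1}\sum_{k=1}^{n}g(x+(in+k)\omega)$ together with an equidistribution estimate for $\{in\omega\}_{i<j}$. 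Optimizing $R$ then produces $r_n=C_0(\log n)^p$ for any $p>\alpha+2$, while the $\log n$ error in the convergent-denominator case comes from the low discrepancy $\lesssim q_s^{-1}\log q_s$ of $\{k\omega\}_{1\le k\le q_s}$ furnished by the three-distance theorem, which sharpens all the deterministic bounds above.

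The step I expect to be the main obstacle is exactly this last one: the quantitative large deviation estimate for ergodic sums of a subharmonic function of bounded Riesz mass --- that is, passing from the $\mathrm{BMO}$/John--Nirenberg bound to the additive form $\exp(-c_0\delta n+C_0(\log n)^p)$, and carefully tracking how the Diophantine exponent $\alpha$ propagates through the cut-off optimization into the constraint $p>\alpha+2$. Since this is precisely \cite[Theorem 3.8]{MR1847592}, however, nothing is needed here beyond the hypothesis-matching of the first paragraph.
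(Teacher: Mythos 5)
The paper offers no proof of this statement: it is quoted verbatim (up to notation) from \cite[Theorem 3.8]{MR1847592}, and your proposal correctly treats it the same way, reducing the matter to matching the Riesz-representation normalization with that of the cited theorem. Your sketch of the underlying Goldstein--Schlag argument (Fourier decay of the Riesz data, Weyl sums with the Diophantine bound, cut-off optimization, John--Nirenberg for the high-frequency tail, and discrepancy at $n=q_{s}$) is a faithful outline of the cited proof, so nothing further is needed.
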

\begin{prop}
\label{prop:apx-ldt-M}(\cite[Theorem 3.10]{2012arXiv1202.2915B})
Fix $p>\alpha+2$. For any $\delta>0$ and any integer $n>1$ we have
\[
\mes\left\{ x\in\mb T:\left|\log\left\Vert M_{n}^{a}\left(x\right)\right\Vert -nL_{n}^{a}\right|>\delta n\right\} <\exp\left(-c_{0}\delta n+C_{0}\left(\log n\right)^{p}\right)
\]
where $c_{0}=c_{0}\left(\left\Vert a\right\Vert _{\infty},\left\Vert b\right\Vert _{*},\left|E\right|,c,\alpha,\gamma\right)$
and $C_{0}=C_{0}\left(\left\Vert a\right\Vert _{\infty},\left\Vert b\right\Vert _{*},\left|E\right|,c,\alpha,\gamma,p\right)$.
The same estimate, with possibly different constants, holds with $L^{a}$
instead of $L_{n}^{a}$. \end{prop}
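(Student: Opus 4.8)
This proposition is a restatement of \cite[Theorem~3.10]{2012arXiv1202.2915B}, and the scheme I would follow is the Bourgain--Goldstein multiscale bootstrap: upgrade the subharmonic large deviations estimate (\thmref{apx-sh_ldt}), which controls only ergodic averages of a \emph{fixed} subharmonic function, to a statement about $\log\|M_n^a\|$ itself, amplifying it with the Avalanche Principle (\propref{prelims-AP}). For the setup, fix $(\omega,E)$ with $L(\omega,E)>\gamma>0$: then $u_n(z):=n^{-1}\log\|M_n^a(z,\omega,E)\|$ extends subharmonically to a fixed strip $|\Im z|<\rho$, is bounded above there by an absolute multiple of $\|a\|_\infty+\|b\|_*+|E|$ uniformly in $n$, satisfies $\langle u_n\rangle_{\mb T}=L_n^a$, and (by the mean value inequality) has total Riesz mass plus sup-norm of its harmonic part bounded uniformly in $n$, so each $u_n$ is an admissible input for \thmref{apx-sh_ldt}. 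Throughout I would use $|L_n^a-L^a|\lesssim(\log n)^2/n$ (\lemref{apx-Ln_L}), so that replacing $L_n^a$ by $L^a$, or by scale-$l$ exponents $L_l^a,L_{2l}^a$, costs only $O((\log l)^2/l)$; in particular the final sentence of the statement (with $L^a$ in place of $L_n^a$) is an immediate consequence.

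\emph{Upper direction.} Submultiplicativity gives $u_n(x)\le n^{-1}\sum_{k=0}^{n-1}v(x+k\omega)$, where $v(y):=\log\|M_l^a(y,\omega,E)\|$ (for a fixed block length $l$) is a \emph{fixed} subharmonic function of bounded mass; running \thmref{apx-sh_ldt} on $v$ with the shift $\omega$ (which lies in $\mb T_{c,\alpha}$, so its constants are of the required form) gives, at scale $n$,
\[
\mes\bigl\{x\in\mb T:\ u_n(x)>L_l^a+\delta\bigr\}<\exp\bigl(-c_0\,\delta\,n+C_0(\log n)^p\bigr)
\]
for $\delta$ down to $\sim(\log n)^p/n$. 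Since $L_l^a\le L_n^a+O((\log l)^2/l)$, this is the sought estimate in the upper direction, up to an $l$-dependent error that will be made negligible by the choice of scales.

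\emph{Lower direction (avalanche).} Suppose inductively that for a scale $l$ we have two-sided control $|u_l-L_l^a|\le\delta_l$ and $|u_{2l}-L_{2l}^a|\le\delta_l$ off sets of measure $\le\kappa_l$. Put $m\simeq n/l$, write $n=s_{m+1}$, and let $A_j^a(x)$ be the block matrices of \corref{prelims-AP-determinants}. The set $G$ of $x$ for which those scale-$l$ and scale-$2l$ estimates hold at \emph{all} the points $x+s_j\omega$ has, by translation invariance of Lebesgue measure --- and, crucially, with \emph{no} Diophantine hypothesis on $l\omega$ --- measure $\ge 1-2m\kappa_l$. On $G$ the hypotheses of the Avalanche Principle hold for the $u$-normalised block matrices with $\mu=e^{\gamma l/2}$ --- condition \eqref{prelims-AP-condition2} from the scale-$l$ lower bound, condition \eqref{prelims-AP-condition3} from the scale-$l$ upper bound together with the scale-$2l$ lower bound and $(\log l)^2/l\to0$ --- exactly as in the proof of \corref{prelims-AP-determinants} (passing between the $a$- and $u$-normalisations via \eqref{prelims-Mu-Ma}), whence
\[
\Bigl|\log\|M_n^a(x)\|+\sum_{j=2}^{m-1}\log\|A_j^a(x)\|-\sum_{j=1}^{m-1}\log\|A_{j+1}^a(x)A_j^a(x)\|\Bigr|\lesssim m\,e^{-\gamma l/2}.
\]
On $G$ the two sums equal $(m-2)lL_l^a+O(m\delta_l l)$ and $(m-1)\cdot 2lL_{2l}^a+O(m\delta_l l)$ (the $A_1^a,A_m^a$ boundary terms contributing $O(l)$), and the same expansion of $nL_n^a$ follows by integrating in $x$ (or from \lemref{apx-Ln_L}); subtracting yields $|u_n(x)-L_n^a|\lesssim\delta_l+e^{-\gamma l/2}+(\log l)^2/l=:\delta_n$ off a set of measure $\le 2m\kappa_l=:\kappa_n$. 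Starting from the seed at a base scale $l_0$ a suitable power of $\log n$ (chosen so $e^{\gamma l_0/2}$ dominates the block counts and $2m\kappa_{l_0}<1$) and iterating along $l_{k+1}\simeq l_k^2$ until $l_K=n$ (so $K\lesssim\log\log n$), the additive errors $e^{-\gamma l_k/2}+(\log l_k)^2/l_k$ are summable with sum controlled by the first term, while $\kappa$ is multiplied by $\lesssim l_k$ at each step; optimising $l_0$ and the deviations then produces the bound $\exp(-c_0\delta n+C_0(\log n)^p)$ (for $\delta$ so small that this exceeds $1$ there is nothing to prove, and for $\delta$ of the size of $\|v\|_\infty$ the set is empty).

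\emph{Main obstacle.} The delicate point is the circularity in the avalanche step: \propref{prelims-AP} wants lower bounds on $\|M_l^a(x+s_j\omega)\|$ and upper control of the norms entering condition \eqref{prelims-AP-condition3}, i.e.\ a \emph{two-sided} scale-$l$ estimate in order to produce a two-sided scale-$n$ one, so the induction must carry both directions simultaneously and be ignited from the subharmonic LDT, which at a fixed scale (and with the good shift $\omega$) only delivers the one-sided, slightly non-optimal information of the second paragraph. Making the seed strong enough to start the recursion --- tuning the base scale $l_0$ and the successive deviations so that the $\le m$-fold union bounds stay below $1$ while $(\log l_0)^p/l_0$ does not spoil the target deviation --- and verifying conditions \eqref{prelims-AP-condition1}--\eqref{prelims-AP-condition3} uniformly on $G$ in the Jacobi setting where $\det M_l^a\ne1$ (handled, as in \corref{prelims-AP-determinants}, by the $u$-normalisation) are the parts that require genuine care; the measure and complexity bookkeeping is then routine.
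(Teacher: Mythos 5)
First, note that this paper does not prove \propref{apx-ldt-M} at all: it is quoted (with constants adapted) from \cite[Theorem 3.10]{2012arXiv1202.2915B}, and the Appendix only reproves \cite[Lemma 4.2]{2012arXiv1202.2915B} and draws corollaries. In the cited work the estimate is obtained by applying the subharmonic large deviations machinery of \thmref{apx-sh_ldt} directly at scale $n$ to $\log\left\Vert M_{n}^{a}\right\Vert$ itself (after splitting off the ergodic sums $S_{n},\t S_{n}$ of the fixed subharmonic function $\log\left|b\right|$ via \eqref{prelims-Mu-Ma}), using its subharmonicity on a fixed strip with Riesz mass $O\left(n\right)$ together with almost shift-invariance; no Avalanche Principle induction is involved. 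Your remark that the version with $L^{a}$ follows from \lemref{apx-Ln_L} is correct.

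The multiscale Avalanche Principle bootstrap you propose has a genuine quantitative gap: it cannot produce the stated measure bound $\exp\left(-c_{0}\delta n+C_{0}\left(\log n\right)^{p}\right)$. In your inductive step you define the good set $G$ by a union bound over the $m\simeq n/l$ block positions, so the exceptional measure only propagates multiplicatively, $\kappa_{n}\le 2m\,\kappa_{l}$. Iterating along $l_{k+1}\simeq l_{k}^{2}$ from a seed scale $l_{0}$, the product of the factors $m_{k}\simeq l_{k}$ is only polynomial in $n$, so the final bound is $\kappa_{n}\lesssim n\,\kappa_{l_{0}}\lesssim n\exp\left(-c\delta l_{0}\right)$; with $l_{0}$ a power of $\log n$ this is $\exp\left(-c\delta\left(\log n\right)^{A}\right)$, and even with a single step $l_{0}=n^{1/2}$ it is only $\exp\left(-c\delta n^{1/2}\right)$ --- exponentially weaker than $\exp\left(-c_{0}\delta n\right)$. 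The exponential-in-$n$ decay must come from an LDT applied at length $n$, either to $\log\left\Vert M_{n}^{a}\right\Vert$ itself (the route of \cite{2012arXiv1202.2915B}), or, if one insists on the AP expansion, to the ergodic sums $\sum_{j}\log\left\Vert M_{l}^{a}\left(x+jl\omega\right)\right\Vert$ of the \emph{fixed} scale-$l$ function along the orbit of $l\omega$ --- which reintroduces exactly the Diophantine/shift-averaging issue for $l\omega$ that you explicitly discard in favour of the union bound. As written, your scheme proves a much weaker statement; in this paper and in the cited one the Avalanche Principle (\propref{prelims-AP}) is reserved for the finer determinant estimates and rate-of-convergence results, not for this norm LDT.
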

\begin{lem}
\label{lem:apx-sh_better_upper_bound}(\cite[Lemma 2.4]{MR2438997})
Let $u$ be a subharmonic function defined on $\mc A_{\rho}$ such
that $\sup_{\mc A_{\rho}}u\le M$. There exist constants $C_{1}=C_{1}\left(\rho\right)$
and $C_{2}$ such that, if for some $0<\delta<1$ and some $L$ we
have 
\[
\mes\left\{ x\in\mb T:\, u\left(x\right)<-L\right\} >\delta,
\]
then
\[
\sup_{\mb T}u\le C_{1}M-\frac{L}{C_{1}\log\left(C_{2}/\delta\right)}.
\]

\end{lem}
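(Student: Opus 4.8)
\emph{Proof plan.} The statement is the Cartan estimate for subharmonic functions in contrapositive form: if $u$ fails to be very negative on a set of measure at least $\delta$, then it cannot be large on all of $\mb T$, the loss being governed by $\log(1/\delta)$. The plan is to combine the Riesz representation of $u$ with \lemref{prelims-Cartan-estimate} (in its subharmonic version, which is what turns scale-$\delta$ information into an exponential gain), together with a harmonic-measure estimate to absorb the regular part.

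First I would fix $\rho_{1}=\rho/2$ and write the Riesz decomposition $u=p_{\mu}+h$ on $\mc A_{\rho_{1}}$, where $p_{\mu}(z)=\int\log|z-\zeta|\,d\mu(\zeta)$, $\mu$ is the Riesz mass of $u$ in $\mc A_{\rho_{1}}$, and $h$ is harmonic on $\mc A_{\rho_{1}}$. Using $\sup_{\mc A_{\rho}}u\le M$ together with the convexity in $\log r$ of the circular means $r\mapsto\frac{1}{2\pi}\int_{|z|=r}u$, one bounds the Riesz mass $N:=\mu(\overline{\mc A_{\rho/4}})$ and, after folding the (harmonic) contribution of the far-away mass into $h$, the $L^{\infty}$ size of $h$ on a neighbourhood of $\mb T$, both in terms of $M$ and $\sup_{\mb T}u$ with $\rho$-dependent constants; one also records that the hypothesis $\mes\{x\in\mb T:u(x)<-L\}>\delta$ provides a point $z_{0}\in\mb T$ with $u(z_{0})\ge-L$, so that $u\not\equiv-\infty$ and a Cartan reference point is available.

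Next comes the core estimate. For the harmonic part I would use that the harmonic measure of $\mb T$, evaluated from a point on one of the circles $|z|=1\pm\rho/4$ sitting inside the sub-annulus whose boundary contains $\mb T$, has density bounded below by $c(\rho)>0$ on $\mb T$; hence the subset of $\mb T$ on which $u<-L$ drags the values of $u$ down on the circles $|z|=1\pm\rho/4$, and by the maximum principle on $\mc A_{\rho/4}$ on all of $\mb T$. For the logarithmic-potential part I would apply \lemref{prelims-Cartan-estimate} to $u$ on a disc $\mc D(z_{0},\rho/4)\subset\mc A_{\rho}$ rescaled to unit size, with Cartan height $H\asymp\log(C_{2}/\delta)$: the exceptional set is then covered by discs of total radius $\le e^{-H}$, so it meets $\mb T$ in measure $<\delta$, while off it $u$ exceeds $M-C(\rho)H\bigl(M-u(z_{0})\bigr)$. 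Since $\{u<-L\}$ has measure $>\delta$ it contains a point outside the exceptional set, where these lower bounds must nonetheless hold; feeding in the mass and upper bounds from the first step and solving the resulting linear inequality for $\sup_{\mb T}u$ yields the claimed bound after adjusting constants.

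\emph{Main obstacle.} The delicate part is the first step: with only $\sup_{\mc A_{\rho}}u\le M$ available and no a priori lower bound on $u$ or on its Riesz mass, the mass estimate and the control of $h$ have to be extracted purely from the convexity of the circular means and $\sup_{\mb T}u$; and one must make sure the Cartan exceptional set and the set $\{u<-L\}$ are compared on the same region of $\mb T$, so that the final comparison is genuine. Tracking the $\rho$-dependence of the constants through the harmonic-measure estimate is the other book-keeping burden.
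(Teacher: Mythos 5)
This lemma is quoted from \cite[Lemma 2.4]{MR2438997} and the paper gives no proof of it, so I can only judge your sketch on its own terms. Your overall architecture (Riesz decomposition, mass and harmonic-part bounds, a Cartan-type estimate at height $H\asymp\log(C_{2}/\delta)$, then solving for $\sup_{\mb T}u$) is the standard route, but as written the two steps that are supposed to carry the estimate do not close. First, the harmonic-measure/maximum-principle step: estimating $u$ on the circles $|z|=1\pm\rho/4$ by harmonic measure of the set $\left\{ u<-L\right\} \subset\mb T$ only yields $\sup_{\mb T}u\le M-c\left(\rho\right)\delta\left(M+L\right)$, a gain \emph{linear} in $\delta$, whereas the whole content of the lemma is the $L/\log\left(1/\delta\right)$ gain; moreover the hypothesis $u<-L$ says nothing about the harmonic part $h$ alone (the potential part may be responsible), so this step controls neither what you need for $h$ nor the final bound. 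Second, and more seriously, the Cartan step: \lemref{prelims-Cartan-estimate} (even in its subharmonic version, which you must invoke since the paper's statement is for analytic functions) gives the lower bound $u>M-CH\left(M-u\left(z_{0}\right)\right)$ only on $\mc D\left(z_{0},\rho/24\right)$ minus the exceptional set, i.e. on an arc of $\mb T$ of length comparable to $\rho$, while the bad set has measure only $>\delta$ and, in the relevant regime $\delta\ll\rho$, need never meet that arc. You name this obstacle yourself but do not resolve it, and without resolving it there is no point at which the inequalities $-L\ge u>M-CH\left(M-u\left(z_{0}\right)\right)$ can be compared, so the argument produces nothing.

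The missing content is precisely what bridges this: one needs the quantitative Riesz data $\mu\left(\mc A_{\rho/2}\right)+\left\Vert h\right\Vert _{L^{\infty}}\le C\left(\rho\right)\left(\left|M\right|+M-\sup_{\mb T}u\right)$ (with the reference point taken where $\sup_{\mb T}u$ is nearly attained -- not a point with $u\ge-L$, which the hypothesis does \emph{not} supply, since $\left\{ u<-L\right\} $ may have full measure), and then either a covering of $\mb T$ by $\sim\rho^{-1}$ Cartan discs each equipped with such a reference point, or, more directly, the global distributional estimate $\mes\left\{ x\in\mb T:\,\int\log\left|x-\zeta\right|d\mu\left(\zeta\right)\le-s\right\} \le C\exp\left(-cs/\mu\left(\mc A_{\rho/2}\right)\right)$ for the potential part. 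With that global estimate the set $\left\{ u<-L\right\} $ is compared on all of $\mb T$ at once, the hypothesis $\mes>\delta$ forces $L\le C\left(\rho\right)\left(M-\sup_{\mb T}u+\left|M\right|\right)\log\left(C_{2}/\delta\right)$, and the stated bound follows; without it, your sketch has a genuine gap rather than a book-keeping burden.
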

We can now reprove \cite[Lemma 4.2]{2012arXiv1202.2915B}. Analogously
to $f_{N}^{a}$ and $f_{N}^{u}$, $f_{N}$ will be the top left entry
in $M_{N}$. From \eqref{prelims-Ma-M} it follows that
\begin{equation}
f_{N}\left(z\right)=\left(\prod_{j=1}^{N}b\left(z+j\omega\right)\right)^{-1}f_{N}^{a}\left(z\right).\label{eq:apx-f-fa}
\end{equation}

\begin{lem}
(cf. \cite[Lemma 4.2]{2012arXiv1202.2915B}) Let $\left(\omega,E\right)\in\mb T_{c,\alpha}\times\mb C$
be such that $L\left(\omega,E\right)>\gamma>0$. There exists $l_{0}=l_{0}\left(\left\Vert a\right\Vert _{\infty},\left\Vert b\right\Vert _{*},\left|E\right|,c,\alpha,\gamma\right)$
such that
\[
\mes\left\{ x\in\mb T:\,\left|f_{l}\left(x\right)\right|\le\exp\left(-l^{3}\right)\right\} \le\exp\left(-l\right)
\]
for all $l\ge l_{0}$.\end{lem}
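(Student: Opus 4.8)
The plan is to reduce the statement about the raw determinant $f_l$ to one about $f_l^a$, and then run a Cartan/subharmonic argument on $\log|f_l^a|$ using the ``better upper bound'' \lemref{apx-sh_better_upper_bound}, feeding in the large deviations estimate for $\left\Vert M_l^a\right\Vert$ from \propref{apx-ldt-M} in place of the (not yet available, $I_{a,E}$-free) large deviations estimate for the determinants. First, since $\left|b(x+j\omega)\right|\le\left\Vert b\right\Vert_\infty$ for $x\in\mb T$, \eqref{apx-f-fa} gives $\left|f_l^a(x)\right|=\left|f_l(x)\right|\prod_{j=1}^l\left|b(x+j\omega)\right|\le\left|f_l(x)\right|\left\Vert b\right\Vert_\infty^l$, so for $l$ large
\[
\left\{x\in\mb T:\,\left|f_l(x)\right|\le e^{-l^3}\right\}\subset\left\{x\in\mb T:\,\left|f_l^a(x)\right|\le e^{-l^3/2}\right\},
\]
and it suffices to bound the measure of the latter set by $e^{-l}$.

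I would argue by contradiction: suppose $\mes\left\{x\in\mb T:\,\log\left|f_l^a(x)\right|<-l^3/2\right\}>e^{-l}$. Apply \lemref{apx-sh_better_upper_bound} to the subharmonic function $u=\log\left|f_l^a(\cdot,\omega,E)\right|$ on $\mc A_\rho$ (for a fixed $\rho<\rho_0$), with $L=l^3/2$ and $\delta=e^{-l}$. The required bound $M\ge\sup_{\mc A_\rho}u$ follows from $\log\left|f_l^a(z)\right|\le\log\left\Vert M_l^a(z)\right\Vert$ together with the fact that $M_l^a$ is a product of $l$ single-step matrices whose entries are bounded on $\mc A_\rho$ by $\left\Vert a\right\Vert_\infty+\left|E\right|+\left\Vert b\right\Vert_\infty+\left\Vert\t b\right\Vert_\infty$, and the identity $\left|\t b(z)\right|=\left|b(\bar z)\right|$, which give $\sup_{\mc A_\rho}\log\left\Vert M_l^a\right\Vert\le C_a l$ with $C_a=C_a(\left\Vert a\right\Vert_\infty,\left\Vert b\right\Vert_\infty,\left|E\right|)$; thus $M\le C_a l$. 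The lemma then yields $\sup_\mb T\log\left|f_l^a\right|\le C_1C_al-\frac{l^3/2}{C_1\log(C_2e^l)}\le C_1C_al-\frac{l^2}{4C_1}$ for $l$ large, hence $\sup_\mb T\log\left|f_l^a\right|\le-l^2/(5C_1)$.

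To reach a contradiction I need a matching lower bound, $\sup_\mb T\log\left|f_l^a\right|\gtrsim-l$ (anything beating $-cl^2$ would do). Here \propref{apx-ldt-M} enters: since $\left\langle\log\left\Vert M_l^a\right\Vert\right\rangle=lL_l^a$, and $L_l^a\ge D\ge-\left\Vert b\right\Vert_*$ (from $\left\Vert M_l^a\right\Vert^2\ge\left|\det M_l^a\right|$ and the explicit value $\left|\det M_l^a(x)\right|=\exp(S_l(x,\omega)+S_l(x+\omega,\omega))$ for $x\in\mb T$), one gets $\sup_\mb T\log\left\Vert M_l^a\right\Vert\ge lL_l^a\ge-\left\Vert b\right\Vert_*l$. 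Writing the four entries of $M_l^a$ via \eqref{prelims-Ma-fa} as products of $b,\t b$ with $f^a$'s at scales $l,l-1,l-2$, this norm lower bound forces $\sup_\mb T\left|f_k^a\right|\ge e^{-Cl}$ for at least one $k\in\left\{l-2,l-1,l\right\}$.

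The main obstacle is precisely this last step: pinning the growth on the $(1,1)$-entry $f_l^a$ itself rather than on $f_{l-1}^a$ or $f_{l-2}^a$. I expect this to be resolved by running the whole argument simultaneously over the short block of consecutive scales $\left\{l-2,l-1,l\right\}$, so that the ``bad'' alternative at scale $l$ is cashed in against the estimate already obtained at scale $l-1$ or $l-2$; the Wronskian-type identity $f_{l-1}^a(z)f_{l-1}^a(z+\omega)=f_l^a(z)f_{l-2}^a(z+\omega)+\prod_{j=1}^{l-1}\t b(z+j\omega)\prod_{j=1}^{l-1}b(z+j\omega)$ (a consequence of comparing the two expressions for $\det M_l^a$) is the tool for propagating a lower bound from one scale to the next; alternatively one combines \propref{apx-ldt-M} with \thmref{apx-sh_ldt} applied to $\log\left|b\right|$ to control $\left\Vert M_l^u\right\Vert=\left\Vert M_l\right\Vert$ and transfers through \eqref{prelims-Mu-Ma} and \eqref{apx-f-fa}. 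Once $\sup_\mb T\log\left|f_l^a\right|\ge-Cl$ is secured, it contradicts the upper bound from the second paragraph for all $l\ge l_0$, with $l_0$ depending only on $\left\Vert a\right\Vert_\infty,\left\Vert b\right\Vert_*,\left|E\right|,c,\alpha,\gamma$ (the $\omega$-dependence of the constants in \propref{apx-ldt-M} and \thmref{apx-sh_ldt} entering only through $c,\alpha$, as noted in the text), which completes the proof.
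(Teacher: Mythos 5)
Your first half matches the paper's proof: the reduction from $f_{l}$ to $f_{l}^{a}$ via \eqref{apx-f-fa}, the contradiction hypothesis, and the application of \lemref{apx-sh_better_upper_bound} to conclude $\sup_{\mb T}\log\left|f_{l}^{a}\right|\le-cl^{2}$ are all exactly as in the paper. But the second half is a genuine gap, and you say so yourself (``I expect this to be resolved by\dots''). The step you defer is the heart of the proof, and your proposed target for it is not the right one. You aim to show $\sup_{\mb T}\log\left|f_{l}^{a}\right|\gtrsim-l$ and contradict the $-cl^{2}$ bound; but a sup lower bound on $f_{l}^{a}$ itself at level $e^{-Cl}$ is essentially equivalent to the large deviations estimate for determinants that this lemma is an ingredient of, and the route you sketch (pinning the growth of $\left\Vert M_{l}^{a}\right\Vert$ on the $(1,1)$ entry, or ``propagating'' a lower bound from scale $l-1$ to scale $l$) does not obviously close: the Wronskian identity gives a lower bound on $f_{l-1}^{a}(x)f_{l-1}^{a}(x+\omega)$ when $f_{l}^{a}$ is tiny, which is information about the off-diagonal entries, and the one-step recursion does not convert a lower bound at scale $l-1$ into one at scale $l$.

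The paper's actual mechanism is different and uses the positivity of the Lyapunov exponent in an essential, quantitative way. From \propref{apx-ldt-M} one has the pointwise lower bound $\left\Vert M_{l}^{a}(x)\right\Vert\ge\exp\left(lL^{a}-l^{1/3}\right)$ off a set of measure $\exp\left(-cl^{1/3}\right)$, and the contradiction is obtained by showing that \emph{all four} entries of $M_{l}^{a}(x)$ are simultaneously $\lesssim\exp\left(lL^{a}-l^{1/2}/2\right)$ on a set of measure $\ge l^{-3}$. The $(1,1)$ entry is handled by your Cartan bound; for $f_{l-1}^{a}$ one assumes $\left|f_{l-1}^{a}\right|\ge\exp\left(lL^{a}-l^{1/2}\right)$ on half the circle, upgrades via Cartan so that both $f_{l-1}^{a}(x)$ and $f_{l-1}^{a}(x+\omega)$ are large on a set of measure $\gtrsim l^{-1}$, and then divides the determinant identity by $\left|P_{l-1}\right|^{2}\simeq\exp(2lD)$: since $L^{a}-D=L>\gamma$, the quotient $\left|f_{l-1}^{a}(x)f_{l-1}^{a}(x+\omega)\right|/\left|P_{l-1}\right|^{2}\ge\exp\left(2\gamma l-o(l)\right)\gg1$, while the other term is $\le\exp(-cl^{2})$, which is absurd. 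Hence $f_{l-1}^{a}$ is small at that level on a set of measure $>1/2$, and the one-step recursion (together with the LDT for $\log\left|b\right|$) then controls $f_{l-2}^{a}$ on a set of measure $\ge l^{-3}\gg\exp(-cl^{1/3})$. None of this appears in your proposal beyond the name of the identity, so the argument as written does not constitute a proof.
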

\begin{proof}
We argue by contradiction. Assume
\[
\mes\left\{ x\in\mb T:\,\left|f_{l}\left(x\right)\right|\le\exp\left(-l^{3}\right)\right\} >\exp\left(-l\right)
\]
for some sufficiently large $l$. We have that
\begin{align*}
\left|f_{l}^{a}\left(x\right)\right| & =\left|f_{l}\left(x\right)\right|\prod_{j=1}^{l}\left|b\left(x+j\omega\right)\right|\le\exp\left(-l^{3}\right)C^{l}\le\exp\left(-l^{3}/2\right)
\end{align*}
on a set of measure greater than $\exp\left(-l\right)$. Hence
\[
\mes\left\{ x\in\mb T:\,\left|f_{l}^{a}\left(x\right)\right|\le\exp\left(-l^{3}/2\right)\right\} >\exp\left(-l\right).
\]
At the same time we have 
\[
\sup_{\mb T}\log\left|f_{l}^{a}\right|\le\sup_{\mb T}\log\left\Vert M_{l}^{a}\right\Vert \le Cl,
\]
so by applying \lemref{apx-sh_better_upper_bound} we get
\begin{equation}
\sup_{\mb T}\left|f_{l}^{a}\right|\le\exp\left(C_{1}l-\frac{l^{3}}{C_{2}\log\left(C_{3}\exp\left(l\right)\right)}\right)\le\exp\left(-Cl^{2}\right).\label{eq:apx-fa-ub}
\end{equation}

Using \propref{apx-ldt-M} and \eqref{prelims-Ma-fa} (recall that
$\t b=\bar{b}$ on $\mb T$) we get 
\begin{multline}
\exp\left(lL^{a}-l^{1/3}\right)\le\left\Vert M_{l}^{a}\left(x\right)\right\Vert \le\Bigg(\left|f_{l}^{a}\left(x\right)\right|^{2}+\left|b\left(x+l\omega\right)f_{l-1}^{a}\left(x\right)\right|^{2}\\
+\left|b\left(x\right)f_{l-1}^{a}\left(x+\omega\right)\right|^{2}+\left|b\left(x\right)b\left(x+l\omega\right)f_{l-2}^{a}\left(x+\omega\right)\right|^{2}\Bigg)^{1/2}\label{eq:apx-Ml-lb}
\end{multline}
for all $x$ except for a set of measure less than $\exp\left(-c_{1}l^{1/3}+C\left(\log l\right)^{p}\right)<\exp\left(-cl^{1/3}\right)$.
Our plan is to contradict \eqref{apx-Ml-lb} by showing that
\begin{multline}
\left|f_{l}^{a}\left(x\right)\right|^{2}+\left|b\left(x+l\omega\right)f_{l-1}^{a}\left(x\right)\right|^{2}+\left|b\left(x\right)f_{l-1}^{a}\left(x+\omega\right)\right|^{2}\\
+\left|b\left(x\right)b\left(x+l\omega\right)f_{l-2}^{a}\left(x+\omega\right)\right|^{2}<\exp\left(2lL^{a}-2l^{1/3}\right),\label{eq:apx-contradiction}
\end{multline}
for $x$ in some set of measure much larger than $\exp\left(-cl^{1/3}\right)$.
The first term is already taken care of by \eqref{apx-fa-ub}. We
will show that we can provide a convenient upper bound for the next
two terms when $x$ is in some set of measure much larger than $\exp\left(-cl^{1/3}\right)$.
For this we argue again by contradiction. Suppose
\begin{equation}
\left|f_{l-1}^{a}\left(x\right)\right|\ge\exp\left(lL^{a}-l^{1/2}\right)\label{eq:apx-f_l-1_lb}
\end{equation}
for $x\in G$, with $\mes\left(G\right)\ge1/2-l^{-2}$. Using \corref{prelims-uniform-upper-bound}
we can apply Cartan's estimate \lemref{prelims-Cartan-estimate} (with
$H=l^{1/4}$) to $\log\left|f_{l-1}^{a}\left(\cdot\right)\right|$
on $\mc D\left(x_{0},l^{^{-1}}\right)$, for any $x_{0}\in G$, to
get
\begin{equation}
\left|f_{l-1}^{a}\left(x\right)\right|\ge\exp\left(lL^{a}-l^{5/6}\right),\label{eq:apx-lb-fl-1}
\end{equation}
for $x\in\mc D\left(x_{0},l^{-1}/6\right)\setminus\mc B_{x_{0}}$,
$\mes\left(\mc B_{x_{0}}\right)\le\exp\left(-l^{1/4}\right)$. It
is straightforward to see that \eqref{apx-lb-fl-1} holds on a set
$G'\supset G$, with$\mes\left(G'\right)\ge1/2+cl^{-1}$. Hence, we
have
\begin{equation}
\left|f_{l-1}^{a}\left(x\right)\right|,\left|f_{l-1}^{a}\left(x+\omega\right)\right|\ge\exp\left(lL^{a}-l^{5/6}\right),\label{eq:apx-fl-1x-fl-1x+o}
\end{equation}
on the set $G''=G'\cap\left(G'+\omega\right)$, with $\mes\left(G''\right)>cl^{-1}$.
Let $P_{l}\left(x,\omega\right)=\prod_{j=0}^{l-1}b\left(x+j\omega\right)$.
We will obtain a contradiction by using the identity 
\begin{multline*}
\overline{P_{l}\left(x,\omega\right)}P_{l}\left(x+\omega,\omega\right)=\det M_{l}^{a}\left(x\right)=-\overline{b\left(x\right)}b\left(x+l\omega\right)f_{l}^{a}\left(x\right)f_{l-2}^{a}\left(x+\omega\right)\\
+\overline{b\left(x\right)}b\left(x+l\omega\right)f_{l-1}^{a}\left(x\right)f_{l-1}^{a}\left(x+\omega\right).
\end{multline*}
Indeed, from the above identity it follows that
\[
\overline{P_{l-1}\left(x+\omega,\omega\right)}P_{l-1}\left(x+\omega,\omega\right)=-f_{l}^{a}\left(x\right)f_{l-2}^{a}\left(x+\omega\right)+f_{l-1}^{a}\left(x\right)f_{l-1}^{a}\left(x+\omega\right),
\]
and hence
\begin{equation}
\frac{\left|f_{l}^{a}\left(x\right)f_{l-2}^{a}\left(x+\omega\right)\right|}{\left|P_{l-1}\left(x+\omega,\omega\right)\right|^{2}}\ge\frac{\left|f_{l-1}^{a}\left(x\right)f_{l-1}^{a}\left(x+\omega\right)\right|}{\left|P_{l-1}\left(x+\omega,\omega\right)\right|^{2}}-1.\label{eq:apx-det-contradiction}
\end{equation}
From \thmref{apx-sh_ldt} it follows that
\[
\exp\left(lD-l^{1/2}\right)\le\left|P_{l-1}\left(x+\omega,\omega\right)\right|\le\exp\left(lD+l^{1/2}\right),
\]
for $x\in\mb T\setminus\mc B$, with $\mes\left(\mc B\right)<\exp\left(-l^{1/3}\right)$.
On one hand we have
\[
\frac{\left|f_{l}^{a}\left(x\right)f_{l-2}^{a}\left(x+\omega\right)\right|}{\left|P_{l-1}\left(x+\omega,\omega\right)\right|^{2}}\le\exp\left(-Cl^{2}+lL^{a}+\left(\log l\right)^{C}-2lD+2l^{1/2}\right)\le\exp\left(-cl^{2}\right),
\]
for $x\in\mb T\setminus\mc B$. On the other hand, for $x\in G''\setminus\mc B$
we have
\begin{multline*}
\frac{\left|f_{l-1}^{a}\left(x\right)f_{l-1}^{a}\left(x+\omega\right)\right|}{\left|P_{l-1}\left(x,\omega\right)\right|^{2}}\ge\exp\left(2lL^{a}-2l^{5/6}-2lD-2l^{1/2}\right)\\
=\exp\left(2lL-2l^{5/6}-2l^{1/2}\right)\ge\exp\left(2\gamma l-4l^{5/6}\right).
\end{multline*}
Since $G''\setminus\mc B\neq\emptyset$, the previous two inequalities
contradict \eqref{apx-det-contradiction}. Hence we must have $\mes\left(G\right)<1/2-l^{-2}$.
In other words we have 
\[
\left|f_{l-1}^{a}\left(x\right)\right|<\exp\left(lL^{a}-l^{1/2}\right),
\]
for $x\in B=\mb T\setminus G$, $\mes\left(B\right)\ge1/2+l^{-2}$.
It follows that
\begin{equation}
\left|f_{l-1}^{a}\left(x\right)\right|,\left|f_{l-1}^{a}\left(x+\omega\right)\right|<\exp\left(lL^{a}-l^{1/2}\right),\label{eq:apx-fl-1-ub}
\end{equation}
for $x\in B'=B\cap\left(B+\omega\right)$, $\mes\left(B'\right)>l^{-2}$.

By writing
\[
M_{l}^{a}\left(x-\omega\right)=M_{l}^{a}\left(x\right)\left[\begin{array}{cc}
a\left(x-\omega\right)-E & -\overline{b\left(x-\omega\right)}\\
b\left(x\right) & 0
\end{array}\right],
\]
we get
\[
f_{l}^{a}\left(x-\omega\right)=\left(a\left(x-\omega\right)-E\right)f_{l-1}^{a}\left(x\right)-\left|b\left(x\right)\right|^{2}f_{l-2}^{a}\left(x+\omega\right).
\]
From this we get
\begin{multline}
\left|b\left(x\right)b\left(x+l\omega\right)f_{l-2}^{a}\left(x+\omega\right)\right|=\frac{\left|b\left(x+l\omega\right)\right|}{\left|b\left(x\right)\right|}\left|f_{l}^{a}\left(x-\omega\right)-\left(a\left(x-\omega\right)-E\right)f_{l-1}^{a}\left(x\right)\right|\\
\le C\exp\left(-D+l^{1/3}\right)\left(\exp\left(-Cl^{2}\right)+C\exp\left(lL^{a}-l^{1/2}\right)\right)\le C\exp\left(lL^{a}-l^{1/2}/2\right),\label{eq:apx-fl-2-ub}
\end{multline}
for $x\in B''\subset B'$, $\mes\left(B''\right)>l^{-2}-\exp\left(-l^{1/4}\right)>l^{-3}$
(note that we used \thmref{apx-sh_ldt}). By \eqref{apx-fa-ub}, \eqref{apx-fl-1-ub},
and \eqref{apx-fl-2-ub} we have that \eqref{apx-contradiction} holds
for $x\in B''$. Since $\mes\left(B''\right)\gg\exp\left(-cl^{1/3}\right)$,
this contradicts \eqref{apx-Ml-lb}, as desired, and concludes the
proof.
\end{proof}
Next we discuss the uniform upper bound result, \propref{prelims-M^a-upper-bound},
and its consequences, \corref{prelims-uniform-upper-bound}, \corref{prelims-lipschitzness},
as well as \lemref{prelims-uniform-bound-S_N}. For convenience we
state the uniform upper bound result from \cite{2012arXiv1202.2915B}.
\begin{prop}
(\cite[Proposition 3.14]{2012arXiv1202.2915B}) Fix $p>\alpha+2$.
For any integer $n>1$ we have that
\[
\sup_{x\in\mb T}\log\left\Vert M_{n}^{a}\left(x\right)\right\Vert \le nL_{n}^{a}+C_{0}\left(\log n\right)^{p}
\]
where $C_{0}=C_{0}\left(\left\Vert a\right\Vert _{\infty},\left\Vert b\right\Vert _{*},\left|E\right|,c,\alpha,\gamma,p\right)$.
\end{prop}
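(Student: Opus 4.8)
The statement is \cite[Proposition 3.14]{2012arXiv1202.2915B}; we outline the argument, which follows the Bourgain--Goldstein--Schlag scheme for uniform upper bounds. The plan is to deduce the bound from the large deviation estimate for $\log\left\Vert M_{n}^{a}\right\Vert $ (\propref{apx-ldt-M}), the submultiplicativity of the cocycle, and the Avalanche Principle in the form of \corref{prelims-AP-determinants}, by an induction on scale in which the sharp error $\left(\log n\right)^{p}$ arises precisely from the error term $r_{n}=C_{0}\left(\log n\right)^{p}$ in the subharmonic large deviation estimate \thmref{apx-sh_ldt}. The reason one cannot simply invoke \propref{apx-ldt-M} is that a large deviation estimate controls only the measure of the set where $\log\left\Vert M_{n}^{a}\right\Vert $ deviates from $nL_{n}^{a}$, whereas here one needs control at \emph{every} $x\in\mb T$.

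First I would record the soft inputs. The a priori bound $\sup_{\mc A_{\rho}}\log\left\Vert M_{n}^{a}\left(z\right)\right\Vert \le Cn$ is immediate from the explicit product formula for $M_{n}^{a}$, since each factor has norm bounded by a constant times $\left\Vert a\right\Vert _{\infty}+\left|E\right|+\left\Vert b\right\Vert _{\infty}$; the mean value identity $\int_{\mb T}\log\left\Vert M_{n}^{a}\left(x\right)\right\Vert dx=nL_{n}^{a}$ follows from the definition of $L_{n}^{a}$ together with \eqref{prelims-L-La}; and $z\mapsto\log\left\Vert M_{n}^{a}\left(z\right)\right\Vert $ is subharmonic on $\mc A_{\rho}$ with Riesz mass $\lesssim n$. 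Combining these with \thmref{apx-sh_ldt} (or directly with \propref{apx-ldt-M}) gives, for any $\delta>0$ and any integer $\ell>1$,
\[
\mes\left\{ x\in\mb T:\,\log\left\Vert M_{\ell}^{a}\left(x\right)\right\Vert >\ell L_{\ell}^{a}+\delta\ell\right\} <\exp\left(-c\delta\ell+C\left(\log\ell\right)^{p}\right),
\]
and similarly for the lower deviation and for the determinants $f_{\ell}^{a}$. The induction runs over a decreasing sequence of scales $n=n_{0}>n_{1}>\cdots$ with $n_{k+1}\asymp n_{k}^{1/2}$ (up to logarithmic corrections dictated by $\alpha$), so that $O\left(\log\log n\right)$ steps reach scale $O\left(1\right)$ and the accumulated errors $C\left(\log n_{k}\right)^{p}$ sum geometrically to $C_{p}\left(\log n\right)^{p}$.

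At a given step, with $\ell=n_{k+1}$ and $m\asymp n_{k}/\ell$, one writes $M_{n_{k}}^{a}\left(x\right)$ as a product of $\sim m$ blocks $M_{\ell_{j}}^{a}\left(x+s_{j}\omega\right)$, $\ell_{j}\in\left[\ell,2\ell\right]$ chosen so that both the blocks and the consecutive pairs satisfy the hypotheses of \corref{prelims-AP-determinants}. Choosing the threshold $\delta_{\ell}$ with $\delta_{\ell}\ell\gg\left(\log\ell\right)^{p}$ makes the scale-$\ell$ bad set (where the norm, or a consecutive-pair norm, or a determinant deviates) have measure an arbitrarily large negative power of $\ell$; exploiting that this bad set is a union of few arcs (a superlevel set of a subharmonic function of controlled Riesz mass), that $\ell\omega$ inherits a Diophantine lower bound from $\omega$, and the freedom to vary both the block lengths in $\left[\ell,2\ell\right]$ and the starting offset, one shows that for every $x$ there is an admissible block decomposition whose block-endpoint orbit avoids the bad set; then \corref{prelims-AP-determinants} applies at that $x$, and, feeding in the scale-$\ell$ and scale-$2\ell$ uniform upper bounds from the inductive hypothesis together with $0\le L_{\ell}^{a}-L^{a}\lesssim\left(\log\ell\right)^{2}/\ell$ (\lemref{apx-Ln_L}) and \eqref{prelims-Ma-fa} to pass between $\log\left\Vert M_{n_{k}}^{a}\right\Vert $ and $\log\left|f_{n_{k}}^{a}\right|$, one obtains $\log\left\Vert M_{n_{k}}^{a}\left(x\right)\right\Vert \le n_{k}L_{n_{k}}^{a}+C\left(\log\ell\right)^{p}+Cm\exp\left(-\tfrac{\gamma}{2}\ell\right)$, which closes the step. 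The main obstacle is exactly this last passage from the ``almost everywhere'' control of the large deviation estimate to uniform control: one must guarantee that, however the small scale-$\ell$ bad set is located, some admissible decomposition yields a fully good orbit, and it is here that subharmonicity (bounding the number of components of the bad set via its Riesz mass), the Diophantine property of $\omega$, and the flexibility in the decomposition all enter. Using the Avalanche Principle, whose error $m\exp\left(-\gamma\ell/2\right)$ is exponentially small in $\ell$, rather than crude submultiplicativity, whose error is only polynomial, is what keeps the final bound logarithmic in $n$.
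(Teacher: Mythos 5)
Your overall strategy does not match how this bound is actually obtained, and as written it has two genuine gaps. First, the crucial step of your induction --- ``for every $x$ there is an admissible block decomposition whose block-endpoint orbit avoids the bad set'' --- is only asserted. For a fixed $x$ the first block necessarily starts at $x$ itself, and nothing you say rules out that $x$ lies in the scale-$\ell'$ bad set for every admissible length $\ell'\in\left[\ell,2\ell\right]$; varying offsets moves the other blocks but not the first one, and the Diophantine property of $\omega$ gives no leverage here. Second, the error accounting fails quantitatively: with $m\sim n_{k}/\ell$ blocks, the Avalanche Principle expression involves $m$ single-block and pairwise terms, each carrying a deviation of size at least the LDT threshold $\gtrsim\left(\log\ell\right)^{p}$, so a single step of your scheme costs at least $\sim m\left(\log\ell\right)^{p}$, which for $\ell\asymp n_{k}^{1/2}$ is polynomial in $n_{k}$, not logarithmic. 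The claimed geometric summation of errors to $C\left(\log n\right)^{p}$ therefore does not hold, and no choice of scale sequence recovers the sharp exponent $p>\alpha+2$ (which is exactly the exponent $r_{n}=C\left(\log n\right)^{p}$ of \thmref{apx-sh_ldt}) through a multi-scale AP argument of this type.

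The actual proof (in the cited paper, and reproduced in this paper's appendix for the analogous statement \lemref{prelims-uniform-bound-S_N}, which the authors note is proved ``along the same lines'') is a one-step argument with no induction and no Avalanche Principle: the function $z\mapsto\log\left\Vert M_{n}^{a}\left(z\right)\right\Vert $ is subharmonic, so by the sub-mean value inequality
\[
\log\left\Vert M_{n}^{a}\left(x_{0}\right)\right\Vert \le\frac{1}{\pi n^{-2}}\int_{\mc D\left(x_{0},n^{-1}\right)}\log\left\Vert M_{n}^{a}\left(z\right)\right\Vert \, dA\left(z\right);
\]
for each height $\left|y\right|\le n^{-1}$ one applies the large deviation estimate (\propref{apx-ldt-M}, i.e.\ the consequence of \thmref{apx-sh_ldt}) with threshold $\delta n\simeq\left(\log n\right)^{p}$, handles the exceptional set of measure $\exp\left(-c\left(\log n\right)^{p}\right)$ by the a priori (or $L^{2}$) bound $\log\left\Vert M_{n}^{a}\right\Vert \lesssim n$, and uses $\left|L_{n}^{a}\left(y\right)-L_{n}^{a}\right|\lesssim\left|y\right|\le n^{-1}$ (\lemref{apx-Ly-Ly'}) to replace $L_{n}^{a}\left(y\right)$ by $L_{n}^{a}$. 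This converts the measure-theoretic LDT into the pointwise bound with only the single loss $C\left(\log n\right)^{p}$, which is precisely where the sharp exponent comes from; your intuition that the LDT alone is insufficient is right, but the missing ingredient is subharmonic averaging, not an induction on scales with \corref{prelims-AP-determinants} and \lemref{apx-Ln_L}.
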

\propref{prelims-M^a-upper-bound} follows immediately from the above
proposition and \lemref{apx-Ln_L}.

Next we recall some further results needed to prove \corref{prelims-uniform-upper-bound}.
The statement of the results is adapted to our setting.
\begin{lem}
\label{lem:apx-Ly-Ly'}(\cite[Corollary 3.13]{2012arXiv1202.2915B})
There exists a constant $C_{0}=C_{0}(\left\Vert a\right\Vert _{\infty},\left\Vert b\right\Vert _{*},\left|E\right|,$
$\rho_{0})$ such that
\[
\left|L_{n}^{u}\left(y_{1}\right)-L_{n}^{u}\left(y_{2}\right)\right|=\left|L_{n}^{a}\left(y_{1}\right)-L_{n}^{a}\left(y_{2}\right)\right|\le C_{0}\left|r_{1}-r_{2}\right|
\]
for any $y_{1},y_{2}\in\left(1-\rho_{0},1+\rho_{0}\right)$ and any
positive integer $n$.
\end{lem}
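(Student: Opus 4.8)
The plan is to exhibit $nL_n^a(\cdot,\omega,E)$ as the average over one period of a periodic subharmonic function, to deduce that it is convex in the transversal variable, and then to read off the Lipschitz bound from convexity together with crude two-sided bounds; the estimate for $L_n^u$ will follow at once from \eqref{prelims-Mu-Ma}. (In the annulus model of \cite{2012arXiv1202.2915B} the transversal parameter is the modulus $r$, but the passage $r\leftrightarrow y$ between modulus and imaginary part is real-analytic and bi-Lipschitz on the relevant range, so it is enough to prove the bound with $|y_1-y_2|$ on the right, writing $z=x+iy$.)

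First I would record analyticity and a cheap uniform bound. The standing hypothesis is that $a,b$ extend analytically past $\overline{\mb H_{\rho_0}}$; fix $\eta=\eta(\rho_0)>0$ with $a,b,\t b$ analytic and bounded on $\mb H_{\rho_0+2\eta}$ (the enlargement of the analyticity strip enters only here, and is absorbed in the $\rho_0$-dependence of all constants). Since $\omega$ is real, every shifted argument in the product formula for $M_N^a$ remains in this strip, so $z\mapsto M_n^a(z,\omega,E)$ is analytic on $\mb H_{\rho_0+2\eta}$ and, bounding each of the $n$ factors, $\|M_n^a(z,\omega,E)\|\le C_+^n$ with $C_+=C_+(\|a\|_\infty,\|b\|_\infty,|E|,\rho_0)$. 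Hence $u(z):=\log\|M_n^a(z,\omega,E)\|$ is subharmonic on $\mb H_{\rho_0+2\eta}$, $1$-periodic in $\Re z$, and $\le n\log C_+$; moreover, from $\|M\|\ge|\det M|^{1/2}$ for $2\times2$ matrices and the identity $\det M_n^a(z)=\prod_{j=1}^n b(z+j\omega)\prod_{j=0}^{n-1}\t b(z+j\omega)$ one gets the pointwise lower bound $u(z)\ge\tfrac12\sum_{j=1}^n\log|b(z+j\omega)|+\tfrac12\sum_{j=0}^{n-1}\log|\t b(z+j\omega)|$, whose right side is locally integrable in $x$.

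The core of the argument is convexity. Set $\phi(y)=\int_{\mb T}u(x+iy)\,dx=nL_n^a(y,\omega,E)$; it is finite by the two bounds above. Then $\phi$ is convex on $(-\rho_0-2\eta,\rho_0+2\eta)$ --- the standard fact that the one-period average of a periodic subharmonic function is convex in the transversal coordinate, proved by mollifying $u$ in both variables to smooth subharmonic $u_\varepsilon$, computing $\phi_\varepsilon''(y)=\int_{\mb T}\partial_y^2u_\varepsilon\,dx=\int_{\mb T}\Delta u_\varepsilon\,dx\ge0$ (the term $\int_{\mb T}\partial_x^2u_\varepsilon\,dx$ vanishing by periodicity), and letting $\varepsilon\to0$. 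By the previous paragraph $\phi\le n\log C_+$, while averaging the lower bound for $u$ over $x\in\mb T$ gives $\phi(y)\ge\tfrac n2(D(y)+\t D(y))$ with $D(y)=\int_{\mb T}\log|b(x+iy)|\,dx$, $\t D(y)=\int_{\mb T}\log|\t b(x+iy)|\,dx$; since $D,\t D$ are themselves period-averages of subharmonic functions, they are convex, hence bounded on $[-\rho_0-\eta,\rho_0+\eta]$ by $C(\|b\|_*,\rho_0)$, so $|\phi|\le Cn$ there. For $y_1<y_2$ in $[-\rho_0,\rho_0]$, monotonicity of the secant slopes of the convex function $\phi$ gives
\[\frac{\phi(y_1)-\phi(y_1-\eta)}{\eta}\le\frac{\phi(y_2)-\phi(y_1)}{y_2-y_1}\le\frac{\phi(y_2+\eta)-\phi(y_2)}{\eta},\]
and the two extreme quotients are $\le 2Cn/\eta$ in absolute value; dividing by $n$ yields $|L_n^a(y_1,\omega,E)-L_n^a(y_2,\omega,E)|\le(2C/\eta)|y_1-y_2|$, the claim with $C_0=C_0(\|a\|_\infty,\|b\|_*,|E|,\rho_0)$.

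Finally, integrating \eqref{prelims-Mu-Ma} over $x\in\mb T$ gives $L_n^u(y,\omega,E)=L_n^a(y,\omega,E)-\tfrac12(D(y)+\t D(y))$; the correction $\tfrac12(D+\t D)$ is convex and bounded on the enlarged interval, hence Lipschitz there with constant $C(\|b\|_*,\rho_0)$ by the same argument (and it equals $D$ on $\mb T$, matching $L^u=-D+L^a$ at $y=0$), so $L_n^u$ satisfies the same Lipschitz estimate as $L_n^a$ --- the two increments being literally equal exactly when $D+\t D$ is constant (e.g.\ if $b$ is zero-free on $\mb H_{\rho_0}$). I expect the only real obstacle to be the rigor of the convexity step --- justifying $\phi''\ge0$ for a merely subharmonic $u$ that may take the value $-\infty$ on a null set --- together with the bookkeeping needed to confirm that $\eta$ and every constant depend only on $\|a\|_\infty,\|b\|_*,|E|,\rho_0$; everything downstream of convexity is elementary convex-function calculus.
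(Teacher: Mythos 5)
Your argument is essentially correct, but note that the paper does not prove this lemma at all: it is quoted verbatim (in annulus coordinates, hence the stray $r_{1},r_{2}$) from \cite[Corollary 3.13]{2012arXiv1202.2915B}, and the route taken there is the one the paper itself recalls as \lemref{apx-uy-uy'} (Goldstein--Schlag): write the Riesz representation of the subharmonic function $\frac{1}{n}\log\left\Vert M_{n}^{a}\right\Vert $, check that Riesz mass plus harmonic part are bounded uniformly in $n$, and read off the Lipschitz dependence on the transversal variable from that representation. Your proof replaces this by the convexity of $y\mapsto\int_{\mb T}\log\left\Vert M_{n}^{a}\left(x+iy\right)\right\Vert dx$ (period average of a periodic subharmonic function), combined with the upper bound $n\log C_{+}$ and the lower bound $\frac{n}{2}\left(D\left(y\right)+\t D\left(y\right)\right)$ coming from $\left\Vert M\right\Vert \ge\left|\det M\right|^{1/2}$; convexity plus two-sided bounds on a slightly larger interval then gives a Lipschitz constant $\lesssim C/\eta$ uniform in $n$. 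This is a legitimate and arguably more elementary alternative, since it avoids estimating the Riesz mass altogether; what the Riesz-representation route buys in exchange is that it comes packaged with the other large-deviation machinery already needed in the appendix. Two caveats are worth recording. First, any uniform Lipschitz constant on the full open interval genuinely requires control of $a,b$ slightly beyond $\overline{\mb H_{\rho_{0}}}$ (your $\eta$), which is available from the standing analyticity assumption but is not literally captured by $\left\Vert a\right\Vert _{\infty},\left\Vert b\right\Vert _{*}$; the cited formulation glosses over this in the same way you do, and since $a,b$ are fixed this is harmless bookkeeping. Second, you prove the two Lipschitz inequalities but not the stated identity $\left|L_{n}^{u}\left(y_{1}\right)-L_{n}^{u}\left(y_{2}\right)\right|=\left|L_{n}^{a}\left(y_{1}\right)-L_{n}^{a}\left(y_{2}\right)\right|$; as you observe, by \eqref{prelims-Mu-Ma} the two increments differ by the increment of $\frac{1}{2}\left(D+\t D\right)$, so the literal equality needs $D\left(y\right)+\t D\left(y\right)$ to be constant on the range considered (e.g.\ $b$ zero-free in the strip). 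Since only the Lipschitz bounds (for $L_{n}^{a}$, $L^{a}$ and for $D$) are invoked downstream, in \corref{prelims-uniform-upper-bound} and \lemref{prelims-uniform-bound-S_N}, this discrepancy does not affect anything in the paper, but your proof as written establishes the inequality, not the equality sign in the displayed statement.
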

\begin{lem}\label{lem:apx-LE-LE'}(\cite[Corollary 3.17]{2012arXiv1202.2915B})
Let $\left(\omega,E_{0}\right)\in\mb T_{c,\alpha}\times\mb C$ such
that $L\left(\omega,E_{0}\right)>\gamma>0$. There exist constants
$C_{0}=C_{0}\left(\left\Vert a\right\Vert _{\infty},\left\Vert b\right\Vert _{*},\left|E_{0}\right|,c,\alpha,\gamma\right)$,
$C_{1}=C_{1}(\left\Vert a\right\Vert _{\infty},\left\Vert b\right\Vert _{*}\left|E_{0}\right|,c,\alpha,$
$\gamma)$, and $n_{0}=n_{0}\left(\left\Vert a\right\Vert _{\infty},\left\Vert b\right\Vert _{*},\left|E_{0}\right|,c,\alpha,\gamma\right)$
such that we have
\[
\left|n\left(L_{n}\left(\omega,E\right)-L_{n}\left(\omega,E_{0}\right)\right)\right|=\left|n\left(L_{n}^{a}\left(\omega,E\right)-L_{n}^{a}\left(\omega,E_{0}\right)\right)\right|\le n^{-C_{0}}
\]
 for $n\ge n_{0}$ and $\left|E-E_{0}\right|<n^{-C_{1}}$.\end{lem}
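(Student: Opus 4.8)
The plan is to reduce the claim to an estimate for the subharmonic function $w_{n}\left(\cdot\right):=nL_{n}^{a}\left(\omega,\cdot\right)$ and its Riesz measure. By the finite-scale analogue of \eqref{prelims-L-La}, namely $L_{n}=-D+L_{n}^{a}$ (which follows from \eqref{prelims-Mu-Ma} upon integrating over $x\in\mb T$, using $\int_{\mb T}S_{n}=\int_{\mb T}\t S_{n}=nD$), the two quantities in the statement coincide, so it suffices to bound $\left|w_{n}\left(E\right)-w_{n}\left(E_{0}\right)\right|$. The naive route via Lipschitz continuity of $E\mapsto M_{n}^{a}\left(x,\omega,E\right)$ is hopeless here, since the only available Lipschitz constant is $\sim C^{n}$, which swamps $\left|E-E_{0}\right|<n^{-C_{1}}$; the point must be to exploit subharmonicity. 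Since $E\mapsto\log\left\Vert M_{n}^{a}\left(x,\omega,E\right)\right\Vert$ is subharmonic (a supremum of functions $\log\left|\left\langle M_{n}^{a}\left(x,\omega,E\right)v,w\right\rangle\right|$), $w_{n}$ is subharmonic on $\mb C$; moreover $\log\left\Vert M_{n}^{a}\left(x,\omega,E\right)\right\Vert$ grows like $n\log\left|E\right|$ at infinity (the $\left(1,1\right)$-entry $f_{n}^{a}$ is, up to sign, a monic degree-$n$ polynomial in $E$ and dominates), so $w_{n}$ has Riesz mass exactly $n$ and a bounded, hence constant, harmonic part. Thus $w_{n}\left(E\right)=\int_{\mb C}\log\left|E-\zeta\right|\,d\nu_{n}\left(\zeta\right)+c_{n}$ with $\nu_{n}\ge0$ of total mass $n$, and
\[
w_{n}\left(E\right)-w_{n}\left(E_{0}\right)=\int_{\mb C}\bigl(\log\left|E-\zeta\right|-\log\left|E_{0}-\zeta\right|\bigr)\,d\nu_{n}\left(\zeta\right).
\]

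Next I would split this integral at the scale $\rho:=n^{-\beta}$, where $\beta$ is the exponent $C_{1}$ appearing in \propref{prelims-number-of-ev} (applied at scale $n$). On $\left|\zeta-E_{0}\right|>\rho$ one has $\left|\log\left|E-\zeta\right|-\log\left|E_{0}-\zeta\right|\right|\lesssim\left|E-E_{0}\right|/\left|\zeta-E_{0}\right|\le n^{\beta}\left|E-E_{0}\right|$ (once $\left|E-E_{0}\right|<\rho/2$), and integrating against the mass-$n$ measure $\nu_{n}$ gives a contribution $\lesssim n^{1+\beta}\left|E-E_{0}\right|\le n^{-C_{0}}$ as soon as $C_{1}\ge1+\beta+C_{0}$; this is what dictates the choice of $C_{1}$. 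On $\left|\zeta-E_{0}\right|\le\rho$ the decisive input is that $\nu_{n}$ carries only a polylogarithmic fraction of its mass there: comparing, via Jensen's formula, the Riesz mass of $\log\left\Vert M_{n}^{a}\left(x,\omega,\cdot\right)\right\Vert$ in a small disk with the number of zeros of $f_{n}^{a}\left(x,\omega,\cdot\right)$ (and of the shifted determinants appearing in \eqref{prelims-Ma-fa}) in a slightly larger disk, and then invoking \propref{prelims-number-of-ev}, one obtains $\nu_{n}\left(\mc D\left(E_{0},r\right)\right)\lesssim\left(\log n\right)^{C}$ for all $r\le\rho$. Finally, the large deviations estimate \propref{prelims-LDT-determinants} (applicable at $E$ too, since $L\left(\omega,E\right)>\gamma/2$ for $E$ near $E_{0}$) forces $\left|f_{n}^{a}\left(x,\omega,E_{0}\right)\right|\ge\exp\left(nL^{a}\left(\omega,E_{0}\right)-\sqrt{n}\left(\log n\right)^{C}\right)$ off an exceptional set of $x$ of measure $\le\exp\left(-c\sqrt{n}\right)$, so for all but that set $\dist\left(E_{0},\spec H^{\left(n\right)}\left(x,\omega\right)\right)\gtrsim\exp\left(-Cn\right)$; consequently the $\nu_{n}$-mass within $\exp\left(-Cn\right)$ of $E_{0}$ (and of $E$) is negligible, the singular contribution is confined to the annulus $\exp\left(-Cn\right)<\left|\zeta-E_{0}\right|\le\rho$, and a dyadic decomposition there, combined with the mass bound above, closes the estimate.

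I expect the main obstacle to be exactly this control of $\nu_{n}$ in the tiny disk about $E_{0}$ — i.e. ruling out a logarithmic concentration of the finite-scale density of states at $E_{0}$. Pushing the singular contribution below $n^{-C_{0}}$, rather than merely below a power of $\log n$, requires using the eigenvalue-counting bound \propref{prelims-number-of-ev} not just at the single scale $n^{-\beta}$ but throughout the range of sub-$\rho$ scales, together with a careful treatment of the exceptional set of phases $x$; this is where the bulk of the work lies and where the hypothesis $L\left(\omega,E_{0}\right)>\gamma$ genuinely enters (through the large deviations estimate). The remaining ingredients — subharmonicity, the Riesz decomposition, and the far-field estimate — are routine, and the constant $C_{1}$ in the statement ends up being whatever is needed to make the far-field term negligible against $n^{-C_{0}}$.
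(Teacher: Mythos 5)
First, note what you are being compared against: the paper does not prove this lemma at all — it is quoted from \cite[Corollary 3.17]{2012arXiv1202.2915B} — and the only proof-adjacent content in the paper is the observation (which you reproduce correctly) that $L_{n}-L_{n}^{a}=-D$ at every finite scale, so the two differences in the statement coincide. The proof in the cited reference is an avalanche-principle argument: one expands $nL_{n}^{a}\left(\omega,\cdot\right)$ via \corref{prelims-AP-determinants} into averaged quantities at scale $\ell\sim C\log n$ up to an additive error $O\left(ne^{-\gamma\ell/2}\right)=O\left(n^{-C}\right)$, and the scale-$\ell$ quantities are Lipschitz in $E$ with constant $e^{C\ell}=n^{C'}$, which is harmless against $\left|E-E_{0}\right|<n^{-C_{1}}$ once $C_{1}>C'$. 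Your opening observation that the naive Lipschitz constant $C^{n}$ is useless is exactly right, but the remedy is to shorten the scale at which one is Lipschitz, not to pass to the Riesz representation.

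The gap in your route is the near-field estimate, and I do not see how to close it with the tools you invoke. Your dyadic decomposition of $\int_{\left|\zeta-E_{0}\right|\le\rho}\log\frac{\left|E-\zeta\right|}{\left|E_{0}-\zeta\right|}\,d\nu_{n}$ needs, already for the single annulus $\left|\zeta-E_{0}\right|\sim\left|E-E_{0}\right|$ (where the integrand is of order one and there is no cancellation to exploit), the bound $\nu_{n}\left(\mc D\left(E_{0},2\left|E-E_{0}\right|\right)\right)\le n^{-C_{0}}$; and for the deeper annuli $\left|\zeta-E_{0}\right|\sim2^{-k}\left|E-E_{0}\right|$ the integrand is of size $\sim k$, with $k$ ranging up to $\sim n$ before your $e^{-Cn}$ cutoff takes over, so one needs $\nu_{n}$-mass $\lesssim n^{-C_{0}}k^{-2}$ there. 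But \propref{prelims-number-of-ev} gives the \emph{same} polylogarithmic bound $\left(\log n\right)^{C}$ at every sub-scale and does not improve as the disk shrinks, so ``using it throughout the range of sub-$\rho$ scales'' buys nothing: the top annulus alone contributes $\sim\left(\log n\right)^{C}$, not $n^{-C_{0}}$. What you actually need is a Wegner-type estimate — that the expected number of eigenvalues of $H^{\left(n\right)}\left(x,\omega\right)$ within $\left|E-E_{0}\right|$ of $E_{0}$ is $\ll n^{-C_{0}}$ — and nothing of the sort is available here; indeed, the fine (almost Lipschitz) regularity of the finite-scale density of states is in Goldstein--Schlag's work a \emph{consequence} of lemmas of the present type, so invoking it would be circular. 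Your far-field estimate, the identification of the total Riesz mass as $n$, and the constancy of the harmonic part are all fine; it is precisely the local distribution of $\nu_{n}$ near $E_{0}$ that cannot be controlled, and the avalanche-principle proof is designed to avoid ever having to know it.
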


A straightforward replacement of $E$ with $\omega$ in the proof
of \cite[Corollary 3.17]{2012arXiv1202.2915B} yields the following
result. 
\begin{lem}
\label{lem:apx-Lo-Lo'}Let $\left(\omega_{0},E\right)\in\mb T_{c,\alpha}\times\mb C$
such that $L\left(\omega_{0},E\right)>\gamma>0$. There exist constants
$C_{0}=C_{0}\left(\left\Vert a\right\Vert _{\infty},\left\Vert b\right\Vert _{*},\left|E\right|,c,\alpha,\gamma\right)$,
$C_{1}=C_{1}\left(\left\Vert a\right\Vert _{\infty},\left\Vert b\right\Vert _{*},\left|E\right|,c,\alpha,\gamma\right)$,
$n_{0}=n_{0}(\left\Vert a\right\Vert _{\infty},\left\Vert b\right\Vert _{*},$
$\left|E\right|,c,\alpha,\gamma)$ such that we have
\[
\left|n\left(L_{n}\left(\omega,E\right)-L_{n}\left(\omega_{0},E\right)\right)\right|=\left|n\left(L_{n}^{a}\left(\omega,E\right)-L_{n}^{a}\left(\omega_{0},E\right)\right)\right|\le n^{-C_{0}}
\]
 for $n\ge n_{0}$ and $\left|\omega-\omega_{0}\right|<n^{-C_{1}}$.
\end{lem}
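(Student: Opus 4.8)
The plan is to transcribe the proof of \cite[Corollary~3.17]{2012arXiv1202.2915B} (our \lemref{apx-LE-LE'}), with the perturbed frequency $\omega$ in the role of the perturbed energy; as the remark preceding the statement indicates, the argument there is insensitive to which variable one perturbs. Two observations make the transcription routine. First, the asserted equality is immediate: integrating \eqref{prelims-Mu-Ma} over $x\in\mb T$, using $S_N=\t S_N$ there and $\int_{\mb T}S_N(x,\omega)\,dx=ND$ (which is independent of $\omega$), gives the finite-scale form of \eqref{prelims-L-La}, $L_n^a(\omega,E)=D+L_n^u(\omega,E)=D+L_n(\omega,E)$, so the two differences coincide and it suffices to bound
\[
\Bigl|\,\int_{\mb T}\bigl(\log\|M_n^a(x,\omega,E)\|-\log\|M_n^a(x,\omega_0,E)\|\bigr)\,dx\,\Bigr|\le n^{-C_0}.
\]
Second, the two main analytic inputs of \cite[Corollary~3.17]{2012arXiv1202.2915B} — the uniform upper bound for $M_N^a$ and the Lipschitz bound for $M_N^a$ — are already available here with perturbations of $\omega$ of size $N^{-C_1}$ built in, namely \corref{prelims-uniform-upper-bound} and \corref{prelims-lipschitzness}; the only new feature compared with the energy case is that moving $\omega$ also moves the arguments $x+s_j\omega$, but this produces a displacement $\le n\,|\omega-\omega_0|<n^{1-C_1}$, absorbed once $C_1$ is large (and \corref{prelims-lipschitzness}, being Lipschitz in $x$ too, is built to handle it).

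The conceptual heart is that $M_n^a(x,\omega,E)$ is holomorphic in $\omega$ on the strip $|\Im\omega|<\rho_0/n$, so $v_n(\omega):=nL_n^a(\omega,E)=\int_{\mb T}\log\|M_n^a(x,\omega,E)\|\,dx$ is subharmonic in $\omega$ there; and on the disk $\mc D(\omega_0,n^{-C_1})$, where \corref{prelims-uniform-upper-bound} applies, $v_n$ stays within $(\log n)^{C_0}$ of $nL^a(\omega_0,E)$ above, while $v_n(\omega_0)=nL_n^a(\omega_0,E)\ge nL^a(\omega_0,E)$ by \lemref{apx-Ln_L} and, using also the continuity of $L(\cdot,E)$ in $\omega$ in the regime $L>\gamma$ together with \lemref{apx-Ln_L}, $v_n$ stays within $C(\log n)^2$ of $v_n(\omega_0)$ below. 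Thus $v_n$ is subharmonic on $\mc D(\omega_0,n^{-C_1})$ with oscillation about its centre that is only polylogarithmic in $n$, and its Riesz mass in that disk is controlled, via Jensen's formula, by the same polylogarithmic quantity. Following \cite[Corollary~3.17]{2012arXiv1202.2915B} one then estimates $v_n(\omega)-v_n(\omega_0)$ through the Riesz representation of $v_n$ — bounding the harmonic part by a Harnack-type inequality, which gains the factor $|\omega-\omega_0|<n^{-C_1}$, and the potential part using the control of the Riesz mass near $\omega_0$ — together with \corref{prelims-lipschitzness} on the part of $\mb T$ where $\|M_n^a(\cdot,\omega_0,E)\|$ is large, and the large deviations estimate (\propref{prelims-LDT-determinants}) to the effect that this is all of $\mb T$ outside a set of negligible measure, whose contribution is absorbed using \propref{prelims-M^a-upper-bound}. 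Taking $C_1$ large enough relative to the $(\log n)^{C_0}$ losses in the uniform bounds produces the bound $n^{-C_0}$; all constants depend only on $\|a\|_\infty$, $\|b\|_*$, $|E|$, $c$, $\alpha$, $\gamma$.

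The hard part — exactly as in the energy case of \cite{2012arXiv1202.2915B} — is squeezing the final estimate below every fixed negative power of $n$: although the oscillation of $v_n$ about its centre is only polylogarithmic, $v_n$ itself has size $\asymp n$, so the Riesz-representation bookkeeping must be arranged to feel only the oscillation, and the same discipline is needed on the measure-small set where the large deviations estimate or the lower transfer-matrix bound fails. In the Jacobi setting this last point needs the one extra ingredient beyond the Schr\"odinger case, that $\log\|M_k^a\|$ remains controlled in $L^1(\mb T)$ near the zeros of $b$ — which holds because $\log\|M_k^u\|\ge0$ (as $|\det M_k^u|=1$), $S_k=\t S_k$ on $\mb T$, and $\log|b|\in L^1(\mb T)$; with that in hand the argument is otherwise line-for-line the one in \cite[Corollary~3.17]{2012arXiv1202.2915B}.
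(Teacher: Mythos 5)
Your top-level strategy---transcribe the proof of \cite[Corollary 3.17]{2012arXiv1202.2915B} with $\omega$ in place of $E$---is exactly the paper's (its entire proof is the remark preceding the statement), and your adaptation points are the right ones: the finite-scale identity $L_{n}^{a}=D+L_{n}$ obtained by integrating \eqref{prelims-Mu-Ma}, the fact that \corref{prelims-uniform-upper-bound} and \corref{prelims-lipschitzness} already carry $\omega$-perturbations of size $N^{-C_{1}}$, the extra displacement $n\left|\omega-\omega_{0}\right|$ of the arguments, and the use of $\log\left\Vert M_{k}^{u}\right\Vert \ge0$ together with $\log\left|b\right|\in L^{1}\left(\mb T\right)$ to tame the zeros of $b$. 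The internal mechanism you then commit to, however, has two genuine gaps. First, the claimed lower bound ``$v_{n}$ stays within $C\left(\log n\right)^{2}$ of $v_{n}\left(\omega_{0}\right)$ below'' rests on ``continuity of $L\left(\cdot,E\right)$ in $\omega$'' together with $L_{n}\ge L$. Quantitative continuity of $L\left(\cdot,E\right)$ at scale $n^{-C_{1}}$ with error $o\left(n^{-1}\right)$ is essentially the statement being proved, so this is circular; moreover the chain $nL_{n}^{a}\left(\omega,E\right)\ge nL^{a}\left(\omega,E\right)\ge nL^{a}\left(\omega_{0},E\right)-C\left(\log n\right)^{2}$ is unavailable because a generic $\omega$ in the window is neither in $\mb T_{c,\alpha}$ nor known to satisfy $L\left(\omega,E\right)>\gamma$. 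Even granting a two-sided polylogarithmic oscillation bound, Harnack plus control of the Riesz mass cannot yield $\left|v_{n}\left(\omega\right)-v_{n}\left(\omega_{0}\right)\right|\le n^{-C_{0}}$ for \emph{every} $\omega$ in the disk: the logarithmic potential of a measure of mass $\left(\log n\right)^{C}$ is not Lipschitz near its support, and pointwise lower bounds on it hold only off a Cartan exceptional set.

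Second, the good-set/bad-set splitting at scale $n$ via \propref{prelims-LDT-determinants} cannot produce a polynomial gain, for a purely arithmetic reason. To make the bad-set contribution (whose integrand is of size up to $Cn$) at most $n^{-C_{0}}$ you must take $H\gtrsim\log n$ in the large deviations estimate; but then the lower bound on $\left|f_{n}^{a}\left(\cdot,\omega_{0},E\right)\right|$ on the good set degrades to $\exp\left(nL^{a}-H\left(\log n\right)^{C_{0}}\right)$, and the ratio in \eqref{prelims-f-lipschitzness} becomes $\left|\omega-\omega_{0}\right|\exp\left(C\left(\log n\right)^{1+C_{0}}\right)$, which exceeds $n^{C_{1}}$ for every fixed $C_{1}$ since $C_{0}=C_{0}\left(\alpha\right)>1$ (indeed $C_{0}>\alpha+2$). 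Conversely, keeping that ratio below $n^{-C_{0}}$ forces $H\lesssim\left(\log n\right)^{1-C_{0}}\rightarrow0$, leaving a bad set of measure $O\left(1\right)$. No choice of $H$ works, so ``taking $C_{1}$ large enough relative to the $\left(\log N\right)^{C_{0}}$ losses'' does not close the argument. This obstruction is exactly why the cited proof cannot be run at scale $n$ directly: one must first reduce to blocks of logarithmic length via the Avalanche Principle (\corref{prelims-AP-determinants}), where the large deviations losses are only $\left(\log\log n\right)^{C}$ and the sharpened estimate at convergent denominators (\thmref{apx-sh_ldt}) is available, and only then perturb $\omega$. Your proposal needs that reduction spelled out; as written, the step that is supposed to deliver the factor $n^{-C_{0}}$ fails.
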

We have all we need to prove \corref{prelims-uniform-upper-bound}. 
\begin{proof}
(of \corref{prelims-uniform-upper-bound}) From \lemref{apx-Ln_L},
\lemref{apx-Ly-Ly'}, \lemref{apx-LE-LE'}, \lemref{apx-Lo-Lo'} it
is straightforward to conclude that there exists $\rho\ll\rho_{0}$
such that $L\left(y,\omega,E\right)>\gamma/2$ for $\left|y\right|,\left|\omega-\omega_{0}\right|,\left|E-E_{0}\right|\le\rho$.
We can apply \propref{prelims-M^a-upper-bound} to get 
\[
\sup_{x\in\mb T}\log\left\Vert M_{N}^{a}\left(x+iy,\omega,E\right)\right\Vert \le NL^{a}\left(y,\omega,E\right)+C\left(\log N\right)^{C'},
\]
for $\left|y\right|,\left|\omega-\omega_{0}\right|,\left|E-E_{0}\right|\le\rho$.
The conclusion follows from \lemref{apx-Ly-Ly'}, \lemref{apx-LE-LE'},
and \lemref{apx-Lo-Lo'}.
\end{proof}
Based on the already established results it is straightforward to
see that \corref{prelims-lipschitzness} follows with the same proof
as \cite[Corollary 2.15]{MR2753606}. For the sake of completness
we include the proof.
\begin{proof}
(of \corref{prelims-lipschitzness}) Let $\partial$ denote any of
the partial derivatives $\partial_{x}$, $\partial_{y}$, $\partial_{\omega}$,
$\partial_{E}$. We have
\begin{multline*}
\partial M_{N}^{a}\left(x+iy,\omega,E\right)=\sum_{j=0}^{N-1}M_{N-j-1}^{a}\left(x+iy+\left(j+1\right)\omega,\omega,E\right)\\
\cdot\partial\left[\begin{array}{cc}
a\left(x+iy+j\omega\right)-E & -\t b\left(x+iy+j\omega\right)\\
-b\left(x+iy+\left(j+1\right)\omega\right) & 0
\end{array}\right]M_{j}^{a}\left(x+iy,\omega,E\right).
\end{multline*}
The estimate \eqref{prelims-M-lipschitzness} follows from \corref{prelims-uniform-upper-bound}
by using the above identity and the mean value theorem. 

From \eqref{prelims-M-lipschitzness} it follows that 
\begin{multline*}
\left|f_{N}^{a}\left(x+iy,\omega,E\right)-f_{N}^{a}\left(x_{0},\omega_{0},E_{0}\right)\right|\\
\le\left(\left|E-E_{0}\right|+\left|\omega-\omega_{0}\right|+\left|x-x_{0}\right|+\left|y\right|\right)\exp\left(NL^{a}\left(\omega_{0},E_{0}\right)+\left(\log N\right)^{C}\right).
\end{multline*}
The estimate \eqref{prelims-f-lipschitzness} follows by dividing
both sides by $\left|f_{N}^{a}\left(x_{0},\omega_{0},E_{0}\right)\right|$
and by using the fact that $\left|\log x\right|\lesssim\left|x-1\right|$
for $x\in\left(1/2,3/2\right)$.
\end{proof}
Finally, \lemref{prelims-uniform-bound-S_N} can be proved along the
same lines as \cite[Proposition 3.14]{2012arXiv1202.2915B} and \corref{prelims-uniform-upper-bound}.
We also need to recall the following result. 
\begin{lem}
(\cite[Lemma 4.1]{MR2438997})\label{lem:apx-uy-uy'} Let $u$ be
a subharmonic function and let 
\[
u\left(z\right)=\int_{\mathbb{C}}\log\left|z-\zeta\right|d\mu\left(\zeta\right)+h\left(z\right)
\]
be its Riesz representation on a neighborhood of $\mc A_{\rho}$.
If $\mu\left(\mc A_{\rho}\right)+\left\Vert h\right\Vert _{L^{\infty}\left(\mc A_{\rho}\right)}\le M$
then for any $r_{1},r_{2}\in\left(1-\rho,1+\rho\right)$ we have
\[
\left|\left\langle u\left(r_{1}\left(\cdot\right)\right)\right\rangle -\left\langle u\left(r_{2}\left(\cdot\right)\right)\right\rangle \right|\le C_{0}\left|r_{1}-r_{2}\right|,
\]
where $C_{0}=C_{0}\left(M,\rho\right)$.\end{lem}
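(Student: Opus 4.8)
The plan is to compute $\langle u(r(\cdot))\rangle=\int_{\mb T}u(re^{2\pi i\theta})\,d\theta$ directly from the Riesz representation, reducing everything to two elementary facts: the circular average of the Newtonian kernel and the circular average of a harmonic function. First I would substitute the representation and use Fubini — legitimate since the representing measure $\mu$ is finite on a bounded neighbourhood of $\overline{\mc A_\rho}$ (this is what $\mu(\mc A_\rho)\le M$ controls) and $\log|\cdot|$ is locally integrable — to get
\[
\langle u(r(\cdot))\rangle=\int_{\mb C}\left(\int_{\mb T}\log\left|re^{2\pi i\theta}-\zeta\right|\,d\theta\right)d\mu(\zeta)+\phi_h(r),\qquad \phi_h(r):=\int_{\mb T}h\left(re^{2\pi i\theta}\right)\,d\theta.
\]
The inner integral is classical: by the mean value property of $z\mapsto\log|z-\zeta|$ when $|\zeta|>r$, and by Jensen's formula (equivalently, $\int_{\mb T}\log|e^{2\pi i\theta}-a|\,d\theta=\max(0,\log|a|)$) when $|\zeta|<r$, it equals $\log\max(r,|\zeta|)$. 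Hence $\langle u(r(\cdot))\rangle=\int_{\mb C}\log\max(r,|\zeta|)\,d\mu(\zeta)+\phi_h(r)$.

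Next I would estimate the two pieces separately for $1-\rho<r_1<r_2<1+\rho$. For the potential part, the integrand $g(\zeta):=\log\max(r_1,|\zeta|)-\log\max(r_2,|\zeta|)$ vanishes identically on $\{|\zeta|\ge r_2\}$, and for $|\zeta|<r_2$ it lies between $\log(r_1/r_2)$ and $0$; in every case $|g(\zeta)|\le\log(r_2/r_1)\le(1-\rho)^{-1}|r_1-r_2|$. Since $g$ is supported in $\{|\zeta|<r_2\}$, which is contained in $\mc A_{\rho'}$ for some $\rho'$ slightly larger than $\rho$, integrating against $d\mu$ gives a contribution bounded by $C(M,\rho)|r_1-r_2|$. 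For the harmonic part, $\phi_h$ is an affine function of $\log r$ on the annulus where $h$ is harmonic (circular averages of harmonic functions satisfy $\phi_h(r)=a\log r+b$), and $|a|$ is controlled by $\sup|h|\le M$ via an interior estimate, so $|\phi_h(r_1)-\phi_h(r_2)|\le|a|\log(r_2/r_1)\le C(M,\rho)|r_1-r_2|$; equivalently one bounds $\sup_{\mc A_\rho}|\nabla h|\le C(M,\rho)$ and applies the mean value theorem. Adding the two contributions gives the claim with $C_0=C_0(M,\rho)$.

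The one point that needs care is the bookkeeping around the domains: $\mu$ and $h$ are a priori only controlled on $\mc A_\rho$ itself, while the circles $|z|=r_i$ may come arbitrarily close to $\partial\mc A_\rho$. This is handled by the standard convention underlying the phrase ``Riesz representation on a neighbourhood of $\mc A_\rho$'': $\mu$ is the Riesz mass of that neighbourhood (hence supported slightly beyond $\mc A_\rho$, which is why $\{|\zeta|<r_2\}\subset\mc A_{\rho'}$ carries mass $\lesssim M$) and $h$ is harmonic and bounded on a genuine neighbourhood of $\overline{\mc A_\rho}$, so the gradient bound on $h$ and all the kernel estimates above are interior estimates with constants depending only on $M$ and $\rho$. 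Apart from this, every step is routine.
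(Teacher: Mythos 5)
The paper itself gives no proof of this lemma; it is only cited from \cite[Lemma 4.1]{MR2438997}. Judged on its own, your argument is the standard one (and essentially the one in the source): average the Riesz representation over the circle of radius $r$, use $\int_{\mb T}\log|re^{2\pi i\theta}-\zeta|\,d\theta=\log\max(r,|\zeta|)$ for the potential part, and use that the circular means of the harmonic part are affine in $\log r$. One justification, however, should be repaired: the claim $\sup_{\mc A_{\rho}}|\nabla h|\le C(M,\rho)$ does not follow from $\Vert h\Vert_{L^{\infty}(\mc A_{\rho})}\le M$, since interior gradient estimates degenerate like $\mathrm{dist}(z,\partial\mc A_{\rho})^{-1}$ and the radii $r_{1},r_{2}$ may be arbitrarily close to $1\pm\rho$. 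The fix is already contained in your own observation that $\phi_{h}(r)=a\log r+b$ on the annulus: since $|\phi_{h}(r)|\le M$ for every $r\in(1-\rho,1+\rho)$, evaluating at $r=1\pm\rho/2$ gives $|a|\le 2M\big/\log\frac{1+\rho/2}{1-\rho/2}$, whence $|\phi_{h}(r_{1})-\phi_{h}(r_{2})|=|a|\,|\log(r_{1}/r_{2})|\le C(M,\rho)|r_{1}-r_{2}|$, with no gradient bound needed.

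Concerning the bookkeeping you flag at the end: you are right that this is the delicate point, and your appeal to the reading of the hypothesis is not merely convenient but necessary. Your potential-part estimate needs a bound on $\mu(\{|\zeta|<r_{2}\})$, while the hypothesis as literally transcribed bounds only $\mu(\mc A_{\rho})$; mass in the inner collar between $\mathrm{supp}\,\mu$ and $\{|\zeta|=1-\rho\}$ is a priori uncontrolled, and with it the statement actually fails: take $h\equiv0$ and $\mu=K\delta_{\zeta_{0}}$ with $|\zeta_{0}|$ slightly below $1-\rho$, which satisfies the stated hypotheses for any $M$, yet the circular means differ by $K\log(r_{2}/r_{1})$. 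So the lemma is true precisely under the reading you adopt, namely that $M$ bounds the Riesz mass of the whole region on which the representation is used (which is how it is meant and used in \cite{MR2438997} and in this paper, e.g.\ in the proof of \lemref{prelims-uniform-bound-S_N}). With that reading, and with the two-radii bound on $a$ above replacing the gradient estimate, your proof is complete and correct.
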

\begin{proof}
(of \lemref{prelims-uniform-bound-S_N}) It is enough to prove the
estimate for $S_{N}$. Note that $\left\Vert b\right\Vert _{*}=\Vert\t b\Vert_{*}$.

We first prove the uniform upper bound with $y=0$. It is sufficient
to establish the estimate for large $N$. Fix $p>\alpha+2$. From
\thmref{apx-sh_ldt} we have
\begin{equation}
S_{N}\left(x+iy,\omega\right)-ND\left(y\right)\le C\left(\log n\right)^{p}\label{eq:apx-S-D}
\end{equation}
except for a set $\mc B\left(y\right)$ of measure less than 
\[
\exp\left(-c_{1}C\left(\log n\right)^{p}+C'\left(\log n\right)^{p}\right)<\exp\left(-c\left(\log n\right)^{p}\right).
\]
By the subharmonicity of $S_{N}$ we have
\begin{multline}
S_{N}\left(x_{0},\omega\right)-ND\le\frac{1}{\pi N^{-2}}\int_{\mc D\left(x_{0},N^{-1}\right)}\left(S_{N}\left(z,\omega\right)-ND\right)dA\left(z\right)\\
\le\frac{1}{\pi N^{-2}}\int_{-N^{-1}}^{N^{-1}}\int_{x_{0}-N^{-1}}^{x_{0}+N^{-1}}\left|S_{N}\left(x+iy,\omega\right)-ND\right|dxdy.\label{eq:apx-S-D-upper_bound}
\end{multline}
For $y\in\left(-N^{-1},N^{-1}\right)$, by using \eqref{apx-S-D}
and \lemref{apx-Ly-Ly'}, we have
\begin{multline*}
\int_{x_{0}-N^{-1}}^{x_{0}+N^{-1}}\left|S_{N}\left(x+iy,\omega\right)-ND\right|dx\\
\le\int_{x_{0}-N^{-1}}^{x_{0}+N^{-1}}\left|S_{N}\left(x+iy,\omega\right)-ND\left(y\right)\right|dx+2\left|D-D\left(y\right)\right|\\
\le C\left(\log N\right)^{p}N^{-1}+CN\exp\left(-c\left(\log N\right)^{p}/2\right)+CN^{-1}\le C\left(\log N\right)^{p}N^{-1}.
\end{multline*}
We used the fact that $\left\Vert S_{N}\left(\cdot,\omega\right)-ND\right\Vert _{L^{2}\left(\mb T\right)}\le CN$
(a straightforward consequence of \thmref{apx-sh_ldt}) to deal with
the exceptional set $\mc B\left(y\right)$. Plugging this estimate
in \eqref{apx-S-D-upper_bound} yields that
\[
\sup_{x\in\mb T}S_{N}\left(x,\omega\right)\le ND+C\left(\log N\right)^{p},
\]
with $C=C\left(\left\Vert b\right\Vert _{*},c,\alpha\right)$. This
yields (by replacing $b$ with $b\left(\cdot+iy\right)$) that
\[
\sup_{x\in\mb T}S_{N}\left(x+iy,\omega\right)\le ND\left(y\right)+C\left(\log N\right)^{p}.
\]
The conclusion follows from \lemref{apx-uy-uy'}.
\end{proof}
\bigskip \noindent\textit{Acknowledgments.} We are indebted to Michael Goldstein for suggesting the problem and the numerous discussions which were instrumental for the completion of the project. The first author was partially supported by the NSERC Discovery grant  5810-2009-298433.

\currentpdfbookmark{References}{References} \bibliographystyle{alpha}
\bibliography{schrodinger}

\end{document}